\newcommand{\red}[1]{{\color{red} #1}}
\setlist[itemize]{leftmargin=18pt}
\setlist[enumerate]{leftmargin=18pt}
\theoremstyle{plain}
 \numberwithin{equation}{section}
 \newcommand{\eni}{k1A}
\newcommand{\enii}{k2A}
\newtheorem{theorem}{Theorem}[section]
\newtheorem{proposition}[theorem]{Proposition}
\newtheorem{question}[theorem]{Question}
\newtheorem{lemma}[theorem]{Lemma}
\newtheorem{corollary}[theorem]{Corollary}
\newtheorem{conjecture}[theorem]{Conjecture}
\theoremstyle{definition}
\newcommand{\appsection}[1]{\let\oldthesection\thesection
\renewcommand{\thesection}{Appendix \oldthesection}
\section{#1}\let\thesection\oldthesection}
\newtheorem{definition}[theorem]{Definition}
\newtheorem{remark}[theorem]{Remark}
\newtheorem{example}[theorem]{Example}
\DeclareMathOperator{\Cl}{Cl}
\DeclareMathOperator{\Pic}{Pic}
\DeclareMathOperator{\ext}{ext}
\DeclareMathOperator{\Ext}{Ext}
\DeclareMathOperator{\exc}{Exc}
\DeclareMathOperator{\Hom}{Hom}
\DeclareMathOperator{\rk}{rank}
\def\cA{{\mathcal{A}}}
\def\cE{{\mathcal{E}}}
\def\D{{\mathbb{D}}}
\def\R{{\mathbb{R}}}
\def\Z{{\mathbb{Z}}}
\def\F{{\mathbb{F}}}
\def\Q{{\mathbb{Q}}}
\def\C{{\mathbb{C}}}
\def\P{{\mathbb{P}}}
\def\setW{{\mathbb{W}}}
\def\O{{\mathcal{O}}}
\def\Wa{{\texttt{W}}}
\def\W{{\mathcal{W}}}
\def\bW{{\overline{W}}}
\providecommand{\leftsquigarrow}{%
  \mathrel{\mathpalette\reflect@squig\relax}%
}
\newcommand{\reflect@squig}[2]{%
  \reflectbox{$\m@th#1\rightsquigarrow$}%
}
\title{Wahl singularities in degenerations of del Pezzo surfaces}
\author{Giancarlo Urz\'ua}
\address{Facultad de Matem\'aticas,
Pontificia \allowbreak Universidad \allowbreak{Cat\'olica} de Chile, Santiago, Chile.}
\email{gianurzua@gmail.com}
\author{Juan Pablo Z\'u\~niga}
\address{Facultad de Matem\'aticas,
Pontificia Universidad Cat\'olica de Chile, Santiago, Chile.}
\email{jpzuniga3@uc.cl}
\date{\today}
\begin{document}

\begin{abstract}
For any fixed $1 \leq \ell \leq 9$, we characterize all Wahl singularities that appear in degenerations of del Pezzo surfaces of degree $\ell$. This extends the work of Manetti and Hacking–Prokhorov in degree $9$, where Wahl singularities are classified using the Markov equation. To achieve this, we introduce del Pezzo Wahl chains with markings. They define marked del Pezzo surfaces $W_{*m}$ that govern all such degenerations. We also prove that every marked del Pezzo surface degenerates into a canonically defined toric del Pezzo surface with only T-singularities. In addition, we establish a one-to-one correspondence between the $W_{*m}$ surfaces and certain fake weighted projective planes. As applications, we show that every Wahl singularity occurs for del Pezzo surfaces of degree $\leq 4$, but that there are infinitely many Wahl singularities that do not arise for degrees $\geq 5$. We also use Hacking's exceptional collections to provide geometric proofs of recent results by Polishchuk and Rains on exceptional vector bundles on del Pezzo surfaces.
\end{abstract}



\maketitle


\section{Introduction} \label{s0}

Let $W$ be a projective surface with only Wahl singularities, i.e. cyclic quotient singularities of type $\frac{1}{n^2}(1, na - 1)$ where $\gcd(n, a) = 1$. Assume that $K_W^2 > 0$. Then $K_W$ or $-K_W$ is big. Consider one-parameter $\mathbb{Q}$-Gorenstein deformations $W_t$ of $W$, denoted by $W_t \rightsquigarrow W_0 = W$, where $t \in \D = \{ t \in \C \colon |t| \ll \epsilon \}$, and the 3-fold total space $\mathcal{W}$ of the deformation has $K_{\W}$ $\mathbb{Q}$-Cartier. If $W_t$ is nonsingular for $t \neq 0$, then the deformation induces $\mathbb{Q}$HD (rational homology disk) smoothings at each of the singularities of $W$. In this case, $W_t$ degenerates into $W$ in the mildest possible way. For example, we have that $K_{W_t}^2$, $\chi_{\text{top}}(W_t)$, $p_g(W_t)$, and $q(W_t)$ are constant, and $H_2(W_t, \mathbb{Z}) \subseteq H_2(W, \mathbb{Z})$ with finite index.


When $K_W$ is big, the nonsingular surfaces $W_t$ are of general type. Classifying all possible surfaces $W$ for a given $K_W^2$ is a difficult task. For instance, there are well-known open questions in relation to geography of surfaces of general type, and exotic 4-manifolds homeomorphic to blow-ups of $\P^2$ at few points \cite{RU21,RU22}. 


When $-K_W$ is big, the nonsingular surfaces $W_t$ are rational, and thus deformations of del Pezzo surfaces. One particular characteristic in this case, which is not necessarily true when $K_W$ is big, is that there are no local-to-global obstructions to deform $W$. Hence, any such $W$ must have $\Q$-Gorenstein smoothings and partial smoothings. For each singularity of $W$, we consider surfaces $W_*$ with only one singularity which are degenerations of del Pezzo surfaces.

Building on the work of B\v{a}descu \cite{B86}, Manetti \cite{Ma91} and Hacking \cite{H04}, Hacking and Prokhorov \cite{HP10} classified all possible $W_*$ when $W_t=\P^2$. They proved that every $W_*$ is a $\Q$-Gorenstein partial smoothing of $\P(x^2,y^2,z^2)$, where $(x,y,z)$ satisfies the Markov equation $x^2+y^2+z^2=3xyz$. This characterization motivates the following more general question.  

\begin{question}
Is there a classification of surfaces $W_*$ in a $\mathbb{Q}$-Gorenstein smoothing $W_t \rightsquigarrow W_*$, where the general fiber $W_t$ is a del Pezzo surface of degree $K_{W_t}^2$, and $W_*$ has a single Wahl singularity? Are there constraints on the types of Wahl singularities that can appear for a fixed degree?
\label{question1}
\end{question}

In this paper, we provide a complete answer to Question \ref{question1}. Further research on $\Q$-Gorenstein degenerations of del Pezzo surfaces can be found in \cite{P15}, \cite{Urz16a}, \cite{akhtar16}, \cite{P19}, \cite{DVS24}, \cite{UZ23}, \cite{Pe25}. See \cite{Urz25} for a background on degenerations of surfaces with only Wahl singularities. 

\vspace{0.2cm}
In the following, we explain Hacking-Prokhorov's result in a way that connects to the results of this paper. We start with some common terminology. Let $0<q<m$ be coprime integers. The corresponding cyclic quotient singularity (c.q.s.) $\frac{1}{m}(1,q)$ has a minimal resolution that replaces the singular point by a chain of curves $E_i$ such that $E_i \simeq \P^1$ and $E_i^2=-e_i \leq -2$. The integers $e_i$ are encoded in the Hirzebruch-Jung continued fraction $$\frac{m}{q}=e_1 - \frac{1}{e_2 - \frac{1}{\ddots - \frac{1}{e_r}}}=:[e_1,\ldots,e_r].$$ By definition, its dual chain is $[b_1,\ldots,b_s]=\frac{m}{m-q}$ where $b_i \geq 2$. We have that $$[e_1,\ldots,e_r,1,b_s,\ldots,b_1]=0.$$ Wahl singularities are precisely the quotient singularities that admit a smoothing with second Betti number equal to zero, i.e. a $\Q$HD smoothing. A Wahl chain is the Hirzebruch-Jung continued fraction of a Wahl singularity. If $\Wa$ is a Wahl chain for some $0<a<n$, then its dual chain $\Wa^{\vee}$ corresponds to $\frac{n^2}{n^2-na+1}=[x_1,\ldots,x_u,2,y_v,\ldots,y_1]$ where $\frac{n}{a}=[y_1,\ldots,y_v]$ and $\frac{n}{n-a}=[x_1,\ldots,x_u]$. In fact, we have $\Wa=[y_1,\ldots,y_{v-1},y_v+x_u,x_{u-1},\ldots,x_1]$.

\vspace{0.2cm} 

Let us explain Hacking-Prokhorov's theorem in terms of Wahl chains. Let $\frac{n^2}{na-1}=[e_1,\ldots,e_r]$ be a Wahl chain. Then (see for example \cite[Proposition 4.1]{UZ23}) this is the Wahl chain of a $W_*$ of degree $9$ if and only if $$[e_1,\ldots,e_r]= [\Wa_0^{\vee},\alpha,\Wa_1^{\vee}]$$ for some Wahl chains $\Wa_i$ and some $\alpha \geq 2$. In fact, if $\frac{n_0^2}{n_0a_0-1}=\Wa_0$ and $\frac{n_1^2}{n_1 a_1-1}=\Wa_1$, then $n_0^2+n_1^2+n^2=3 n_0 n_1 n$. We have $3$ possibilities for $\alpha$:
\begin{itemize}
    \item if both $\Wa_i$ are not empty, then $\alpha=10$;
    \item if only one $\Wa_i$ is empty, then $\alpha=7$;
    \item if both $\Wa_i$ are empty, then $\alpha=4=n^2$.
\end{itemize}

Let us simplify this characterization with the following definition.

\begin{definition}
We say that $[f_1,\ldots,f_r]$ with $f_i \geq 2$ admits a \textit{zero continued fraction of weight $\lambda$}
if there are indices $i_1<i_2<\ldots<i_v$ for some $v \geq 1$ and integers $d_{i_k}\geq 1$ such that $$[\ldots,f_{i_1}-d_{i_1},\ldots,f_{i_2}-d_{i_2},\ldots,f_{i_v}-d_{i_v},\ldots]=0,$$  and $\lambda+1=\sum_{k=1}^v d_{i_k}$.
\label{zcfSigma}
\end{definition}

It is easy to show that $[f_1,\ldots,f_r]$ admits a zero continued fraction of weight $0$ if and only if $[f_1,\ldots,f_r]$ is a dual Wahl chain. In this way, Hacking-Prokhorov's result becomes the next equivalence. 

\begin{theorem}
A Wahl chain $\Wa=[e_1,\ldots,e_r]$ appears in a degeneration of $\P^2$ if and only if there is $i \in \{1,\ldots,r \}$ such that $[e_1,\ldots,e_{i-1}]$ and $[e_{i+1},\ldots,e_{r}]$ admit zero continued fractions of weight $0$. (One ($i=1,r$) or both ($r=1$) chains could be empty.)
\label{markov}
\end{theorem}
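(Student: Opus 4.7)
The plan is to reduce Theorem~\ref{markov} to the version of Hacking--Prokhorov recalled just above, by establishing the key equivalence highlighted in the excerpt: \emph{a chain $[f_1,\ldots,f_r]$ with all $f_i\ge 2$ admits a zero continued fraction of weight $0$ if and only if it is a dual Wahl chain}. Granting this, the theorem becomes a direct translation in which the index $i$ matches the position of $\alpha$ in the decomposition $[\Wa_0^\vee,\alpha,\Wa_1^\vee]$, and the complementary subchains $[e_1,\ldots,e_{i-1}]$ and $[e_{i+1},\ldots,e_r]$ correspond to $\Wa_0^\vee$ and $\Wa_1^\vee$. The parenthetical empty-subchain cases correspond to $\Wa_0$ or $\Wa_1$ being empty.

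For the forward direction of the equivalence, I would take a nonempty dual Wahl chain $\Wa^\vee=[x_1,\ldots,x_u,2,y_v,\ldots,y_1]$ with $\frac{n}{n-a}=[x_1,\ldots,x_u]$ and $\frac{n}{a}=[y_1,\ldots,y_v]$ dual c.q.s., as in the excerpt. The general duality identity $[e_1,\ldots,e_r,1,b_s,\ldots,b_1]=0$ applied here yields $[x_1,\ldots,x_u,1,y_v,\ldots,y_1]=0$, so the single choice $d_{i_1}=1$ at the interior $2$ realizes a zero continued fraction of weight $0$.

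For the converse, suppose $[g_1,\ldots,g_r]=0$ where $g_{i_1}=f_{i_1}-1\ge 1$ and $g_j=f_j\ge 2$ for $j\ne i_1$. A short induction gives $[c_1,\ldots,c_k]>1$ for any chain of integers $\ge 2$, so the vanishing forces $g_{i_1}\le 1$, hence $f_{i_1}=2$ and $g_{i_1}=1$. The same inequality rules out $i_1\in\{1,r\}$: for instance $[1,f_2,\ldots,f_r]=0$ would demand $[f_2,\ldots,f_r]=1$, impossible. With $i_1$ interior, the identity $[f_1,\ldots,f_{i_1-1},1,f_{i_1+1},\ldots,f_r]=0$ expresses $[f_1,\ldots,f_{i_1-1}]$ and the reverse of $[f_{i_1+1},\ldots,f_r]$ as coprime dual c.q.s.\ $\frac{n}{n-a}$ and $\frac{n}{a}$, so $[f_1,\ldots,f_r]$ is precisely the dual Wahl chain of $\frac{n^2}{na-1}$.

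The main obstacle is this converse: pinning down that the unique decreased entry is necessarily an interior $2$ and that the two flanking subchains match dual c.q.s. Both points hinge on the sharp bound $[c_1,\ldots,c_k]>1$ for chains of integers $\ge 2$. Once the equivalence is in place, the reverse direction of Theorem~\ref{markov} is immediate from Hacking--Prokhorov with $\alpha=e_i\ge 2$, and the forward direction follows by reading Hacking--Prokhorov's decomposition as producing an index $i$ such that both flanks are dual Wahl chains (possibly empty), which by the equivalence is exactly the zero-continued-fraction condition.
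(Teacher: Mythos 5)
Your argument matches the paper's: Theorem \ref{markov} is presented there as a direct translation of the Hacking--Prokhorov decomposition $[\Wa_0^{\vee},\alpha,\Wa_1^{\vee}]$ (via \cite[Proposition 4.1]{UZ23}, restated as Theorem \ref{markovThm}) combined with the equivalence of Proposition \ref{weight0}, which is exactly the reduction you carry out. Your write-up of the ``weight $0$ iff dual Wahl chain'' equivalence, which the paper leaves as ``easy to show,'' is correct; the only cosmetic point is that excluding $i_1=r$ uses the companion bound $[f_1,\ldots,f_{r-1},1]\geq 1$ (a one-line induction, or Lemma \ref{updown}) rather than literally the same inequality as the $i_1=1$ case.
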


Our answer to Question \ref{question1} is Theorem \ref{mainthm1}, which is formulated in the spirit of Theorem \ref{markov}. The Wahl singularities that appear in degenerations of del Pezzo surfaces also depend on "markings" over the corresponding Wahl chain.


\begin{definition}
A Wahl chain $[e_1,\ldots,e_r]$ is \textit{del Pezzo of type (I) or (II)} if it satisfies one of the following:
\begin{itemize}
\item[(I)] $[e_1,\ldots,e_{r-1}]$ or $[e_{2},\ldots,e_r]$ admits a zero continued fraction of weight $\lambda \leq 8$. 
\item[(II)] There is $i \in \{2,\ldots,r-1\}$ such that $[e_1,\ldots,e_{i-1}]$ and $[e_{i+1},\ldots,e_r]$ admit zero continued fractions of weights $\lambda_1$ and $\lambda_2$ such that $\lambda_1 + \lambda_2 \leq 8$.
\end{itemize}
Its \textit{degree} is either $9-\lambda$ for (I), or $9-\lambda_1-\lambda_2$ for (II). Its \textit{marking} is the data of the zero continued fraction(s) involved in its type. If $[k_1,\ldots,k_{i-1}]$ and $[k_{i+1},\ldots,k_r]$ are the corresponding zero continued fractions, then its marking is $$[k_1,\ldots,k_{i-1},\underline{e_i},k_{i+1},\ldots,k_r].$$ We call $\underline{e_i}$ the central mark.
\label{DPWahlchain}
\end{definition}

All markings for a given Wahl chain can be found here \href{https://colab.research.google.com/drive/1tooXaHZnug0nCRvmXs9g_WIi1jFm4dgs?usp=sharing}{MarkingsWahlChains}. 

\vspace{0.3cm} 

A del Pezzo Wahl chain with a given marking defines del Pezzo surfaces $W_{*m}$ with one singularity (Definition \ref{def: markedsurface}). We call them \textit{marked del Pezzo} surfaces. Its minimal resolution $\phi \colon X \to W_{*m}$ has a fibration $X \to \P^1$ of genus $0$ such that $\exc(\phi)$ is formed by one section of $X \to \P^1$, and contains components of one fiber for type (I), or two fibers for type (II). Specifically, in each case it contains one or two fibers minus all $(-1)$-curves in the fiber(s). In Figure \ref{f0} we show diagrams of this, where $\exc(\phi)$ is black and $(-1)$-curves are red.

\begin{figure}[htbp]
\centering
\includegraphics[height=5cm, width=9.3cm]{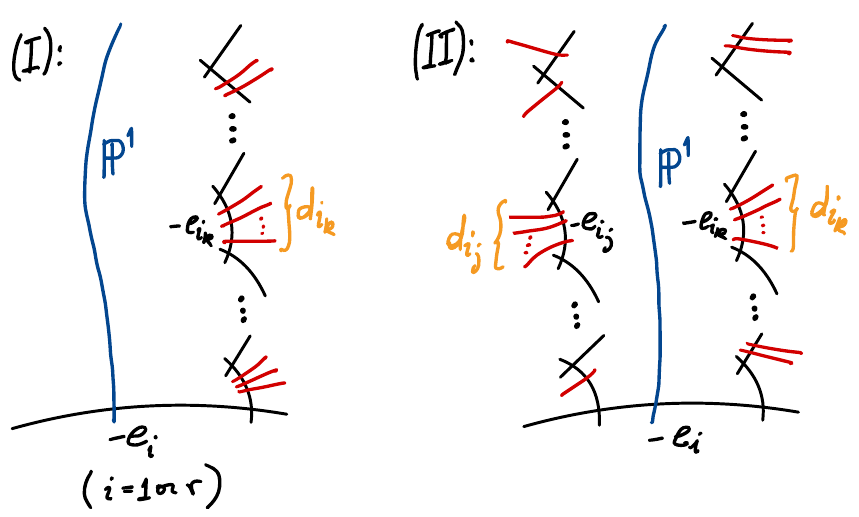}
\caption{Minimal resolutions of $W_{*m}$ for types (I) and (II).} 
\label{f0}
\end{figure}

Given a $\Q$-Gorenstein smoothing $W_t \rightsquigarrow W$ with $K_W$ not nef, the minimal model program (MMP) relative to $\D$ gives us two types of birational operations: flips and divisorial contractions. The case of divisorial contractions is particularly recurrent when we work under the assumption $-K_W$ big. There are two types of basic divisorial contractions: W-blow-downs and Iitaka-Kodaira blow-downs; see Definition \ref{divcontr}. The W-blow-downs come in infinite families over a Wahl singularity, and its numerical data belong to a Mori train (Definition \ref{moritrain}). An Iitaka-Kodaira blow-down is defined by a $(-1)$-curve in $W$ which is disjoint from the singularities of $W$. It induces a contractible divisor of $(-1)$-curves in $W_t \rightsquigarrow W$, so the contraction is a new $\Q$-Gorenstein smoothing $W'_t \rightsquigarrow W'$. We will use the classical notation $\text{Bl}_u(W)$ for the blow-up of $u$ nonsingular points in $W$. Following \cite{CH17}, we call the corresponding $(-1)$-curves \textit{floating curves}.       

\begin{theorem}
Let $W$ be a surface with only Wahl singularities such that $\ell=K_W^2>0$ and $-K_W$ is big. For each singularity $P \in W$, there exists a marked surface $W_{*m}$ of degree $\ell^\prime\geq\ell$ containing a singularity of the same type as $P$, such that $\text{Bl}_{\ell'-\ell}(W_{*m})$ admits a $\Q$-Gorenstein degeneration to $W$.

In particular, for any fixed degree $\ell$, every Wahl singularity appearing in a $\Q$-Gorenstein degeneration of a del Pezzo surface is realized by a del Pezzo Wahl chain of degree $\ell$, or by one of degree $\ell'\geq\ell$ obtained by adjoining floating curves. \footnote{For $\frac{1}{4}(1,1)$ there is one extra option, constructed from the blow-up of five points over a line in $\P^2$.}  
\label{mainthm1}
\end{theorem}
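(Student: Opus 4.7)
The plan is to run a relative Minimal Model Program on a $\Q$-Gorenstein smoothing $W_t \rightsquigarrow W$, and show that it terminates at a marked del Pezzo surface carrying the prescribed singularity $P$.

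First I would fix such a smoothing (which exists because, as the introduction recalls, there are no local-to-global obstructions when $-K_W$ is big) together with the chosen Wahl singularity $P \in W$. Passing to the 3-fold total space $\W \to \D$, I would run the MMP relative to $\D$, scaling by a relatively ample divisor chosen so that its intersection with the exceptional chain over $P$ is positive. This ensures that no MMP ray passes through $P$, so every contraction either leaves $P$ untouched or modifies a different Wahl singularity of the central fiber.

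By the classification of extremal divisorial contractions for 3-fold families with T-singular central fibers (Definition \ref{divcontr}), each MMP step falls into one of three types: a flip, which does not alter the deformation class of the central fiber; a W-blow-down in an infinite family on a Mori train (Definition \ref{moritrain}), which eliminates or simplifies a Wahl singularity away from $P$; or an Iitaka--Kodaira blow-down of a floating $(-1)$-curve on the central fiber, which produces a new $\Q$-Gorenstein family with larger $(-K)^2$. Since $-K_W$ is big, the MMP terminates at a 3-fold $\W' \to \D$ whose central fiber $W'$ carries a Wahl singularity of the same type as $P$ together with some T-singularities.

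The next step is to identify $W' = W_{*m}$ for an appropriate marking. Termination of the MMP means that every remaining extremal ray either touches the exceptional divisor over $P$ or yields a Mori fiber contraction, so the minimal resolution $X \to W'$ inherits a $\P^1$-fibration $X \to \P^1$ on which the dual graph of $P$ appears as a section. The remaining exceptional components of $X \to W'$ are the non-$(-1)$ components of one or two fibers, which reproduces the type (I) or type (II) picture in Figure \ref{f0}. Computing $K_{W'}^2$ from this fibration and using $K_{W'}^2 = K_{W_t}^2 = \ell'$ with $\ell' \leq 9$ (since the general fiber of $\W' \to \D$ is a smooth rational surface) forces $9 - \ell' \in \{0, 1, \dots, 8\}$, which is exactly the total weight of the zero continued fractions encoded by the remaining fiber components, recovering the bounds $\lambda \leq 8$ and $\lambda_1 + \lambda_2 \leq 8$ of Definition \ref{DPWahlchain}.

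Finally, reversing the MMP yields the claimed degeneration. The Iitaka--Kodaira blow-downs reverse to blow-ups of $\ell' - \ell$ floating points on the central fiber of $\W' \to \D$, producing $\text{Bl}_{\ell'-\ell}(W_{*m})$ as the central fiber of a $\Q$-Gorenstein family with general fiber a del Pezzo of degree $\ell$; the intervening W-blow-downs and flips reverse to W-blow-ups and antiflips, each of which is a $\Q$-Gorenstein deformation. Composing these steps produces a $\Q$-Gorenstein degeneration of $\text{Bl}_{\ell'-\ell}(W_{*m})$ to $W$. The main obstacle I expect is the combinatorial identification step: showing that the fiber components on $X$ genuinely assemble into zero continued fractions of the correct weights so that $W'$ is a marked del Pezzo, and handling the degenerate configurations where the Mori fiber structure is non-generic, in particular the $\frac{1}{4}(1,1)$ exception noted in the footnote, which requires a direct analysis of the pencil on the blow-up of $\P^2$ at five collinear points.
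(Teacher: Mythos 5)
Your proposal has two genuine gaps. First, the relative MMP cannot be steered away from the chosen singularity $P$: the $K_{\W}$-negative extremal rays over $\D$ are dictated by the geometry, and a flipping curve may well pass through $P$, in which case the flip replaces $P$ by the singularities of an extremal P-resolution and the singularity type is lost. Your claim that a flip ``does not alter the deformation class of the central fiber'' is also false as stated --- a flip $W \dashrightarrow W^+$ changes the central fiber and its singularities (only the general fiber is preserved). Relatedly, the identification of the MMP output as a marked surface $W_{*m}$ is exactly the hard content and is asserted rather than proved; the paper obtains it not from termination of any MMP but from the estimate $h^0(-K_X)\geq K_W^2+1$ together with an analysis of the fixed part of $|-K_X|$ (Lemma \ref{keylemma}, Theorem \ref{Thm class}, Lemma \ref{reductiontype}), which bounds the transversal curves in $\exc(\phi)$ and pins down the type (I)/(II) fiber configurations, including the degenerate cases (0), (5), (6) and the residual surface.

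Second, and more fundamentally, ``reversing the MMP'' does not produce the degeneration the theorem asks for. A flip or a W-blow-down relates two \emph{different central fibers} of $\Q$-Gorenstein families whose general fibers agree or differ by an ordinary blow-down; it does not give a one-parameter family with general fiber one of the two surfaces and special fiber the other. Consequently, composing the reversed steps does not yield a $\Q$-Gorenstein degeneration of $\text{Bl}_{\ell'-\ell}(W_{*m})$ to $W$ (and antiflips need not even exist as deformations, cf. the conditions of \cite[Corollary 3.23]{HTU17} recalled in Section \ref{s3.2}). The paper instead constructs the degeneration directly: starting from the minimal resolution $X\to W_*$ fibered over $\P^1$, it builds explicit smoothings $X_t\rightsquigarrow X$ that keep the Wahl chain constant while smoothing away the remaining fiber components, so that disjoint marking $(-1)$-curves appear in $X_t$ (Example \ref{12...2}, Lemma \ref{keyContraction}, and Steps 1--4 in the proof of Theorem \ref{theTheorem}); contracting the Wahl chain fiberwise then gives an honest family with general fiber $\text{Bl}_u(W_{*m})$ and special fiber $W_*$. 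Some version of this deformation-theoretic construction is unavoidable, and it is absent from your argument.
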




In degree $8$ we have degenerations of two del Pezzo surfaces: $\mathbb{F}_0$ and $\mathbb{F}_1$. The following theorem distinguishes them in terms of the corresponding $W_{*m}$; see \ref{ss5.1}.

\begin{theorem}
Let $W_{*m}$ be a marked surface of degree $8$ with singularity $\frac{1}{n^2}(1,na-1)$. 

\begin{itemize}
    \item[(0)] There is a degeneration $\F_0 \rightsquigarrow W_{*m}$ if and only if $n$ is odd, and $n \Gamma \cdot K_{W_{*m}}$ is even for every marking curve $\Gamma$. 
    \item[(1)] There is a degeneration $\F_1 \rightsquigarrow W_{*m}$ if and only if one of the following holds: $W_{*m}$ is a $W$-blow-up or the blow-up at a smooth point of a marked surface $W'_{*m}$ of degree $9$; $n$ is even; or $n$ is odd, and there exists a marking curve $\Gamma$ such that $n\Gamma \cdot K_{W_{*m}}$ is odd.
\end{itemize}
\label{mainthm2}
\end{theorem}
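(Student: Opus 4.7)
The plan is to distinguish $\F_0$ from $\F_1$ through the $2$-divisibility of the canonical class: $K_{\F_0}=-2s_1-2s_2$ is divisible by $2$ in $\Pic(\F_0)$, whereas $K_{\F_1}$ is not. Under a $\Q$-Gorenstein degeneration $W_t \rightsquigarrow W_{*m}$, line bundles lift from the del Pezzo fiber to the total space $\W$ because $H^1(W_t,\O)=H^2(W_t,\O)=0$. The translator is the local structure at the Wahl singularity of $W_{*m}$: the total space $\W$ has there an index-$n$ canonical threefold singularity whose local class group is $\Z/n$, and Cartier divisors on $\W$ restrict to Weil divisors on $W_{*m}$ whose local class lies in the order-$n$ subgroup $n\cdot \Z/n^2 \subset \Z/n^2$ of the singularity's class group. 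This is the dictionary I would use throughout.

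For necessity in (0): suppose $\F_0 \rightsquigarrow W_{*m}$ and pick $L_t \in \Pic(\F_0)$ with $2L_t \sim K_{\F_0}$. By the dictionary, $L_t$ extends to $\mathcal{L} \in \Pic(\W)$ with $2\mathcal{L} \sim K_{\W}$, restricting to a Weil divisor $L:=\mathcal{L}|_{W_{*m}}$ with $2L \sim K_{W_{*m}}$ and local class $kn \in n\Z/n^2$. Since the local class of $K_{W_{*m}}$ in $\Z/n^2$ is $-na$, the relation $2kn \equiv -na \pmod{n^2}$ reduces to $2k \equiv -a \pmod{n}$. Because $\gcd(n,a)=1$, this congruence is solvable iff $n$ is odd, forcing that conclusion. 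The parity statement then follows from $n\Gamma \cdot K_{W_{*m}} = 2(n\Gamma \cdot L) \in 2\Z$, using that $n\Gamma$ is Cartier. For sufficiency of (0), given $n$ odd and the parity condition, I would construct $L \in \Cl(W_{*m})$ with $2L \sim K_{W_{*m}}$ using the explicit geometry of the minimal resolution $\phi\colon X \to W_{*m}$ and its fibration $X \to \P^1$; the resulting $L$ extends to any $\Q$-Gorenstein smoothing guaranteed by Theorem \ref{mainthm1}, and its existence on the general fiber forces that fiber to be $\F_0$.

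For part (1), two observations combine. First, the explicit construction cases: if $W_{*m}$ is the blow-up at a smooth point of a degree-$9$ marked surface $W'_{*m}$, then fiberwise blow-up of any $\Q$-Gorenstein smoothing $\P^2 \rightsquigarrow W'_{*m}$ yields $\F_1 \rightsquigarrow W_{*m}$; the W-blow-up case is analogous, since W-blow-ups commute with $\Q$-Gorenstein deformation. Second, in the remaining cases ($n$ even, or $n$ odd with some $n\Gamma \cdot K_{W_{*m}}$ odd), part (0) obstructs $\F_0$-degeneration; since a $\Q$-Gorenstein smoothing exists by Theorem \ref{mainthm1} with general fiber a smooth del Pezzo of degree $8$, this fiber must be $\F_1$. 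The main obstacle is the sufficiency direction of (0): producing the explicit half-canonical $L$ on $W_{*m}$ from the parity data requires tracking class-group contributions from the Wahl chain together with the marking curves realizing the zero continued fraction(s), with separate treatment for markings of type (I) and (II) and for the exceptional case of $\frac{1}{4}(1,1)$ already flagged in Theorem \ref{mainthm1}.
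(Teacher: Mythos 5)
Your overall strategy is the same as the paper's: both proofs reduce the $\F_0$-versus-$\F_1$ dichotomy to a parity computation in the unimodular lattice $\Pic(\F_k)$, viewed inside $\Cl(W_{*m})$ via the index-$n$ inclusion $\Pic(W_t)\hookrightarrow \Cl(W_{*m})$ controlled by the local class group $\Z/n^2$ of the Wahl singularity. (Your invariant, $2\mid K$ in $\Pic$, and the paper's, $D\cdot K$ even for all $D\in\Pic$, are equivalent since $K$ is characteristic and the lattice is unimodular.) Your necessity argument for (0) is complete and, if anything, tidier than the paper's: the single congruence $2k\equiv -a \pmod n$ delivers both ``$n$ odd'' and, via $n\Gamma\cdot K=2(n\Gamma\cdot L)$, the evenness of $n\Gamma\cdot K$. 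The paper instead splits this into two statements, handling $n$ even by exhibiting a generator $A$ of the local class group with $nA\cdot K_{\F_k}=-a$ odd, and handling the marking-curve parity by noting $n\Gamma\in\Pic(\F_k)$ directly. Part (1) then follows for both of you by the same elimination argument plus the explicit blow-up constructions.

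The genuine gap is the sufficiency direction of (0), which you correctly identify as ``the main obstacle'' but then only announce rather than prove: ``I would construct $L\in\Cl(W_{*m})$ with $2L\sim K_{W_{*m}}$\dots''. Producing such an $L$ \emph{inside the sublattice that lifts to $\Pic(W_t)$} is exactly the hard half of the equivalence, and it is not clear how the explicit geometry of $X\to\P^1$ hands it to you without redoing the whole computation. The paper closes this step without ever constructing a square root of $K$: it observes that the marking curves $\{\Gamma_j\}$ generate $\Cl(W_{*m})$ (in degree $8$ there are exactly two of them, matching $\rho(W_t)=2$), so every $D\in\Pic(\F_k)$ is an integral combination $\sum c_j\Gamma_j$; then $nD\cdot K=\sum c_j\,(n\Gamma_j\cdot K)$ is even by hypothesis, and since $n$ is odd and $D\cdot K\in\Z$, the number $D\cdot K$ is even for every $D\in\Pic(\F_k)$, forcing $k$ even. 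If you adopt that generation statement as your missing lemma, your argument closes; as written, the ``if'' direction of (0) — and hence the ``only if'' direction of (1), which rests on it — is not established.
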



Analogously to Hacking-Prokhorov's theorem, we also prove that every marked del Pezzo surface $W_{*m}$ is a partial smoothing of a naturally defined toric del Pezzo surface $W_{*m}^T$ with only T-singularities. We recall that T-singularities are Du Val singularities or $\frac{1}{dn^2}(1,dna-1)$ where gcd$(n,a)=1$ and $d\geq 1$. To prove this, we introduce in \ref{s3.4} the notion of \textit{slidings} of the marking curves in $W_{*m}$.

\begin{theorem}
A marked del Pezzo surface $W_{*m}$ defines a toric del Pezzo surface $W_{*m}^T$ with only T-singularities so that $W_{*m}$ is a $\Q$-Gorenstein partial smoothing of $W_{*m}^T$. 
\label{mainthm3}
\end{theorem}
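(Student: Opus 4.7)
The plan is to construct $W_{*m}^T$ from the marking data via the slidings introduced in \S 3.4, and then to exhibit $W_{*m}$ as a $\Q$-Gorenstein partial smoothing of $W_{*m}^T$ using standard deformation theory of T-singularities on rational surfaces with big anticanonical class.

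First, I would start with the minimal resolution $\phi \colon X \to W_{*m}$ and its genus-$0$ fibration $X \to \P^1$ described in the introduction. The marking picks out one or two special fibers $F_j$ whose non-$(-1)$-curve components belong to $\exc(\phi)$, plus a section $\Gamma_0$. The marking entries $[k_1,\ldots,k_s]$ record a zero continued fraction of weight $\lambda$, and the differences $d_{i_k} = e_{i_k} - k_{i_k}$ correspond geometrically to $(-1)$-curves sitting inside the marked fibers. Slidings are local birational moves along the marked fibers that shift these $(-1)$-curves while preserving both the $\P^1$-fibration and the intersection numbers with $\Gamma_0$. I would apply them iteratively until the non-$(-1)$-components of each marked fiber form a genuine toric chain, i.e. a chain whose Hirzebruch--Jung entries satisfy $[k_1,\ldots,k_s] = 0$ with no further modifications. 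Contracting these chains together with the section then defines $W_{*m}^T$.

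Next, I would verify that $W_{*m}^T$ has the three advertised properties. Toricity follows because after slidings the ambient $X^T$ has a $\P^1$-fibration with two marked fibers and a section, which is precisely the equivariant structure of a blow-up of a Hirzebruch surface, and contracting inside this toric structure yields a toric target. The singularities produced by contracting zero continued fraction chains are by definition T-singularities, since a T-chain is characterized (up to the additional $d$-datum) by admitting a zero continued fraction. For the del Pezzo property, I would note that slidings preserve $K_X^2$ and the numerical class of $-K_X$ up to pullback, so the degree of $W_{*m}^T$ equals the degree $\ell$ of $W_{*m}$, and $-K_{W_{*m}^T}$ inherits bigness; on a toric surface, big and nef are equivalent to ample, so $-K_{W_{*m}^T}$ is ample.

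Finally, to produce the partial smoothing, I would use the classical fact that each T-singularity $\tfrac{1}{dn^2}(1, dna - 1)$ admits a canonical one-parameter $\Q$-Gorenstein family whose general fiber has either a Wahl singularity $\tfrac{1}{n^2}(1, na - 1)$ (when we partially smooth) or is smooth (when $d = 1$). Since $W_{*m}^T$ is rational with $-K$ big and has only T-singularities, Hacking's vanishing of local-to-global obstructions \cite{H04} applies, and these local families integrate to a global $\Q$-Gorenstein deformation $\W \to \D$ of $W_{*m}^T$. The \emph{main obstacle} is to ensure that the general fiber of this family is exactly $W_{*m}$ and not some other marked surface sharing the same Wahl singularity type. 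Resolving this requires running the sliding procedure in reverse on the total space: the marking combinatorics on $W_{*m}^T$ determine the preferred one-parameter $\Q$-Gorenstein direction among deformations of each T-singularity, and that direction, once glued globally, recovers the original fiber configuration and hence $W_{*m}$ itself.
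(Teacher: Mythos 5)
Your construction of $W_{*m}^T$ via slidings follows the same route as the paper (which first builds an intermediate toric surface $\widehat{W}_{*m}$ with only Wahl singularities and then contracts its $K$-trivial torus-invariant curves), but the proposal has two genuine gaps. The first and most serious is the degeneration itself. You correctly identify the main obstacle --- that a $\Q$-Gorenstein deformation of $W_{*m}^T$ obtained from unobstructedness plus the local smoothings of T-singularities need not have $W_{*m}$ as its general fiber --- but your proposed fix, ``running the sliding procedure in reverse on the total space,'' presupposes exactly what needs to be proved, namely that each slide is realized by an explicit one-parameter $\Q$-Gorenstein family. This is the content of the paper's Lemma \ref{defslides}: one passes to the minimal resolution, checks $H^2(W',T_{W'})=0$ via the logarithmic tangent sheaf and the adding/deleting criterion for $(-1)$-curves, takes a deformation smoothing only the new Wahl singularity, and then runs a guided MMP (flipping as in Proposition \ref{specialFlip}) to identify the general fiber with the original surface and to track the marking curve through the degeneration. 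Without this lemma (or an equivalent), the statement that the family you build out of the local deformations of the T-singularities of $W_{*m}^T$ has general fiber $W_{*m}$ is an assertion, not an argument; the paper avoids the issue entirely by building the degeneration step by step in the direction $W_{*m}\rightsquigarrow\widehat{W}_{*m}$ and then blowing down the $K$-trivial curves in the family.

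The second gap is the del Pezzo property. Your claim that on a toric surface big and nef is equivalent to ample is false ($-K_{\F_2}$ is big and nef but trivial on the $(-2)$-section), and in fact the intermediate surface $\widehat{W}_{*m}$ is only weak del Pezzo: it genuinely contains $K$-trivial curves (repeated Wahl singularities joined by curves with $\delta_i=0$, and terminal chains of $(-2)$-curves), which is precisely why the contraction $\widehat{W}_{*m}\to W_{*m}^T$ is needed. Moreover, nefness of $-K_{\widehat{W}_{*m}}$ is not automatic from ``slidings preserve the numerical class of $-K$'': the paper must compute $K_{\widehat{W}_{*m}}\cdot C$ for the final section $C$ using the relation $d=\ell+u-3$ (resp.\ $d=\ell-1+u_1+u_2$) of Lemma \ref{restrCentralMark}, and this is exactly where the exclusion of type (I) degree $1$ enters --- a hypothesis your argument never uses and which the statement silently requires (cf.\ Lemma \ref{wirdo}). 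A smaller inaccuracy: the T-singularities of $W_{*m}^T$ do not arise by ``contracting zero continued fraction chains,'' and T-chains are not characterized by admitting zero continued fractions (Proposition \ref{weight0} concerns duals of Wahl chains); they arise by contracting M-resolutions consisting of Wahl singularities linked by $K$-trivial curves.
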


In this way, every $W_{*m}$ is deformation equivalent to a toric del Pezzo surface. By \cite[Theorem 6]{KNP17} all toric del Pezzo surfaces with T-singularities are obtained through particular families over $\P^1$. This deformations are induced by a combinatorial process known as mutation of Fano polygons (see \cite[Definition 2]{KNP17} and \cite[Lemma 7]{akhtar16}). In \ref{s3.4} we show that slidings define two types of deformations over $\mathbb{D}$, which we refer to as \textit{mutations}, on a given $W_{*m}$. The difference between them lies in the self-intersection of the marking curve $\Gamma$ of the slide. It holds that $\Gamma \cdot K_{W_{*m}}<0$, but the number $\Gamma^2$ is non-zero. When $\Gamma^2>0$ we refer to them as \textit{Markov mutations}. In degree $9$, all slidings are of this form. On the other hand, when $\Gamma^2<0$ we refer to them as \textit{Mori mutations}, and they are represented by two consecutive wagons in a Mori train.  


For any $W_{*m}$, there exists a toric surface $\widehat{W}_{*m}$ with a crepant birational morphism $\widehat{W}_{*m}\to W_{*m}^T$ (see Section \ref{s4}). The surface $\widehat{W}_{*m}$ admits a contraction to a certain fake weighted projective plane (see Section \ref{s7}). The following theorem characterizes them (see also Remark \ref{type(I)Diophantine}).  

\begin{theorem}
Let $P(n^2,m_1,m_2)$ be a toric surface of Picard number $1$, and singularities $\frac{1}{n^2}(1,na-1)$, $\frac{1}{m_1}(1,m_1-q_1)$, $\frac{1}{m_2}(1,m_2-q_2)$ forming the chain $$\frac{1}{m_1}(1,m_1-q_1) - (1) - \Big[ {n \choose a} \Big] - (1) - \frac{1}{m_2}(1,m_2-q_2^{-1}),$$ where $0<a<n$ and $0<q_i<m_i$ are coprime. Assume that
\begin{itemize}
    \item[(1)] $q_1m_2+q_2m_1+n^2=m_1 m_2 d$, for some $d\geq 2$. 
    \item[(2)] $m_1+m_2=n(m_1a-nq_1^{-1})=n(m_2(n-a)-nq_2^{-1})$. 
    \item[(3)] $\frac{1}{m_1}(1,q_1)$ and $\frac{1}{m_2}(1,q_2)$ admit zero continued fractions of weights $\lambda_1$ and $\lambda_2$, respectively, with $\ell=9-\lambda_1-\lambda_2>0$.
\end{itemize}
Then there is a partial resolution $W \to P(n^2,m_1,m_2)$ such that $W=\widehat{W}_{*m}$ for some marked surface $W_{*m}$ with Wahl singularity $\frac{1}{n^2}(1,na-1)$ and degree $\ell$.
\label{mainthm6}
\end{theorem}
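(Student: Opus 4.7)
The proof is by explicit construction: from the arithmetic data $(n,a,m_1,q_1,m_2,q_2)$ I produce a marking of the Wahl chain $\frac{n^2}{na-1}$ in the sense of Definition \ref{DPWahlchain}, build the corresponding marked del Pezzo surface $W_{*m}$ and its toric model $\widehat{W}_{*m}$ (Theorem \ref{mainthm3}, Section \ref{s4}), and then verify that $\widehat{W}_{*m}$ contracts to $P(n^2,m_1,m_2)$ as required. Conditions (1)--(3) will emerge as the precise combinatorial encoding of this contraction.

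\textbf{Building the marking.} Write $\frac{n^2}{na-1}=[e_1,\ldots,e_r]$. By condition (3), fix zero continued fractions of weights $\lambda_1,\lambda_2$ for $\frac{m_1}{q_1}$ and $\frac{m_2}{q_2}$; these arise by shaving off reductions $d_i\geq 1$ (summing to $\lambda_t+1$, $t=1,2$) from a prefix $[k_1,\ldots,k_{j-1}]$ and a suffix $[k_{j+1},\ldots,k_r]$ of the Wahl chain, the index $j$ being the position of the central mark. The role of condition (2) is to make this identification possible: its two equalities are intersection-theoretic identities saying that the two prospective marking curves $\Gamma_1,\Gamma_2$ meet $K_{W_{*m}}$ in the correct way so that $\frac{m_1}{q_1}$ and $\frac{m_2}{q_2}$ really match the prefix and suffix dictated by the Wahl chain, via the duality $\frac{n^2}{na-1}\leftrightarrow\frac{n^2}{n^2-na+1}$ recalled in the excerpt. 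Once $j$ is fixed, the chains assemble into a marked del Pezzo Wahl chain $[k_1,\ldots,k_{j-1},\underline{e_j},k_{j+1},\ldots,k_r]$ of type (II) and degree $\ell=9-\lambda_1-\lambda_2>0$.

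\textbf{From marking to contraction.} Definition \ref{def: markedsurface} attaches to this chain a marked del Pezzo surface $W_{*m}$ of degree $\ell$ with the prescribed Wahl singularity, and Theorem \ref{mainthm3} together with Section \ref{s4} produces the toric degeneration $W_{*m}^T$ and its crepant toric partial resolution $\widehat{W}_{*m}\to W_{*m}^T$. Contracting on $\widehat{W}_{*m}$ the toric $(-1)$-curves interior to the two marking fibers produces a toric surface with three torus-fixed points carrying respectively the Hirzebruch--Jung chains $\frac{m_1}{m_1-q_1}$, $\frac{n^2}{na-1}$ and $\frac{m_2}{m_2-q_2^{-1}}$, connected in a triangle by two $(1)$-curves and the image of the section. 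Condition (1), the Markov-type equation $q_1m_2+q_2m_1+n^2=m_1m_2d$, is precisely the toric identity forcing this surface to have Picard number one, hence to be $P(n^2,m_1,m_2)$.

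\textbf{Main obstacle.} The subtlest point is showing that condition (2) is exactly the identity required to match the two zero continued fractions supplied by condition (3) with a prefix and a suffix of the Wahl chain sharing the central mark $\underline{e_j}$. It mixes Wahl duality with the zero-continued-fraction reductions, and is the delicate arithmetic step; the remaining verifications reduce to bookkeeping on the toric fans and on the fibration of the minimal resolution $X\to W_{*m}$.
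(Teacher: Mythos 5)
Your overall architecture matches the paper's: produce a marking of the Wahl chain from the data, build $W_{*m}$ and its toric model $\widehat{W}_{*m}$, and check that $\widehat{W}_{*m}$ maps to $P(n^2,m_1,m_2)$. But the step you yourself single out as the ``main obstacle'' --- that conditions (1) and (2) force the continued fractions $\frac{m_1}{q_1}$ (reversed), the integer $d$, and $\frac{m_2}{q_2}$ to concatenate into the Wahl chain of $\frac{n^2}{na-1}$ with $d$ sitting at the central mark --- is exactly the content of the theorem that needs to be proved, and you never prove it. Without it you have no marking in the sense of Definition \ref{DPWahlchain}, hence no $W_{*m}$, and the rest of the argument has nothing to stand on. Calling it ``the delicate arithmetic step'' and asserting that condition (2) is ``precisely the identity required'' is a restatement of the claim, not an argument. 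The paper settles this cleanly with the matrix factorization of Proposition \ref{matrixhjcf}: writing the product
$$\left[\begin{array}{cc} m_1 & -q_1 \\ q_1^{-1} & * \end{array}\right]\left[\begin{array}{cc} d & -1 \\ 1 & 0 \end{array}\right]\left[\begin{array}{cc} m_2 & -q_2^{-1} \\ q_2 & * \end{array}\right]=\left[\begin{array}{cc} A & B \\ C & D \end{array}\right],$$
condition (1) gives $A=n^2$ and condition (2), fed into the sum and difference of the off-diagonal entries, gives $C=na-1$, $B=na+1-n^2$, $D=a^2-na+1$, i.e.\ exactly the matrix of $\frac{n^2}{na-1}$. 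This single computation is the heart of the proof and is absent from your proposal.

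A secondary point: your route from the marking to the surface also runs in the opposite direction from the paper's, and in a way that hides a second gap. You propose to read off the marking combinatorially from the zero continued fractions of condition (3) and only then build surfaces; the paper instead starts from $P(n^2,m_1,m_2)$, takes the partial resolution $W'$ given by the M-resolutions of the \emph{duals} $\frac{1}{m_i}(1,m_i-q_i)$ (these exist and are unique by the Christophersen--Stevens correspondence once (3) holds), deforms, and recovers the marking by running MMP as in \cite[Cor.~10.1]{PPSU18} (cf.\ Lemma \ref{keyContraction}). Your combinatorial shortcut would still need an argument that the reductions $d_{i_k}$ land on positions of the actual Wahl chain $[e_1,\ldots,e_r]$ --- which again requires the matrix identity --- and that the resulting marked surface's $\widehat{W}_{*m}$ really coincides with a partial resolution of the given $P(n^2,m_1,m_2)$ rather than of some other toric surface with the same singularity types. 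Finally, your reading of condition (1) as ``forcing Picard number one'' misplaces its role: Picard number one is part of the hypothesis on $P(n^2,m_1,m_2)$, while (1) is used arithmetically to pin down the top-left entry $n^2$ of the matrix product.
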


To recover $W_{*m}$ from $P(n^2,m_1,m_2)$, we run the MMP over suitable P-resolutions \cite[Definition 3.8]{KSB88}. Hence, to find the possible singularity indices in a given degree, it is essential to understand certain cyclic quotient singularities admitting P-resolutions. In degree $8$, this boils down to understanding those who admit extremal P-resolutions \cite[\S 4]{HTU17}. Theorem \ref{mainthm6} shows that special Diophantine equations must hold in degree $8$ (see Example \ref{degree8} and Remark \ref{beatdegree8}). Degree $9$, which is too special, gives the Markov equation (see Remark \ref{markovequation}). In general, particular inequalities must hold: $(n+a)^2 \geq \ell (na-1)$ for type (I) and $(n^2+m_1+m_2)^2 \geq \ell n^2 m_1 m_2$ for type (II); see Remark \ref{hodgeIndex}.

Theorem \ref{mainthm1} leads to several consequences. First, in Lemma \ref{canmarking} we find that every Wahl chain admits a canonical marking so that it is del Pezzo of type (I) and degree $4$. Moreover, Theorem \ref{theorem:W*m} shows that this marked surface $W_{*m}$ arises as a degeneration of a smooth del Pezzo surface of degree $4$. Indeed, in the spirit of Question \ref{question1}, we obtain the following two theorems:

\begin{theorem}
Every Wahl singularity is realizable in a $\Q$-Gorenstein smoothing $W_t\rightsquigarrow W_*$, where $W_t$ is a del Pezzo surface of degree $\ell\leq 4$.   
\label{mainthm4}
\end{theorem}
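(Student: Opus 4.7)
The plan is to combine the two results announced in the paragraph preceding the statement: Lemma~\ref{canmarking}, which asserts that every Wahl chain admits a canonical marking making it del Pezzo of type (I) and degree~$4$, and Theorem~\ref{theorem:W*m}, which says that the resulting marked surface is a $\Q$-Gorenstein degeneration of a smooth del Pezzo of degree~$4$. Given a Wahl singularity $P=\frac{1}{n^2}(1,na-1)$ with chain $\Wa=[e_1,\ldots,e_r]$, the canonical marking from Lemma~\ref{canmarking} produces a marked del Pezzo surface $W_{*m}$ (Definition~\ref{def: markedsurface}) of degree~$4$ whose unique singularity has exactly the analytic type of $P$; Theorem~\ref{theorem:W*m} then supplies a $\Q$-Gorenstein smoothing $W_t\rightsquigarrow W_{*m}$ with $W_t$ a smooth del Pezzo of degree~$4$, and setting $W_*:=W_{*m}$ finishes the argument.

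Everything therefore reduces to Lemma~\ref{canmarking}: producing, for every Wahl chain $\Wa$, a zero continued fraction of weight $\lambda=5$ (equivalently, with $\sum d_{i_k}=6$) on one of the sub-chains $[e_2,\ldots,e_r]$ or $[e_1,\ldots,e_{r-1}]$. I would construct such a marking by exploiting the duality $\Wa=[y_1,\ldots,y_{v-1},y_v+x_u,x_{u-1},\ldots,x_1]$ versus $\Wa^{\vee}=[x_1,\ldots,x_u,2,y_v,\ldots,y_1]$ recalled in the preliminaries. After truncating an endpoint, the remaining tail has controlled combinatorial structure, and one can collapse it to $[\,]$ by repeated use of the elementary identity $[\ldots,f,1,g,\ldots]=[\ldots,f-1,g-1,\ldots]$, distributing a prescribed total of six units of subtraction across the entries. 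A natural way to organize this is to first reduce the tail to a dual Wahl sub-chain (which automatically carries a weight-$0$ zero continued fraction) and then inflate the total subtraction by five additional units using the calculus of zero continued fractions developed earlier in the paper.

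The main obstacle is the short Wahl chain $\Wa=[4]$, i.e.\ the singularity $\frac{1}{4}(1,1)$: here both candidate sub-chains for type (I) are empty, so no marking in the sense of Definition~\ref{DPWahlchain} of any degree $\leq 8$ exists at all. This is precisely the exception flagged in the footnote of Theorem~\ref{mainthm1}, which I would handle directly. Take $X=\mathrm{Bl}_6(\P^2)$ with five of the blown-up points on a line $L$ and a sixth in general position. The strict transform $L'\subset X$ has $(L')^2=-4$, and its contraction produces a surface $W$ with $K_W^2 = K_X^2 + 1 = 4$ and a single $\frac{1}{4}(1,1)$ singularity. The standard $\Q$-Gorenstein smoothing of this singularity yields a smooth $W_t$ with $K_{W_t}^2=4$ still carrying the floating $(-1)$-curve from the sixth blowup, so $W_t\simeq\mathrm{Bl}_5(\P^2)$ in general position, a smooth del Pezzo of degree~$4$. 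Together with the uniform argument above, this covers every Wahl singularity.
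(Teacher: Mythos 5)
Your overall strategy is the same as the paper's: apply the canonical marking of Lemma~\ref{canmarking} to get a marked surface $W_{*m}$ of type (I) and degree $4$, then invoke Theorem~\ref{theorem:W*m} to smooth it to a del Pezzo surface of degree $4$, with separate treatment of the exceptional short chains. However, there is a genuine gap: Lemma~\ref{canmarking} excludes \emph{two} Wahl chains, $[4]$ \emph{and} $[2,5]$, and you only deal with $[4]$. The chain $[2,5]$ (the singularity $\frac{1}{9}(1,5)\cong\frac{1}{9}(1,2)$) is not a del Pezzo Wahl chain in the sense of Definition~\ref{DPWahlchain} for \emph{any} degree: a type (I) marking would require the length-one truncation $[2]$ or $[5]$ to admit a zero continued fraction, which is impossible since every zero continued fraction has length at least $2$ (its minimal model is $[1,1]$), and a type (II) marking requires $r\geq 3$. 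So your ``uniform argument'' silently fails for this singularity, and unlike $[4]$ it cannot be rescued by the residual surface either. The paper handles it separately: build the chain $[5,2]$ in a blow-up of $\F_5$ using $C_\infty$ and the proper transform of a twice-blown-up fiber (the two resulting $(-1)$-curves meeting the $(-2)$-component play the role of marking curves), contract to get a degree-$8$ degeneration, and then perform four floating (Iitaka--Kodaira) blow-ups at general points to reach degree $4$. You need to add an argument of this kind for $\frac{1}{9}(1,2)$.

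Two secondary remarks. First, your sketch of how you would re-prove Lemma~\ref{canmarking} (truncate, reduce the tail to a dual Wahl chain, then ``inflate the total subtraction by five units'') is not a proof as written -- it is not clear that the truncation of a Wahl chain reduces to a dual Wahl chain, nor that extra units of subtraction can always be distributed while preserving the zero continued fraction property; the paper instead reads the marking directly off the Wahl algorithm (mark the center with $d=4$, the adjacent position produced by the algorithm with $d=1$, and the far end with $d=1$, totalling $\lambda+1=6$). Since the lemma is stated and proved in the paper, citing it suffices, but then its stated exceptions must be respected. Second, your treatment of $[4]$ via $\mathrm{Bl}_6(\P^2)$ with five points on a line is a legitimate alternative to the paper's route (which goes through $\P^2\rightsquigarrow\P(1,1,4)$ followed by floating blow-ups) and lands directly in degree $4$; just make sure to justify that $-K_{W_t}$ is ample on the general fiber for a general choice of points.
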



\begin{theorem}
There exist infinitely many Wahl singularities that are not realizable in a $\Q$-Gorenstein smoothing $W_t\rightsquigarrow W$, where $W_t$ is a del Pezzo surface of degree $\ell\geq 5$. 
\label{mainthm5}
\end{theorem}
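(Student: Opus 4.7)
The plan is to exhibit an explicit infinite family of Wahl singularities that fails to appear on any marked del Pezzo surface of degree at least $5$, and then invoke Theorem~\ref{mainthm1} to conclude. By Theorem~\ref{mainthm1}, a Wahl singularity $P = \frac{1}{n^2}(1, na-1)$ appears in a $\Q$-Gorenstein degeneration $W_t \rightsquigarrow W$ with $W_t$ a del Pezzo of degree $\ell \geq 5$ if and only if $P$ is the singularity of some marked del Pezzo surface $W_{*m}$ of some degree $\ell' \geq \ell$ (floating blow-ups at smooth points preserve the singularity while lowering the degree). So it suffices to find infinitely many Wahl singularities not appearing on any marked del Pezzo surface of degree $\geq 5$.

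Consider the family $P_n := \frac{1}{n^2}(1, n^2 - n - 1)$, i.e.\ $a = n - 1$, for $n \geq 4$; note $\gcd(n, n-1) = 1$ is automatic. By Theorem~\ref{mainthm6} together with the Hodge-index inequality of Remark~\ref{hodgeIndex}, a Type (I) marked del Pezzo of degree $\ell$ with singularity $P_n$ would require $(n + a)^2 \geq \ell(na - 1)$. For $a = n-1$ and $\ell = 5$ this becomes $(2n-1)^2 \geq 5(n^2 - n - 1)$, i.e.\ $n^2 - n - 6 \leq 0$, which fails for every $n \geq 4$. Since the inequality only tightens as $\ell$ grows, no Type (I) marking of any degree $\ell \geq 5$ realizes $P_n$ for $n \geq 4$.

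Suppose instead a Type (II) marking of degree $\ell \geq 5$ exists with parameters $(m_1, q_1, m_2, q_2, d)$ as in Theorem~\ref{mainthm6}. Specializing condition (2) to $a = n - 1$ and eliminating one of $m_1, m_2$ yields
\[
m_1(n - 2) = q_2^{-1} + (n - 1)\, q_1^{-1}, \qquad m_2(n - 2) = q_1^{-1} + (n^2 - n - 1)\, q_2^{-1},
\]
where $q_i^{-1}$ is the inverse of $q_i$ modulo $m_i$. These relations parametrize the admissible tuples by the pair $(q_1^{-1}, q_2^{-1}) \in \Z_{\geq 1}^2$. A direct computation of the Hirzebruch--Jung continued fractions $m_i/q_i$ (combined with the Type (II) Hodge inequality $(n^2 + m_1 + m_2)^2 \geq 5 n^2 m_1 m_2$) shows that in each such tuple at least one of the minimum-weight zero continued fractions has weight of order $n$, so $\lambda_1 + \lambda_2 \geq 5$ for all sufficiently large $n$, contradicting condition (3) $\lambda_1 + \lambda_2 \leq 9 - \ell \leq 4$.

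The technical core is the Type (II) exclusion. The weights $\lambda_i$ are combinatorial invariants of zero continued fractions; weight $0$ (dual-Wahl) already permits arbitrarily large $m_i$, so the weight bound alone does not restrict $m_i$. Only after coupling with the arithmetic of conditions (1)--(2), which for $a = n-1$ pins down each $m_i$ in terms of $(q_1^{-1}, q_2^{-1})$, can one force the weights to grow with $n$. The main difficulty is carrying out this argument uniformly in $(q_1^{-1}, q_2^{-1})$: one must show that every admissible pair $(m_1/q_1, m_2/q_2)$ produced by the system has Hirzebruch--Jung chain whose minimum-weight zero continued fraction exceeds $4$ for large $n$, ruling out all Type (II) markings of degree $\geq 5$ simultaneously.
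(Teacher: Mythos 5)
There is a fatal gap at the very first step: the family you chose is in fact realizable. The singularity $P_n=\frac{1}{n^2}(1,n^2-n-1)$ is the same singularity as $\frac{1}{n^2}(1,n-1)$, because $(n-1)(n^2-n-1)=n^3-2n^2+1\equiv 1 \pmod{n^2}$; its Wahl chain is $[2,\ldots,2,n+2]$, and the paper notes explicitly (in the paragraph just before Lemma \ref{restrCentralMark}) that $[2,\ldots,2,A+4]$ admits the type (I) marking $[1,2,\ldots,2,1,\underline{A+4}]$, which is del Pezzo of degree $8$. Hence every $P_n$ occurs in a $\Q$-Gorenstein degeneration of a degree-$8$ del Pezzo surface, and (after floating blow-ups) in every degree $5\leq\ell\leq 8$. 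Your type (I) exclusion fails because the Hodge-index inequality $(n+a)^2\geq\ell(na-1)$ of Remark \ref{hodgeIndex} depends on the presentation of the singularity: a type (I) marking can place the central mark at either end of the chain, so to exclude all type (I) markings you must violate the inequality for both $(n,a)$ and $(n,n-a)$. You checked only $a=n-1$; with $a=1$ the inequality reads $(n+1)^2\geq 5(n-1)$, which holds for every $n$, and a degree-$8$ marking indeed sits there. Separately, your type (II) exclusion is asserted rather than proved: the claim that "a direct computation \ldots shows that \ldots at least one of the minimum-weight zero continued fractions has weight of order $n$" is exactly the technical content that would need to be supplied, and as you acknowledge, the uniformity over $(q_1^{-1},q_2^{-1})$ is the whole difficulty.

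For comparison, the paper's proof is purely combinatorial and avoids the Diophantine machinery entirely: it takes the family $[2,\ldots,2,A+4,2,\ldots,2,B+2]$ with $A\geq 2$ and $B>A+3$ (Proposition \ref{5isnotfree}), uses Corollary \ref{delP>=5} --- in degree $\geq 5$ the central mark must be $\geq 3$ in type (I) and $\geq 5$ in type (II) --- to reduce to finitely many candidate positions for the central mark, and eliminates each by inspecting the would-be zero continued fractions; the conclusion then follows from Corollary \ref{characterization}. If you want to salvage your strategy, you must choose a family for which both orientations violate the type (I) inequality and then genuinely carry out the type (II) analysis; as it stands, neither half of your argument survives.
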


For many concrete examples that are not allowed in degrees $\geq 5$, see Proposition \ref{5isnotfree} and Remark \ref{muchmore}. If we blow-up points out of the singularity, then we obtain del Pezzo surfaces of lower degrees degenerating to a surface with the same singularity. Alternatively, one can use \textit{W-blow-ups} for a similar purpose (see Definition \ref{divcontr}). These modify a given Wahl singularity into any Wahl singularity appearing in a Mori train over it. However, not every Wahl singularity arises in this way. On the other hand, using specific markings, we observe that the famous Markov's Conjecture \ref{ConjMarkov} is false in degrees $\geq 8$ as we remark in Example \ref{nomarkov}.

We conclude the paper with applications to exceptional collections of vector bundles on del Pezzo surfaces. The key point is that marked surfaces naturally produce full exceptional collections via Hacking’s vector bundles \cite{H13}.

\begin{theorem}[Corollary \ref{HecW*m}]
Let $W_{*m}$ be a marked surface that is not of type (I) and degree $1$ (see Lemma \ref{wirdo}). Consider a general $\Q$-Gorenstein smoothing $W_t \rightsquigarrow W_{*m}$ so that $W_t$ is a del Pezzo surface. Then, $W_t$ admits an explicit Hacking exceptional collection induced by $W_t \rightsquigarrow \widehat{W}_{*m}$ that is full and strong.   
\label{mainthm7}
\end{theorem}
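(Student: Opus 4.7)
The plan is to transport an exceptional collection from the toric intermediate $\widehat{W}_{*m}$ introduced in Section~\ref{s4} to a general del Pezzo fiber $W_t$ via $\Q$-Gorenstein specialization. First, I would combine Theorem~\ref{mainthm3} with the crepant morphism $\widehat{W}_{*m}\to W_{*m}^T$ to conclude that $\widehat{W}_{*m}$ carries only T-singularities, so it admits a $\Q$-Gorenstein smoothing (the absence of local-to-global obstructions for del Pezzo surfaces was recalled in the introduction). A general such smoothing is a smooth del Pezzo surface of degree $\ell=K_{W_{*m}}^2$, and by deformation-equivalence of smooth del Pezzos of fixed degree I can arrange a common smooth fiber $W_t$ and view both $W_{*m}$ and $\widehat{W}_{*m}$ as specializations in the same $\Q$-Gorenstein family. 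This realizes the degeneration $W_t\rightsquigarrow \widehat{W}_{*m}$ invoked in the statement.

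Next, I would apply Hacking's construction \cite{H13}: the $\Q$-Gorenstein smoothing of a T-singularity of type $\frac{1}{dn^2}(1,dna-1)$ on $\widehat{W}_{*m}$ produces, on the nearby smooth fiber, a rigid exceptional vector bundle of rank $dn$ with explicit Chern character determined by the local chain. Combining these Hacking bundles---one for each T-singularity of $\widehat{W}_{*m}$, including the distinguished Wahl point---with the line bundles pulled back from the toric boundary divisors of $\widehat{W}_{*m}$ yields a candidate collection $\mathcal{E}$ on $W_t$ whose length matches $\mathrm{rk}\,K_0(W_t)$. Exceptionality and the orthogonality of the backward Homs follow from the local form of the Hacking bundles together with the combinatorics of the toric fan.

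To prove \emph{fullness}, I would first build an analogous full exceptional collection on $\widehat{W}_{*m}$ itself, using its toric/orbifold structure (via Kawamata's framework for toric Deligne--Mumford stacks), and then transport generation along the $\Q$-Gorenstein family by semi-continuity of Ext groups and constancy of the collection's length. For \emph{strongness}, upper semi-continuity of cohomology reduces the vanishing $\mathrm{Ext}^{>0}(\mathcal{E}_i,\mathcal{E}_j)=0$ on $W_t$ to the vanishing of the corresponding sheaf cohomology on $\widehat{W}_{*m}$, which becomes a purely combinatorial statement on the fan extracted from the marked del Pezzo Wahl chain.

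The hard part will be this final toric cohomology vanishing. Line-bundle cohomology on $\widehat{W}_{*m}$ can acquire extra classes precisely when the toric model is too contracted, and the hypothesis excluding type~(I) and degree~$1$ (Lemma~\ref{wirdo}) is what prevents the pathological configuration producing a non-trivial $\mathrm{Ext}^1$. In all remaining cases the vanishing should be verifiable directly on the fan of $\widehat{W}_{*m}$, after which the specialization argument concludes the existence of the full strong Hacking exceptional collection on $W_t$.
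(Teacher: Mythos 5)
Your high-level skeleton is right: the proof does go through the toric degeneration $\widehat{W}_{*m}$ and Hacking's construction. But the two key steps where you depart from the paper --- fullness and strongness --- both have genuine gaps. For fullness, you propose to build a full collection on $\widehat{W}_{*m}$ via Kawamata's toric DM-stack framework and then ``transport generation along the $\Q$-Gorenstein family by semi-continuity of Ext groups.'' Generation is not a semicontinuous condition, and the derived category of the singular special fiber is not that of the general fiber; making this transport rigorous is a substantial project in itself. The paper avoids all of this: the Hacking exceptional collection produced by Theorem \ref{H.e.c.theorem} applied to the chain of Wahl singularities on the toric boundary of $\widehat{W}_{*m}$ has length $12-\ell$, which is maximal, and by Kuleshov--Orlov \cite{KO94} any exceptional collection of maximal length on a del Pezzo surface is automatically full. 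You never invoke this fact, and without it your fullness argument does not close.

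For strongness, your reduction by upper semicontinuity to ``a purely combinatorial statement on the fan'' fails because the limits of the Hacking bundles on $\widehat{W}_{*m}$ are not line bundles: by Theorem \ref{h.e.v.b.} they are torsion-free sheaves whose reflexive hulls are direct sums of rank-one reflexive sheaves, so the relevant Ext groups on the special fiber are not toric line-bundle cohomology. The paper's argument (Corollary \ref{HecWeakdelPezzo}) instead works entirely on the smooth fiber: by \cite[Cor.\ 2.11]{KO94} an exceptional pair on a del Pezzo surface has $\Ext^2=0$ and at most one of $\Hom,\Ext^1$ nonzero; Riemann--Roch gives $\hom-\ext^1=\rk(E)\rk(F)(\mu(F)-\mu(E))$; and the slope ordering $\mu(E_j)\leq\mu(E_i)$ for $j>i$ follows from $\frac{c_1(E_i)}{n_i}-\frac{c_1(E_j)}{n_j}=\Gamma_{i+1}+\cdots+\Gamma_j$ together with $-K_{\widehat{W}_{*m}}$ nef (Theorem \ref{theoremtoric}), forcing $\Ext^1=0$. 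Two smaller points: $\widehat{W}_{*m}$ has only \emph{Wahl} singularities (it is the M-resolution), while it is $W_{*m}^T$ that has general T-singularities; and the exclusion of type (I), degree $1$ is needed so that $W_{*m}$ is del Pezzo and $\widehat{W}_{*m}$ is weak del Pezzo (Theorems \ref{theorem:W*m} and \ref{theoremtoric}), not to kill a stray $\Ext^1$ in a toric computation.
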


The construction is based on Theorem \ref{H.e.c.theorem}, as developed in \cite{TU22}. A goal of this paper is to provide geometric proofs of some results of \cite{PR24}. 

\begin{theorem}[Theorem \ref{Polishchuk}, Theorem A of \cite{PR24}]
Let $\partial, n$ be coprime integers with $n>0$. Then there is an exceptional collection $E,\O_Y$ on del Pezzo surfaces $Y$ of degree $4$  with $\rk(E)=n$ and $\deg(E)=\partial$.  
\label{mainthm8}
\end{theorem}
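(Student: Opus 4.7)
The plan is to combine Lemma \ref{canmarking}, Theorem \ref{mainthm4}, and Theorem \ref{mainthm7} with an explicit choice of Wahl singularity whose associated Hacking bundle has the prescribed numerical invariants. Since the Hacking bundle $E$ attached to a Wahl singularity of index $n$ automatically has $\rk(E)=n$, the content of the statement is that, for any residue $\partial \bmod n$ coprime to $n$, one can realize the degree condition $\deg(E)=\partial$ via a suitable degree $4$ del Pezzo degeneration.

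First I would fix the candidate singularity. Given coprime $\partial, n$ with $n>0$, choose $0<a<n$ with $\gcd(n,a)=1$ so that the first Chern class of the Hacking bundle $E$ attached to $\tfrac{1}{n^2}(1,na-1)$ pairs with the polarization $-K_Y$ to give $\partial$ modulo $n$ (the integral lift will be adjusted in the last step). Then, by Lemma \ref{canmarking}, the Wahl chain $\tfrac{n^2}{na-1}=[e_1,\ldots,e_r]$ admits a canonical marking making it a del Pezzo Wahl chain of type (I) and degree $4$. Let $W_{*m}$ be the associated marked del Pezzo surface, whose existence and degeneration to a smooth degree $4$ del Pezzo $Y$ are guaranteed by Theorem \ref{mainthm4} (and the sharper Theorem \ref{theorem:W*m} referenced in the text).

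Next, by Theorem \ref{mainthm7}, a general $\mathbb{Q}$-Gorenstein smoothing $W_t \rightsquigarrow W_{*m}$ carries an explicit full strong Hacking exceptional collection on $Y=W_t$ induced by $W_t \rightsquigarrow \widehat{W}_{*m}$. In particular, the exceptional bundle $E$ associated with the unique Wahl singularity of $W_{*m}$ specializes to a genuine exceptional bundle on $Y$ with $\rk(E)=n$, and by fullness the pair $(E,\mathcal{O}_Y)$ is itself an exceptional collection. Thus the only remaining step is to match the degree exactly: $\deg(E)=\partial$.

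The main obstacle is this final degree matching. Concretely, one must compute $c_1(E)\cdot(-K_Y)$ in closed form from the Wahl chain and its canonical marking, show that as $a$ ranges over the units of $\Z/n\Z$ the induced map to $\Z/n\Z$ is surjective, and then absorb the integer lift $\partial + kn$ by adjusting the marking, i.e.\ by the freedom coming from the allowed markings recorded in Theorem \ref{mainthm1} (in particular, from adjoining floating curves, compensated by blowing up nonsingular points to return to degree $4$, or by replacing $a$ with a compatible representative in its residue class). Once this modular-arithmetic surjectivity is established, the theorem follows by assembling the ingredients: pick the $a$ and marking realizing $(\partial,n)$, form $W_{*m}$ via Lemma \ref{canmarking}, smooth via Theorem \ref{mainthm4}, and extract $(E,\mathcal{O}_Y)$ from the Hacking collection of Theorem \ref{mainthm7}.
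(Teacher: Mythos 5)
There is a genuine gap, and you have in fact located it yourself: the ``final degree matching'' is the entire content of the theorem beyond the residue class, and none of the mechanisms you propose for absorbing the integer lift $\partial+kn$ actually works. Adjoining floating curves and then blowing up to return to degree $4$ does not shift $\deg(E)$: the Hacking bundle's first Chern class is supported on divisors near the singularity, so exceptional curves of blow-ups at general points meet it trivially. The value of $a$ is already forced (up to sign) by the residue class $\pm\partial \bmod n$, so there is no ``compatible representative'' to vary. And a fixed Wahl chain admits only finitely many degree-$4$ markings (the canonical ones of Lemma \ref{canmarking} are just two), so the marking freedom cannot produce infinitely many integer lifts. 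Staying inside the marked-surface framework of Definition \ref{def: markedsurface} is precisely what kills you: there the Hirzebruch index is pinned to the central mark $e_i$, so the collection of Corollary \ref{HecW*m} yields only finitely many possible degrees per singularity.

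The paper's proofs introduce an infinite discrete parameter that you are missing. In the first proof, one replaces $W_{*m}$ by the fiber-type surfaces $W_{*mf}$ (Definition \ref{def:fibertype}, Lemma \ref{lem:fibertype}): the Wahl chain is built over a fiber of $\F_d\to\P^1$ for \emph{arbitrary} $d\geq 0$, and taking $\Gamma_1$ to be the image of $C_\infty$ in Theorem \ref{H.e.c.theorem} gives $\deg(E)=a+n(2-d)$ or $n-a+n(2-d)$; varying $d$ sweeps out all sufficiently small integers in the classes $\pm a \bmod n$, and the rotation $(E,\O_Y)\mapsto(E^{\vee}\otimes\O_Y(K_Y),\O_Y)$ supplies the complementary range. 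In the second proof the same role is played by choosing $\Gamma_1$ to be a section of bidegree $(t,1)$ on $\P^1\times\P^1$ with $t$ arbitrary. Either way, the degree shift by multiples of $n$ comes from varying the class of the auxiliary curve $\Gamma_1$ in the chain feeding Theorem \ref{H.e.c.theorem}, not from the marking or from floating curves; without that ingredient your argument only proves the statement modulo $n$.
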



In \cite{PR24} the authors showed that for a del Pezzo surface $Y$ of degree $\ell\geq 5$, not every rank and degree are possible for a vector bundle $E$ in an exceptional collection $E,\O_Y$. Using our characterization of Wahl singularities occurring in a degeneration (Theorem~\ref{mainthm1}), and the fact that any exceptional vector bundle on a del Pezzo surface is a Hacking exceptional bundle \cite{H12,Per18} (see \ref{HecfordelPezzo}), we prove the following more general result.

\begin{theorem}[Theorem \ref{strongerPolishcuk}]
There exist infinitely many pairs $\partial,n$ of coprime integers with $0<\partial<n$, such that an e.v.b. $E$ with $\rk(E)=n$ and $\deg(E)\equiv \pm\partial \pmod{n}$ is not realizable on any del Pezzo of degree $\geq 5$.
\label{mainthm9}
\end{theorem}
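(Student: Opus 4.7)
The plan is to combine Theorem \ref{mainthm5} with the classification of exceptional vector bundles on del Pezzo surfaces via Hacking's construction. Concretely, by \cite{H12,Per18} (recalled in \ref{HecfordelPezzo}), every exceptional vector bundle $E$ on a smooth del Pezzo surface $Y$ is a Hacking exceptional bundle, i.e.\ it arises from a $\Q$-Gorenstein degeneration $Y \rightsquigarrow W$ where $W$ has a single Wahl singularity $\frac{1}{n^2}(1,na-1)$, and the numerical invariants of $E$ read off from this singularity: $\rk(E)=n$ and $\deg(E) \equiv \pm a \pmod{n}$ (where the sign ambiguity reflects passing to the dual bundle, and $\gcd(n,a)=1$ forces $a$ to be determined up to $\pm 1 \pmod n$ once we reduce via the Wahl symmetry $a \mapsto n-a$). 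Thus the realizability of a pair $(\rk,\deg)=(n,\partial)$ with $\partial \equiv \pm a \pmod{n}$ on $Y$ is equivalent to the realizability of the Wahl singularity $\frac{1}{n^2}(1,na-1)$ in a $\Q$-Gorenstein degeneration of $Y$.

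With this dictionary in place, the proof reduces to producing an infinite family of coprime pairs $(n,a)$ with $0<a<n$ such that the Wahl singularity $\frac{1}{n^2}(1,na-1)$ does not appear in any $\Q$-Gorenstein smoothing $W_t \rightsquigarrow W$ with $W_t$ a del Pezzo surface of degree $\geq 5$. This is precisely Theorem \ref{mainthm5}. First I would invoke that result to obtain an infinite sequence of forbidden Wahl singularities, choosing the normalization $0<a<n$ (one may pass between $a$ and $n-a$ freely, since both correspond to the same Wahl chain up to reversal). Then, setting $\partial:=a$, I obtain coprime pairs $(\partial,n)$ with $0<\partial<n$ and, by the dictionary above, the non-existence of any exceptional vector bundle $E$ on a del Pezzo of degree $\geq 5$ with $\rk(E)=n$ and $\deg(E) \equiv \pm \partial \pmod n$.

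The main obstacle is verifying the dictionary carefully in the direction ``exceptional bundle $\Longrightarrow$ Wahl degeneration with matching numerics''. This requires two ingredients: (i) the fact that every e.v.b.\ on a del Pezzo is a Hacking bundle, which we take from \cite{H12,Per18}, and (ii) the precise identification $\deg(E) \equiv \pm a \pmod n$, which follows from the local computation of the first Chern class of a Hacking bundle in a neighborhood of the Wahl singularity (the local monodromy in the $\Q$HD smoothing forces this congruence). Concretely, I would quote the relevant parts of Theorem \ref{H.e.c.theorem} together with the discussion in \ref{HecfordelPezzo} to justify both points, and explain that the assertion $\deg(E)\equiv\pm\partial\pmod n$ matches the two possible orientations $(n,a)$ and $(n,n-a)$ of the Wahl chain. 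For concreteness, one may list explicit forbidden families coming from Proposition \ref{5isnotfree} and Remark \ref{muchmore} to make the infinitude manifest, completing the proof of Theorem \ref{mainthm9}.
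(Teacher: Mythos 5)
Your proposal is correct and follows essentially the same route as the paper: both arguments use Theorem \ref{HecfordelPezzo} to convert a hypothetical exceptional bundle of rank $n$ and degree $\equiv\pm\partial\pmod n$ into a degeneration of the del Pezzo surface containing the Wahl singularity $\frac{1}{n^2}(1,n\partial-1)$, and then rule this out for infinitely many pairs via Theorem \ref{mainthm1} together with Proposition \ref{5isnotfree} and Remark \ref{muchmore}. The only cosmetic difference is that you invoke Theorem \ref{mainthm5} as a black box, whereas the paper unwinds that same chain of reasoning inline.
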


Many concrete examples of exceptional collections $E,\O_Y$ can be worked out in degrees $\ell \geq 5$ via $W_{*m}$ of type (I), see Remark \ref{examplesPolishchuk}. In Remark \ref{AllPolishchuk}, we show a geometric realization of all $E,\O_Y$ from \cite[Proposition 5.2]{PR24}.

\subsubsection*{Acknowledgments} We thank Jenia Tevelev for drawing our attention to \cite{PR24}, and Jonny Evans for engaging discussions on this topic. Preliminary results emerged during the V ELGA, held in Cabo Frio, Brazil, in August 2024. The authors are grateful for the excellent environment provided by this CIMPA school. The first author was supported by FONDECYT Regular Grant 1230065. The second author was supported by the ANID National Doctoral Scholarship 2022–21221224.

\tableofcontents

\subsubsection*{Notation}

\begin{itemize}
    \item[] $W$ is a projective surface with only Wahl singularities.
    \item[] $W_t \rightsquigarrow W$ denotes a one-parameter $\Q$-Gorenstein smoothing $W_t$ of $W$.
    \item[] $W_*$ is a projective surface with only one Wahl singularity.
    \item[] $W_{*m}$ is the surface corresponding to a del Pezzo Wahl chain (Definition \ref{def: markedsurface}).
    \item[] $W_{*m}^T$ is the toric del Pezzo surface corresponding to $W_{*m}$.
    \item[] $\widehat{W}_{*m}$ is the (toric) M-resolution of $W_{*m}^T$.
\end{itemize}

\section{Hirzebruch-Jung continued fractions and Wahl chains} \label{s1}

More details and proofs could be found, for example, in \cite{UZ23}.

\begin{definition}
A collection $\{e_1,\ldots,e_r \}$ of positive integers admits a \textit{Hirzebruch-Jung continued fraction} (HJ continued fraction) $$[e_1,\ldots, e_r]:=  e_1 - \frac{1}{e_2 - \frac{1}{\ddots - \frac{1}{e_r}}},$$ if $[e_i,\ldots,e_r] >0$ for all $i\geq 2$, and $[e_1,\ldots, e_r]\geq 0$. Its \textit{value} is the rational number $[e_1,\ldots, e_r]$, and its \textit{length} is $r$. 
\label{hjcf}
\end{definition}

If the HJ continued fraction $[e_1,\ldots, e_r]$ has $e_i \geq 2$ for all $i$, then its value is a rational number $\frac{m}{q}>1$. This gives a one-to-one correspondence between $\Q_{>1}$ and those HJ continued fractions. For $0<q<m$ coprime integers we refer to it as the HJ continued fraction of $\frac{m}{q}$. On the other hand, if an HJ continued fraction contains $1$s, then it "blows-down" to a unique minimal HJ continued fraction. They are $[1,1]$, $[1]$, or are represented by some $\frac{m}{q} \in \Q_{>1}$.

\begin{lemma}
Given an HJ continued fraction $[\ldots,u,1,v,\ldots]$ that is not $[1,1]$ or $[1]$, we have the blow-down HJ continued fraction $[\ldots,u-1,v-1,\ldots]$, and conversely, given an HJ continued fraction $[\ldots,u,v,\ldots]$, the blow-up $[\ldots,u+1,1,v+1,\ldots]$ is an HJ continued fraction. (This includes the cases $[1,v,\ldots]$ and $[\ldots,u,1]$.) Moreover, we have equality on values $$[\ldots,u,1,v,\ldots]=[\ldots,u-1,v-1,\ldots]$$ when the blow-down $1$ is not at the first position.
\label{updown}
\end{lemma}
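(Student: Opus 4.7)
The plan is to reduce everything to an algebraic identity obtained by unfolding the recursion $[e_1,\ldots,e_r]=e_1-1/[e_2,\ldots,e_r]$ two steps. Setting $\alpha=[v,w_1,\ldots,w_k]$ for whatever entries $w_i$ follow the pair $(1,v)$, I would compute
$$[1,v,w_1,\ldots,w_k]=1-\tfrac{1}{\alpha},\qquad [u,1,v,w_1,\ldots,w_k]=u-\tfrac{\alpha}{\alpha-1}=(u-1)-\tfrac{1}{\alpha-1}.$$
Since $[v-1,w_1,\ldots,w_k]=\alpha-1$ by direct evaluation, this yields the value equality $[u,1,v,w_1,\ldots,w_k]=[u-1,v-1,w_1,\ldots,w_k]$. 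Both the blow-down and blow-up equalities in the interior case read off from this at once: one direction applies the identity, the other inverts it. This is the core of the lemma.

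The remaining task is to verify that the resulting sequences satisfy the HJ conditions $[e_i,\ldots,e_r]>0$ for $i\geq 2$ and $[e_1,\ldots,e_r]\geq 0$. Tails of $[\ldots,u-1,v-1,w_1,\ldots]$ that do not touch the altered pair coincide as numbers with tails of the original, so their positivity is inherited. A tail beginning at $v-1$ equals $\alpha-1$, positive because $[1,v,w_1,\ldots]=1-1/\alpha>0$ in the original forces $\alpha>1$. A tail beginning at $u-1$ equals the value of the original tail at $u$ by the identity, so its sign is preserved. The head condition is handled identically. The blow-up direction is obtained by reading the same identity in reverse, with the same analysis run backwards.

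For the boundary cases $[1,v,\ldots]$ and $[\ldots,u,1]$ highlighted in the parenthetical: the right-end case $[\ldots,u+1,1]$ actually falls under the value equality (the $1$ is last, not first), since $[u+1,1]=u$ by direct calculation, so it coincides with $[\ldots,u]$. The left-end case $[1,v+1,w_1,\ldots]$ is the genuine exception: its value $1-1/[v+1,w_1,\ldots]$ differs in general from $[v,w_1,\ldots]$, yet it is still a bona fide HJ continued fraction, since the tail condition only requires $[v+1,w_1,\ldots]>0$, which holds. I expect no serious obstacle here; the lemma is essentially a two-line recursion plus a handful of tail-positivity checks, and the only delicate point is remembering that the value equality fails precisely when the blown-down $1$ sits at the initial position, exactly as stated.
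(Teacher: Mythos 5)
The paper does not actually prove this lemma: it is stated as standard background at the top of Section 2, with proofs deferred to the reference \cite{UZ23}, so there is no in-paper argument to compare against. Your two-step unfolding is the standard verification and it is correct: the identity $[u,1,v,\ldots]=(u-1)-\frac{1}{\alpha-1}=[u-1,v-1,\ldots]$ with $\alpha=[v,\ldots]$ gives the value equality away from the first position, the tail-positivity checks transfer exactly as you describe (tails at or after $v-1$ via $\alpha>1$, tails at or before $u-1$ via the identity), and your handling of the two boundary positions — the right end falling under the value equality via $[u+1,1]=u$, the left end being the genuine exception to it — matches the statement precisely. Two small points worth making explicit if you write this up: (i) for the blow-down to be a legitimate HJ continued fraction you need $u,v\geq 2$ so that $u-1,v-1$ are positive integers; this does follow from your own inequalities (the tail condition at the $1$ forces $\alpha>1$, hence $v\geq 2$, and the condition at $u$ forces $u>\frac{\alpha}{\alpha-1}>1$), but it should be said, since it is exactly where the excluded cases $[1,1]$ and $[1]$ and the impossibility of adjacent $1$'s enter; (ii) in the left-end blow-up case you must also check the head condition $1-1/[v+1,\ldots]\geq 0$, which holds because $[v,\ldots]\geq 0$ gives $[v+1,\ldots]\geq 1$ — you only mention the tail condition there. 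Neither is a real gap.
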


\begin{proposition}
Let $0<q<m$ be coprime, and let $\frac{m}{q}=[e_1,\ldots, e_r]$ and $\frac{m}{m-q}=[b_1,\ldots,b_s]$ be the corresponding HJ continued fractions. Then $[e_1,\ldots,e_r,1,b_s,\ldots,b_1]=0$, i.e. the minimal model for $[e_1,\ldots,e_r,1,b_s,\ldots,b_1]$ is $[1,1]$. We call $[b_1,\ldots,b_s]$ the \textit{dual} of $[e_1,\ldots,e_r]$. 
\label{dual}
\end{proposition}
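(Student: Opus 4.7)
The plan is to prove the identity via a direct matrix computation using the standard encoding $M(e)=\bigl(\begin{smallmatrix} e & -1 \\ 1 & 0\end{smallmatrix}\bigr)$ for HJ continued fraction entries. A short induction (together with the Wronskian relation $p_kq_{k-1}-p_{k-1}q_k=-1$) shows that $M(e_1)\cdots M(e_r)=\bigl(\begin{smallmatrix} p_r & -p_{r-1} \\ q_r & -q_{r-1}\end{smallmatrix}\bigr)$ with $p_k/q_k=[e_1,\ldots,e_k]$, so in particular $p_r=m$ and $q_r=q$. The value $[f_1,\ldots,f_n]$ is recovered as the top-left divided by the bottom-left entry of $M(f_1)\cdots M(f_n)$, so the proposition reduces to showing that the top-left entry of the $2\times 2$ product $M(e_1)\cdots M(e_r)\cdot M(1)\cdot M(b_s)\cdots M(b_1)$ vanishes; since each factor has determinant $1$, the bottom-left entry is automatically nonzero and the continued fraction is well-defined and equal to $0$.

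First I would pin down the entry $p_{r-1}$: the Wronskian identity forces $p_{r-1}q\equiv 1\pmod m$, and a quick induction using the recursion $p_k=e_kp_{k-1}-p_{k-2}$ with $e_k\geq 2$ gives the sharp bounds $0<p_{r-1}<m$, so $p_{r-1}$ is exactly the representative of $q^{-1}$ in $(0,m)$. Next I would reinterpret the reversed block $M(b_s)\cdots M(b_1)$ as the matrix of the HJ continued fraction $[b_s,\ldots,b_1]$. Applying the standard reversal formula $[a_k,\ldots,a_1]=m/q^{*}$ with $qq^{*}\equiv 1\pmod m$ to the dual chain $m/(m-q)=[b_1,\ldots,b_s]$, together with the congruence $(m-q)^{-1}\equiv -q^{-1}\equiv m-q^{-1}\pmod m$, shows that its first column is $\bigl(\begin{smallmatrix} m \\ m-q^{-1}\end{smallmatrix}\bigr)$.

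The core of the argument is then a single $2\times 2$ multiplication: acting by $M(1)$ on this column produces $\bigl(\begin{smallmatrix} q^{-1} \\ m\end{smallmatrix}\bigr)$, and then the outer block $\bigl(\begin{smallmatrix} m & -p_{r-1} \\ q & -q_{r-1}\end{smallmatrix}\bigr)$ multiplies this to give first column $\bigl(\begin{smallmatrix} m(q^{-1}-p_{r-1}) \\ qq^{-1}-q_{r-1}m\end{smallmatrix}\bigr)$. The top entry $m(q^{-1}-p_{r-1})$ vanishes by the identification $p_{r-1}=q^{-1}$, while the bottom entry evaluates to $1$ by a second use of the Wronskian. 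This is the desired vanishing.

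The step most likely to need care is upgrading the congruence $p_{r-1}\equiv q^{-1}\pmod m$ to the exact equality $p_{r-1}=q^{-1}$: without the inequalities $0<p_{r-1}<m$ one would only get $m\mid m(q^{-1}-p_{r-1})$, which is vacuous. These bounds follow from the positivity of all $e_i\geq 2$ in the recursion (giving $p_k>p_{k-1}$ for $k\geq 1$, hence $p_{r-1}<p_r=m$), with the edge case $r=1$ (where $q=1$ and $p_0=1=q^{-1}$) being immediate. A backup route by induction on $m$ is available: one first verifies via reversal that for $m\geq 3$ exactly one of $e_r,b_s$ equals $2$, then applies Lemma~\ref{updown} to blow down the central $1$ together with the adjacent $2$, obtaining a chain of the same shape for a strictly smaller coprime pair; this avoids matrices altogether but requires more combinatorial bookkeeping.
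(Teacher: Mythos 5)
The paper does not actually prove Proposition~\ref{dual}: it is stated as background, with proofs deferred to the reference given at the start of Section~\ref{s1}. So there is no in-paper argument to compare against; judged on its own terms, your matrix computation is correct, and it is in fact the same mechanism the paper itself records later as Proposition~\ref{matrixhjcf} (which is precisely your identification $p_{r-1}=q^{-1}$, $q_{r-1}=\tfrac{qq^{-1}-1}{m}$). Your reversal step, the action of $M(1)$ on the column $\bigl(\begin{smallmatrix} m\\ m-q^{-1}\end{smallmatrix}\bigr)$, and the final evaluation of the first column to $\bigl(\begin{smallmatrix}0\\1\end{smallmatrix}\bigr)$ all check out, as do the bounds $0<p_{r-1}<m$ from the recursion with $e_i\geq 2$.

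The one point to tighten: the proposition asserts not just that the formal value is $0$ but that the chain is an HJ continued fraction in the sense of Definition~\ref{hjcf} whose minimal model is $[1,1]$, and this requires every tail $[e_i,\ldots,e_r,1,b_s,\ldots,b_1]$ (for $i\geq 2$) to be positive. Your remark that ``the bottom-left entry is automatically nonzero'' only controls the last division, not the intermediate ones, so it does not by itself establish well-definedness, let alone the blow-down statement. This is exactly what your backup route supplies: since $e_1=2$ iff $q>m/2$ iff $b_1\neq 2$ (and the same for $e_r,b_s$ after reversal), exactly one of $e_r,b_s$ equals $2$ when $m\geq 3$, and Lemma~\ref{updown} reduces to a strictly smaller coprime pair while preserving validity at every stage, terminating at $[2,1,2]\to[1,1]$. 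I would therefore either promote that induction to the main argument (it matches the ``minimal model'' phrasing of the proposition and the preceding Lemma~\ref{updown}) or at least use it to justify positivity of the tails before invoking the matrix identity for the value.
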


\begin{definition}
A \textit{Wahl chain} is the HJ continued fraction of $\frac{n^2}{na-1}$ for some $0<a<n$ coprime. 
\label{wahl}
\end{definition}

Every Wahl chain can be obtained via the following algorithm due to J. Wahl (see \cite[Prop.3.11]{KSB88}): 

\begin{itemize}
    \item[(i)] $[4]$ is the Wahl chain for $n=2$ and $a=1$.
    \item[(ii)] If $[e_1,\ldots,e_r]$ is a Wahl chain, then $[e_1+1,e_2,\ldots,e_r,2]$ and $[2,e_1,\ldots,e_{r-1},e_r+1]$ are Wahl chains.
    \item[(iii)] Every Wahl chain is obtained by starting with (i) and iterating the steps in (ii).
\end{itemize}

Under this algorithm, the new position of the initial $[4]$ is called the \textit{center} of the Wahl chain. For example, the Wahl chain $[3,2,2,2,2,6,2,7,2]$ has its center at $6$. 

\begin{proposition}
If $[e_1,\ldots,e_r]$ is a Wahl chain, then $\sum_{i=1}^r e_i=3r+1$.
\label{sumWahl}    
\end{proposition}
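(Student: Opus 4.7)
The plan is to prove this by induction on the number of Wahl algorithm steps, exploiting the inductive description of Wahl chains given just before the statement.

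\textbf{Base case.} For the initial Wahl chain $[4]$ in step (i), we have $r = 1$ and $\sum e_i = 4 = 3\cdot 1 + 1$, so the formula holds.

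\textbf{Inductive step.} Assume $[e_1,\ldots,e_r]$ is a Wahl chain satisfying $\sum_{i=1}^r e_i = 3r+1$. The two operations in step (ii) produce new chains of length $r+1$ as follows:
\begin{itemize}
    \item $[e_1+1, e_2, \ldots, e_r, 2]$ has sum $(e_1+1) + e_2 + \cdots + e_r + 2 = \left(\sum_{i=1}^r e_i\right) + 3 = (3r+1) + 3 = 3(r+1)+1$.
    \item $[2, e_1, \ldots, e_{r-1}, e_r+1]$ has sum $2 + e_1 + \cdots + e_{r-1} + (e_r+1) = \left(\sum_{i=1}^r e_i\right) + 3 = 3(r+1)+1$.
\end{itemize}
In both cases the formula is preserved. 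Since by step (iii) every Wahl chain is reached in finitely many iterations starting from $[4]$, induction concludes the proof.

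\textbf{Comments on difficulty.} There is essentially no obstacle here: the key observation is simply that each of the two mutation rules appends one entry equal to $2$ and increments another entry by $1$, contributing exactly $3$ to the total sum while increasing the length by $1$. Thus the ratio is perfectly aligned with the inductive hypothesis. The only care needed is to verify that both mutations contribute the same increment of $3$, which is immediate. No continued-fraction arithmetic or properties of the rational number $\tfrac{n^2}{na-1}$ are required.
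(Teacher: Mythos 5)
Your proof is correct and uses exactly the inductive structure the paper sets up immediately before the statement (Wahl's algorithm (i)--(iii)); the paper itself omits the proof, deferring to the references, but this is the standard argument. No issues.
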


\begin{proposition}
Let $\Wa$ be a Wahl chain for some $0<a<n$. Then its dual chain corresponds to $\frac{n^2}{n^2-na+1}=[x_1,\ldots,x_u,2,y_v,\ldots,y_1]$ where $\frac{n}{a}=[y_1,\ldots,y_v]$ and $\frac{n}{n-a}=[x_1,\ldots,x_u]$. In fact, we have $\Wa=[y_1,\ldots,y_{v-1},y_v+x_u,x_{u-1},\ldots,x_1]$.
\end{proposition}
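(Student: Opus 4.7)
The plan is to verify both identities by a direct matrix computation using the standard encoding $M(e) := \bigl(\begin{smallmatrix} e & -1 \\ 1 & 0 \end{smallmatrix}\bigr)$ for HJ continued fractions, where $M([e_1,\ldots,e_r]) := M(e_1) \cdots M(e_r) = \bigl(\begin{smallmatrix} p & -p' \\ q & -q' \end{smallmatrix}\bigr)$ with $p/q = [e_1,\ldots,e_r]$ and $p'/q' = [e_1,\ldots,e_{r-1}]$. Reversing the chain exchanges the antidiagonal entries: $M([e_r,\ldots,e_1]) = \bigl(\begin{smallmatrix} p & -q \\ p' & -q' \end{smallmatrix}\bigr)$. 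Writing
\[ M([y_1,\ldots,y_v]) = \begin{pmatrix} n & -P \\ a & -Q \end{pmatrix}, \qquad M([x_1,\ldots,x_u]) = \begin{pmatrix} n & -R \\ n-a & -S \end{pmatrix}, \]
the determinant identities give $aP - nQ = 1$ and $(n-a)R - nS = 1$.

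The crucial preliminary observation is $P + R = n$. Reducing mod $n$ yields $aP \equiv 1$ and $-aR \equiv 1$, so $P + R \equiv 0 \pmod{n}$; since both $P$ and $R$ lie strictly between $0$ and $n$ (they are proper convergents of chains with all entries $\geq 2$), we must have $P + R = n$. Now expand, using $M(y_v + x_u) = M(y_v) + x_u E_{11}$ where $E_{11}$ is the $(1,1)$ matrix unit:
\[ M([y_1,\ldots,y_{v-1}, y_v + x_u, x_{u-1},\ldots,x_1]) = M([y_1,\ldots,y_v]) \cdot M([x_{u-1},\ldots,x_1]) + x_u \, M([y_1,\ldots,y_{v-1}]) E_{11} M([x_{u-1},\ldots,x_1]). \]
The $(1,1)$-entry collapses, via the three-term recurrence $R_{-2} = x_u R - n$ for convergents of $[x_1,\ldots,x_u]$, to $n(P+R) = n^2$, and the $(2,1)$-entry reduces to $aR + nQ = a(n-P) + (aP - 1) = na - 1$, identifying the chain with $n^2/(na-1)$. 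For the dual, compute $M([x_1,\ldots,x_u]) \cdot M(2) \cdot M([y_v,\ldots,y_1])$ using $M([y_v,\ldots,y_1]) = \bigl(\begin{smallmatrix} n & -a \\ P & -Q \end{smallmatrix}\bigr)$ from the reversal identity; the same relation $P+R = n$ forces the $(1,1)$-entry to equal $n^2$, and the $(2,1)$-entry becomes $n(n-a) + (n-a)R - nS = n^2 - na + 1$. Since all entries of $[x_1,\ldots,x_u,2,y_v,\ldots,y_1]$ are $\geq 2$ (the middle one exactly $2$), this is a valid HJ continued fraction, hence the dual of $\Wa$ by uniqueness in Proposition~\ref{dual}.

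The main obstacle, though mild, is twofold: spotting the identity $P + R = n$ and recognizing that the $x_u$-dependent cross terms cancel via the convergent recurrence; these are what force the matrix products to collapse into perfect squares in the $(1,1)$-slot. An alternative but more laborious route is induction on the length via Wahl's algorithm, using $(n', a') = (n+a, a)$ for the left extension $[e_1+1,\ldots,e_r,2]$ and $(2n-a, n)$ for the right extension $[2,e_1,\ldots,e_r+1]$, and tracking how $\frac{n'}{a'}$ and $\frac{n'}{n'-a'}$ are obtained from their predecessors by adding $1$ to an endpoint or prepending a $2$; this avoids matrices at the cost of more case analysis over the two extension types.
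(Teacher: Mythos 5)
Your proof is correct. Note that the paper itself does not prove this proposition: Section~2 opens by deferring all proofs to \cite{UZ23}, so there is no in-paper argument to compare against line by line. That said, your route is essentially the natural one and is consistent with the machinery the paper does introduce later, namely the matrix encoding of HJ continued fractions in Proposition~\ref{matrixhjcf} (your $M([e_1,\ldots,e_r])$ is exactly that factorization, and your reversal identity is the standard $q \leftrightarrow q^{-1}$ swap it encodes). The two genuinely load-bearing observations are exactly the ones you flag: the identity $P+R=n$, which follows cleanly from $aP\equiv 1$ and $(n-a)R\equiv 1 \pmod n$ together with the bounds $0<P,R<n$ coming from entries $\geq 2$; and the cancellation of the $x_u$-cross-terms via the convergent recurrence $R''=x_uR-n$, which collapses the $(1,1)$-entries to $n(P+R)=n^2$ and the $(2,1)$-entries to $na-1$ and $n^2-na+1$ respectively. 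I verified both computations and they check out, including the determinant substitutions $nQ=aP-1$ and $nS=(n-a)R-1$. The only thing worth making explicit is the degenerate cases $u=1$ or $v=1$, where $[x_1,\ldots,x_{u-1}]$ or $[y_1,\ldots,y_{v-1}]$ is empty: with the standard conventions $M(\emptyset)=I$ and $p_{-1}=1$, $p_{-2}=0$, your formulas $P+R=n$ and $R''=x_uR-n$ still hold (e.g.\ for $v=1$ one has $n=y_1$, $a=1$, $P=1$, $R=n-1$), so nothing breaks, but a one-line remark would make the argument airtight. Your suggested alternative by induction on Wahl's algorithm is indeed the other standard proof and would also work; the matrix argument is shorter and is the one that generalizes to the Diophantine computations of Section~\ref{s7}.
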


\begin{definition}
An HJ continued fraction is a \textit{zero continued fraction} if its minimal model is $[1,1]$.
\label{zeroHJ}
\end{definition}

Zero continued fractions are recovered via "blowing-up": $[1,1]$, $[1,2,1]$, $[2,1,2]$, $[1,2,2,1]$, $[2,1,3,1]$, $[1,3,1,2]$, $[3,1,2,2]$, $[2,2,1,3]$, etc. For a given length $s$, there are $\frac{1}{s}\binom{2(s-1)}{s-1}$ zero continued fractions. They are in one-to-one correspondence with triangulations of convex polygons with $s+1$ sides. It is relevant to us to know when an HJ continued fraction admits a zero continued fraction; see Definition \ref{zcfSigma}. The next proposition is well-known.


\begin{proposition}
The HJ continued fraction $[e_1,\ldots,e_r]$ admits a zero continued fraction of weight $0$ if and only if it is the dual of a Wahl chain.
\label{weight0} 
\end{proposition}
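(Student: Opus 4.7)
My plan is to unpack Definition \ref{zcfSigma} at weight $\lambda=0$: the constraint $\sum d_{i_k}=1$ forces exactly one index $i$ with $d_i=1$, so the statement reduces to the existence of $i\in\{1,\ldots,r\}$ with $[e_1,\ldots,e_{i-1},\,e_i-1,\,e_{i+1},\ldots,e_r]=0$. Both directions then follow by comparing $[e_1,\ldots,e_r]$ with the explicit dual Wahl form $[x_1,\ldots,x_u,\,2,\,y_v,\ldots,y_1]$ where $\tfrac{n}{n-a}=[x_1,\ldots,x_u]$ and $\tfrac{n}{a}=[y_1,\ldots,y_v]$ provided by the Proposition following \ref{sumWahl}.

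\textbf{($\Leftarrow$).} Starting from the explicit dual Wahl form, I take $i = u+1$ (the central $2$) and $d_i = 1$; applying Proposition \ref{dual} with $(m,q)=(n,n-a)$, whose HJ dual is $\tfrac{m}{m-q} = \tfrac{n}{a} = [y_1,\ldots,y_v]$, immediately gives $[x_1,\ldots,x_u,\,1,\,y_v,\ldots,y_1]=0$, which exhibits the desired zero continued fraction of weight $0$.

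\textbf{($\Rightarrow$).} First I would rule out the positions of $i$ that cannot work. If $e_i\ge 3$, then every entry of the modified sequence is $\ge 2$, and a one-step induction using $[f_j,\ldots,f_t] = f_j - 1/[f_{j+1},\ldots,f_t] > f_j - 1 \ge 1$ shows such a fraction has value $>1$, contradicting the required value $0$. If $i=1$ (forcing $e_1=2$), the modified sequence is $[1,e_2,\ldots,e_r] = 1 - 1/[e_2,\ldots,e_r] \in (0,1)$ by the same estimate, hence nonzero; the case $i=r$ is symmetric. Therefore $e_i=2$ with $2\le i\le r-1$, and $[e_1,\ldots,e_{i-1},\,1,\,e_{i+1},\ldots,e_r] = 0$. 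At this point I invoke the converse of Proposition \ref{dual}: whenever $[A,1,B]=0$ with $A,B$ HJ continued fractions of entries $\ge 2$, then $A$ and $\operatorname{rev}(B)$ must be HJ dual. This follows from iterating the recursion $[c_1,\ldots,c_t]=c_1-1/[c_2,\ldots,c_t]$: the value $[A,1,B]$ is a M\"obius function of $[B]$ whose coefficients depend only on $A$, so the equation $[A,1,B]=0$ pins $[B]$ down to a single rational, and uniqueness of the HJ expansion (entries $\ge 2$) pins $B$ itself down, matching the specific $\operatorname{rev}(\tfrac{m}{m-q})$ supplied by Proposition \ref{dual}. Writing $\tfrac{m}{q}:=[e_1,\ldots,e_{i-1}]$ and setting $n:=m$, $a:=m-q$, I obtain $[e_1,\ldots,e_{i-1}]=\tfrac{n}{n-a}$ and $[e_r,\ldots,e_{i+1}]=\tfrac{n}{a}$, so $[e_1,\ldots,e_r]=[x_1,\ldots,x_u,\,2,\,y_v,\ldots,y_1]$ is precisely the dual Wahl chain of $\tfrac{n^2}{na-1}$.

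\textbf{Main obstacle.} The only non-bookkeeping step is the converse of Proposition \ref{dual} just invoked. If one prefers to avoid the M\"obius computation, an alternative is induction on $r$ via Lemma \ref{updown}: repeatedly blow down the central $1$ in $[A,1,B]$, obtaining shorter zero continued fractions, until reaching $[1,1]$; then the blow-up history uniquely reconstructs $A$ and $\operatorname{rev}(B)$ as an HJ dual pair. Everything else in the argument is a direct match with the explicit dual Wahl form.
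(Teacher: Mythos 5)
Your proof is correct. Note, however, that the paper itself offers no argument for Proposition \ref{weight0}: it is stated as ``well-known'' with no proof, so there is nothing to compare against line by line. Your reduction of weight $0$ to a single index $i$ with $d_i=1$, the elimination of the cases $e_i\geq 3$ and $i\in\{1,r\}$ via the elementary bound that an HJ continued fraction with all entries $\geq 2$ has value $>1$, and the identification of the two flanking chains with $\tfrac{n}{n-a}$ and $\tfrac{n}{a}$ via the explicit form $[x_1,\ldots,x_u,2,y_v,\ldots,y_1]$ of a dual Wahl chain are all sound. The one step you rightly flag as nontrivial --- that $[A,1,B]=0$ determines $B$ once $A$ is fixed --- does hold: the map $\beta\mapsto[A,1,\beta]$ is a M\"obius transformation with unimodular matrix, hence injective, and uniqueness of HJ expansions with entries $\geq 2$ then pins down $B$; your alternative via repeated blow-downs using Lemma \ref{updown} is the more standard route and is presumably what the ``well-known'' attribution has in mind. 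Either way the argument is complete and self-contained.
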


Classification for higher weights is more complicated. It has to do with M-resolutions of cyclic quotient singularities (c.q.s.). We will review this in Section \ref{s3}. For example, weight $1$ corresponds precisely to c.q.s. that either admit an extremal P-resolution, or are HJ continued fractions of $\frac{2n^2}{2na-1}$ with gcd$(a,n)=1$.

\begin{proposition}
Consider an HJ-continued fraction $[e_1,\ldots,e_r]$ with $e_j \geq 2$ for all $j$. Take $i>1$, and let $\frac{n}{a}=[e_1,\ldots,e_{i-1}]$. Then $$[e_1,\ldots, e_{i-1}+x_u,x_{u-1},\ldots,x_1,1,e_1,\ldots,e_r]$$ contracts to $[e_1,\ldots, e_{i-1},e_i-1,e_{i+1}, \ldots,e_r]$ where $\frac{n}{n-a}=[x_1,\ldots,x_u]$. 
\label{prop:slide}
\end{proposition}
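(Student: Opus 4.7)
My plan is to recast the statement as a matrix identity that follows from Proposition~\ref{dual}, and then to exhibit an explicit cascade of blow-downs realizing the contraction. Associate to any HJ chain $[a_1,\ldots,a_r]$ the matrix product $M_{a_1}\cdots M_{a_r}$, where $M_e=\begin{pmatrix}e & -1\\ 1 & 0\end{pmatrix}$; a one-line check gives $M_uM_1M_v=M_{u-1}M_{v-1}$, so this matrix is invariant under the blow-downs of Lemma~\ref{updown}. It therefore suffices to verify (i) that the source chain and the target $[e_1,\ldots,e_{i-1},e_i-1,e_{i+1},\ldots,e_r]$ have the same associated matrix, and (ii) that a sequence of blow-downs (rather than blow-ups) actually takes the source to the target.

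For (i), Proposition~\ref{dual} in matrix form says
\[
M_{e_1}\cdots M_{e_{i-1}}\,M_1\,M_{x_u}\cdots M_{x_1}=M_1^2,
\]
since $[e_1,\ldots,e_{i-1},1,x_u,\ldots,x_1]$ blows down to $[1,1]$. Using $M_1^3=-I$, a short conjugation also yields $M_{x_u}\cdots M_{x_1}\,M_1\,M_{e_1}\cdots M_{e_{i-1}}=M_1^2$. Combined with the addition-insertion identity $M_{a+b}=-M_a M_0 M_b$ (an immediate $2\times 2$ check) applied to the factor $M_{e_{i-1}+x_u}$ inside the source's matrix, the middle of the product collapses to $M_0\cdot M_1^2\cdot M_{e_i}$; the explicit calculation $M_0 M_1^2 M_{e_i}=-M_{e_i-1}$ then reduces the source matrix to $M_{e_1}\cdots M_{e_{i-1}}\,M_{e_i-1}\,M_{e_{i+1}}\cdots M_{e_r}$, which is precisely the matrix of the target.

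For (ii), I plan to argue by induction on $u$ that repeatedly blowing down the leftmost interior $1$ in the source produces the target after exactly $i+u-1$ blow-downs. The base case $u=1$ is a telescoping computation: the prefix is $[\underbrace{2,\ldots,2}_{n-1}]$, and the unique interior $1$ travels leftward through the block of $2$'s, decrementing them one by one, until the final state matches the target. For the inductive step, the first one or two blow-downs convert the local configuration around the central $1$ into a smaller instance of the same shape associated to a dual chain of length $u-1$, to which the inductive hypothesis applies. I expect the bookkeeping in (ii) to be the main obstacle: the cascade can spawn new $1$'s both from $x_j=2$ in the dual tail and from $e_j=2$ in the repeated prefix, so care is needed to track where the propagation stops. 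The matrix identity from (i) strongly constrains the terminal chain, which should make this inductive step tractable.
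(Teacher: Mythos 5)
Your part (i) is correct and complete: I checked the identities $M_uM_1M_v=M_{u-1}M_{v-1}$, $M_{a+b}=-M_aM_0M_b$ and $M_0M_1^2M_{e}=-M_{e-1}$, as well as the conjugation step via $M_1^3=-I$ (the two sign factors you pick up cancel, so the source and target chains do have the same associated matrix). This is a genuinely different route from the paper's, which disposes of the proposition in one line by citing Proposition~\ref{dual}: the intended mechanism there is the direct combinatorial collapse of the embedded segment $[x_u,\ldots,x_1,1,e_1,\ldots,e_{i-1}]$, which is the reverse of the zero continued fraction $[e_{i-1},\ldots,e_1,1,x_1,\ldots,x_u]$ supplied by Proposition~\ref{dual} applied to the reversed chain. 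Your matrix version buys something real: since blow-downs strictly decrease length, the minimal model is unique (as stated after Definition~\ref{hjcf}), and a chain with all entries $\geq 2$ is determined by its matrix (Proposition~\ref{matrixhjcf}), part (i) alone already proves the statement whenever $e_i\geq 3$, because then the target \emph{is} the minimal model of the source. So most of the induction you plan in (ii) is unnecessary.

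What remains is exactly where you stop. Part (ii) is a plan, not a proof, and it is needed in two situations: when $e_i=2$, the target $[e_1,\ldots,e_{i-1},1,e_{i+1},\ldots,e_r]$ is itself non-minimal, and the matrix invariant only shows that source and target share a minimal model, not that the target occurs as an actual intermediate stage of the cascade; and for the geometric use of this proposition in Definition~\ref{slide} one must know \emph{which} curves are contracted (the new Wahl chain and the $(-1)$-curve) and which survive (the original $E_j$, with $E_i$ decremented), which the invariant does not track. Your stated worry about the cascade spawning adjacent $1$'s is unfounded: since $[e_1,\ldots,e_{i-1}]$ and $[x_1,\ldots,x_u]$ represent $\frac{n}{a}$ and $\frac{n}{n-a}$, exactly one of $e_1,x_1$ equals $2$, so the first blow-down creates exactly one new $1$, and the same duality persists at every stage; this is the standard fact that the total transform of a fiber carries a unique $(-1)$-curve until the final stage. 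The quickest way to close (ii) is therefore not your induction on $u$ but the paper's route: identify the embedded segment as a zero continued fraction via (the reverse of) Proposition~\ref{dual} and contract it with Lemma~\ref{updown}, observing that the left external neighbour sheds the merged $x_u$ and the right one sheds a single unit, which lands precisely on the target.
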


\begin{proof}
This easily follows from Proposition \ref{dual}.
\end{proof}

Proposition \ref{prop:slide} is the combinatorial content of sliding, which is a tool used in Subsection \ref{s3.4}. 

\section{Del Pezzo Wahl chains} \label{s2}

Del Pezzo Wahl chains were defined in Definition \ref{DPWahlchain}. The Wahl chain $$[2,2,2,10,2,2,2,2,2,5]$$ is del Pezzo for $18$ distinct markings. For example, it is del Pezzo of degree $9$ for the marking $[2,1,2,\underline{10},2,2,2,2,1,5]$. It is also del Pezzo of degree $8$ for the marking $[1,2,1,\underline{10},2,2,2,2,1,5]$. The Wahl chain $[2, 7, 2, 2, 3]$ is del Pezzo of degree $4$ for the markings $[1, 3, 1, 2,\underline{3}]$, $[\underline{2},3, 1, 2, 2]$, and $[\underline{2},2, 2, 1, 3]$. See \href{https://colab.research.google.com/drive/1tooXaHZnug0nCRvmXs9g_WIi1jFm4dgs?usp=sharing}{MarkingsWahlChains} for more examples. In general, a Wahl chain could be del Pezzo of a fixed degree for various markings and different types, but canonically we always have:

\begin{lemma}
Except for $[4]$ and $[2,5]$, every Wahl chain $[e_1,\ldots,e_r]$ is del Pezzo of type (I) and degree $4$ for two canonical markings. The central marks are $e_1$ or $e_r$. 
\label{canmarking}
\end{lemma}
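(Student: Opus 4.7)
I plan to dispose of the exceptions separately and then build the canonical markings for the remaining Wahl chains using Wahl's recursive algorithm together with the sum identity of Proposition~\ref{sumWahl}. For the exceptions, $[4]$ has $r=1$, so neither of the subchains in Definition~\ref{DPWahlchain}(I) is nonempty; and $[2,5]$ and $[5,2]$ have $r=2$, so the candidate subchains $[2]$ and $[5]$ have length~$1$, which cannot be a zero continued fraction since the minimal model $[1,1]$ has length~$2$. Hence no type~(I) marking exists in these cases, justifying the exclusions.

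For $r\geq 3$, Wahl's algorithm (step~(ii) of Section~\ref{s1}) builds every Wahl chain from $[4]$ by iteratively applying a rule that either appends a $2$ on the right (ii.1) or prepends a $2$ on the left (ii.2). Hence the last step used ensures $e_r=2$ or $e_1=2$; in either case I can aim for a canonical marking at the corresponding endpoint, and by reversing the chain I reduce to the case $e_r=2$ and construct the marking centered at $e_r$ explicitly (the one centered at $e_1$ being entirely symmetric). A weight-$5$ zero continued fraction $[f_1,\ldots,f_{r-1}]$ of $[e_1,\ldots,e_{r-1}]$ needs $\sum(e_i-f_i)=6$; by Proposition~\ref{sumWahl}, $\sum_{i=1}^{r-1} e_i = 3r-1$, hence $\sum f_i = 3r-7$. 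Since every length-$(r-1)$ zero continued fraction arises from $[1,1]$ by interior blow-ups (each adding $3$ to the sum) and boundary blow-ups (each adding $2$), its sum lies in $[2r-4,\,3r-7]$, with the maximum $3r-7$ attained precisely by the \emph{interior-only} zero continued fractions. The task therefore reduces to exhibiting an interior-only zero CF $[f_1,\ldots,f_{r-1}]$ with $f_i\leq e_i$ for every $i$.

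I would proceed by induction on $r$. The base $r=3$ forces $e_1+e_2=8$, and the unique length-$2$ zero CF $[1,1]$ trivially satisfies $f_i\leq e_i$, yielding $d=(e_1-1,e_2-1)$ of total~$6$. For the inductive step, writing $[e_1,\ldots,e_r]=[e_1'+1,e_2',\ldots,e_{r-1}',2]$ coming from a parent Wahl chain $\Wa'$ via rule ii.1, I lift an interior-only zero CF of the appropriate truncation of $\Wa'$ to one for the current chain by a single interior blow-up, noting that the $+1$ on the first entry produced by rule ii.1 exactly accommodates the sum increment of the blow-up. The main obstacle will be the combinatorial bookkeeping needed to verify $f_i\leq e_i$ throughout the induction. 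This is cleanest when phrased via the dual identity $[e_1,\ldots,e_{r-1},2,1,b_s,\ldots,b_1]=0$ of Proposition~\ref{dual}: a specific cascade of interior blow-downs starting at the displayed $1$ between $2$ and $b_s$ collapses the dual tail step by step, directly producing the required interior-only zero CF, with the bounds $f_i\leq e_i$ following from the positivity of the dual entries $b_j$. The symmetric case $e_1=2$ yields the second canonical marking with central mark $e_1$, completing the count.
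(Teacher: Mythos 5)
Your sum-counting reformulation of the first marking is correct and is sharper than anything the paper makes explicit: with $e_r=2$ one has $\sum_{i=1}^{r-1}e_i=3r-1$, so a weight-$5$ zero continued fraction of $[e_1,\ldots,e_{r-1}]$ must have the maximal sum $3r-7$ and hence be interior-only. (Your dismissal of $[2,5]$ is right in conclusion, though the paper does admit the degenerate length-one zero continued fraction $[0]$ — see the marking $[0,\underline{2}]$ in Corollary~\ref{delP>=5} — so the correct reason is that the resulting weights are $1$ or $4$, never $5$.) The serious gap is the assertion that the marking centered at $e_1$ is ``entirely symmetric.'' It is not: no Wahl chain of length $r\geq 2$ has both ends equal to $2$, since reversing the last step of Wahl's algorithm would otherwise leave a $1$ at an end. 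So after normalizing to $e_r=2$ you have $e_1\geq 3$, and reversing the chain merely reproduces the marking you already built, read backwards. By your own count, a degree-$4$ marking with central mark $e_1$ needs a zero continued fraction of $[e_2,\ldots,e_r]$ of sum $3r-5-e_1$, which is strictly below the maximum $3r-7$ and is a genuinely different, non-maximal existence problem; it can fail outright (for $[6,2,2]$ or $[7,2,2,2]$ the required sum drops below the minimum $2r-4$, and for $[2,2,2,5,5]$ the second canonical marking $[1,2,2,1,\underline{5}]$ listed in the paper has degree $5$, not $4$). The paper's own proof is delicate exactly here — it special-cases $[2,\ldots,2,x+4]$ — but as written your argument delivers only one marking, not two.

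The remaining step, existence of an interior-only zero continued fraction dominated by $[e_1,\ldots,e_{r-1}]$, is also not yet a proof. The induction does not obviously close: the parent chain obtained by undoing rule (ii) may have its terminal $2$ at the opposite end from the one you truncate, and the bound $f_{r-1}\leq e_{r-1}$ (typically $=2$) is not propagated when you insert the extra blow-up. The blow-down cascade on $[e_1,\ldots,e_{r-1},2,1,b_s,\ldots,b_1]$ is asserted to terminate at exactly length $r-1$ with support in the first $r-1$ positions; that is precisely the content to be proved, and positivity of the $b_j$ alone does not give it. The paper avoids all of this by writing the marking down: the center of the Wahl chain (the position of the initial $[4]$ under the algorithm of Section~\ref{s1}) receives the mark $4$, the neighbour created by the first step of the algorithm receives $1$, and the opposite endpoint receives $1$, the zero-continued-fraction property being read off from the algorithm itself. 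I would recommend adopting that explicit construction for the existence step and retaining your sum count from Proposition~\ref{sumWahl} only as the verification that its weight is exactly $5$; the two-markings claim then still needs a separate, honest treatment of the chains for which the center or the first created $2$ sits at an endpoint.
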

    
\begin{proof}
Let $\Wa$ be a Wahl chain different from $[2,\ldots,2,x+4]$. Let $e_i$ be the center of the Wahl chain. We mark it with $d_i=4$. Let $e_{i-1}$ or $e_{i+1}$ be the position right after we move from the center using the Wahl algorithm, we mark it with $d_{i-1}=1$ or $d_{i+1}=1$ accordingly. Consider $e_1$ as the central mark, and mark $e_r$ with $d_r=1$. That is one marking. For the other, consider $e_r$ as the central mark, and mark $e_1$ with $d_1=1$. Then $[e_2,\ldots,e_{r}]$ or $[e_1,\ldots,e_{r-1}]$ admits a zero continued fraction with that marking, because of the Wahl algorithm. If $\Wa=[2,\ldots,2,x+4]$, then we mark it as $[\underline{2},2,\ldots,2,1,x-1]$, or $[1,2,\ldots,2,1,\underline{x}]$, and this works as degree $4$ when $x\geq 2$. 
\end{proof}

The previous lemma defines \textit{canonical markings} of degree $4$ for any Wahl chain. 

For example, $[2,6,2,3]$ has canonical markings $[1,2,1,\underline{3}]$ and $[\underline{2},2,1,2]$; 

for $[2,2,2,5,5]$ we have $[\underline{2},2,2,1,3]$ and $[1,2,2,1,\underline{5}]$; 

for $[2,2,2,2,8]$ they are $[\underline{2},2,2,1,3]$ and $[1,2,2,1,\underline{8}]$; 

for $[2,2,7,2,2,2,2,2,2,5,9,2,2,2,2,4]$ the canonical markings are $$[1,2,7,2,2,2,2,2,2,1,8,2,2,2,2,\underline{4}] \ \ \text{and} \ \  [\underline{2},2,7,2,2,2,2,2,2,1,8,2,2,2,2,3].$$ 

Of course, there are plenty of other markings. The Wahl chains $[2,\ldots,2,A+4]$ admit the marking $[1,2,\ldots,2,1,\underline{A+4}]$ which is del Pezzo of degree $8$. The particular case $[2,2,2,7]$ admits $[2,1,2,\underline{7}]$ which is del Pezzo degree $9$. There are more markings for $[2,\ldots,2,A+4]$, but when $A\geq 6$ we have the above (degree $8$), the canonical (degree $4$), and some other few families of markings of degree $1$ and $2$. In general, it is hard to list all possible markings, but for degree bigger than or equal to $5$ we at least have a restriction on the central mark.   

\begin{lemma}
Let $[e_1,\ldots,e_r]$ be a del Pezzo Wahl chain of degree $\ell$, and central mark $d$.
\begin{itemize}
\item If the marking is of type (I), then $d=\ell+u-3$ for some $u\geq 0$.
\item If the marking is of type (II), then $d=\ell+u_1+u_2-1$ for some $u_i\geq 0$.
\end{itemize}
\label{restrCentralMark}
\end{lemma}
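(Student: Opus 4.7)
The plan is to combine two sum identities: the Wahl-chain sum $\sum_{i=1}^r e_i = 3r+1$ from Proposition~\ref{sumWahl}, together with a sharp upper bound on the sum of a zero continued fraction.

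First I would establish the following key inequality: for any zero continued fraction $[k_1,\ldots,k_s]$ with $s \geq 2$, one has $\sum_{j=1}^s k_j \leq 3s-4$. The argument goes by induction on $s$: the base case $s=2$ is $[1,1]$, with sum $2 = 3\cdot 2 - 4$. For the inductive step, such a chain must contain some $k_j = 1$, since otherwise its minimal HJ continued fraction would be itself, contradicting the assumption that the minimal model is $[1,1]$. If $j$ is an interior position, Lemma~\ref{updown} produces a zero continued fraction of length $s-1$ with sum reduced by exactly $3$. If all the ones sit at the boundary, a direct computation using
\[
[1,v,k_3,\ldots,k_s] = 0 \iff [v,k_3,\ldots,k_s] = 1 \iff [v-1,k_3,\ldots,k_s] = 0
\]
shows that the boundary blow-down is still a zero continued fraction of length $s-1$, now with sum reduced by only $2$. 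In either case the inductive hypothesis yields $\sum_j k_j \leq 3s - 4$.

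For Type~(I), I may assume by symmetry that the central mark is at position $r$, so $d = e_r$. Write $k_j = e_j - d_j$ with $d_j \geq 0$ and $\sum_{j=1}^{r-1} d_j = \lambda + 1 = 10 - \ell$. Combining Proposition~\ref{sumWahl} with the key inequality applied to $[k_1,\ldots,k_{r-1}]$ gives
\[
(3r+1-e_r) - (10-\ell) \;=\; \sum_{j=1}^{r-1} k_j \;\leq\; 3(r-1)-4 \;=\; 3r-7,
\]
which rearranges to $e_r \geq \ell - 2$. Hence $u := e_r - \ell + 3 \geq 1 \geq 0$ and $d = \ell + u - 3$. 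The degenerate length-one case $r = 2$ (where the zcf reduces to $[0]$ and $d_1 = e_1$) is handled directly: $e_1 + e_2 = 7$ and $e_1 = 10 - \ell$ force $e_2 = \ell - 3$, i.e.\ $u = 0$. For Type~(II), with $d = e_i$, two zero continued fractions of lengths $s_1 = i-1$ and $s_2 = r-i$, and $\sum_{j \neq i} d_j = 11 - \ell$, summing the key inequality on each piece (when $s_1, s_2 \geq 2$) gives $e_i \geq \ell + 1$, and any non-negative splitting realizes $d = \ell + u_1 + u_2 - 1$. The short-chain cases $s_i = 1$ are handled analogously, producing the still-compatible bounds $e_i \geq \ell$ (one short side) or $e_i = \ell - 1$ (both short).

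The hard part is really the key inequality, and more precisely the boundary blow-down step: Lemma~\ref{updown} only guarantees value equality when the blown-down $1$ is not in the first position, so one has to verify by hand the value-zero relation at the boundary before the induction can close. Once that is settled, everything else reduces to routine substitution into the two sum identities.
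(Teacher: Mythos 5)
Your proof is correct for the statement as written, and it shares the same skeleton as the paper's: combine $\sum e_i = 3r+1$ (Proposition~\ref{sumWahl}) with control on the sum of the entries of a zero continued fraction. The difference is in how that control is obtained and how much of it you keep. The paper uses the exact structure theorem: every zero continued fraction of length $s$ is obtained from a unique base chain $[1,2,\ldots,2,1]$ of length $u+1$ by $v$ interior blow-ups, so $\sum k_j = 2u+3v$ with $s = u+1+v$; plugging this into the Wahl sum gives the \emph{identity} $d = \ell+u-3$ where $u$ is that specific combinatorial invariant. You instead prove only the consequence $\sum k_j \leq 3s-4$ by downward induction on blow-downs (your boundary-versus-interior case analysis, including the value-zero check $[1,v,\ldots]=0 \iff [v-1,\ldots]=0$, is sound, and your handling of the degenerate short chains is consistent with the paper's $[0,\underline{2}]$, $[0,\underline{5}]$ examples), and then you \emph{define} $u := d-\ell+3$. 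Since the lemma is stated with an existential ``for some $u\geq 0$'', this does prove it; what it buys is a more self-contained, purely inductive argument. What it loses is that in the paper $u$ is not an arbitrary nonnegative integer but the length parameter of the underlying chain $[1,2,\ldots,2,1]$, and that identification is what is actually used downstream (Lemma~\ref{wirdo}, the multiplicity claim in the proof of Theorem~\ref{theorem:W*m}, Remark~\ref{type(I)Diophantine}): your version only yields the bounds $d\geq \ell-3$ and $d\geq \ell-1$. So if you want your argument to substitute for the paper's, you should upgrade your induction to track the exact defect $3s-3-\sum k_j$, which recovers $u$.
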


\begin{proof}
It is not hard to see that every zero continued fraction can be constructed from a unique $[1,1]$, $[1,2,1]$, $\ldots$, $[1,2,\ldots,2,1]$ of length $u+1$ by applying some $v$ blow-ups of the form $[\ldots,a,b,\ldots] \mapsto [\ldots,a+1,1,b+1,\ldots]$. Therefore, if $[k_1,\ldots,k_s]$ is a zero continued fraction, then $\sum_{i=1}^r k_i = 2u+3v.$   

Assume type (I). Let $[k_1,\ldots,k_{r-1},\underline{e_r}]$ be the marking of degree $\ell=9-\lambda$. Then Proposition \ref{sumWahl} and the formula above gives the equality $$\sum_{i=1}^r e_i = 3(u+1+v+1)+1=2u+3v+\lambda+1+d,$$ and so $d=\ell+u-3$.  Similarly for (II) and marking $[k_1,\ldots,k_{i-1},\underline{e_i},k_{i+1},\ldots,k_r]$ with $u_i$s and $v_i$s, we obtain $d=\ell+u_1+u_2-1$.
\end{proof}

\begin{corollary}
Let $[e_1,\ldots,e_r]$ be del Pezzo of degree $\ell  \geq 5$.
\begin{itemize}
\item If the marking is of type (I) and not $[0,\underline{2}]$ (it is degree $5$), then the central mark is $\geq 3$.
\item If the marking is of type (II), then the central mark is $\geq 5$.
\end{itemize}    
\label{delP>=5}
\end{corollary}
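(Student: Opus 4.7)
The plan is to extract both bounds directly from Lemma~\ref{restrCentralMark}, which pins the central mark to $d = \ell + u - 3$ in type (I) and $d = \ell + u_1 + u_2 - 1$ in type (II), where $u$ (respectively $u_1, u_2$) measures the length of the underlying ``standard form'' zero continued fraction $[1, 2, \ldots, 2, 1]$. The whole argument then reduces to chasing what happens when these auxiliary parameters are as small as possible, the only delicate point being the degenerate length-$1$ zero continued fraction $[0]$, which is allowed by Definition~\ref{zcfSigma} but sits just outside the inductive framework used in the proof of Lemma~\ref{restrCentralMark} (whose standard forms start at $[1,1]$).

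For type (I) with $\ell \geq 5$, I would first note that $u \geq 1$ immediately gives $d \geq \ell - 2 \geq 3$, leaving only the regime $u = 0$ to discuss. The plan is to observe that $u = 0$ forces the zero continued fraction to equal $[0]$: since no internal blow-up is applicable to a single-entry chain one automatically has $v = 0$, and the Wahl chain has length $r = u + v + 2 = 2$. Since the only length-$2$ Wahl chains are $[5, 2]$ and $[2, 5]$, the marking must be $[0, \underline{2}]$ (up to reflection), with weight $\lambda = 4$, degree $\ell = 5$, and central mark $d = 2$ — precisely the exception stated.

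For type (II) with $\ell \geq 5$, the formula directly yields $d \geq 5$ as soon as $\ell \geq 6$ or $u_1 + u_2 \geq 1$. The only case requiring extra work is $\ell = 5$ together with $u_1 = u_2 = 0$. By the same reasoning as above, both zero continued fractions must then be $[0]$ and $v_1 = v_2 = 0$, forcing the Wahl chain to have length $r = 3$. The four length-$3$ Wahl chains, read off from Wahl's algorithm, are $[6,2,2]$, $[2,5,3]$, $[3,5,2]$, $[2,2,6]$. For each of them the forced marking $[0, \underline{e_2}, 0]$ has total weight $\lambda_1 + \lambda_2 = e_1 + e_3 - 2$, taking the values $6, 3, 3, 6$ respectively, so the possible degrees are $3$ or $6$ — never $5$. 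Hence this residual case is vacuous and $d \geq 5$ always holds.

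The only conceptual step beyond mechanically plugging into Lemma~\ref{restrCentralMark} is the recognition that the degenerate chain $[0]$ must be treated as a separate base case for $u = 0$; this is the expected main obstacle. Once isolated, everything collapses into a finite enumeration of Wahl chains of length at most $3$, which is immediate.
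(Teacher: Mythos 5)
Your proposal is correct and follows essentially the same route as the paper: read both bounds off Lemma \ref{restrCentralMark} and then dispose of the degenerate $u=0$ cases by direct enumeration. Two remarks on the details. First, in type (I) your assertion that $u=0$ forces the marking to be $[0,\underline{2}]$ up to reflection is not quite accurate: the length-$2$ chains also carry the $u=0$ markings $[0,\underline{5}]$ and $[\underline{5},0]$ (weight $1$, degree $8$, central mark $5$), which the paper's proof lists explicitly alongside $[0,\underline{2}]$. This omission is harmless, since that marking has central mark $5\geq 3$ and so satisfies the conclusion, but as written your enumeration of the $u=0$ markings is incomplete and the claim that $u=0$ forces degree $5$ is false. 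Second, your explicit elimination of the type (II) boundary case $\ell=5$, $u_1=u_2=0$ (where the formula alone only gives $d=4$) by checking the four length-$3$ Wahl chains $[6,2,2]$, $[2,5,3]$, $[3,5,2]$, $[2,2,6]$ is carried out correctly, and is actually more thorough than the paper's one-line proof, which only comments on the type (I) degenerate case and leaves the type (II) edge case implicit.
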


\begin{proof}
This is Lemma \ref{restrCentralMark}. If $u=0$ in type (I), then we have only $[0,\underline{2}]$ or $[0,\underline{5}]$.
\end{proof}

\begin{remark}
Let $[e_1,\ldots,e_r]$ be a Wahl chain for some $0<a<n$. Assume it is del Pezzo of type (I) with degree $\ell \geq 5$ and central mark $e_1$. Then, 
for any $k\geq 1$, $$\frac{n_{k+1}^2}{n_{k+1}n_k-1}=[\underbrace{\ell-2,\dots,\ell-2}_{k},e_1,\dots,e_r+1, \underbrace{2,\dots,2,3,\dots,2,\dots,2,3}_{(k-1) \ \text{blocks} \  2,\dots,2,3},\underbrace{2,\dots,2}_{\ell-4}]$$ is a del Pezzo Wahl chain of type (I) and degree $\ell$ with central mark $\ell-2$ and marking induced by the marking on $[e_1,\ldots,e_r]$. Here, the blocks $2,\ldots,2,3$ have $(\ell-5)$ $2$s, and the $n_k$ satisfy $n_{k+1}=(l-2)n_k-n_{k-1}$ for all $k\geq 1$, where $n_0=a$ and $n_1=n$. See explicit Wahl chains in Example \ref{nomarkov} and Remark \ref{AllPolishchuk}. 
\label{Pellrecursion}
\end{remark}

\begin{proposition}
Consider Wahl chains of the form $\Wa=[2,\ldots,2,A+4,2,\ldots,2,B+2]$. If $A \geq 2$ and $B>A+3$, then $\Wa$ is never del Pezzo of degree greater than or equal to $5$.
\label{5isnotfree}
\end{proposition}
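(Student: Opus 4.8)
The plan is to first fix the exact shape of $\Wa$, then show that any marking of degree $\ell\ge 5$ forces the central mark onto one of the two large entries, and finally reach a numerical contradiction in each of the two marking types. \textbf{Shape.} First I would determine the two strings of $2$'s by running the Wahl algorithm backwards: each inverse step deletes a boundary $2$ and lowers the opposite end (which must be $\ge 3$) by one. Since $A+4$ and $B+2$ are the only entries $\ge 3$ of $\Wa=[\,\underbrace{2,\dots,2}_{p},A+4,\underbrace{2,\dots,2}_{q},B+2\,]$, this reduction is forced. Stripping leading $2$'s lowers $B+2$ one unit at a time; unless it reaches $2$ exactly when the leading block is exhausted the chain gets stuck (both ends $\ge 3$, interior all $2$'s), so $p=B$; then stripping the trailing $2$'s of the resulting $[A+4,2^{\,q+1}]$ down to $[4]$ forces $q=A-1$. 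Hence
\[
\Wa=[\,\underbrace{2,\dots,2}_{B},\,A+4,\,\underbrace{2,\dots,2}_{A-1},\,B+2\,],\qquad r=A+B+1,
\]
with center $A+4$ at position $B+1$ (consistent with Proposition \ref{sumWahl}); tracking the two moves gives the singularity $\tfrac1{n^2}(1,na-1)$ with $n=(A+1)(B+1)+1$ and $a=(A+1)B+1$.

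\textbf{Central mark and type (II).} Suppose $\Wa$ is del Pezzo of degree $\ell\ge5$ for some marking. The only entries $\ge3$ are the interior $A+4$ and the endpoint $B+2$. In type (I) the central mark is $e_1=2$ or $e_r=B+2$; the former is impossible, since Lemma \ref{restrCentralMark} would force $u=5-\ell\le0$ while the zero continued fraction, carried by the nonempty chain $[e_2,\dots,e_r]$, has base parameter $u\ge1$. So in type (I) the central mark is $B+2$. In type (II) the central mark is interior and $\ge5$ (Corollary \ref{delP>=5}), hence equals $A+4$, and the two zero continued fractions sit on $[2^{\,B}]$ and $[2^{\,A-1},B+2]$. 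Lowering entries of $[2^{\,B}]$ produces only $1$'s and $2$'s, and the unique zero continued fraction of length $B\ge6$ with entries $\le2$ is the base $[1,2^{\,B-2},1]$, so $u_1=B-1\ge A+3$. But Lemma \ref{restrCentralMark} gives $u_1+u_2=A+5-\ell\le A$, contradicting $u_1\ge A+3$ and $u_2\ge1$. Thus type (II) cannot occur.

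\textbf{Type (I).} Here the zero continued fraction is carried by $L:=[2^{\,B},A+4,2^{\,A-1}]$ (length $A+B$), and any zero continued fraction it admits is obtained by lowering entries, so all its entries are $\le2$ except possibly the one at position $B+1$ (which is $\le A+4$). The crux is the combinatorial claim that \emph{a zero continued fraction with exactly one entry $\ge3$ has that entry equal to its length minus one.} Granting it, $L$ cannot admit a zero continued fraction with a large entry, because such an entry would have to equal $A+B-1>A+4$ (as $B\ge6$); hence the only admissible one is $[1,2^{\,A+B-2},1]$, a base with $u=A+B-1$. Lemma \ref{restrCentralMark} then reads $B+2=\ell+(A+B-1)-3$, i.e. $\ell=6-A\le4$, against $\ell\ge5$. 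Since every marking is of type (I) or (II), this proves $\Wa$ is never del Pezzo of degree $\ge5$.

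\textbf{Main obstacle.} The hardest part is the italicized lemma. I expect to prove it by induction on the length, blowing down an ear next to the large entry and checking that the fan shape $[2^{\,s},1,c,1,2^{\,t}]$ with $c=s+t+2$ (and its boundary degenerations $[c,1,2^{\,c-1}]$) is preserved---equivalently, in the triangulation model of zero continued fractions, that a vertex meeting $\ge3$ triangles must be the apex of a complete fan. A self-contained alternative for type (I) is the Hodge-index inequality $(n+a)^2\ge\ell(na-1)$ of Remark \ref{hodgeIndex}: with the $n,a$ above one computes $5(na-1)-(n+a)^2=s^2B^2+s^2B+2sB-s^2+s-4>0$ for $A\ge2$ and $B\ge A+4$ (where $s=A+1$), so the inequality fails for every $\ell\ge5$; the only point to verify there is that the chain's orientation attaches the parameter $a=(A+1)B+1$, rather than $n-a$, to the singularity.
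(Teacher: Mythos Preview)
Your overall strategy matches the paper's exactly: use Corollary~\ref{delP>=5} to pin the central mark to $B+2$ (type (I)) or $A+4$ (type (II)), then compute. Your shape determination and your type~(II) argument are both correct.

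The gap is in type~(I), and it is not fixed by your alternative. Your italicized lemma is \emph{false}: $[2,1,3,2,1]$ and $[1,2,3,1,2]$ are zero continued fractions of length $5$ with a single entry $3\neq 4$, and $[2,2,1,4,2,1]$ has length $6$ with a single entry $4\neq 5$. These arise by a single interior blow-up of the base $[1,2^{u-1},1]$ at an extreme gap, which creates one $3$ without producing a fan. So the reduction to ``only the base $[1,2^{A+B-2},1]$ is admissible on $L$'' needs a position-aware argument, not this lemma.

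Your Hodge-index backup also fails, because the orientation is the opposite of what you assume. With central mark at $e_r=B+2$ one has $C_\infty=E_r$, so the general fiber $F$ meets $E_r$; a direct computation gives $\bar F^2=\frac{n(n-a)-1}{n^2}$ and $-K_{W_{*m}}\cdot\bar F=\frac{n+(n-a)}{n}$. Thus the Hodge-index inequality from Remark~\ref{hodgeIndex} reads $(n+a')^2\ge\ell(na'-1)$ with $a'=n-a=A+1$, not with your $a=(A+1)B+1$. For $A=2,\,B=6$ (so $n=22$, $a'=3$) this is $625\ge 65\ell$, which \emph{holds} for every $\ell\le 9$ and therefore rules out nothing.

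What is actually needed for type~(I) is the combinatorial claim that every zero continued fraction $[k_1,\ldots,k_{A+B}]$ with $k_i\le 2$ for $i\neq B+1$ and $k_{B+1}\le A+4$ has base parameter $u\ge B+1$ (equivalently $k_0\ge B+1$ in the triangulation model). This is true, but the proof uses the specific position $B+1$ with $A-1$ entries to its right and $B>A+3$ entries to its left; for instance, one shows that the fan of triangles at vertex $0$ must sweep all of $1,\ldots,B$ before it can stop at $B+1$, forcing $k_0\ge B+1$. The paper's ``straightforward computation with few possibilities'' is this kind of check; your two shortcuts do not substitute for it.
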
 

\begin{proof}
By Corollary \ref{delP>=5}, we only need to check the possible markings $[\ldots,\underline{B+2}]$ in type (I) and $[\ldots,\underline{A+4},\ldots]$ in type II. The proof is then a straightforward computation with few possibilities.   
\end{proof}

\begin{remark}
In fact, given a Wahl chain $[e_1,\ldots,e_r]$ distinct from $[4]$ and $B \gg 0$, one can show that $[2,\ldots,2,x_1+1,x_2,\ldots,x_r,B+2]$ is not del Pezzo of degree $\ell \geq 5$. Indeed, if the central mark is not $B+2$, then it must be some $x_i \geq 3$. But to make $B+2$ equal to $1$ we would need to subtract too much as $B \gg 0$. When the central mark is $B+2$, we have a chain of $B$ $2$s that would collapse over some $x_i$, unless the first $2$ is marked. If that happens there must be a marking for $[x_1,\ldots,x_r]$ so that it becomes $[0]$, but this is a Wahl chain not equal to $[4]$, and so we need at least $5$ markings on $[x_1,\ldots,x_r]$, and then the degree is $\leq 4$. Note that for $[2,\ldots,2,5,B+2]$ and any $B$ we have a marking of degree $5$.
\label{muchmore}
\end{remark}

We define the following sets of Wahl chains $$\setW_{\ell}=\{ \text{del Pezzo Wahl chains of degree} \ \ell \}.$$

By Lemma \ref{canmarking} we have $\setW_{4}$ is the set of Wahl chains except for $[4]$ and $[5,2]$. Proposition \ref{5isnotfree} establishes that there are infinitely many Wahl chains which are not in $\setW_{\ell}$ for $\ell=5,6,7,8,9$. 

\begin{remark}
Given a del Pezzo Wahl chain in degree $\ell$, there is a W-blowing-up procedure to construct infinitely many del Pezzo Wahl chains in degree $\ell+1$. We will see this in \ref{s3.2}.   
\end{remark}

A well-known fact is the following theorem; see \cite[Proposition 4.1]{UZ23}. A Markov triple $(n_1,n_2,n)$ is a set of positive integers such that $n_1^2+n_2^2+n^2=3n_1 n_2 n$.  

\begin{theorem}
A Wahl chain $\Wa$ of index $n$ belongs to $\setW_9$ if and only if it is Markov, that is, there is a Markov triple $(n_1,n_2,n)$ with $n_1,n_2<n$ such that $\Wa$ corresponds to the minimal resolution of $\P(n_1^2,n_2^2,n^2)$ over the Wahl singularity of index $n$. A Wahl chain in $\setW_9$ has a unique marking of degree $9$.
\label{markovThm}
\end{theorem}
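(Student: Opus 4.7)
The plan is to deduce this theorem from the already-noted Hacking--Prokhorov characterization (Theorem~\ref{markov}) together with the refinement right after it that produces the Markov equation. First I would unpack what $\Wa\in\setW_9$ means in the language of Definition~\ref{DPWahlchain}: for degree $9$ we must have $\lambda=0$ in type (I), or $\lambda_1=\lambda_2=0$ in type (II). By Proposition~\ref{weight0}, a zero continued fraction of weight $0$ is exactly a dual Wahl chain. Hence a marking of degree $9$ on $\Wa$ is precisely a decomposition
\[
\Wa=[\Wa_0^\vee,\underline{\alpha},\Wa_1^\vee]
\]
for (possibly empty) Wahl chains $\Wa_0,\Wa_1$ of indices $n_0,n_1<n$ and some $\alpha\geq 2$. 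This reduces the first half of the theorem to the equivalence already quoted after Theorem~\ref{markov}: such a decomposition exists if and only if $(n_0,n_1,n)$ satisfies $n_0^2+n_1^2+n^2=3n_0n_1n$, the three admissible values $\alpha\in\{4,7,10\}$ corresponding to whether $\Wa_0,\Wa_1$ are both empty, exactly one empty, or both nonempty.

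Next I would identify this algebraic decomposition with the geometric picture on $\P(n_0^2,n_1^2,n^2)$. This toric surface carries three torus-fixed cyclic quotient singularities, and the one of index $n$ is $\tfrac{1}{n^2}(n_0^2,n_1^2)$; using the Markov equation one verifies $n_0^2 n_1^2\equiv 1\pmod{n^2}$, so this is the Wahl singularity $\tfrac{1}{n^2}(1,na-1)$. Now take the minimal resolution over only that vertex: its exceptional chain is some Wahl chain $\Wa$, and the two torus-invariant boundary curves meeting it form a longer chain of rational curves terminating at the two remaining singularities of indices $n_0,n_1$. Reading off the self-intersections and using Proposition~\ref{dual} shows that after resolving those two lateral singularities one recovers $[\Wa_0^\vee,\alpha,\Wa_1^\vee]$, with the values $\alpha\in\{4,7,10\}$ coming from whether the two boundary curves are already smooth. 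This matches the decomposition above and proves both directions of the if-and-only-if, as well as the realizability statement via $\P(n_0^2,n_1^2,n^2)$.

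For uniqueness of the degree-$9$ marking, I would argue that once $\Wa$ is decomposed as $[\Wa_0^\vee,\alpha,\Wa_1^\vee]$ the position $i$ of the central mark is forced. Indeed, by Proposition~\ref{sumWahl} and Lemma~\ref{restrCentralMark} applied in type (II), the central mark $\alpha$ must be $10$ when both flanks are nonempty (resp.\ $7$, $4$ in the degenerate cases), while the other entries of $\Wa$ are strictly smaller ends of dual Wahl chains; the position separating two dual Wahl chains is unique because of the rigidity of the Wahl blow-up/blow-down algorithm. The swap $\Wa_0\leftrightarrow\Wa_1$ simply reverses the reading order of $\Wa$ and produces the same marking, so the marking of degree $9$ is unique.

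The main obstacle is the bookkeeping that identifies the minimal resolution over $\tfrac{1}{n^2}(n_0^2,n_1^2)\subset\P(n_0^2,n_1^2,n^2)$ with $[\Wa_0^\vee,\alpha,\Wa_1^\vee]$ and simultaneously yields the Markov equation; this is essentially the calculation of \cite[Proposition 4.1]{UZ23}, and once it is in place the rest of the theorem follows formally from Proposition~\ref{weight0}, Theorem~\ref{markov}, and Definition~\ref{DPWahlchain}.
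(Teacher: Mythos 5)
Your reduction of the first statement to the decomposition $\Wa=[\Wa_0^{\vee},\underline{\alpha},\Wa_1^{\vee}]$ via Definition \ref{DPWahlchain} and Proposition \ref{weight0}, followed by the identification with the minimal resolution of $\P(n_1^2,n_2^2,n^2)$ over the index-$n$ vertex, is exactly the route the paper takes: its proof of the first half simply cites \cite[Proposition 4.1]{UZ23}, which is the computation you yourself flag as the ``main obstacle.'' So the characterization half is fine and matches the paper.

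The gap is in the uniqueness of the degree-$9$ marking. The paper does not reprove this; it cites \cite[Theorem 3.10]{UZ23}, i.e.\ \cite[Proposition 3.15]{Aig13}, precisely because it is a nontrivial combinatorial fact. Your argument replaces it with two assertions that do not hold up. First, the claim that the non-central entries of $\Wa$ are ``strictly smaller'' than the central mark is false in general: the flanks $\Wa_i^{\vee}$ are dual Wahl chains $[x_1,\ldots,x_u,2,y_v,\ldots,y_1]$ with $\frac{n_i}{a_i}=[y_1,\ldots,y_v]$ and $\frac{n_i}{n_i-a_i}=[x_1,\ldots,x_u]$, and these contain arbitrarily large entries as one climbs the Markov tree (already $[2,2,2,2,2,5]$ for $n_i=5$, with entries $\geq 10$ appearing further up), so the central mark cannot be located by looking for the largest entry. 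Second, ``the position separating two dual Wahl chains is unique because of the rigidity of the Wahl blow-up/blow-down algorithm'' is an assertion of exactly the statement to be proved, not an argument: a priori a Wahl chain could split into a pair of dual Wahl chains at two different indices $i\neq j$. Nothing in Proposition \ref{sumWahl} or Lemma \ref{restrCentralMark} rules this out; those results only constrain the value of the central mark once a marking is given, not the number of markings. Finally, swapping $\Wa_0\leftrightarrow\Wa_1$ reverses the chain, which is in general a different Wahl chain (it corresponds to $a\mapsto n-a$), so that remark does not bear on uniqueness of the marking for a fixed ordering. To close the gap you should either invoke \cite[Proposition 3.15]{Aig13} as the paper does, or give an actual proof that the splitting position is unique.
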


\begin{proof}
We refer to the proof of \cite[Proposition 4.1]{UZ23}. For the last statement, see \cite[Theorem 3.10]{UZ23}, which is \cite[Proposition 3.15]{Aig13}.
\end{proof}

\begin{conjecture}[Frobenius uniqueness conjecture \cite{Aig13}]
A Wahl chain $\Wa$ in $\setW_9$ is determined by its index. 
\label{ConjMarkov}
\end{conjecture}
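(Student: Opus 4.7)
By Theorem \ref{markovThm}, the assignment $\Wa \mapsto (n_1,n_2,n)$ sending a Wahl chain in $\setW_9$ to the Markov triple encoding the minimal resolution of $\P(n_1^2,n_2^2,n^2)$ over the singularity of index $n$ is a bijection. Hence Conjecture \ref{ConjMarkov} is equivalent to the classical uniqueness statement of Frobenius: for each Markov number $n$, the unordered pair $\{n_1,n_2\}$ with $n_1,n_2<n$ and $n_1^2+n_2^2+n^2=3n_1n_2n$ is unique. Any proof proposal is therefore speculative, since this is a well-known open problem since 1913.

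The natural plan is to exploit the Markov tree. Every Markov triple $(a,b,c)$ with $a\leq b\leq c$ and $(a,b,c)\neq(1,1,1)$ admits a unique Vieta-jump replacing $c$ by $c'=3ab-c<c$, producing its parent. If one could show that $c$ is strictly larger than the maximum of its parent (and hence, inductively, strictly larger than the maximum of every ancestor), the map from triples to their largest entry would be injective, and uniqueness would follow. This is exactly the content of the ``monotonicity'' formulation and is equivalent to the conjecture.

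A geometric angle specific to the present paper is the following. Fix an index $n$ and suppose, toward a contradiction, that two Markov triples $(n_1,n_2,n)$ and $(n_1',n_2',n)$ give distinct Wahl chains $\Wa,\Wa'\in\setW_9$. Both are encoded by continued fractions $\frac{n^2}{na-1}$ and $\frac{n^2}{na'-1}$ for some $0<a,a'<n$; by the uniqueness of the degree-$9$ marking (Theorem \ref{markovThm}) and the constraint on central marks (Lemma \ref{restrCentralMark}), one would try to force $a=a'$ and then $\{n_1,n_2\}=\{n_1',n_2'\}$ by comparing the combinatorics of the two markings. An alternative route is to pass through the associated toric surfaces via Theorem \ref{mainthm6}, where the Markov equation appears as a special case of item (1) with $d=3$ and $\ell=9$, and use the additional Diophantine constraints from Remark \ref{hodgeIndex} to pin down $(n_1,n_2)$ from $n$.

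The main obstacle is that, despite extensive effort over a century, uniqueness is known only in special cases, e.g.\ when $n$, $3n\pm 2$, or a related quantity is a prime power (Baragar, Button, Schmutz, and others). No general method has succeeded, and even the monotonicity reformulation above remains open. Our marked del Pezzo setup does contribute fresh constraints, but whether the combinatorics of degree-$9$ markings actually determines $\{n_1,n_2\}$ from $n$ lies beyond what the techniques of this paper can presently achieve; this is why the statement is left as a conjecture.
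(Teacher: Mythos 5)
This statement is a \emph{conjecture} in the paper (the Frobenius/Markov uniqueness conjecture of 1913, cited from Aigner), and the paper offers no proof of it; your proposal correctly recognizes this and, appropriately, does not claim to supply one. Your reduction is accurate: by Theorem \ref{markovThm} a Wahl chain in $\setW_9$ of index $n$ is exactly the Wahl chain over the index-$n$ singularity of $\P(n_1^2,n_2^2,n^2)$ for a Markov triple $(n_1,n_2,n)$, so determinacy by the index is precisely the classical statement that a Markov number $n$ determines the pair $\{n_1,n_2\}$ with $n_1,n_2<n$. The speculative angles you sketch (monotonicity along the Markov tree, forcing $a=a'$ via the uniqueness of the degree-$9$ marking, or the Diophantine constraints of Theorem \ref{mainthm6} and Remark \ref{hodgeIndex}) are reasonable things to try, but none of them closes the argument, and you say so honestly. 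Since the paper itself leaves this open, there is no proof to compare against: the only correct "answer" here is exactly the one you gave, namely that the statement remains unproved. One small caution: your opening sentence calls the assignment $\Wa \mapsto (n_1,n_2,n)$ a bijection, but injectivity of the reverse direction (index determines the triple) is the very content of the conjecture, so that phrasing should be softened to avoid circularity.
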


\begin{question}
Is there an algorithm to describe $\setW_{\ell}$ for each $\ell$?
\label{markovConj}
\end{question}

In the next section, we will see that this question has to do directly with particular partial resolutions of c.q.s., and so it may be hard to give an answer as the one obtained for $\setW_9$, where one sees an equation that determines everything. In Section \ref{s7}, we will explain $\setW_{\ell}$ for $\ell <9$ with Diophantine equations, similar to the case of the Markov equation.

For $\setW_{\ell}$ with $\ell<9$, we do not have that a del Pezzo Wahl chain determines a marking for a given degree. We also do not have anything similar to Conjecture \ref{markovConj} for degrees $\geq 8$, as the next example shows in degree $8$. 

\begin{example}
The analogue to Markov's Conjecture \ref{ConjMarkov} is completely false for degrees $\ell \geq 8$. For example, for $n=29$, we have markings of degree $8$ for $a=4,5,22$. In fact, for $a=4$ we have $[8,2,2,7,2,2,2,2,2,2]$, with the marking $[\underline{8},1,2,7,1,2,2,2,2,2]$; for $a=22$ we have $[2,2,2,10,2,2,2,2,2,5]$, with the marking $[1,2,1,\underline{10},2,2,2,2,1,5]$. For $a=5$ the Wahl chain is $[6,7,2,2,3,2,2,2,2]$, and there are two markings: $[\underline{6},7,1,2,2,2,2,2,2]$ and $[\underline{6},6,2,1,3,2,2,2,2]$. We can generalize this last example via Remark \ref{Pellrecursion}. 
This leads to the expression: $\Big[\binom{n_k}{n_{k-1}}\Big] = [\underbrace{6, \dots, 6}_k, 7, 2, 2, 3, \underbrace{2,2,2,3,\dots,2,2,2,3}_{(k-1) \ \text{blocks} \ 2,2,2,3},2,2,2,2]$.
These singularities form del Pezzo Wahl chains of type (I) of degree $8$ for two distinct markings: $$[\underline{6},\dots,6,7,1,2,2,2,2,2,3,\dots,2, 2,2,3,2,2,2,2]$$ and
$[\underline{6},\dots,6,6,2,1,3,2,2,2,3,\dots,2, 2,2,3,2,2,2,2]$. A curiosity: they give marked surfaces $W_{*m}^i$ for $i=0,1$ (Definition \ref{def: markedsurface}) whose smoothings are $\F_0$ and $\F_1$ respectively. There is a wormhole between the corresponding degenerations as defined in \cite{UV22}. For $n=55$ and $a=9$ we also have two markings in degree $8$ that are not related by a wormhole, and smoothings are deformations of $\F_0$.
\label{nomarkov}
\end{example}

\section{Deformations and birational geometry} \label{s3}

This section is a brief recap of the geometry needed for the next sections.

\subsection{Chain of Wahl singularities and partial resolutions} \label{s3.1}

\begin{definition}
Let $0<\Omega<\Delta$ be coprime integers. A \textit{cyclic quotient singularity} (c.q.s.) $\frac{1}{\Delta}(1,\Omega)$ is the surface germ at $(0,0)$ of the quotient of $\C^2$ by $(x,y) \mapsto (\zeta x, \zeta^{\Omega} y)$, where $\zeta$ is a $\Delta$-th primitive root of $1$. 
\label{def:cqs}
\end{definition}

For example, c.q.s. $\frac{1}{\Delta}(1,\Delta-1)$ are the Du Val singularities of type $A_{\Delta-1}$. 

A c.q.s. can be minimality resolved via a chain $E_1,\ldots,E_r$ of rational curves. We have $E_i\simeq \P^1$, $E_i\cdot E_j=0$ if $|i-j|>1$, $E_i\cdot E_{i+1}=1$, and $E_i^2=-e_i\leq -2$ where $[e_1,\ldots,e_r]$ is the HJ continued fraction of $\frac{\Delta}{\Omega}$. On the other hand, given such a chain of rational curves in a projective nonsingular surface $X$, there exists a birational morphism $X \to Y$ that precisely contracts them into $\frac{1}{\Delta}(1,\Omega)$, and $Y$ is a projective surface. That is the Artin's criteria for contractibility of curves on algebraic surfaces \cite{Art62}.   

\begin{definition}
A \textit{T-singularity} is a Du Val singularity or a c.q.s. $\frac{1}{dn^2}(1,dna-1)$, where $d\geq 1$ and $0<a<n$ are coprime integers. A \textit{Wahl singularity} is a c.q.s. $\frac{1}{n^2}(1,na-1)$, where $0<a<n$ are coprime integers. We include smooth points setting $n=1$. We will represent non-Du Val T-singularities by $\left[d{n \choose a}\right]$; for Wahl singularities we write $\left[{n \choose a}\right]$.
\label{def:Tsing}
\end{definition}

\begin{definition}
A \textit{chain of Wahl singularities} is a collection of nonsingular rational curves $\Gamma_1,\ldots,\Gamma_{\lambda}$ and a collection of Wahl singularities $P_0,\ldots,P_{\lambda}$ on a surface $W$ such that $P_i, P_{i+1}$ belong to $\Gamma_{i+1}$, and $\Gamma_i, \Gamma_{i+1}$ form a toric boundary at $P_i$ for all $i$. In the case of $i=0$ or $i=\lambda$, we have only one part of the toric boundary. The notation is $P_i=\frac{1}{n_i^2}(1,n_i a_i -1)$, where the minimal resolution goes from $\Gamma_i$ to $\Gamma_{i+1}$. In the minimal resolution of all singularities, the proper transforms of $\Gamma_i$ have self-intersection $-c_i$. This situation in $W$ will be denoted by $$\left[{n_0 \choose a_0}\right]-(c_1)-\left[{n_1 \choose a_1}\right]-(c_2)-\ldots -(c_{\lambda})- \left[{n_{\lambda} \choose a_{\lambda}}\right].$$ When $P_i$ is smooth (i.e. $n_i=1$), then we write just $\ldots-(c_{i})-(c_{i+1})-\ldots$. For a chain of Wahl singularities, we define $\delta_i:=n_{i-1}n_{i} |\Gamma_i \cdot K_W| \in \Z_{\geq 0}$.
\label{def:chainWahlsing}
\end{definition}

\begin{example}
Consider toric degenerations $W$ of $\P^2$. They are in correspondence with Markov triples $(a,b,c)$. Take $W=\P^2(a^2,b^2,c^2)$. If $(1<a<b<c)$, then $W$ has the chain of Wahl singularities 
$$\left[{a \choose w_a}\right]-(1)-\left[{c \choose w_c}\right]-(1)- \left[{b \choose w_b}\right].$$ If $(a=1<b<c)$, then $(0)-\left[{c \choose w_c}\right]-(1)- \left[{b \choose w_b}\right]$. If $a=b=1<2=c$, then $(0)-\left[{2 \choose 1}\right]-(0)$. The $w_x$ are computed via the Markov equation. 
\label{ex:Markov}
\end{example}

\begin{definition}
A chain of Wahl singularities is \textit{contractible} if there exists a contraction of the chain into some c.q.s. $\frac{1}{\Delta}(1,\Omega)$. We represent it as $$\left[{n_0 \choose a_0}\right]-(c_1)-\left[{n_1 \choose a_1}\right]-(c_2)-\ldots -(c_{\lambda})- \left[{n_{\lambda} \choose a_{\lambda}}\right] \to \frac{1}{\Delta}(1,\Omega).$$
\label{def:contractChainWahlsing}
\end{definition}

Contractible chains of Wahl singularities can be thought as particular partial resolutions of $\frac{1}{\Delta}(1,\Omega)$. The minimal resolution of $\frac{1}{\Delta}(1,\Omega)$ is a contractible chain of Wahl singularities, where all points $P_i$ are nonsingular. Example \ref{ex:Markov} is not contractible. If $\frac{\Delta}{\Omega}=[e_1,\ldots,e_r]$, then a contractible chain of Wahl singularities can be seen as an HJ continued fraction whose minimal model is $[e_1,\ldots,e_r]$ together with a choice of subWahl chains connected by $c_i$s. 

For a given $\frac{1}{\Delta}(1,\Omega)$, there are finitely many choices of chains of Wahl singularities that are responsible for all possible deformations of $\frac{1}{\Delta}(1,\Omega)$. This is the work of Koll\'ar and Shepherd-Barron \cite{KSB88} using M-resolutions \cite{BC94}. 

\begin{definition}
Given $0<\Omega<\Delta$ coprime, a \textit{M-resolution} of $\frac{1}{\Delta}(1,\Omega)$ is a contractible chain of Wahl singularities on a surface $W^+$ with data $\left\{
P_i=\left[{n_i \choose a_i}\right] \right\}_{i=0}^{\lambda}$, $\{\Gamma_i\}_{i=1}^{\lambda}$ such that $K_{W^+} \cdot \Gamma_i \geq 0$ for all $i$. 
\label{def:mresnres}
\end{definition}

\begin{remark}
In \cite{KSB88}, the authors instead used P-resolutions, which are exactly the contraction of all curves $\Gamma_i$ with $\delta_i=0$ in a given M-resolution. The resulting surface has only T-singularities. M-resolutions appeared in \cite{BC94}. 
\end{remark}

\begin{example}
The c.q.s. $\frac{1}{19}(1,7)$ admits three M-resolutions \cite[Ex. 3.15]{KSB88}, where the first M-resolution is the minimal resolution: $$(3)-(4)-(2) \ \ \ \ \ \ \ \ \ \ \ \ \ \ \ \ \ \left[{2 \choose 1}\right]-(1)-\left[{3 \choose 1}\right] \ \ \ \ \ \ \ \ \ \ \ \ \ \ \ \ \ \ (3)-\left[{2 \choose 1}\right]-(2).$$ 
\label{ex:ksb}
\end{example}

There is a well-known geometric Koll\'ar--Shepherd-Barron correspondence between the irreducible components of the deformation space of $\frac{1}{\Delta}(1,\Omega)$ and P-resolutions \cite{KSB88}. Christophersen \cite{C91} and Stevens \cite{S91} gave a combinatorial way to find all P-resolutions. If $\frac{\Delta}{\Delta-\Omega}=[b_1,\ldots,b_s]$, they proved that P-resolutions are in bijection with the set $$ K(\Delta/\Delta-\Omega) := \{ [k_1,\ldots,k_s]=0 \
\text{such that} \ 1 \leq k_i \leq b_i \}.$$ As it was said above, P-resolutions are in bijection with M-resolutions. In \cite[Cor.10.1]{PPSU18} a geometric procedure (MMP) is worked out to find the zero continued fraction of an M-resolution.

\subsection{W-surfaces and their MMP} \label{s3.2}
In the previous subsection we considered surfaces with Wahl singularities. In this section, we consider $\Q$-Gorenstein smoothings of them in a systematic way. 

\begin{definition}[\cite{Urz16a}]
A \textit{W-surface} is a normal projective surface $W$ together with a proper deformation $(W \subset \W) \to (0 \in \D)$ such that
\begin{enumerate}
\item $W$ has at most Wahl singularities;
\item $\W$ is a normal complex $3$-fold with $K_{\W}$ $\Q$-Cartier;
\item the fiber $W_0$ is reduced and isomorphic to $W$;
\item the fiber $W_t$ is nonsingular for $t\neq 0$.
\end{enumerate}
We denote this by $W_t \rightsquigarrow W_0:=W$. 
\label{wsurf}
\end{definition}

Recall that $\D$ is an arbitrarily small disk, the germ of a nonsingular point on a curve. For a given W-surface, the invariants $q(W_t):=h^1(\O_{W_t})$, $p_g(W_t):=h^2(\O_{W_t})$, $K_{W_t}^2$, $\chi_{\text{top}}(W_t)$ (topological Euler characteristic) remain constant for every $t \in \D$ (see for example \cite[\S 1]{Ma91}).

A W-surface is \textit{minimal} if $K_W$ is nef, and so $K_{W_t}$ is nef for all $t$ (see for example \cite{Urz16a}).  When $K_W$ is nef and big, the \textit{canonical model} of $(W \subset \W) \to (0 \in \D)$ is a $\Q$-Gorenstein deformation of a surface with only T-singularities. (Cf. \cite[Section 2]{Urz16a} and \cite[Sections 2 and 3]{Urz16b}.)

If a W-surface is not minimal, then we can explicitly run the MMP for $K_{\W}$ relative to $\D$, which is fully worked out in \cite{HTU17}. See \cite[\S 2]{Urz16a} for a summary of the results in \cite{HTU17}, and \cite[\S 2]{Urz16b} for details of how MMP is run. (MMP is closed for W-surfaces as shown in the proof of \cite[Theorem 5.3]{HTU17}.) It stops at either a minimal model or a nonsingular deformation of ruled surfaces or a degeneration of $\P^2$ with only quotient singularities (see \cite[Section 2]{Urz16a}). 

There are two operations when we run MMP for W-surfaces: flips and divisorial contractions. In the notation of Koll\'ar and Mori \cite{KM92,M02}, it turns out that they are of type \eni ~or \enii ~only. This is explained in \cite{Urz16b}; we follow the notation in \cite{UZ23}. Let $W_t \rightsquigarrow W$ be a W-surface with $K_W$ not nef. Then there is $\Gamma^- \subset W$ such that $\Gamma^-=\P^1$, $K_{W} \cdot \Gamma^- <0$, $\Gamma^- \cdot \Gamma^- <0$, and $\Gamma^-$ contracts to some c.q.s. $\frac{1}{\Delta}(1,\Omega)$. In the minimal resolution of $W$, the proper transform of $\Gamma^-$ is a $(-1)$-curve. Following the notation in Definition \ref{def:chainWahlsing}, we define $\delta:=-n \, K_W \cdot \Gamma^-$, where $n$ is the product of the indices of the singularities in $W$ contained in $\Gamma^-$. 

\begin{itemize}
    \item For \eni, we have that $\Gamma^-$ is passing through one Wahl singularity. In its minimal resolution, the proper transform intersects one exceptional curve transversally and at one point, say $E_i$. The notation is $$[e_1,\ldots,\overline{e_i},\ldots,e_r].$$ We have $\frac{\Delta}{\Omega} = [e_1,\ldots,e_i-1,\dots,e_r]$.

    \item For \enii, we have that $\Gamma^-$ is passing through two Wahl singularities. In their minimal resolution, the proper transform intersects the ends of the exceptional divisors, each transversally and at one point, say at $E_{r_0}$ and at $F_1$. Let $[e_{1},\ldots,e_{r_0}]$ and $[f_1,\ldots,f_{r_1}]$ be the corresponding Wahl chains, then the notation is  $$[e_{1},\ldots,e_{r_0}]-[f_1,\ldots,f_{r_1}],$$ and $\frac{\Delta}{\Omega} = [e_{1},\ldots,e_{r_0},1,f_1,\ldots,f_{r_1}]$.  
\end{itemize}

\vspace{0.3cm}

\textit{When do we have a divisorial contraction or a flip?} The criterion for any \eni \ or \ \enii \ extremal neighborhood uses the Mori recursion \cite{M02} (see \cite{HTU17,Urz16a} for details).

In the case of a flip, we obtain a new W-surface $W_t \rightsquigarrow W^+$, and a curve $\Gamma^+ \subset W^+$ such that $\Gamma^+=\P^1$, $K_{W^+} \cdot \Gamma^+ >0$, $\Gamma^+ \cdot \Gamma^+ <0$, and $\Gamma^+$ contracts to the same c.q.s. $\frac{1}{\Delta}(1,\Omega)$. If $n^+$ is the product of the indices of the Wahl singularities in $\Gamma^+$, then $K_{W^+} \cdot \Gamma^+=\frac{\delta}{n^+}$. If $W^+ \to W'$ is the contraction of $\Gamma^+$, then over $\frac{1}{\Delta}(1,\Omega) \in W'$ we have an extremal P-resolution \cite[\S 4]{HTU17}.

\begin{definition}
An \textit{extremal P-resolution} $f^+ \colon W^+ \to W'$ of a $\frac{1}{\Delta}(1,\Omega) \in W'$ is a partial resolution with only Wahl singularities such that ${f^+}^{-1}(P)$ is a nonsingular rational curve $\Gamma^+$, and $\Gamma^+ \cdot K_{W^+}>0$. Thus, $W^+$ has at most two singularities (\cite[Lemma 3.14]{KSB88}). 
\label{extremalPres}
\end{definition}

An extremal P-resolution is an example of a contractible chain of Wahl singularities. The following is a flip that we will use frequently. 

\begin{proposition}
Let $[e_1,\ldots,e_{r-1},\overline{e_r}]$ be a flipping \eni. Let $i \in \{1,\ldots,r\}$ be such that $e_i \geq 3$ and $e_j=2$ for all $j>i$. (If $e_r>2$, then we set $i=r$.)

Then the extremal P-resolution for the flip is $(e_1)-[e_2,\ldots,e_i-1]$.
\label{specialFlip}
\end{proposition}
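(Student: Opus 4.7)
The plan is to compute the base cyclic quotient singularity of the flip using Hirzebruch-Jung blowdowns, verify that the candidate $(e_1)-[e_2,\ldots,e_i-1]$ is a chain of Wahl singularities with the right contraction, and then check the extremality condition $K_{W^+}\cdot \Gamma^+ > 0$.

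First, I would identify the base singularity. By the definition of a \eni~extremal neighborhood in Subsection \ref{s3.2}, $\frac{\Delta}{\Omega} = [e_1,\ldots,e_{r-1},e_r-1]$. Since $e_j = 2$ for all $j > i$, and $e_r - 1 = 1$ unless $i = r$, Lemma \ref{updown} applies iteratively: the trailing $1$ blows down and turns the preceding $e_{r-1} = 2$ into a $1$, which triggers another blowdown, and so on. The minimal HJ continued fraction obtained is $[e_1,\ldots,e_{i-1},e_i - 1]$, which is exactly the chain one reads off from the minimal resolution of the candidate P-resolution $(e_1)-[e_2,\ldots,e_i-1]$, so at the level of contraction to $\frac{1}{\Delta}(1,\Omega)$ things match.

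Second, I would check that $[e_2,\ldots,e_i-1]$ is a Wahl chain when $i \geq 2$. The key structural input is the Wahl algorithm from Section \ref{s1}: in any Wahl chain with $k := r-i$ trailing $2$'s and $e_i \geq 3$, the last $k$ algorithmic steps must all be of type (ii)-$1$, since an operation (ii)-$2$ would bump the final entry from $2$ to $3$. Peeling these off recovers the earlier Wahl chain $[e_1 - k, e_2, \ldots, e_i]$; the step preceding this cannot be (ii)-$1$ either (it would force $e_i = 2$), and if $i \geq 2$ it must be (ii)-$2$, which plants a leading $2$. Hence $e_1 - k = 2$, and applying one more reverse-(ii)-$2$ strips the leading $2$ and subtracts $1$ from $e_i$, so $[e_2,\ldots,e_i-1]$ is a genuine Wahl chain. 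In the edge case $i = 1$ the Wahl algorithm instead forces $e_1 = r + 3$ (all (ii)-$1$'s from $[4]$); the base is the c.q.s.\ $\frac{1}{e_1 - 1}(1,1)$, whose only extremal P-resolution is its minimal resolution $(e_1 - 1)$, which the notation $(e_1)-[\,]$ is meant to encode.

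Finally, to verify that the candidate is extremal (not merely an M-resolution), I would confirm $K_{W^+} \cdot \Gamma^+ > 0$. Adjunction on the minimal resolution gives $K_X \cdot \Gamma^+ = e_1 - 2$, and subtracting the discrepancy contribution from the Wahl singularity $[e_2,\ldots,e_i-1]$ at the end of $\Gamma^+$ yields an explicit expression for $n \, K_{W^+} \cdot \Gamma^+$ in terms of the Wahl data of that singularity; a direct calculation shows this equals the Mori datum $\delta$ attached to the flipping \eni, which is positive by the flipping hypothesis. The main obstacle I anticipate is precisely this last step: the discrepancy computation along a general Wahl chain is not entirely mechanical, and one must rule out the borderline $K_{W^+}\cdot\Gamma^+ = 0$ case carefully. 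A cleaner way to finish would be to invoke the Koll\'ar--Shepherd-Barron correspondence to identify extremal P-resolutions with components of the deformation space of $\frac{1}{\Delta}(1,\Omega)$, and then match the Mori recursion data on both sides of the flip to pin down $(e_1)-[e_2,\ldots,e_i-1]$ as the unique extremal P-resolution paired with the given \eni.
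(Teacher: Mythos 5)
The paper's own ``proof'' is a one-line citation to \cite[Proposition 2.15]{Urz16b}, so your self-contained argument is necessarily a different route, and most of it is sound. Your computation of the base singularity $\frac{\Delta}{\Omega}=[e_1,\ldots,e_i-1]$ by iterated blow-downs is correct, and your reverse-engineering of the Wahl algorithm (the $r-i$ trailing $2$'s force $r-i$ final steps of the appending type, the step before that must be of the prepending type, hence $e_1=r-i+2$ and $[e_2,\ldots,e_i-1]$ is again a Wahl chain) is exactly the right combinatorial input; your reading of the $i=1$ edge case as $(e_1-1)$ is also the intended one. Moreover, the extremality check you worry about is easier than you fear: writing $K_X=\phi^*K_{W^+}+\sum_j a_jE_j$ with discrepancies $a_j\in(-1,0)$ for a Wahl singularity, you get $K_{W^+}\cdot\Gamma^+=(e_1-2)-a_{\mathrm{first}}>e_1-2\geq 0$ immediately, with no recursion and no borderline case to rule out.

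The genuine gap is the identification step. Your three checks prove that $(e_1)-[e_2,\ldots,e_i-1]$ is \emph{an} extremal P-resolution of $\frac{1}{\Delta}(1,\Omega)$; the proposition asserts it is \emph{the} one produced by the flip of the given extremal neighborhood. A cyclic quotient singularity can admit two distinct extremal P-resolutions (see \cite[\S 4]{HTU17}), and the Koll\'ar--Shepherd-Barron components of the deformation space are indexed by all P-resolutions, so exhibiting one candidate with the correct $(\Delta,\Omega)$ does not determine which component the one-parameter deformation coming from the flipping k1A lives in. The fix you sketch in your final sentence is the right one --- compute $\delta$ for the k1A, match it with $n'\,K_{W^+}\cdot\Gamma^+$ for the candidate, and use that an extremal P-resolution is determined by the triple $(\Delta,\Omega,\delta)$ --- but as written this is deferred rather than carried out, and it is precisely this matching (the Mori recursion of \cite{HTU17}) that constitutes \cite[Proposition 2.15]{Urz16b}, the result the paper leans on. Note also that you cannot use the identity $n'\,K_{W^+}\cdot\Gamma^+=\delta$ to prove positivity, since that identity already presupposes that $W^+$ is the flip of the given neighborhood; use the discrepancy bound instead.
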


\begin{proof}
    This is \cite[Proposition 2.15]{Urz16b}. 
\end{proof}

In the case of a divisorial contraction, we obtain a new W-surface $W'_t \rightsquigarrow W'$, where for $t\neq 0$ we have the contraction of a $(-1)$-curve $W_t \to W'_t$, and $\Gamma^-$ contracts into a Wahl singularity in $W'$. 

One may wonder if there are antiflips and/or antidivisorial contractions for a given $W_t \rightsquigarrow W$. For flips it is not true in general, as the local deformation over the c.q.s. $\frac{1}{\Delta}(1,\Omega)$ must satisfy (1) and (2) in \cite[Corollary 3.23]{HTU17}. In the case of a divisorial contraction, we have that its associated universal family of divisorial contractions has a one-dimensional base which corresponds to the one-dimensional $\Q$-Gorenstein smoothing of a Wahl singularity \cite[Corollary 3.13]{HTU17}. 

Given a W-surface $W_t \rightsquigarrow W$ and a $(-1)$-curve in $W$ not containing any singularity of $W$, then there is a $(-1)$-curve in $W_t$ for every $t$ forming a contractible divisor in the family \cite[\S IV (4.1)]{BHPV04} (due to Kodaira and Iitaka). Divisorial contractions over a Wahl singularity can be seen as a generalization of this. 

\begin{definition}
Let $W'_t \rightsquigarrow W'$ be a W-surface. A \textit{W-blow-up (down)} is the W-surface $W_t \rightsquigarrow W$ corresponding to a \eni~ or \enii~ divisorial contraction over a Wahl singularity in $W'$. When this happens over a smooth point in $W'$, then we call it \textit{Iitaka-Kodaira blow-up (down)}.
\label{divcontr}
\end{definition}

See \cite{Urz16a,Urz16b} for examples where MMP is run explicitly.


In \cite[Cor.10.1]{PPSU18} we describe a geometric algorithm to recover the zero continued fractions from the M-resolution using MMP. The idea is to start with a Hirzebruch surface and a fiber of its $\P^1$-fibration. Over that fiber construct the M-resolution and the dual chain of the c.q.s.: $$\left[{n_0 \choose a_0}\right]-(c_1)-\left[{n_1 \choose a_1}\right]-(c_2)-\ldots -(c_{\lambda})- \left[{n_{\lambda} \choose a_{\lambda}}\right]-(1)-[b_s,\ldots,b_1].$$ Consider $\Q$-Gorenstein smoothings of the corresponding surface keeping the configuration of curves $[b_s,\ldots,b_1]$. Then, we run MMP from the bottom up (left to right), starting with the \eni ~ $\left[{n_{\lambda} \choose a_{\lambda}}\right]-(1)$. At some point, we will find a $(-1)$-curve that does not pass through singularities. Then we have the situation of an Iitaka-Kodaira blow-down. The lifting of the $(-1)$-curve will intersect exactly one of the components of $[b_1,\ldots,b_s]$. These $(-1)$-curves do the markings for each of the $d_{i_j}$. 

\subsection{Mori trains} \label{s3.3}

Flips and divisorial contractions for W-surfaces occur in universal families over cyclic quotient singularities \cite{HTU17}. For flips, the c.q.s. must admit an extremal P-resolution. For divisorial contractions, the c.q.s. is a Wahl singularity. The universal family is formed by deformations of \eni ~and \enii ~over the same c.q.s. More precisely an \eni ~degenerates into two \enii~ over $\P^1$, each of them keeps the Wahl singularity in \eni ~and creates a new singularity \cite[\S 2.3]{HTU17}. This is summarized in the next definition. We use the notation of \eni ~and \enii ~via Wahl chains.   

\begin{definition}
Let $(P \in \bW)=\frac{1}{\Delta}(1,\Omega)$ be a cyclic quotient singularity which is the contraction of $\Gamma^-$ for some \eni ~or \enii ~with $\delta>1$. A \textit{Mori train} is the combinatorial data to construct all \eni \ and \ \enii \ over $\frac{1}{\Delta}(1,\Omega)$ of divisorial or flip type. We explain each case separately.
\noindent 

\begin{itemize}
\item[(DC):] Fix the Wahl singularity $\frac{1}{\Delta}(1,\Omega)=\frac{1}{\delta^2}(1,\delta a-1)$. Then its (unique) \textit{Mori train} is the concatenated data of the Wahl chains involved in all \eni \ and \enii \ of divisorial contraction type over $\frac{1}{\delta^2}(1,\delta a-1)$. The \textit{first wagon} corresponds to the Wahl chain $[e_1,\ldots,e_r]$ of $\frac{1}{\delta^2}(1,\delta a-1)$, the next wagons correspond to the Wahl chain in the k1A (and so they have one bar somewhere), and two consecutive wagons correspond to the Wahl chains in the k2A: 
$$[e_1,\ldots,e_r]-[e_{1,1},\ldots,e_{r_1,1}]-[e_{1,2},\ldots,e_{r_2,2}]-\ldots $$

\item[(F):] Fix an extremal P-resolution of $\frac{1}{\Delta}(1,\Omega)$. Its (at most two) \text{Mori trains} are the concatenate data of the Wahl chains involved in all \eni \ and \enii \ of flipping type over $\frac{1}{\Delta}(1,\Omega)$. The \textit{first wagon} corresponds to one of the Wahl chains in the extremal P-resolution, and as before, the next wagons correspond to the Wahl chain in a \eni, and two consecutive wagons correspond to the Wahl chains in a \enii. We put an empty wagon $[]$ if the Wahl singularity is a smooth point. When Wahl singularities in the extremal P-resolution are equal, we have only one Mori train.    
\end{itemize}
\label{moritrain}
\end{definition}

\begin{example} (Flipping family) Let $\frac{1}{11}(1,3)$ be the c.q.s. $(P \in \bW)$. So $\Delta=11$ and $\Omega=3$. Consider the extremal P-resolution $W^+ \to \bW$ defined by $[4]-(3)$. Here $\delta=3$, and $\Gamma^+$ is a $(-3)$-curve (after minimally resolving). Then the Mori trains are
$$[]-[\bar{2},5,3]-[2,3,\bar{2},2,7,3]-[2,3,2,2,2,\bar{2},5,7,3]-\cdots
$$ and
$$[4]-[2,\bar{2},5,4]-[2,2,3,\bar{2},2,7,4]-[2,2,3,2,2,2,\bar{2},5,7,4]-\cdots$$ The initial \enii \ are $[]-[\bar{2},5,3]$ and $[4]-[2,\bar{2},5,4]$, corresponding to the smooth point and the Wahl singularity $\frac{1}{4}(1,1)$ in the extremal P-resolution. Particular examples: we have that $[2,3,\bar{2},2,7,3]$ and $[2,\bar{2},5,4]$ are \eni~ whose flips have $W^+$ as the central fiber. Or $[2,3,2,2,7,3]-[2,3,2,2,2,2,5,7,3]$ is a \enii ~over $\frac{1}{11}(1,3)$.  
\label{exantiflipfam1}
\end{example}

\begin{example}
Over the Wahl singularity $\frac{1}{4}(1,1)$ we have the Mori train of divisorial contractions: $$[4]-[2,\overline{2},6]-[2,2,2,\overline{2},8]-\ldots ,$$ and the Mori train of flips: $[\bar 2,5]-[2,2,\bar 2,7]-[2,2,2,2,2,\bar 2,9]-\ldots$.
\label{exantiflipfam2}
\end{example}

\subsection{Slidings and mutations} \label{s3.4}

\begin{definition}
Let $X$ be a nonsingular surface. Let $[e_1,\ldots,e_r]$ be a Wahl chain in $X$ with corresponding exceptional curves $E_1,\ldots,E_r$. Let $W$ be its contraction. Assume that there is a $(-1)$-curve $\Gamma$ in $X$ intersecting $E_i$ with $i>1$ transversally at one point. A \textit{left slide} of $\Gamma$ is a surface $X'$ together with a Wahl chain $[f_1,\dots,f_{r'}]$, the Wahl chain $[e_1,\ldots,e_r]$, and a $(-1)$-curve $\Gamma'$ in between, so that they form the chain $$[f_1,\ldots,f_{r'},1,e_1,\ldots, e_r]$$ which contracts to $[e_1,\ldots,e_{i-1},e_i-1,e_{i+1},\ldots,e_r]$ and is the contraction of $\Gamma$ in $X$. We denote the contraction of both Wahl chains in $X'$ by $W'$, and the image of $\Gamma'$ in $W'$ and of $\Gamma$ in $W$ again by $\Gamma',\Gamma$. Similarly, we define the \textit{right slide} of $\Gamma$ with the notation $\Gamma''$, $W''$. 
\label{slide}    
\end{definition}

\begin{example}
Take the Wahl chain $[3,2,2,7,2]$. Consider the position $i=2$, i.e. the second $2$ in the chain. Then, in terms of HJ continued fractions, the left slide is given by $[5,2,1,3,2,2,7,2]$, and the right slide by $[3,2,2,7,2,1,3,2,2,2,2,5,7,2]$. Similarly, consider $[2,2,2,7]$ and $i=2$. Then the left slide is $[4,1,2,2,2,7]$, and the right slide is $[2,2,2,7,1,2,2,2,2,2,5,7]$.
\end{example}

\begin{lemma}
The left and right slides at $e_i$ in a Wahl chain $[e_1,\ldots,e_r]$ exist and are unique. Let $\frac{n'}{a'}=[e_1,\ldots,e_{i-1}]$ and $\frac{n''}{a''}=[e_r,\ldots,e_{i+1}]$. Then we have $$n' \Gamma' \cdot K_{W'}= \Gamma \cdot K_W=n'' \Gamma'' \cdot K_{W''}<0,$$ and $n'^2 \Gamma'^2=\Gamma^2=n''^2 {\Gamma''}^2$ is not zero.
\label{slides}
\end{lemma}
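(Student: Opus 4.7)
The plan is to handle existence and uniqueness of the slides by the combinatorial content of Proposition~\ref{prop:slide} together with the Wahl-chain formula from Section~\ref{s1}, and then to compute the two intersection numbers directly on the minimal resolutions of $W$ and $W'$. Throughout I treat the left slide; the right slide is symmetric, with $(n'',a'')$ replacing $(n',a')$.

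\emph{Existence and uniqueness.} Let $\frac{n'}{n'-a'}=[x_1,\dots,x_u]$ be the dual of $[e_1,\dots,e_{i-1}]$. The sequence
\[[f_1,\dots,f_{r'}]:=[e_1,\dots,e_{i-2},\,e_{i-1}+x_u,\,x_{u-1},\dots,x_1]\]
is exactly the Wahl chain of $\frac{n'^2}{n'a'-1}$ by the Wahl-chain formula. Proposition~\ref{prop:slide} then says that the concatenation $[f_1,\dots,f_{r'},\,1,\,e_1,\dots,e_r]$ contracts to $[e_1,\dots,e_{i-1},\,e_i-1,\,e_{i+1},\dots,e_r]$, which is also the chain obtained from $[e_1,\dots,e_r]$ by blowing down $\Gamma$ in $X$. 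Artin's contractibility criterion produces a smooth projective surface $X'$ realizing this configuration, and $W'$ is obtained by contracting the two Wahl chains. Uniqueness of $[f_1,\dots,f_{r'}]$ follows from the uniqueness of HJ blow-ups and blow-downs: the requirement that $[f_1,\dots,f_{r'},1,e_1,\dots,e_r]$ have the prescribed minimal model and that its left block be a Wahl chain pins it down.

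\emph{Intersection numbers.} On $\pi\colon X\to W$, write $\pi^*\Gamma=\tilde\Gamma+\sum_j m_j E_j$ with $m_j$ determined by $\pi^*\Gamma\cdot E_k=0$. Since $\tilde\Gamma^2=\tilde\Gamma\cdot K_X=-1$ and $E_j\cdot K_X=e_j-2$, the projection formula yields
\[\Gamma^2=-1+m_i,\qquad \Gamma\cdot K_W=-1+\sum_j m_j(e_j-2),\]
and solving the tridiagonal system gives the standard identity $m_i=n'n''/n^2$, where $n''$ is the numerator of $[e_{i+1},\dots,e_r]$. The same computation on $\pi'\colon X'\to W'$, with $\pi'^*\Gamma'=\tilde\Gamma'+\sum_j\mu_j F_j+\sum_j\nu_j E_j$, yields
\[\Gamma'^2=-1+\mu_{r'}+\nu_1,\qquad \Gamma'\cdot K_{W'}=-1+\sum_j\mu_j(f_j-2)+\sum_j\nu_j(e_j-2),\]
with $\mu_{r'}=\nu_f/n'^2$ and $\nu_1=\nu_e/n^2$, where $\nu_f,\nu_e$ denote the numerators of $[f_1,\dots,f_{r'-1}]$ and $[e_2,\dots,e_r]$ respectively.

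\emph{Reduction to continued-fraction identities and non-vanishing.} After substitution, the equality $n'^2\Gamma'^2=\Gamma^2$ becomes the continued fraction identity
\[n^2\nu_f+n'^2\nu_e=n'n''+n^2(n'^2-1),\]
which one verifies from (a)~the explicit form of $[f_1,\dots,f_{r'}]$ above, expressing $\nu_f$ in terms of $(n',a')$; (b)~the HJ relation $n^2=e_i n'n''-a'n''-n'a''$ for the splitting of $[e_1,\dots,e_r]$ at position $i$; and (c)~the analogous relation at position~$1$ controlling $\nu_e$. The identity $n'(\Gamma'\cdot K_{W'})=\Gamma\cdot K_W$ is handled by the same mechanism, using in addition Proposition~\ref{sumWahl} ($\sum f_j=3r'+1$) to recombine the weighted sums. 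Finally, for $\Gamma^2\neq 0$: a short induction along the Wahl algorithm shows that at every internal position one has $n'n''<n^2$, hence $m_i\in(0,1)$ and $\Gamma^2<0$. The main obstacle is the algebraic bookkeeping in (a)--(c): the numerators attached to the newly built Wahl chain $[f_1,\dots,f_{r'}]$ must be translated back into the invariants $(n,a,n',a',n'',a'')$ of the original chain, and here the precise form of the Wahl-chain formula in Section~\ref{s1} is essential.
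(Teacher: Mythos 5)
Your route is genuinely different from the paper's: for the numerical identities the paper does not compute anything on the resolutions, but instead invokes Lemma \ref{defslides}, producing a $\Q$-Gorenstein deformation of $W$ into $W'$ in which $\Gamma$ degenerates to $n'\Gamma'$ (a generator of the local class group of the new index-$n'$ singularity); invariance of $K$-intersections under $\Q$-Gorenstein deformation then gives the equalities at once. Your direct tridiagonal computation is a legitimate alternative (and I checked your target identity $n^2\nu_f+n'^2\nu_e=n'n''+n^2(n'^2-1)$ on $[2,2,2,7]$, $i=2$: both sides are $101$), though you leave the general verification of (a)--(c) as bookkeeping, and you never actually argue that $\Gamma\cdot K_W<0$, which is part of the statement (it follows from $\Gamma\cdot K_W=-1-d_i$ with the discrepancy $d_i\in(-1,0)$).

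However, your non-vanishing step is wrong, not just incomplete. You claim that a short induction gives $n'n''<n^2$ at every internal position, hence $m_i\in(0,1)$ and $\Gamma^2<0$. This is false: for the Wahl chain $[2,2,2,7]$ (so $n^2=25$) at $i=2$ one has $n'=2$ (numerator of $[2]$) and $n''=13$ (numerator of $[2,7]$), so $n'n''=26>25$ and $\Gamma^2=1/25>0$. Indeed the paper's whole dichotomy between Markov mutations ($\Gamma^2>0$) and Mori mutations ($\Gamma^2<0$) in Section \ref{s3.4} depends on both signs occurring, and the example just given is precisely the paper's Fibonacci Markov train. The correct argument, which is the one the paper gives, is by contradiction: $\Gamma^2=0$ is equivalent to $n'n''=n^2$, i.e.\ to $[e_1,\ldots,e_i-1,\ldots,e_r]$ being a zero continued fraction, and by Proposition \ref{weight0} that would make $[e_1,\ldots,e_r]$ simultaneously a Wahl chain and the dual of a Wahl chain, which is impossible. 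You should replace your induction by this parity-type obstruction; as written, your proof proves a false strengthening of the lemma.
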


\begin{proof}
The existence and uniqueness follow from Proposition \ref{prop:slide} applied in both directions. Indeed, the associated Wahl chain of the left slide of $\Gamma$ is the HJ continued fraction of $\frac{{n^\prime}^2}{n^\prime a^\prime-1}$ and of the right slide is $\frac{{n^{\prime\prime}}^2}{n^{\prime\prime}a^{\prime\prime}-1}$.

The second part will be a consequence of Lemma \ref{defslides}. The self-intersection of $\Gamma$ is not zero since $[e_1,\ldots,e_i-1,\ldots,e_r]$ never contracts to $[1,1]$. Indeed, if it does, then this is a Wahl chain and a dual Wahl chain at the same time, a contradiction.    
\end{proof}

\begin{remark}
In fact, if $\delta=-n \Gamma \cdot K_W \in \Z_{>0}$ in Lemma \ref{slides}, then $$\delta=n'a-a'n=n''(n-a)-na''.$$ We have $n'+n''=\delta n$ and $a'+a''=\delta a$. (This can be shown using the matrix equations in Theorem \ref{thm:diophequat}.)  
    \label{calculo}
\end{remark}

\begin{definition}
Let $X',X''$ be the left and right slides, respectively, for some fixed $i$. Let $W',W''$ be the contraction of the corresponding singularities. We say that there is a \textit{mutation} between $W'$ and $W''$.
\label{mutation}
\end{definition}

\begin{definition}
A mutation with $\Gamma^2<0$ is called \textit{Mori mutation}. Otherwise, we call it \textit{Markov mutation}.
\label{mutation}
\end{definition}

Consider an \eni ~ $\mathcal{W}_1 \to \D$ that degenerates into two \enii ~ $\mathcal{W}_2 \to \D$ over $\P^1$ in its universal family. Then there is a Mori mutation between $W_1$ and $W_2$.

\begin{proposition}
Any Mori mutation is given by two consecutive \enii ~in a some Mori train. 
\end{proposition}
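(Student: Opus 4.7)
The plan is to realize the two slides $W',W''$ of a Mori mutation as the two $\enii$ extremal neighborhoods adjacent to a common $\eni$ wagon in a Mori train, via the universal family description of an $\eni$ from \cite{HTU17}.

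First I would interpret $\Gamma \subset W$ as an $\eni$ extremal neighborhood. Lemma \ref{slides} gives $K_W \cdot \Gamma < 0$, and $\Gamma^2 < 0$ holds by the Mori hypothesis, so the contraction of $\Gamma$ produces a c.q.s.\ $\bW$ with Hirzebruch-Jung continued fraction $[e_1,\ldots,e_{i-1},e_i-1,e_{i+1},\ldots,e_r]$, and $W \to \bW$ is an $\eni$ represented by the wagon $[e_1,\ldots,\overline{e_i},\ldots,e_r]$. Depending on whether $\bW$ is a Wahl singularity or admits an extremal P-resolution, this $\eni$ sits in a Mori train over $\bW$ of type (DC) or (F).

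Next, I would show the slides produce $\enii$ extremal neighborhoods over the same $\bW$. By Definition \ref{slide}, in $X'$ the chain $[f_1,\ldots,f_{r'},1,e_1,\ldots,e_r]$ contracts to $[e_1,\ldots,e_i-1,\ldots,e_r]$, so $\Gamma' \subset W'$ passes through both Wahl singularities of $W'$ and contracts to $\bW$. Lemma \ref{slides} combined with the Mori hypothesis gives $K_{W'}\cdot\Gamma' = \frac{1}{n'}K_W\cdot\Gamma < 0$ and $\Gamma'^2 = \frac{1}{n'^2}\Gamma^2 < 0$, so $W' \to \bW$ is the $\enii$ represented by the wagon pair $[f_1,\ldots,f_{r'}]-[e_1,\ldots,e_r]$; symmetrically $W'' \to \bW$ is the $\enii$ represented by the wagon pair $[e_1,\ldots,e_r]-[g_1,\ldots,g_{r''}]$. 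The two pairs share the central wagon $[e_1,\ldots,e_r]$, which is the $\eni$ wagon of $W$.

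Finally, I would apply the universal family of an $\eni$ from \cite{HTU17}: the $\eni$ $W \to \bW$ has exactly two $\enii$ degenerations over its $\P^1$ base, each preserving the Wahl chain $[e_1,\ldots,e_r]$ and introducing a single new Wahl chain on one side. By Proposition \ref{prop:slide}, the new Wahl chains produced on the two sides are precisely $[f_1,\ldots,f_{r'}]$ and $[g_1,\ldots,g_{r''}]$. Uniqueness of the universal family then forces these two $\enii$ to be $W'$ and $W''$, so they appear as two consecutive wagon pairs in the Mori train over $\bW$. The main obstacle is precisely this matching between the combinatorial slides and the geometric $\enii$ fibers of the universal $\eni$-family; the cleanest route is to verify that the same HJ continued fraction identity underlies both Proposition \ref{prop:slide} and the \cite{HTU17} degeneration, from which consecutiveness in the Mori train follows directly from its definition.
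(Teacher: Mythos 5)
Your argument is correct and takes essentially the same route as the paper's (much terser) proof: contract $\Gamma$, which is contractible since $\Gamma^2<0$ and satisfies $K_W\cdot\Gamma<0$ by Lemma \ref{slides}, recognize $W$, $W'$, $W''$ as extremal neighborhoods over the resulting c.q.s.\ classified in \cite{HTU17}, and identify the mutation with the degeneration of a k1A into its two k2A's from \cite[Section 2.3]{HTU17}, which is exactly what places the two wagon pairs consecutively in the Mori train. The paper compresses this into two sentences; you have simply filled in the intermediate verifications.
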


\begin{proof}
We have $\Gamma^2<0$ in $W$ if and only if it is contractible, and it must contract to some c.q.s. We also have $\Gamma \cdot K_W <0$, by Lemma \ref{slides}. They are classified in \cite{HTU17}, and the mutation corresponds to \cite[Section 2.3]{HTU17}.
\end{proof}

As in the case of Mori mutations, Markov mutations also define a \textit{Markov train}. As we will not use it, we left the details to the reader, providing only a single example.

\begin{example}
Consider again the Wahl chain $[2,2,2,7]$ and the position $i=2$. Then the Markov mutation defines the Markov train:
$$[4]-[2,\bar{2},2,7]-[2,2,2,2,\bar{2},5,7]-[2,2,2,2,2,3,\bar{2},2,7,7]-\ldots $$ This is precisely the Fibonacci branch of the Markov tree \cite{UZ23}. 
\end{example}

\begin{remark}
Let $W$ be a toric surface with Wahl singularities only. For a curve $\Gamma$ in the toric boundary we could have $\Gamma \cdot K_W <0$, $\Gamma \cdot K_W =0$, or $\Gamma \cdot K_W >0$. The first case is a situation of a mutation. The second case is an M-resolution of a $d=2$ T-singularity. The third must be an extremal P-resolution. Then, over the toric boundary there is a dynamic through the birational geometry of these curves, in the sense of mutations and antiflips. In Section \ref{s5}, we will find ways to "mutate" canonical del Pezzo toric surfaces into simpler del Pezzo surfaces of the same type.     
\end{remark}

\section{Geometric models for Del Pezzo Wahl chains} \label{s4}


\begin{definition}
A \textit{del Pezzo surface} of degree $\ell$ is a projective surface $Z$ with only T-singularities, $-K_Z$ ample, and $K_Z^2=\ell$. A \textit{W-del Pezzo surface} of degree $\ell$ is a W-surface $W_t \rightsquigarrow W$ with $W_t$ (nonsingular) del Pezzo surfaces for $t \neq 0$ and $K_W^2=\ell$. 
\label{def: del Pezzo}
\end{definition}

\begin{remark}
If a W-surface has special fiber $W$ with $-K_W$ ample, then it must be a W-del Pezzo surface, i.e. $-K_{W_t}$ is ample for all $t$ (see for example \cite[Proposition 1.41]{KM98}). However, W-del Pezzo does not imply that $W$ is a del Pezzo surface. This is comparable to the deformations of $\F_0$ into surfaces $\F_{2e}$ with $e\geq 1$. 
\end{remark}

Given a del Pezzo Wahl chain $[e_1,\ldots,e_r]$ of degree $\ell$ and marking $$[k_1,\ldots,k_{i-1},\underline{e_i},k_{i+1},\ldots,k_r]$$ (Definition \ref{DPWahlchain}), we construct its associated surface $W_{*m}$ as follows. Recall that for type (II) we have weights $\lambda_1$ and $\lambda_2$ such that $\ell=9-\lambda_1-\lambda_2 \geq 1$; for type (I) we have $\ell=9-\lambda$, where $\lambda $ is the weight. See Figure \ref{f0} as a guide.

\begin{itemize}
\item Consider the Hirzebruch surface $\F_{e_i}$.
\item For type (II), choose two fibers of the $\P^1$-fibration $\F_d \to \P^1$ and perform blow-ups along these fibers to construct the chains for $[k_1,\ldots,k_{i-1}]$ and $[k_{i+1},\ldots,k_r]$. For type (I) consider one fiber and perform the analogue blow-ups.
\item Perform extra $\lambda_1+1$ and $\lambda_2+1$ (or just $\lambda+1$) blow-ups at general points of the corresponding marked components. Then the Wahl chain has been constructed.
\item Contract the Wahl chain. This is the surface $W_{*m}$.
\end{itemize}

\begin{definition}
For a given del Pezzo Wahl chain, we refer to $W_{*m}$ as its associated \textit{marked surface}. Its type/degree is the type/degree of the del Pezzo Wahl chain. 
\label{def: markedsurface}
\end{definition}

\begin{remark} 
We note that a $W_{*m}$ is not unique up to isomorphism, since the extra blow-ups at general points may introduce parameters. However, they are all deformation equivalent by moving these points. 
\label{W*mnotunique}  
\end{remark}

\begin{definition}
Let $X \to W_{*m}$ be the minimal resolution of $W_{*m}$ (Figure \ref{f0}). A $(-1)$-curve in $X$ corresponding to the extra blow-ups in the marked components is called \textit{marking curve}. A $(-1)$-curve in $X$ may not be a marking curve. In particular, there may be $(-1)$-curves disjoint to the Wahl chain. We call them \textit{floating curves} \cite{CH17}. In general, for a surface $W$ with only T-singularities, a floating curve is a $\P^1$ in $W$ disjoint from singularities and whose self-intersection is $-1$.  
\label{floating}
\end{definition}

It turns out that marked surfaces of type (I) and degree $1$ can be reduced to type (II), or higher degree marked surfaces. Therefore, in degree $1$, it suffices to consider type (II).

\begin{lemma}
Any $W_{*m}$ of type (I) and degree $1$ with no floating curves is also the marked surface of a del Pezzo Wahl chain type (II) and degree $1$.  
\label{wirdo}
\end{lemma}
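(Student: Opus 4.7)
Let $[e_1,\ldots,e_r]$ be the Wahl chain of $W_{*m}$ with type (I) degree $1$ marking $[k_1,\ldots,k_{r-1},\underline{e_r}]$ (without loss of generality, central mark at position $r$), so $[k_1,\ldots,k_{r-1}]$ is a zero continued fraction of weight $\lambda=8$ and $\sum d_{i_k}=9$. My plan is to exhibit a type (II) degree $1$ marking $[k'_1,\ldots,k'_{i^*-1},\underline{e_{i^*}},k'_{i^*+1},\ldots,k'_r]$ on the same Wahl chain for some interior $i^*\in\{2,\ldots,r-1\}$, and to verify that the resulting marked surface is deformation equivalent to $W_{*m}$.

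First, I would apply Lemma \ref{restrCentralMark} to the case $\ell=1$: the central mark satisfies $e_r=u-2\geq 2$, and $[k_1,\ldots,k_{r-1}]$ is obtained from the base chain $[1,2,\ldots,2,1]$ of length $u+1\geq 5$ by $v=r-u-2$ blow-ups of the form $[\ldots,a,b,\ldots]\mapsto [\ldots,a+1,1,b+1,\ldots]$. In particular, the base chain contains at least three middle $2$s, which provide candidate interior positions for the new central mark. I would then use the triangulation model for zero continued fractions (length $s$ corresponding to triangulations of convex $(s+1)$-gons) to locate a diagonal partitioning the polygon into two sub-polygons, and pick $i^*$ at the corresponding vertex; together with the contribution absorbed from the old central mark $e_r$ on the right side, this gives zero continued fractions on $[e_1,\ldots,e_{i^*-1}]$ and $[e_{i^*+1},\ldots,e_r]$ of weights $\lambda_1,\lambda_2\geq 0$ with $\lambda_1+\lambda_2=8$, i.e., a type (II) degree $1$ marking.

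Next, I would verify that the two constructions produce the same marked surface up to deformation. Denote by $X^{(I)}$ and $X^{(II)}$ the minimal resolutions. A direct count shows both are rational surfaces of Picard rank $r+9$: $X^{(I)}$ is $\F_{e_r}$ blown up $r-2$ times to build the single fiber chain plus $9$ marking blow-ups, while $X^{(II)}$ is $\F_{e_{i^*}}$ blown up $r-3$ times across two fibers plus $10$ marking blow-ups, both giving $r+7$ blow-ups in total. Under the no floating curves hypothesis, the only $(-1)$-curves of $X^{(I)}$ are the $9$ marking curves, which rules out $W_{*m}$ being an Iitaka--Kodaira blow-up of a marked surface of higher degree. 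This permits the identification of a new $\P^1$-fibration of $X^{(I)}$ with $e_{i^*}$ as section, coming from a nef class $F'$ with $(F')^2=0$, $F'\cdot e_{i^*}=1$, and $F'\cdot K=-2$; this realizes the type (II) structure. Deformation equivalence then follows from Remark \ref{W*mnotunique} by moving the positions of the general marking blow-ups.

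\textbf{Main obstacle.} The crucial step is the combinatorial one: showing that every type (I) degree $1$ marking admits a valid interior splitting $i^*$ whose two sides decompose into zero continued fractions with weights summing to $8$, and identifying the corresponding alternative $\P^1$-fibration of $X^{(I)}$. This requires a detailed case analysis using the triangulation structure of weight-$8$ zero continued fractions together with the Wahl chain constraints, with careful attention to edge cases such as very short Wahl chains or concentrated marking distributions. The no floating curves hypothesis is essential: without it, $W_{*m}$ could be produced by an extraneous Iitaka--Kodaira blow-up of a simpler marked surface, which would not admit a direct type (II) reinterpretation on its Wahl chain.
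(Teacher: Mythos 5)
Your overall strategy (find an interior position that can serve as a type (II) central mark via an alternative genus-$0$ fibration, and use the no-floating-curves hypothesis to rule out extraneous blow-ups) points in the same direction as the paper's argument, but there is a genuine gap: the step you label the ``main obstacle'' is precisely the content of the lemma, and you do not carry it out. You never specify which diagonal of the triangulation to choose, never verify that the two resulting sub-chains $[e_1,\ldots,e_{i^*-1}]$ and $[e_{i^*+1},\ldots,e_r]$ actually admit zero continued fractions compatible with the Wahl chain entries, and never check that their weights sum to $8$; ``a detailed case analysis \ldots with careful attention to edge cases'' is a placeholder, not an argument. The paper avoids any case analysis by producing the splitting geometrically: writing $d=e_r=u-2$ and letting $E_u,\ldots,E_1,E_0$ be the curves of the initial chain $[1,2,\ldots,2,1]$, the pencil $|C_{\infty}+dF|$ on $\F_d$ lifts to a genus-$0$ pencil on the resolution whose members meet $E_{u-2}$ transversally, so $E_{u-2}$ becomes a section of a new fibration; the no-floating-curves hypothesis guarantees that the only blow-ups over $E_{u-2}$ occur at its nodes with $E_{u-1}$ and $E_{u-3}$, so the remaining exceptional curves lie in two fibers of the new fibration and the type (II) marking of degree $1$ with center at $E_{u-2}$ is read off directly. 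If you want to rescue the combinatorial route, you must prove that this specific vertex works, which amounts to re-deriving the paper's geometric observation.

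A second, smaller problem: your final step shows at best that $X^{(I)}$ and $X^{(II)}$ are rational surfaces of the same Picard rank, which does not establish that the given $W_{*m}$ \emph{is} the marked surface of a type (II) chain. Remark \ref{W*mnotunique} only asserts that varying the general points of a \emph{fixed} marking yields deformation equivalent surfaces; it does not let you pass between two different markings, and equality of Picard ranks is far from an isomorphism or even deformation equivalence. The paper sidesteps this entirely because the type (II) structure is exhibited on the very same surface $X$, not on a separately constructed one.
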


\begin{proof}
Let $[k_1,\ldots,k_{r-1},\underline{e_r}]$ be the marking, and let $d=e_r$. As in Lemma \ref{restrCentralMark}, we have $d=u-2$, where we have the initial chain $[1,2,...,2,1](d)$ with $u-1\geq 0$ $2$s. Let $[1,2,\ldots,2,1]$ be represented by the chain of curves $E_u,\ldots,E_1, E_0$ respectively. We recall that to complete the Wahl chain, we only need to blow-up at nodes in this chain, and then on suitable marked $E_{i_j}$s. Let $X'$ be the corresponding surface.

Let $C_{\infty}$ be the negative section in $\F_d$, and $F$ be the general fiber of $\F_d \to \P^1$. Consider the linear system $|C_{\infty}+(u-2)F|$ in $\F_{u-2}$. Then in $X'$ we have a pencil of curves from $|C_{\infty}+dF|$. The corresponding fibers intersect transversally the component $E_{u-2}$. Moreover, there is a $(-1)$-curve intersecting transversally $E_{u-1}$ at one point. Since there are no floating curves, no further blow-ups occur over $E_{u-2}$, except possibly at its intersections with $E_{u-1}$ and $E_{u-3}$.

With this data, we can consider another marking on the same Wahl chain with the same degree equal to $1$, whose center is represented by $E_{u-2}$.   
\end{proof}

\begin{example}
Take the Wahl chain $[4,3,5,3,3,2,2]$. It is del Pezzo of type (I) and degree $1$ for the marking $[2,1,3,2,2,1,\underline{2}]$. It defines $W_{*m}^1$. Following the proof above, $W_{*m}^1$ has a floating curve $C$. We have another marking of type (II) and degree $2$ given by $[2,1,2,\underline{3},2,1,2]$. It defines $W_{*m}^2$. Then one can check that there is $W_{*m}^1 \to W_{*m}^2$ blow-up at one nonsingular point, which is the contraction of $C$.    
\end{example}

\begin{theorem}
Let $W_{*m}$ be a marked surface that is not of type (I) and degree $1$. Then $W_{*m}$ is a del Pezzo surface. Moreover, since there are no local-to-global obstructions to deform $W_{*m}$, there exists a W-del Pezzo surface with special fiber $W=W_{*m}$.
\label{theorem:W*m}
\end{theorem}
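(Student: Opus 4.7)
The plan is to verify first that $W_{*m}$ is a del Pezzo surface of degree $\ell$, and then invoke the standard vanishing of local-to-global obstructions to produce the smoothing. Let $\sigma \colon X \to \F_{e_i}$ and $\phi \colon X \to W_{*m}$ be as in the construction preceding Definition~\ref{def: markedsurface}, and denote the Wahl chain by $[e_1,\dots,e_r]$ with exceptional curves $E_1,\dots,E_r$.

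I first compute $K_{W_{*m}}^2 = \ell$. Counting the blow-ups in $\sigma$ gives $K_X^2 = 8 - N$ with $N = r-1+\lambda$ in type (I) and $N = r-1+\lambda_1+\lambda_2$ in type (II); in either case $K_X^2 = \ell - r$. Writing $\phi^* K_{W_{*m}} = K_X + A$ with $A = \sum a_j E_j$ (each $a_j \in (0,1)$ since Wahl singularities are klt), the identity $\sum a_j(e_j - 2) = r$ holds for every Wahl chain, proved by induction along Wahl's algorithm from Section~\ref{s1}. Hence
\[
K_{W_{*m}}^2 = K_X^2 + \sum a_j(e_j - 2) = (\ell-r) + r = \ell.
\]

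Next I would show $-K_{W_{*m}}$ is ample via Nakai--Moishezon. Since $K_{W_{*m}}^2 = \ell > 0$, it suffices to check $-K_{W_{*m}} \cdot \bar C > 0$ on every irreducible curve $\bar C \subset W_{*m}$. For its strict transform $C \subset X$, one has $-K_{W_{*m}} \cdot \bar C = -K_X \cdot C - A \cdot C$, and the curves of $X$ outside the Wahl chain split, via the $\P^1$-fibration of Figure~\ref{f0} with $E_i$ as a section, into four classes: (i) marking curves, each a $(-1)$-curve meeting a unique $E_j$, yielding $1 - a_j$; (ii) generic fibers, yielding $2 - a_i$; (iii) the proper transform of the positive section $C_0$ of $\F_{e_i}$, yielding $(2 + e_i) - a_{i-1}$, with an additional $-a_{i+1}$ in type (II); (iv) residual irreducible components of the singular fibers that are neither in the chain nor marking curves, handled analogously. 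In each case $a_j \in (0,1)$ forces strict positivity.

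Finally, with $W_{*m}$ a del Pezzo surface of degree $\ell$ carrying only T-singularities, the standard vanishing $H^2(W_{*m}, T^0_{W_{*m}}) = 0$ for log del Pezzo surfaces with T-singularities (Hacking \cite{H04}, Manetti \cite{Ma91}) removes the local-to-global obstructions. Combined with the existence of a one-parameter local $\Q$-Gorenstein smoothing at the Wahl singularity, this produces a W-surface $W_t \rightsquigarrow W_{*m}$; openness of ampleness then yields $-K_{W_t}$ ample for $t \neq 0$, so $W_t$ is a smooth del Pezzo surface of degree $\ell$. The main obstacle will be the case-by-case ampleness verification in step two: beyond the four classes one must confirm no pathological residual curve on $X$ violates positivity, and the theorem's exclusion of type (I) with $\ell = 1$ (handled via Lemma~\ref{wirdo}) corresponds precisely to the boundary configuration where marking curves and the positive section would otherwise conspire to defeat ampleness.
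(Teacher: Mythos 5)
Your overall skeleton (compute $K_{W_{*m}}^2=\ell$, then Nakai--Moishezon, then quote the vanishing of local-to-global obstructions) matches the paper's, and your degree computation via $\sum a_j(e_j-2)=r$ is fine. But the ampleness step has a genuine gap: Nakai--Moishezon requires $-K_{W_{*m}}\cdot\bar C>0$ for \emph{every} irreducible curve, and your four classes --- marking curves, generic fibers, one positive section $C_0$, and residual fiber components --- only cover curves contained in fibers of $X\to\P^1$ together with a single horizontal curve. All other horizontal curves (the infinitely many sections in $|C_\infty+kF|$ for $k\ge e_i$, and multisections of every degree, possibly passing with high multiplicity through the blown-up points) are omitted, and these are exactly the curves for which positivity is not obvious. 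A symptom of the gap is that your argument never uses the hypothesis excluding type (I) of degree $1$: no inequality involving $\ell$ enters your case analysis, so the same reasoning would "prove" the excluded case as well.

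The paper avoids the curve-by-curve check entirely. Writing $K_X\sim\pi^*(K_{\F_d})+E$, it proves a multiplicity claim comparing $\pi^*((u+1)F')$ with $E$ (where $u$ is as in Lemma \ref{restrCentralMark}), and the inequality $u+1\le d+2$ --- equivalent to $\ell\ge 2$ in type (I), and automatic in type (II) --- is precisely what makes $\phi^*(-K_{W_{*m}})$ $\Q$-effective with support contained in the exceptional loci of $\pi$ and $\phi$. Given that effectivity and $K_{W_{*m}}^2>0$, positivity against curves not in that support follows for free (Hodge-index type reasoning), and one only has to check the finitely many $(-1)$-curves over the special fiber(s), where the discrepancies lying in $(-1,0)$ give strict positivity. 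To repair your argument you would either need to reproduce this effectivity statement (and with it the use of the degree hypothesis), or supply a separate bound ruling out horizontal curves with $-K\cdot\bar C\le 0$; as written, step two does not close.
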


\begin{proof}
First, suppose that $W_{*m}$ that is of type (I) and degree $\ell \geq 2$. We use the notation in Lemma \ref{wirdo}. By Lemma \ref{restrCentralMark} we have $d=\ell +u-3$, and thus $d+2=\ell-1+u \geq u+1$. This last inequality will be key to show ampleness for $-K_{W_{*m}}$. Let $\phi \colon X \to W_{*m}$ be the minimal resolution, and $\pi \colon X \to \F_d$ the composition of blow-ups (here $d=e_r$). We have $K_X \sim \pi^*(K_{\F_d}) +E$, for some (nonreduced) effective exceptional divisor $E$. We have $K_{\F_d} \sim -2 C_{\infty} - (d+2)F$ in the notation of Lemma \ref{wirdo}. Let $F'$ be the fiber over which the Wahl chain is constructed.  

\vspace{0.3cm}
\noindent
\textbf{Claim:} Let $\Gamma$ be an exceptional curve of $\pi$. Then the multiplicity of $\Gamma$ in $\pi^*((u+1)F')$ minus the multiplicity of $\Gamma$ in $E$ is at least $1$ for non $(-1)$-curves, otherwise it is at least $0$.

\begin{proof}[Proof of the claim]
First, we check for multiplicities up to $X^\prime$, that is, for $E_u,\ldots,E_1$ as in Lemma \ref{wirdo}. For $ \pi^*((u+1)F^\prime)$ the multiplicities are $u+1$, they are at most $u$, so the claim holds for this case. After that, we only blow-up at nodes, and so the difference between multiplicities is at least $1$. Finally, we blow-up at general points of certain exceptional curves, and so the difference is at least $0$.   
\end{proof}

Since $u+1 \leq d+2$, it follows that $-K_X$ is effective by the previous claim. Moreover, $\phi^*(-K_{W_{*m}})$ is $\Q$-effective with support contained in the union of the exceptional curves of both maps. 

Since $K_{W_{*m}}^2>0$, to verify the ampleness of $-K_{W_{*m}}$, it suffices to check that each $(-1)$-curve over the fiber $F'$ intersects $\phi^*(-K_{W_{*m}})$ positively. Each such curve intersects transversally a single exceptional curve of $\phi$, and Wahl singularities have discrepancies in the interval $(-1,0)$. Thus, by the Nakai-Moishezon ampleness criterion we conclude that $-K_{W_{*m}}$ is ample.

Now suppose that $W_{*m}$ is of type (II). Then Lemma \ref{restrCentralMark} 
gives $d=\ell- 1+u_1+u_2$. Therefore, $d+2=\ell-1 +(u_1+1)+(u_2+1)$, and an analogous proof shows that $\phi^*(-K_{W_{*m}})$ is $\Q$-effective with support contained in the union of the exceptional curves of both maps. We apply the Nakai-Moishezon criterion in the same way.  
\end{proof}

\begin{remark}
A Wahl chain could admit two distinct markings that are represented by the same surface $W_{*m}$. For example, the Wahl chain $\Wa=[5,5,3,2,2,2]$ admits the markings
$[1,1,\underline{3},1, 2, 1]$ and $[2, 1, 3, 1, \underline{2},0]$, which are del Pezzo of degree $1$. It is easy to see that the corresponding surfaces $W_{*m}$ are equal. Thus, the function sending a marked Wahl chain into $W_{*m}$ is not injective. This may be a rare phenomenon that occurs only in low degrees. Hence, when we refer to $W_{*m}$, then we think of a particular Wahl chain and marking.   
\label{NOTunique}
\end{remark}

\begin{corollary}
In degree $4$, any Wahl singularity is realizable as a W-del Pezzo surface. 
\end{corollary}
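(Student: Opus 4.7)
The plan is to combine Lemma \ref{canmarking} with Theorem \ref{theorem:W*m} to handle all but two Wahl singularities, and then deal with those two exceptions by hand. First, for any Wahl chain $\Wa\notin\{[4],[2,5]\}$, Lemma \ref{canmarking} provides a canonical marking making $\Wa$ a del Pezzo Wahl chain of type (I) and degree $4$. By Theorem \ref{theorem:W*m}, the associated marked surface $W_{*m}$ of Definition \ref{def: markedsurface} is a del Pezzo surface of degree $4$ with a unique singularity of the prescribed Wahl type. Since surfaces with only T-singularities and $-K$ ample have no local-to-global obstructions, $W_{*m}$ is the central fiber of a one-parameter $\Q$-Gorenstein smoothing $W_t\rightsquigarrow W_{*m}$ with $W_t$ a smooth del Pezzo of degree $4$, as required.

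For $\Wa=[4]$, realizing $\frac{1}{4}(1,1)$, I would start from the degree-$9$ W-del Pezzo family $\P^2\rightsquigarrow\P(1,1,4)$ attached to the Markov triple $(1,1,2)$, and blow up five general sections of its total space $\W\to\D$ through five smooth points of $\P(1,1,4)$. The resulting family is $\Q$-Gorenstein with special fiber $\text{Bl}_5\bigl(\P(1,1,4)\bigr)$, a degree-$4$ W-del Pezzo carrying only the Wahl singularity $\frac{1}{4}(1,1)$, and general fiber $\text{Bl}_5(\P^2)$, a smooth del Pezzo of degree $4$. Ampleness of $-K$ for generically chosen sections, and absence of deformation obstructions, make this family a bona fide W-del Pezzo.

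The main obstacle is $\Wa=[2,5]$, realizing $\frac{1}{9}(1,2)$. This Wahl chain admits no valid marking at all: each single-entry side chain $[2]$ or $[5]$ fails to admit a zero continued fraction in the sense of Definition \ref{zeroHJ}, and type (II) is excluded since the length $r=2<3$. Consequently Theorem \ref{theorem:W*m} cannot be applied. The natural approach is to exploit the flipping Mori train of Example \ref{exantiflipfam2}, whose first wagon $[\bar 2,5]$ exhibits $\frac{1}{9}(1,2)$ on the \eni-side of a flipping extremal neighborhood over $\frac{1}{4}(1,1)$. Starting from the degree-$4$ W-del Pezzo built in the previous paragraph and performing a global antiflip over its $\frac{1}{4}(1,1)$ singularity would produce a degree-$4$ W-del Pezzo whose sole Wahl singularity is $\frac{1}{9}(1,2)$. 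Establishing the global availability of this antiflip reduces to verifying the deformation-theoretic criterion \cite[Corollary 3.23]{HTU17} for the chosen family; this is the delicate case-specific step, and it is where genuine new input is required.
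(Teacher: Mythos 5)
Your first two cases coincide with the paper's proof: the canonical marking of Lemma \ref{canmarking} plus Theorem \ref{theorem:W*m} for every chain other than $[4]$ and $[2,5]$, and $\P^2\rightsquigarrow\P(1,1,4)$ followed by five floating blow-ups for $[4]$. The gap is in your treatment of $[2,5]$. Your premise that this chain ``admits no valid marking at all'' is not how the paper reads its own definitions: degenerate zero continued fractions such as $[0]$ are allowed (see the markings $[0,\underline{2}]$ and $[0,\underline{5}]$ in Corollary \ref{delP>=5}), and the paper handles $[5,2]$ precisely via the type (I), degree $8$ marking on $\F_5$ --- blow up one fiber twice so that $C_{\infty}$ and the fiber form the chain $[5,2]$, contract, and then take four floating blow-ups to land in degree $4$. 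So Theorem \ref{theorem:W*m} does apply here, just in degree $8$ rather than $4$, and no antiflip is needed.

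Moreover, the antiflip you propose is not merely ``delicate''; it fails for the family you start from. The flipping neighborhood $[\bar 2,5]$ of Example \ref{exantiflipfam2} sits over the c.q.s. $\frac{1}{4}(1,1)$ equipped with its extremal P-resolution, which is the minimal resolution (a $(-4)$-curve $\Gamma^+$ with $K\cdot\Gamma^+=2>0$). By \cite[Corollary 3.23]{HTU17} the antiflip exists only when the induced one-parameter deformation of $\frac{1}{4}(1,1)$ lies in the component of its versal deformation space determined by that P-resolution, i.e.\ the Artin component. But $\text{Bl}_5(\P(1,1,4))$ with its $\Q$-Gorenstein smoothing restricts, near the singular point, to the $\Q$HD smoothing of the Wahl singularity $\frac{1}{4}(1,1)$, which is a different irreducible component meeting the Artin component only at the origin; there is no $\Gamma^+$ in that family to antiflip. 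You would have to start instead from a degeneration whose special fiber carries an actual $(-4)$-curve (for instance the residual surface, or $\text{Bl}_5(\F_4)$), at which point the direct $\F_5$ construction above is both simpler and already complete. (A minor point: $[2,5]=\frac{9}{5}$ is the Wahl chain of $\frac{1}{9}(1,5)$, i.e.\ $n=3$, $a=2$; it is the reversed chain $[5,2]$ that corresponds to $\frac{1}{9}(1,2)$ --- the same germ up to isomorphism, but the orientation matters when reading $a$ off the marking.)
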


\begin{proof}
When the Wahl chain is not $[5,2]$ and $[4]$, then this is just Lemma \ref{canmarking}, where we consider its canonical marking. Then we apply Theorem \ref{theorem:W*m}. For $[5,2]$ we may take $\F_5$ and the obvious type (I) degree $8$ marking. After that we take $4$ general blow-ups ($4$ floating curves). For $[4]$, consider $\P^2 \rightsquigarrow \P(1,1,4)$. 
\end{proof}

A given $W_{*m}$ defines, in general, infinitely many toric surfaces with Wahl singularities that are degenerations of $W_{*m}$. (Think about one and apply Markov and/or Mori mutations.) Among them, we will choose a particular one, which is del Pezzo. We will use slides (Definition \ref{slide}) in deformations.

\begin{lemma}
Let $W$ be a surface with only Wahl singularities and $H^2(W,T_W)=0$. Suppose $W$ has a Wahl singularity $P$ together with a curve $\Gamma$ defining a slide. Let $W'$ be the left (or right) slide of $\Gamma$. Then there is a $\Q$-Gorenstein deformation of $W$ into $W'$ which smooths the new singularity appearing in $W'$. 
\label{defslides}
\end{lemma}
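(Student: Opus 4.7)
The plan is to construct a one-parameter $\Q$-Gorenstein family $\cW\to \D$ whose special fiber is $W'$ and whose generic fiber is $W$, so that only the new Wahl singularity of $W'$ (the one coming from $[f_1,\ldots,f_{r'}]$) gets smoothed. First I would build $W'$ explicitly from $W$ as in Definition \ref{slide}: blow up $X$ repeatedly starting at $\Gamma\cap E_i$ to produce $X'$ with the chain $[f_1,\ldots,f_{r'},1,e_1,\ldots,e_r]$, and let $W'$ be the contraction of the two Wahl chains. By Proposition \ref{prop:slide}, the full chain in $X'$ blows down through the interior $1$ to the configuration $[e_1,\ldots,e_i-1,\ldots,e_r]$ obtained by contracting $\Gamma\subset X$; this identification will be the key to recognizing the generic fiber later.

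Next, the new Wahl singularity $Q\in W'$ admits a local one-parameter $\Q$-Gorenstein smoothing, represented by a generator of the local $T^1_{QG}$. I would lift this to a global first-order $\Q$-Gorenstein deformation of $W'$ via the local-to-global sequence for the $\Q$-Gorenstein deformation functor, whose obstructions sit in $H^2$ of a suitable tangent sheaf on $W'$. The decisive input is the hypothesis $H^2(W,T_W)=0$: by comparing $W'$ and $W$ through the partial $\Q$-Gorenstein smoothing at $Q$, which is an isomorphism away from a small neighborhood of $Q$, the global obstructions on $W'$ are controlled by $H^2(W,T_W)$ and hence vanish. Standard deformation theory then promotes the first-order deformation to a genuine one-parameter family, yielding $\cW\to \D$.

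To identify the generic fiber with $W$, I would examine the local model of the smoothing at $Q$. The Milnor fiber of the $\Q$-Gorenstein smoothing of $[f_1,\ldots,f_{r'}]$ is a rational homology disk filling the link of $Q$. Because the chain $[f_1,\ldots,f_{r'},1,e_1,\ldots,e_r]$ blows down through $\Gamma'$ onto the contraction of $\Gamma$ in $X$, gluing this Milnor fiber to the complement of a neighborhood of $Q$ in $W'$ produces $W$ with the curve $\Gamma$ in place. In the Mori case $\Gamma^2<0$, the whole argument is an instance of the universal family of \eni/\enii\ extremal neighborhoods over the cyclic quotient singularity with fraction $[e_1,\ldots,e_i-1,\ldots,e_r]$, furnished directly by \cite[Corollary 3.13]{HTU17}, and the hypothesis $H^2(W,T_W)=0$ is not even needed.

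The main obstacle is the global obstruction step in the Markov case $\Gamma^2>0$: one must argue that the deformation obstructions of $W'$ for the partial $\Q$-Gorenstein functor smoothing only $Q$ are indeed controlled by $H^2(W,T_W)$, rather than by the full $H^2$ on $W'$, which is in general larger due to the extra Wahl singularity. Making the relevant Mayer--Vietoris or spectral sequence argument precise, and in particular accounting correctly for the contribution of the unchanged Wahl singularity in $W'$, is the bulk of the technical work.
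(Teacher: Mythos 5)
Your overall architecture (build a partial $\Q$-Gorenstein smoothing of $W'$ at the new singularity $Q$, then identify the general fiber with $W$) matches the paper's, but both of the steps you flag as delicate are genuinely left open, and your proposed routes for them do not work as stated. For the obstruction step, arguing that the obstructions of $W'$ are ``controlled by $H^2(W,T_W)$ by comparing $W'$ and $W$ through the partial smoothing at $Q$'' is circular --- that partial smoothing is precisely what you are trying to construct --- and $W'$ is not isomorphic to $W$ away from a neighborhood of $Q$ (it is obtained by blowing up $X$ along $E_i$ and contracting a longer chain, so the whole curve $\Gamma$ is replaced by $\Gamma'$ passing through two singularities). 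The paper's actual mechanism is the missing idea: $H^2(W,T_W)\simeq H^2(X,T_X(-\log E))=0$ by \cite{LP07}, the blow-ups producing $X'$ occur at points of the boundary $E$, so the adding/deleting criterion for $(-1)$-curves \cite[Proposition 4.3]{PSU13} gives $H^2(X',T_{X'}(-\log E'))=0$ and hence $H^2(W',T_{W'})=0$. No Mayer--Vietoris comparison with $W$ is needed, and indeed none would be straightforward.

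The second gap is the identification of the general fiber. Gluing the $\Q$HD Milnor fiber of $Q$ into $W'\setminus\{Q\}$ only recovers the topology; it does not show that the general fiber is a complex surface deformation equivalent to $W$ carrying the Wahl singularity $P$ together with a curve in the position of $\Gamma$. The paper does this by simultaneously resolving the unchanged singularities in the family and then running an explicit MMP, flipping the $(-1)$-curves over $[1,e_1,\ldots,e_r]$ as in Proposition \ref{specialFlip} until the central fiber contains $[e_1,\ldots,e_{i-1}+1,1,e_i,\ldots,e_r]$; the lifted $(-1)$-curve then meets the curve corresponding to $e_i$ transversally in the general fiber, which (using absence of local-to-global obstructions again) pins down the deformation class. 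Finally, your claim that in the Mori case ($\Gamma^2<0$) the hypothesis $H^2(W,T_W)=0$ is ``not even needed'' because of \cite[Corollary 3.13]{HTU17} is unjustified: that result produces the universal family of extremal neighborhoods over the germ of the cyclic quotient singularity, and globalizing it to a deformation of the projective surface $W'$ still requires the vanishing of the global obstruction space.
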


\begin{proof}
Let $X,X'$ be the minimal resolutions of $W,W'$. We recall that the contraction of $\Gamma$ in $X$ also is the contraction of $X'$ at various $(-1)$-curves over the exceptional chain of $P$. Since $ H^2(X,T_X(-\log E)) \simeq H^2(W,T_W)$, where $E$ is the exceptional divisor over all Wahl singularities in $W$ \cite[Theorem 2]{LP07}, then $H^2(X,T_X(-\log E))=0$. Moreover, using the adding/deleting criteria for $(-1)$-curves \cite[Proposition 4.3]{PSU13}, then $H^2(X',T_X'(-\log E'))=0$, where $E'$ is the exceptional divisor for $W'$. Therefore $H^2(W',T_{W'})=0$. We have no local-to-global obstruction to deform $W'$. 

We consider a one-parameter $\Q$-Gorenstein deformation of $W'$ so that it is a $\Q$-Gorenstein smoothing for the new Wahl singularity, and keeps the other original Wahl singularities unchanged. We simultaneously resolve the original Wahl singularities in the family, and so we obtain a $\Q$-Gorenstein smoothing $\widetilde{W}_t \rightsquigarrow \widetilde{W'}$, where $\widetilde{W'}$ is the resolution of the original Wahl singularities in $W'$. 

We now run a guided MMP (as in \ref{s3.2}) on $\widetilde{W}_t \rightsquigarrow \widetilde{W'}$, using only the $(-1)$-curves over $[1,e_1,\ldots,e_r]$, where $[e_1,\ldots,e_r]$ is the Wahl chain over $P$. We have in $\widetilde{W'}$ the Wahl chain $$\left[{n \choose a}\right]-(1)-(e_1)-\ldots -(e_r),$$ for some $n,a$, where the minimal resolution corresponds to $[f_1,\ldots,f_s,1,e_1,\ldots,e_r]$ and contracts into $[e_1,\ldots, e_i-1,\ldots,e_r]$. We first flip, as in Proposition \ref{specialFlip}, the curve $\Gamma'$. We keep flipping (as in Proposition \ref{specialFlip}) the new $(-1)$-curves from $[e_1,\ldots,e_r]$ until the new central fiber is nonsingular.  

The central fiber of the deformation $\widetilde{W}_t \rightsquigarrow W_0=\widetilde{W'}^{+}$ contains the chain $[e_1,\ldots,e_{i-1}+1,1,e_i,\ldots,e_r]$. At this stage, the chain $[e_1,\ldots,e_r]$ in $W_t$ degenerates to $[e_1,\ldots,e_{i-1}+1,1,e_i,\ldots,e_r]$ with the property that the only curve that breaks is $(e_{i-1})$ into $(e_{i-1})-(1)$. This $(-1)$-curve in the chain lifts to the general fiber, and must intersect  the curve corresponding to $e_i$ transversally.

As with $W'$, the surface $W_0$ has no local to global obstructions to deform, and so $W_t$ is deformation equivalent to our original $W$. This produces the desired degeneration of the statement. 

Moreover, $\Gamma$ degenerates into $n' \Gamma'$, since $\Gamma'$ is a generator of the local class group of the new singularity, which has index $n'$. Since $W \rightsquigarrow W^\prime$ is a $\Q$-Gorenstein deformation, we preserve intersection with canonical class, and therefore we obtain the numerical claims of Proposition \ref{prop:slide}. 
\end{proof}

Given a del Pezzo Wahl chain $[e_1,\ldots,e_r]$ of degree $\ell$ and marking $$[k_1,\ldots,k_{i-1},\underline{e_i},k_{i+1},\ldots,k_r]$$ (Definition \ref{DPWahlchain}), we already constructed its associated surface $W_{*m}$. Let $\phi \colon X \to W_{*m}$ be the minimal resolution. For type (I), we have one special fiber $F'$ containing exceptional curves of $\phi$ and $\pi$, and for type (II) we have two $F'_1$ and $F'_2$. We construct a toric del Pezzo surface $W_{*m}^T$ with only T-singularities and its M-resolution $\widehat{W}_{*m}$ with only Wahl singularities through the next steps:



\begin{itemize}
    \item For the $(-1)$-curves in the special fiber corresponding to $[e_1,\ldots,e_{i-1}]$, we perform successive left slidings from index $i-1$ to $1$. For the special fiber corresponding to $[e_{i+1},\ldots,e_{r}]$, we perform the successive right slidings from $i+1$ to $r$.
    \item Each slide yields a $\Q$-Gorenstein degeneration into the corresponding new surface as established in Lemma \ref{defslides}.
    \item For $(-1)$-curves intersecting $E_1$ or $E_r$ we proceed as follows. If $a$ is the number of $(-1)$-curves intersecting $E_1$, we replace the corresponding Wahl chain $[f_1,\ldots,f_u]$ with $$[\underbrace{2,\ldots,2}_{a-1},1,f_1,\ldots,f_u].$$ For $E_r$, we make the respective replacement to the right. As before, the slide construction provides a $\Q$-Gorenstein deformation.
    \item We repeat this process until we obtain a toric surface $\widehat{W}_{*m}$. Note that in $\widehat{W}_{*m}$ we may have zero curves for the canonical class. These are possible ending chains of $(-2)$-curves and/or curves between two Wahl singularities that are equal. In the subsequent theorem, we show that $-K_{\widehat{W}_{*m}}$ is nef.
    \item We perform the contraction of all such zero curves for $K_{\widehat{W}_{*m}}$, which gives the toric del Pezzo surface $W_{*m}^T$.  
\end{itemize}

\begin{figure}[htbp]
\centering
\includegraphics[height=5.8cm, width=10.6cm]{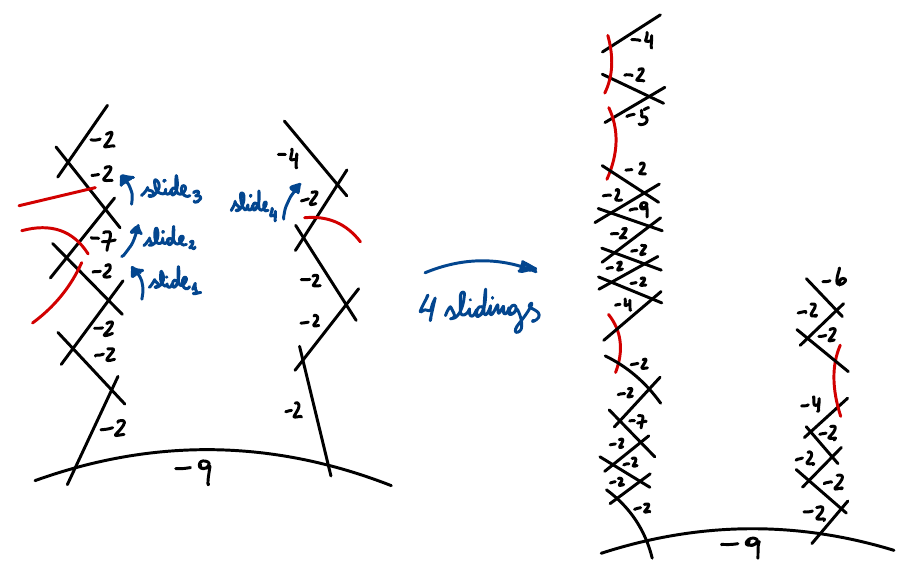}
\caption{The sliding process between the minimal resolutions of $W_{*m}$ and $\widehat{W}_{*m}$, for $n=99$, $a=68$ and a marking with $\ell=7$.} 
\label{f1}
\end{figure}

\begin{theorem}
The surface $W_{*m}^T$ is a toric del Pezzo surface when $W_{*m}$ is not of type (I) and degree $1$. There is a $\Q$-Gorenstein degeneration of $W_{*m}$ into $W_{*m}^T$.
\label{theoremtoric}
\end{theorem}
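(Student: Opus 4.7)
The plan is to verify the bullet-point construction in three stages. First, show the sliding algorithm terminates and produces a toric surface $\widehat{W}_{*m}$, and that its execution is a composition of $\Q$-Gorenstein degenerations. Second, prove that $-K_{\widehat{W}_{*m}}$ is nef, so that the crepant contraction of its zero-$K$ curves yields a surface with only T-singularities and ample anticanonical class. Third, combine these with the Koll\'ar--Shepherd-Barron correspondence to extract a $\Q$-Gorenstein degeneration $W_{*m}\rightsquigarrow W_{*m}^T$.

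For the first stage I would induct on the positions of the marking $(-1)$-curves along the exceptional fiber(s) of $X\to\F_{e_i}$. A left slide at position $j$ inside a Wahl chain replaces the local configuration by a new Wahl chain of length $\leq j-1$, with the next marking $(-1)$-curve placed at a strictly smaller position (Proposition \ref{prop:slide}). The positions therefore strictly decrease, so termination is automatic; the end cases where the marking curve meets $E_1$ or $E_r$ are handled by the special replacement with a chain of $(-2)$-curves specified in the construction. After every marking has been processed, the minimal resolution of the resulting surface has a dual graph that is a cycle of rational curves built from the original toric boundary of $\F_{e_i}$ together with the inserted Wahl chains, which is precisely the dual graph of a toric surface; hence $\widehat{W}_{*m}$ is toric. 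Each individual slide is a $\Q$-Gorenstein degeneration by Lemma \ref{defslides}, and the vanishing $H^2(-,T_-)=0$ propagates through the sequence by the same adding/deleting-$(-1)$-curves argument used in the proof of that lemma.

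For the second stage, nefness of $-K_{\widehat{W}_{*m}}$ only needs to be tested on torus-invariant curves. By construction these split into three classes: exceptional components over Wahl singularities, on which the discrepancy interval $(-1,0)$ forces $-K\cdot C\geq 0$; ambient components whose intersection with $-K$ inherits positivity from the numerical identities preserved along the family (in particular $K^2=\ell>0$); and the zero-$K$ curves, which by \cite[Lemma 3.14]{KSB88} split into chains contractible to T-singularities. Hence the crepant contraction $\widehat{W}_{*m}\to W_{*m}^T$ produces only T-singularities and pushes $-K_{\widehat{W}_{*m}}$ forward to a nef class that is strictly positive on every remaining curve. Combined with $K_{W_{*m}^T}^2=\ell>0$, Nakai-Moishezon yields ampleness, and the torus-equivariance of the contraction yields toricity of $W_{*m}^T$.

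For the third stage I would use that $\widehat{W}_{*m}$, being an M-resolution of $W_{*m}^T$, is the general fiber of a $\Q$-Gorenstein partial smoothing of $W_{*m}^T$ in the deformation component dual to the M-resolution, by the Koll\'ar--Shepherd-Barron correspondence summarized in Subsection \ref{s3.1}. Composing this partial smoothing (in the opposite direction) with the $\Q$-Gorenstein degeneration $W_{*m}\rightsquigarrow\widehat{W}_{*m}$ obtained in the first stage yields the degeneration $W_{*m}\rightsquigarrow W_{*m}^T$. The main technical obstacle I anticipate lies in stage one: verifying that, slide after slide, the configuration remains within the hypotheses of Lemma \ref{defslides} (a Wahl chain with an adjacent marking curve in the prescribed position), and that the resulting chain really closes up into a toric cycle with the correct zero-$K$ structure rather than leaving some non-toric residue. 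Once this combinatorial bookkeeping is in place, stages two and three follow cleanly from the tools already developed in Section \ref{s3}.
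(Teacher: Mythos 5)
Your stages one and two follow the paper's architecture (slides plus Lemma \ref{defslides} to degenerate $W_{*m}$ to $\widehat{W}_{*m}$, then a nefness check on torus-invariant curves), but stage two has a real gap: you never verify nefness on the one curve where it can actually fail, namely the ``final'' section $C$, the image of the strict transform of a curve in $|C_{\infty}+dF|$. The paper computes $K_{\widehat{W}_{*m}}\cdot C=-\ell+1-d$ in type (I) (resp.\ $-\ell-1-d_1-d_2$ in type (II)) with $d,d_i\in(-1,0]$, using the relation $d=\ell+u-3$ from Lemma \ref{restrCentralMark}; this is precisely where the hypothesis that $W_{*m}$ is not of type (I) and degree $1$ enters, since for $\ell=1$ in type (I) one gets $K\cdot C\geq 0$ and nefness of $-K$ breaks down. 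Your appeal to ``numerical identities preserved along the family (in particular $K^2=\ell>0$)'' does not substitute for this computation, and your proposal nowhere explains why type (I) degree $1$ is excluded --- a sign that the decisive step is missing.

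Stage three rests on a false premise. The Koll\'ar--Shepherd-Barron/BC correspondence does \emph{not} say that an M-resolution $\widehat{W}_{*m}\to W_{*m}^T$ is the general fiber of a $\Q$-Gorenstein partial smoothing of $W_{*m}^T$: the M-resolution is a crepant birational partial resolution, and it indexes a component of the deformation space in the sense that $\Q$-Gorenstein smoothings of the M-resolution blow down to deformations of the singularity lying in that component. Locally, the $\Q$-Gorenstein deformations of a T-singularity $\frac{1}{dn^2}(1,dna-1)$ are quotients of $xy=z^{dn}+t_{d-1}z^{(d-1)n}+\cdots+t_0$, whose singular fibers carry at most one non-Du Val point; no fiber exhibits the $d$ separate Wahl points of index $n$ that the M-resolution has, so the surface you want to insert as a ``general fiber'' simply does not occur in that family. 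Even granting it, composing two one-parameter degenerations through a common middle object is not a formal operation. The paper avoids all of this: it takes the family $W_{*m}\rightsquigarrow\widehat{W}_{*m}$ already built in stage one, contracts the $K$-trivial torus-invariant curves in the special fiber, and invokes Wahl's theorem on blowing down deformations (\cite[Theorem 1.4]{W76}) to obtain the family $W_{*m}\rightsquigarrow W_{*m}^T$ directly. You should replace your stage three by this contraction argument.
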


\begin{proof}
We first prove that $\widehat{W}_{*m}$ satisfies $K_{\widehat{W}_{*m}} \cdot C \leq 0$ for every torus invariant curve $C$. For the curves that arise from the slides, it follows from Lemma \ref{prop:slide}. By performing the first slide on $\Gamma$ on a $E_i$, we obtain $K_{\widehat{W}_{*m}} \cdot \Gamma^\prime<0$. For the remaining of such $\Gamma_j$s intersecting the same $E_i$, we obtain zero curves for the canonical class. Hence, the condition $K_{\widehat{W}_{*m}} \cdot C \leq 0$ holds for curves contained in fibers.

In type (I) of no degree $1$ and in type (II) for any degree, we show that the \say{final} torus-invariant curve $C$, which is the image under $X\to W_{*m}$ of the strict transform of a section in $|C_{\infty} + dF|$, satisfies $K_{\widehat{W}_{*m}} \cdot C < 0$. We have that $\widehat{W}_{*m}$ retains the same $u_i$s of $W_{*m}$ as in Lemma \ref{restrCentralMark}. It follows that in type (I), the intersection is $K_{\widehat{W}_{*m}}\cdot C=-\ell+1-d$ for some $d\in(-1,0]$, while in type (II), we have $K_{\widehat{W}_{*m}}\cdot C=-\ell-1-d_1-d_2$ for some $d_1,d_2\in(-1,0]$.

The degeneration of $W_{*m}$ into $\widehat{W}_{*m}$ is achieved by repeatedly using Lemma \ref{defslides}. It is defined by the slides of each of its marking curves upwards. Now we contract the torus invariant curves $C$ in $\widehat{W}_{*m}$ such that $K_{\widehat{W}_{*m}} \cdot C=0$. Let $\widehat{W}_{*m} \to W_{*m}^{T}$ be the contraction. We then blow-down the deformation \cite[Theorem 1.4]{W76}. We obtain a $\Q$-Gorenstein degeneration of $W_{*m}$ into $W_{*m}^T$.
\end{proof}

\begin{example}
Let $\frac{n^2}{na-1}=[e_1,\ldots,e_r]$ be a Wahl chain that does not have the form $[x+4,2,\ldots,2]$. Let us assume that $e_1>2$ (and so $e_r=2$). By Lemma \ref{canmarking} this Wahl chain has two canonical markings that make it del Pezzo of degree $4$. Let us consider the one with central mark $e_r$. Let $e_i$ be the center of the Wahl chain. This center is marked with $4$ marking curves, and $e_r$ is marked with $1$ marking curve. We also have a marking in $e_{i-1}$ or $e_{i+1}$ depending on how the Wahl algorithm is run on the Wahl chain. If we marked $e_{i+1}$, then we call it right canonical marking; otherwise left. Let us define $$\frac{n_i}{n_i-a_i}=[e_r,\ldots,e_i].$$ Consider the corresponding marked surface $W_{*m}$, and its weak del Pezzo toric surface $\widehat{W}_{*m}$. Then in $\widehat{W}_{*m}$ we have the right chain of Wahl singularities $$(0)-\left[{n \choose a}\right]-(1)-\left[{n_{i+1} \choose a_{i+1}}\right]-(1)-\ldots -(1)- \left[{n_{i+1} \choose a_{i+1}}\right]-(1)-\left[{n_{i+2} \choose a_{i+2}}\right]-(1),$$ or the left chain of Wahl singularities $$(0)-\left[{n \choose a}\right]-(1)-\left[{n_{i} \choose a_{i}}\right]-(1)-\left[{n_{i+1} \choose a_{i+1}}\right]-(1)-\ldots -(1)- \left[{n_{i+1} \choose a_{i+1}}\right]-(1),$$ where the repeated singularities appear four times. The computations follow directly from Proposition \ref{prop:slide} which gives the numerics for an slide. When the Wahl chain has the form $[x+4,2,\ldots,2]$ and $x\geq 2$, then a canonical marking is $[\underline{2},2,\ldots,2,1,x-1]$, and so in $\widehat{W}_{*m}$ we have the chain of Wahl singularities $$(0)-\left[{x+2 \choose x+1}\right]-(1)-\left[{x+4 \choose x+3}\right]-(1)-(2)-(2)-(2)-(2).$$ 
\label{ex:canmarking}
\end{example}

\begin{remark} [Descending mutation] Let us consider $\widehat{W}_{*m}$ for some marked surface $W_{*m}$. We may take a $\Q$-Gorenstein smoothing of the higher index singularity which preserves the toric boundary of $\widehat{W}_{*m}$ and the remaining singularities. This produces an almost toric surface with one sliding curve over some Wahl singularity. Performing the corresponding slide yields a new toric surface with a Wahl singularity of strictly smaller index. Then we have a new toric surface with Wahl singularities. We call it \textit{descending mutation}. In degree $9$, this is exactly the usual mutation between two Markov triples. In general, this gives a way to navigate through toric surfaces with only Wahl singularities and of the same degree until one reaches a smooth toric surface. Details of this process will be worked out in a future work.
\label{treemutation}
\end{remark}


\section{Classification of W-Del Pezzo with one singularity} \label{s5}

Let $W$ be a projective surface with only Wahl singularities,  $K_W^2 \geq 1$, and $-K_W$ big. Let $\phi \colon X \to W$ be the minimal resolution of $W$. Therefore, $X$ is rational. Let $\pi \colon X \to \F_d$ be a composition of blow-downs into a Hirzebruch surface $\F_d$ for some $d>0$. We denote the exceptional divisor of $\phi$ by $\exc(\phi)$. It is a union of Wahl chains. We have the diagram of morphisms $$ \xymatrix{  & X  \ar[ld]_{\pi} \ar[rd]^{\phi} &  \\ \F_d &  & W}$$ We recall that the $\P^1$-bundle $\F_d \to \P^1$ has a unique negative curve $C_{\infty}$ with $C_{\infty}^2=-d$. Let $F$ be the class of a fiber. We have an induced fibration $\pi' \colon X \to \P^1$ via composition of $\pi$ with $\F_d \to \P^1$. 

\begin{proposition}
The surface $W$ has no local-to-global obstructions to deform. For every W-surface $W_t \rightsquigarrow W$ the general fiber is rational, and $h^0(-K_X) \geq K_W^2+1$.
\end{proposition}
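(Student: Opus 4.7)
My plan is to handle the three claims in order; the main tools are Bogomolov-type vanishing on $X$ and upper semicontinuity in the $\Q$-Gorenstein family $\mathcal{W}\to\D$.

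For unobstructedness, I would invoke the Lee--Park isomorphism $H^2(W,T_W)\cong H^2(X,T_X(-\log E))$ already used in Lemma \ref{defslides}, with $E=\exc(\phi)$, combined with Serre duality $H^2(X,T_X(-\log E))\cong H^0(X,\Omega_X^1(\log E)\otimes K_X)^*$. A nonzero section would produce a rank-one subsheaf $\O_X(-K_X)\hookrightarrow\Omega_X^1(\log E)$; since $-\phi^*K_W$ is big on $X$ and $-K_X=-\phi^*K_W+\sum|a_i|E_i$ with $|a_i|>0$, also $-K_X$ is big, so $\kappa(X,-K_X)=2$. This contradicts the Bogomolov--Sommese bound of $1$ on Iitaka dimensions of rank-one subsheaves of $\Omega_X^1(\log E)$ on the rational surface $X$, giving $H^2(W,T_W)=0$.

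For rationality, $X$ birationally dominates $\F_d$, hence $W$ is rational. Upper semicontinuity of plurigenera in the W-family and the vanishing $h^0(W,mK_W)=0$ for $m\geq 1$ (since $-K_W$ big makes $mK_W$ non-pseudo-effective) give $P_m(W_t)=0$ for all $m\geq 1$, and together with $q(W_t)=q(W)=0$, Castelnuovo's criterion gives $W_t$ rational. Since $W_t$ is then smooth rational with $K_{W_t}^2=K_W^2>0$ and $K_{W_t}$ not big, the surface dichotomy forces $-K_{W_t}$ big, and Serre duality gives $h^2(W_t,-K_{W_t})=h^0(2K_{W_t})=0$; hence Riemann--Roch yields $h^0(W_t,-K_{W_t})\geq\chi(-K_{W_t})=K_W^2+1$. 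Upper semicontinuity transfers this to $h^0(W,-K_W)\geq K_W^2+1$. Finally, since the Wahl discrepancies satisfy $|a_i|\in(0,1)$, the round-up identity $\lceil\phi^*(-K_W)\rceil=-K_X$ holds and produces a natural pullback injection $H^0(W,-K_W)\hookrightarrow H^0(X,-K_X)$, so $h^0(X,-K_X)\geq K_W^2+1$.

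The main obstacle I foresee is the logarithmic Bogomolov--Sommese step: the Iitaka-dimension contradiction is conceptually clean, but one has to verify that the precise formulation being invoked applies to the rational $X$ with its reduced SNC exceptional boundary $E$. The other ingredients---upper semicontinuity, Riemann--Roch on $W_t$, and the pullback for reflexive sheaves on klt surfaces---are classical and should go through without serious trouble.
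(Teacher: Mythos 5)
Your argument is correct, and for the second and third claims it is essentially the paper's route: the paper likewise deduces rationality and the bound from the semicontinuity $P_n(W)\geq P_n(W_t)$ for all $n\in\Z$ (quoting Manetti, \cite[Theorem 4]{Ma91}), Riemann--Roch on the general fiber, and a comparison of $P_{-1}(W)$ with $h^0(-K_X)$ (via \cite[(3.9.2)]{W81}). Where you genuinely diverge is the first claim: the paper simply cites \cite[Proposition 3.1]{HP10}, whereas you reprove it via the Lee--Park isomorphism $H^2(W,T_W)\cong H^2(X,T_X(-\log E))$, Serre duality, and Bogomolov--Sommese applied to $\O_X(-K_X)\hookrightarrow\Omega^1_X(\log E)$ with $-K_X=\phi^*(-K_W)+\sum|a_i|E_i$ big; this is a sound, self-contained version of the standard argument (the log Bogomolov--Sommese bound $\kappa\leq 1$ holds for any invertible subsheaf of $\Omega^1_X(\log E)$ with $E$ reduced SNC, which your $\exc(\phi)$ is), and it buys independence from the citation at the cost of length. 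Two small points to tighten. First, the step you call ``upper semicontinuity'' for $P_{-1}$ is not the textbook semicontinuity theorem: $\omega^{[-1]}_{\W/\D}$ need not commute with base change, so $P_{-1}(W)\geq P_{-1}(W_t)$ requires the argument of Manetti's Theorem 4 (for the vanishing $P_m(W_t)=0$ you could instead restrict to $m$ divisible by the index of $K_{\W}$, where $\omega^{[m]}$ is a line bundle, and that already forces $\kappa(W_t)=-\infty$). Second, for global sections the correct identity is $H^0(W,-K_W)=H^0(X,\lfloor\phi^*(-K_W)\rfloor)=H^0(X,-K_X-E)$, not the round-up; since this sits inside $H^0(X,-K_X)$ the inequality you want still follows, but the injection comes from the floor, not from $\lceil\phi^*(-K_W)\rceil=-K_X$.
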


\begin{proof}
No local-to-global obstructions follows from \cite[Proposition 3.1]{HP10}.  We now follow \cite[Section 1]{Ma91}. Consider a W-surface $W_t \rightsquigarrow W$, where $W=W_0$. Let $P_n(W_t):=h^0(W_t, {\omega_{W_t}^{\otimes n}}^{\vee \vee})$ for every $t$, where $\omega_{W_t}$ is the canonical sheaf on $W_t$. Then, following the proof of \cite[Theorem 4]{Ma91}, we see that $P_n(W) \geq P_n(W_t)$ for any $n \in \Z$. Since $K_{W_t}^2>0$ is constant on $t$, we have that $-K_{W}$ big implies that $-K_{W_t}$ is big for every $t$. Hence $W_t$ is rational, and by Riemann-Roch on $W_t$ we have $P_{-1}(W_t) \geq K_{W}^2 +1$. On the other hand, by \cite[(3.9.2)]{W81}, we have $h^0(-K_X)=P_{-1}(X)=P_{-1}(W)$, and so $h^0(-K_X) \geq K_W^2 +1$.
\end{proof}

The next definitions appeared in \cite[III.3]{Manetti} to study degenerations of $\P^2$ \cite{Ma91}. 

\begin{definition}
We define $d_{\text{max}}(W)$ as the maximum possible $d$ in the diagram above. A curve $C$ in $X$ is \textit{transversal} to $\pi' \colon X \to \P^1$ if $C \cdot \pi^*(F) >0$.
\label{dmaxtrans}
\end{definition}

We note that the proper transform of $C_{\infty}$ in $X$ has self-intersection $-d_{\text{max}}$ since we cannot blow-up over a point in $C_{\infty}$. We denote it again by $C_{\infty}$, and the pull-back of $F$ again by $F$.

\begin{proposition}
We have $d_{\text{max}}(W)=\text{max} \{-C^2\}$ where $C$ is an irreducible curve in $X$ such that there exists a smooth rational curve $f$ with $f^2=0$ and $f \cdot C=1$.
\label{ultil}
\end{proposition}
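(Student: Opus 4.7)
The inequality $d_{\max}(W) \leq \max\{-C^2\}$ is straightforward. Given any $\pi \colon X \to \F_d$ realizing the maximum, let $C$ be the strict transform of the negative section $C_\infty$ of $\F_d$ (which equals $C_\infty$ itself since, as noted after Definition \ref{dmaxtrans}, no blow-up of $\pi$ lies on $C_\infty$) and let $f$ be the pullback of a general fiber. Then $f^2=0$, $f\cdot C = 1$, and $C^2 = -d$, so $d = -C^2 \leq \max\{-C^2\}$.

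For the reverse inequality, fix $C$ and a smooth rational curve $f$ as in the statement, and set $d':=-C^2$. The plan is to construct a morphism $X \to \F_e$ for some $e \geq d'$, which gives $d_{\max}(W)\geq e \geq -C^2$; taking the supremum over $C$ then concludes.

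First, I would show that $|f|$ is a base-point-free pencil. By adjunction $K_X \cdot f = -2$, so Riemann--Roch on the smooth rational surface $X$ gives $\chi(\mathcal{O}_X(f)) = 2$. Since $-K_W$ is big and the Wahl discrepancies lie in $(-1,0)$, the divisor $-K_X$ is also big. If $K_X - f$ were effective, then $f - K_X$ would be simultaneously big (as the sum of the effective $f$ and the big $-K_X$) and anti-effective, a contradiction; hence $h^2(\mathcal{O}_X(f))=0$ by Serre duality and $h^0(\mathcal{O}_X(f))\geq 2$. Base-point freeness follows from $f\cdot f = 0$, since a base point would force a positive local contribution to the intersection of two distinct members. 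Thus $|f|$ induces a morphism $g \colon X \to \P^1$ whose general fiber is isomorphic to $f$, and $C$ is a section of $g$ since $C \cdot f = 1$.

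Next, I would run a carefully chosen relative MMP for $g$: at each stage, contract a $(-1)$-curve that is a component of a fiber and does not meet $C$. The key combinatorial point is that each reducible fiber of a smooth $\P^1$-fibration is a tree of smooth $\P^1$'s whose leaves are $(-1)$-curves (since $E^2 + \#\{\text{neighbors of }E\} = E \cdot F = 0$ for each component $E$). Any tree on at least two vertices has at least two leaves, and the section $C$ meets each fiber in only one component, so every reducible fiber has a leaf avoiding $C$; contracting such a leaf preserves $C^2$ and strictly decreases the total number of components in fibers. Iterating terminates at a Hirzebruch surface $Y = \F_e$ with image $\bar{C}$ of $C$ satisfying $\bar{C}^2 = C^2 = -d'$. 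Since every section of $\F_e$ has self-intersection $\geq -e$, we get $e \geq d'$, and the composition $X \to \F_e$ witnesses $d_{\max}(W)\geq e \geq -C^2$.

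The main obstacle will be the combinatorial MMP step: ensuring that the contractions can always be chosen to avoid $C$ while still reaching a relatively minimal model. The leaf/tree observation handles this in principle, but extra care is needed when contracting a leaf turns a non-leaf into a new leaf meeting $C$; in such a case, one proceeds by contracting a leaf in another branch or another reducible fiber, keeping the induction alive until all fibers are irreducible.
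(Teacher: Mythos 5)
Your overall route is the standard one, and it is worth noting that the paper itself gives no argument here: its ``proof'' is a citation of \cite{Manetti}*{Theorem 3.2}, so a self-contained proof is a genuinely different (and more informative) contribution. The easy inequality is fine, and your verification that $|f|$ is a base-point-free pencil is correct (one can sharpen it: the restriction sequence $0 \to \O_X \to \O_X(f) \to \O_f \to 0$ gives $h^1(\O_X(f))=0$ and $h^0(\O_X(f))=2$ exactly, since $X$ is rational). The plan of contracting $(-1)$-curves inside fibers away from the section $C$ until one reaches a relatively minimal model $\F_e$, and then using that every section of $\F_e$ has self-intersection at least $-e$, is exactly the right strategy.

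There is, however, a genuine error in the combinatorial step that drives the induction. A reducible fiber $F=\sum_i m_iE_i$ of a genus-$0$ fibration can have components of multiplicity $m_i>1$ (this occurs as soon as one blows up a node of a fiber, which certainly happens here because the Wahl chains sit inside fibers and require infinitely near points). Then $E\cdot F=0$ reads $m_EE^2+\sum_{j\neq E}m_j\,E_j\cdot E=0$, not $E^2+(\text{number of neighbours})=0$, and the leaves of the dual tree need \emph{not} be $(-1)$-curves: for the fiber $E_1+2E_2+E_3$ whose dual graph is a chain, the two leaves are $(-2)$-curves and the unique $(-1)$-curve is the middle component of multiplicity $2$. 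So your stated mechanism for producing a contractible curve avoiding $C$ fails, and the hedge in your last paragraph (re-ordering the contractions) does not address this, since it presumes leaves are contractible. The fix is to argue with multiplicities: $C$ is a section, so it meets exactly one component $E_0$ of $F$, and $m_0=1$; since every component of a reducible fiber satisfies $E_i^2<0$ and $K_X\cdot E_i=-2-E_i^2\geq -1$ with equality if and only if $E_i$ is a $(-1)$-curve, the relation $K_X\cdot F=-2$ forces $\sum_{E_i^2=-1}m_i\geq 2$. Hence either the fiber contains at least two distinct $(-1)$-components, at most one of which can be $E_0$, or it contains a single $(-1)$-component of multiplicity at least $2$, which then cannot be $E_0$. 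Either way every reducible fiber contains a $(-1)$-curve disjoint from $C$, and with this replacement your induction terminates at $\F_e$ with $\bar{C}^2=C^2$ as claimed.
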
 

\begin{proof}
    See \cite[Theorem 3.2]{Manetti}.
\end{proof}

\begin{lemma}
Assume that $W$ does not have floating curves. Then the exceptional loci $\exc(\phi)$ can contain at most two transversal curves, and must be sections of $\pi'$. In fact, we have the following options:

\begin{itemize}
    \item[(0)] $\exc(\phi)$ contains no transversal curve.
    \item[(1)] $\exc(\phi)$ contains one transversal curve and it is a section that is a proper transform of a curve in $|C_{\infty}+(d+j)F|$ for some $j=-d,0,1,2$.
    \item[(2)] $\exc(\phi)$ contains two transversal curves. One is $C_{\infty}$ and the other is a section that is a proper transform of a curve in $|C_{\infty}+(d+j)F|$ for some $j=0,1$. In this case, we must have $(\exc(\phi)-C_{\infty}) \cdot C_{\infty} \leq 1$. 
\end{itemize}
\label{keylemma}
\end{lemma}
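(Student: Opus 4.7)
The plan is to exploit the maximality of $d = d_{\max}(W)$ via Proposition \ref{ultil}, together with the Wahl-chain structure of $\exc(\phi)$ and the absence of floating curves. The first and most delicate step is to show that every transversal component $C \subset \exc(\phi)$ is a section, i.e.\ $C\cdot F = 1$. Suppose instead $C \cdot F = k \geq 2$. Then the image $\pi(C)$ is a multisection of $\F_d \to \P^1$ linearly equivalent to $kC_\infty + bF$ for some $b$. Performing an elementary transformation of $\F_d$ centered at a smooth point of $\pi(C)$ on a suitable fiber (blow up the point and contract the proper transform of the fiber) and iterating using $k \geq 2$, one obtains a factorization $X \to \F_{d'}$ with $d' > d$, contradicting maximality of $d$. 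Equivalently, from the components of $\pi(C)$ one can extract an irreducible $C' \subset X$ and a smooth rational $f$ with $f^2 = 0$, $f \cdot C' = 1$, and $-(C')^2 > d$, violating Proposition \ref{ultil} directly.

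Once every transversal $C \subset \exc(\phi)$ is a section, its image in $\F_d$ is linearly equivalent either to $C_\infty$ (giving $j = -d$) or to a section in $|C_\infty + (d+j)F|$ with $j \geq 0$. Writing $C$ as the strict transform of such a section passing through $b$ blow-up points of $\pi$ (counted with multiplicity), one has $C^2 = -d + 2(d+j) - b = d + 2j - b \leq -2$, hence $b \geq d + 2j + 2$. To obtain $j \leq 2$ in case (1), I would use that $-K_X \sim 2C_\infty + (d+2)F - E_\pi$ is big together with $h^0(-K_X) \geq K_W^2 + 1 \geq 2$: the blow-up points of $\pi$ over a single fiber of $\pi'$ supporting the Wahl chain containing $C$ can be accommodated only up to $d+j+2$ in a manner compatible with the Wahl chain of $C$ containing other fiber-components, and comparing this with the lower bound on $b$ forces $j \leq 2$. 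The bound on $j$ in case (1) is thus a numerical constraint coming from the bigness of $-K_W$.

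For the count of transversal sections, consider two such sections $C_1, C_2 \subset \exc(\phi)$. If they lie in disjoint Wahl chains they are disjoint in $X$, yet their images satisfy $\pi(C_1)\cdot \pi(C_2) = -d + k_1 + k_2$. Disjointness together with the effectivity constraint $k_i \in \{0\} \cup \{k_i \geq d\}$ forces one of them to be $C_\infty$ (hence $k = 0$) while the other has $k = d+j$ with $j$ small. The no-floating-curve hypothesis and positivity of $-K_W$ further constrain $j \leq 1$, and $(\exc(\phi) - C_\infty)\cdot C_\infty \leq 1$ holds because $C_\infty$ is not blown up by $\pi$, so its intersections in $X$ with the rest of $\exc(\phi)$ are inherited from its intersections on $\F_d$, where it meets the other section in at most one point. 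Three or more transversal sections are ruled out by the same intersection-disjointness analysis applied pairwise. If two sections lie in the same Wahl chain, they meet at exactly one node of the chain, which is equally restrictive.

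The main obstacle, in my view, is the rigorous execution of Step 1: the elementary transformation argument is geometrically compelling but one must ensure that the resulting birational contraction really is a morphism $X \to \F_{d'}$ with $d' > d$ (as opposed to landing in the same Hirzebruch or in $\P^2$), and that the lifted $C$ remains an irreducible component. Once Step 1 is secured, the remaining bounds on $j$ and on the number of transversal curves reduce to intersection-theoretic bookkeeping using $-K_X \sim 2C_\infty + (d+2)F - E_\pi$ and the standard classification of sections of $\F_d \to \P^1$.
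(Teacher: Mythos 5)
There is a genuine gap, and it sits exactly where you flagged your ``main obstacle'' --- but it is worse than delicate: the whole load-bearing inequality of the paper's proof is missing from your plan. The paper first shows that $\exc(\phi)$ is contained in the fixed part $N$ of $|-K_X|$: each Wahl chain contains a curve $A$ with $A^2<-3$, so $h^0(-K_X-A)=h^0(-K_X)\geq K_W^2+1\geq 2$, and since every curve of the chain has self-intersection $\leq -2$ one adds them one at a time without losing sections, getting $h^0(-K_X-\exc(\phi))\geq 2$. Pairing with $F\cdot(-K_X)=2$ then gives $F\cdot\exc(\phi)\leq 2$, which simultaneously bounds the number of transversal curves by two \emph{and} forces each to be a section (up to one double-section case that is excluded afterwards). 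Your substitute for this --- ruling out multisections via maximality of $d$ --- does not work: Proposition~\ref{ultil} only speaks about curves $C$ admitting a $0$-curve $f$ with $f\cdot C=1$, so a bisection is simply outside its scope; and an elementary transformation centered on $\pi(C)$ neither lowers the fiber-degree of $C$ nor produces a \emph{morphism} $X\to\F_{d'}$ (the fiber you contract is not contracted by $\pi$), so no contradiction with $d=d_{\max}$ arises. Moreover, even granting that all transversal components are sections, nothing in your proposal caps their number at two: your pairwise-disjointness argument is not developed (two components of $\exc(\phi)$ in the same Wahl chain meet, and images of disjoint curves can meet in $\F_d$ and be separated by blow-ups), whereas the paper gets the cap for free from $F\cdot\exc(\phi)\leq 2$.

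Two further steps are asserted rather than proved. The bound $j\leq 2$ in case (1): the paper writes $-K_X-C_\infty-\Gamma-G\sim(d+2-e)F+G'-G$ as an effective divisor ($G$, $G'$ supported on fibers) and intersects with a moving section $C^+\sim C_\infty+dF$ that avoids $G'$, obtaining $d+2-e\geq 0$; your ``accommodation'' count of blow-up points is not a proof. And your justification of $(\exc(\phi)-C_\infty)\cdot C_\infty\leq 1$ is incorrect as stated: $\exc(\phi)-C_\infty$ contains fiber components, which do meet $C_\infty$, and the section $\Gamma$ can meet $C_\infty$ in more than one point of $\F_d$ when $e>d+1$ (and no blow-ups over $C_\infty$ are available to separate them). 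The paper instead notes that $(\exc(\phi)-C_\infty)\cdot C_\infty\geq 2$ would give $h^0(-K_X-\exc(\phi)-C_\infty)\geq 1$ while $F\cdot(-K_X-\exc(\phi)-C_\infty)=-1$, a contradiction. To repair your proof you should import the fixed-part argument as Step 0; everything else then follows by the intersection-theoretic bookkeeping you describe.
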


\begin{proof}
Let $\mathcal{C}$ be a Wahl chain in $\exc(\phi)$. It must have a curve $A$ in $\mathcal{C}$ with $A^2<-3$. Then $h^0(-K_X-A)=h^0(-K_X) \geq K_W^2+1 \geq 2$. Let $N$ be its fixed part of $|-K_X|$. Then $A$ belongs to $N$. As any curve in $\mathcal{C}$ has self-intersection less than or equal to $-2$, we inductively add curves in $\mathcal{C}$ to keep the same number of sections. We finally get $h^0(-K_X-\mathcal{C}) \geq 2$ and $\mathcal{C} \leq N$. As Wahl chains in $\exc(\phi)$ do not intersect among them, we inductively have $h^0(-K_X-\exc(\phi))\geq 2$, and $\exc(\phi) \leq N$. Let $F$ be a fiber of $\pi'$, then $F \cdot (-K_X -\exc(\phi)) = 2-F \cdot \exc(\phi) \geq 0.$ Thus, $\exc(\phi)$ can contain at most two sections, or it contains one (irreducible) double section. However, the subsequent argument easily rules out the possibility of a double section.

In the case $d=d_{\text{max}}(W)=1$ we must have $C_{\infty} \cdot \exc(\phi) >0$. For any $d \geq 1$, if $\exc(\phi)$ does not contain $C_{\infty}$, then $h^0(-K_X-\exc(\phi)-C_{\infty}) \geq 1$ and $F \cdot (-K_X-\exc(\phi)-C_{\infty})=1- F \cdot \exc(\phi) \geq 0$. Thus, $\exc(\phi)$ can contain at most one transversal curve, which must be a section. Hence, by Proposition \ref{ultil}, there is no transversal curve in $\exc(\phi)$ when $d=1$. So, for what follows, we only consider $d \geq 2$.


Assume that $\exc(\phi)$ contains only one section $\Gamma \neq C_{\infty}$, or a section $\Gamma$ and $C_{\infty}$. We have $h^0(-K_X-\exc(\phi) -C_{\infty}) \geq 1$ or $h^0(-K_X-\exc(\phi)) \geq 2$ respectively. In both cases, $-K_X-\Gamma-C_{\infty}-G$ is linearly equivalent to an effective divisor, where $G$ is formed by fibers or fiber components. Moreover, it follows that $\Gamma \sim C_{\infty} + e F -G'$, where $G'$ is an effective divisor supported on fiber components. Then, $ -K_X-C_{\infty} - \Gamma -G \sim (d+2-e)F+G'-G$. Let us consider a section $C^+ \sim C_{\infty} +dF$. In particular, it avoids intersection with $G'$. Since $(d+2-e)F+G'-G$ is linearly equivalent to an effective divisor and ${C^+}^2=d$, it follows that $C^+ \cdot \big((d+2-e)F+G'-G \big) \geq 0$. This yields to $d+2-e\geq 0$, which restricts the possibilities of $\Gamma$ to those described in (1).

If $\exc(\phi)$ contains a section $\Gamma$ and $C_{\infty}$, then $e \neq d+2$, which rules the possibilities of $\Gamma$ in (2). Moreover, if $(\exc(\phi)-C_{\infty}) \cdot C_{\infty} \geq 2$, then $h^0(-K_X-\exc(\phi)-C_{\infty}) \geq 1$. However, this contradicts that  $F \cdot (-K_X-\exc(\phi)-C_{\infty}) =-1$.
\end{proof}

\begin{remark}
For $d\geq 2$ and $K_W^2\geq 5$, Manetti's \cite[Proposition 9]{Ma91} gives that $\exc(\phi)$ can contain at most $C_{\infty}$ as a transversal curve.   
\label{improvedRemark}
\end{remark}

\begin{theorem}
Assume that $W_*$ is as in the previous lemma and has only one singularity. Consider the diagram $$ \xymatrix{  & X  \ar[ld]_{\pi} \ar[rd]^{\phi} &  \\ \F_d &  & W_*}$$ where $d=d_{\text{max}}(W_*)$. Then we have the following options for $\pi(\exc(\phi))$:
\begin{itemize}
    \item[(0)] One point.
    \item[(1)] One fiber.
    \item[(2)] One section of type $C_{\infty}+(d+i)F$ where $i=-d,0,1,2$.
    \item[(3)] One fiber and one section of type $C_{\infty}+(d+i)F$ where $i=-d,0,1,2$.
    \item[(4)] Two fibers and one section of type $C_{\infty}+(d+i)F$ where $i=-d,0,1,2$.
    \item[(5)] One fiber, $C_{\infty}$ and one section of type $C_{\infty}+dF$.
    \item[(6)] One section of type $C_{\infty}+(d+1)F$ and $C_{\infty}$.
\end{itemize}
\label{Thm class}
\end{theorem}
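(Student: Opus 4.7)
The plan is to exploit Lemma \ref{keylemma}, which already enumerates the possible transversal components of $\exc(\phi)$, together with the fact that $\exc(\phi)$ is a single connected Wahl chain $\mathcal{C}$ (since $W_*$ has only one singularity). The first step is the following dichotomy for each component $E \subset \mathcal{C}$: either $E$ is transversal to $\pi'$, or $E \cdot F = 0$. In the latter case, $E$ is contracted by $\pi$ or dominates a single fiber of $\F_d \to \P^1$, because $F$ moves in a linear system. Since fibers of $\pi'$ are pairwise disjoint in $\F_d$ and $\mathcal{C}$ is connected and linear, every maximal connected sub-chain of non-transversal components must map into a single fiber of $\F_d \to \P^1$ (or to a single point if all such components are contracted).

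I would then split into cases according to the number of transversal components of $\mathcal{C}$, as prescribed by Lemma \ref{keylemma}. If there are none, all of $\mathcal{C}$ is non-transversal, and by the observation above its image is either a single point (option (0)) or a single fiber (option (1)). If there is exactly one transversal component $\Gamma \sim C_\infty + (d+j)F$ with $j \in \{-d, 0, 1, 2\}$, then the linear chain $\mathcal{C}$ splits into at most two non-empty sub-chains on each side of $\Gamma$, each lying in a single fiber that meets $\Gamma$ at the attachment point. This yields options (2), (3), or (4) depending on whether zero, one, or two non-empty fiber sub-chains occur.

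The remaining case is when $\mathcal{C}$ contains both $C_\infty$ and a second transversal section $\Gamma \sim C_\infty + (d+j)F$ with $j \in \{0, 1\}$. The constraint $(\exc(\phi) - C_\infty) \cdot C_\infty \leq 1$ from Lemma \ref{keylemma}(2) is crucial here. When $j = 0$ we have $\Gamma \cdot C_\infty = 0$, so $\Gamma$ and $C_\infty$ cannot be adjacent in $\mathcal{C}$; connectivity forces an intermediate sub-chain, which must lie in a single fiber, and the intersection bound forbids any further sub-chain attached to $C_\infty$, producing option (5). When $j = 1$, we have $\Gamma \cdot C_\infty = 1$, so $\Gamma$ and $C_\infty$ are adjacent in $\mathcal{C}$ and no connecting sub-chain is needed, matching option (6).

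The main obstacle I foresee is the care needed in this last two-section case: specifically, ruling out extra fiber sub-chains attached to the non-$C_\infty$ side, or showing that they force a contradiction with $d = d_{\max}(W_*)$. For this I would apply Proposition \ref{ultil}, searching for a smooth rational $0$-curve transverse to a hypothetical additional component and extracting from it a new blow-down $X \to \F_{d'}$ with $d' > d$, contradicting the maximality of $d$. A similar self-intersection argument, together with the adjacency constraints coming from the chain structure of $\mathcal{C}$, should also organize the bookkeeping for which $j \in \{-d,0,1,2\}$ actually occur in options (2)--(4).
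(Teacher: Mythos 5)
Your proposal follows essentially the same route as the paper's proof: Lemma \ref{keylemma} caps the transversal part of $\exc(\phi)$ at two sections (one of which must be $C_{\infty}$), and the connectedness and linearity of the single Wahl chain then bound the number of maximal vertical sub-chains, with the inequality $(\exc(\phi)-C_{\infty})\cdot C_{\infty}\leq 1$ forcing $C_{\infty}$ to sit at an end of the chain and producing the list (0)--(6). The one step you flag as an obstacle --- excluding an extra vertical sub-chain hanging off the far side of the non-$C_{\infty}$ section in the two-section case --- is exactly the step the paper's own proof passes over (it simply asserts that the intersection bound leaves only (5) and (6)), so your plan to close it via Proposition \ref{ultil} and the maximality of $d$ is additional care beyond, not a deviation from, the published argument.
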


\begin{proof}
By Lemma \ref{keylemma}, we have that $\exc(\phi)$ can contain two sections or one section as transversal curves. If $\exc(\phi)$ does not contain transversal curves, then $\pi(\exc(\phi))$ is a point (0) or one fiber (1). If it contains one section, then we have that $\exc(\phi)$ can contain at most $2$ fibers, because $\exc(\phi)$ is a chain of $\P^1$s. Hence, we have (2), (3) and (4). If $\exc(\phi)$ contains two sections, then one of them must be $C_{\infty}$. We are taking $d=d_{\text{max}}(W)$, and so there are no blow-ups over $C_{\infty}$. Then we have at most $2$ fibers in $\exc(\phi)$. But Lemma \ref{keylemma} (2) implies that $(\exc(\phi)-C_{\infty}) \cdot C_{\infty} \leq 1$, and so the only possible situation is (5) and (6).   
\end{proof}

In the previous theorem, we use $d=d_{\text{max}}$ to control the possible configurations of $\pi(\exc(\phi))$. We now want to prove that for each case in Theorem \ref{Thm class} we have a deformation into $\text{Bl}_u(W_{*m})$ for some $u \geq 0$ and some $W_{*m}$.

As an intermediate step, let us say that $W_*$ is of type (I)' or (II)' if $\pi(\exc(\phi))$ is of type (3) with $i=-d'$ or type (4) with $i=-d'$, where $d'$ is not necessarily $d_{\text{max}}(W_*)$. There may be some floating curves in $W_*$, but we do not blow-up points over $C_{\infty}$. 

\begin{lemma}
A $W_*$ in case (5) of Theorem \ref{Thm class} is of type (II)'. 
\label{caso(5)raro}
\end{lemma}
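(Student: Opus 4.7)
The plan is to construct, for any $W_*$ in case (5), an alternative factorization $\pi' \colon X \to \F_{d'}$ (with $d'$ possibly distinct from $d_{\text{max}}$) such that $\pi'(\exc(\phi))$ consists of two fibers together with the negative section $C_\infty$; this is precisely case (4) with $i = -d'$, i.e.\ type (II)'. The combinatorial setup of case (5) is as follows: we have $\pi(\exc(\phi)) = F_0 \cup C_\infty \cup C_\infty'$ with $C_\infty' \sim C_\infty + dF$. Since $d = d_{\text{max}}(W_*)$, no blow-up of $\pi$ occurs over $C_\infty$, so $C_\infty \subset X$ has self-intersection $-d$ and meets exactly one other component of $\exc(\phi)$, namely a curve in the preimage tower over $p = F_0 \cap C_\infty$. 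Hence $C_\infty$ is an end of the Wahl chain, and walking along it we traverse the tower over $F_0$, pass through the proper transform of $F_0$, traverse the tower over $q = F_0 \cap C_\infty'$, and reach the proper transform of $C_\infty'$ together with any additional blow-ups over other points of $C_\infty'$. The numerical relation $C_\infty' - C_\infty \sim dF$ is the hint: the section $C_\infty'$ is numerically $C_\infty$ plus $d$ fibers.

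I would then perform successive elementary transformations of $\F_d$ centered at the blown-up points of $C_\infty'$: each such operation (blow up a point on $C_\infty'$ that is a center of a blow-up in $\pi$, then contract the proper transform of the fiber through it) produces a Hirzebruch model of one lower degree and converts a blow-up locus on $C_\infty'$ into a new fiber of the ruling. After an appropriate sequence of such operations, the old section $C_\infty'$ is absorbed into a new degenerate fiber, and we obtain a Hirzebruch model $\F_{d'}$ (with $d' \leq d$) in which $\pi'(\exc(\phi))$ is the union of two fibers—the old $F_0$ together with the new fiber produced from $C_\infty'$—and the negative section $C_\infty$. This matches case (4) with $i = -d'$ and realizes $W_*$ as type (II)'.

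The main obstacle is the detailed combinatorial bookkeeping: one must check that the elementary transformations can all be performed inside the given $X$ without disrupting the Wahl-chain structure of $\exc(\phi)$, and that the resulting image is precisely two fibers plus $C_\infty$ rather than some richer or degenerate configuration. Here the constraint $(\exc(\phi) - C_\infty) \cdot C_\infty \leq 1$ from Lemma \ref{keylemma}(2) is crucial, both to pin down the structure of case (5) (in particular how $C_\infty$ is attached to the rest of the chain) and to ensure that the output of the refactorization falls within the allowed configurations of case (4). One should additionally verify that any floating curves present in $W_*$ are not obstructing the construction, since type (II)' explicitly permits them.
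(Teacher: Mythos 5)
Your reduction cannot work as stated, because elementary transformations preserve the ruling. An elementary transformation of $\F_d$ centered at a point of $C_\infty'$ commutes with the projection to $\P^1$: the genus-$0$ fibration $X \to \P^1$ is literally unchanged, only the intermediate Hirzebruch model varies. Consequently any curve transversal to the ruling stays transversal. Your section $C_\infty'$ meets every fiber once, so after any sequence of elementary transformations it is still a section of the (same) fibration and can never be ``absorbed into a new degenerate fiber.'' The target configuration you describe --- $F_0$ and a fiber coming from $C_\infty'$, plus the section $C_\infty$ --- is therefore unreachable: you would end with two sections and one fiber again, i.e.\ still case (5). (Also note that repeated elementary transformations at points of $C_\infty'$ \emph{raise} $C_\infty^2$ from $-d$ towards $0$ and beyond, so $C_\infty$ does not even remain the negative section of the model.)

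The missing idea is a genuine change of ruling, which is what the paper's proof does. Since $C_\infty'\sim C_\infty+dF$ has self-intersection $d$ in $\F_d$ but $\le -2$ in $X$, at least $d+2$ points of $C_\infty'$ (possibly infinitely near) are blown up by $\pi$. One can then choose a pencil inside the $(d+1)$-dimensional linear system $|C_\infty+dF|$ whose base points are among these blown-up points, so that it induces a \emph{second} genus-$0$ fibration on $X$. With respect to this new fibration the roles flip: $C_\infty\cdot(C_\infty+dF)=0$ and the proper transform of $C_\infty'$ has intersection $0$ with the general member, so both old sections lie in fibers, while $F'\cdot(C_\infty+dF)=1$ makes a component of $\pi^{-1}(F')$ (of self-intersection $-e\le -2$) into the negative section of a new model $\F_e$. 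The image of $\exc(\phi)$ then consists of two fibers and the negative section, which is case (4) with $i=-e$, i.e.\ type (II)'. If you want to salvage your elementary-transformation strategy, you would have to pass through $\F_0$ and switch to the other ruling there, which is essentially this same change-of-ruling argument in disguise; as written, your proof keeps the ruling fixed throughout and so cannot close.
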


\begin{proof}
Let us assume that $W_*$ falls into case (5). In $X$ we have a section $\Gamma$ of $X \to \P^1$ from $|C_{\infty}+dF|$ in $\exc(\phi)$. We also have a fiber $F'$ in $\pi(\exc(\phi))$. We denote its image in $\F_d$ by $\Gamma$ again. We must have points in $\Gamma \subset \F_d$ over which we do the blow-ups. As $\Gamma^2=d$ in $\F_d$ and $\Gamma^2 \leq -2$ in $X$, we need at least $d+2$ of these points. Therefore, there is a pencil $\Gamma_t$ in $|C_{\infty}+dF|$ (in $\F_d$) that defines a genus $0$ fibration on $X$. It has at least a section of self-intersection $-e \leq -2$ in $\pi^{-1}(F')$. Hence, we can consider a blow-down into $\F_e$ for $X$. The image of $\exc(\phi)$ must contain at least the new negative section, and two fibers. Therefore, it is of type (4) with $i=-d$ by Theorem \ref{Thm class}, i.e. of type (II)'.    
\end{proof}

A \textit{residual surface} is the contraction of the proper transform of a line in $\P^2$ blown-up at five points. Here, the Wahl chain is $[4]$. 

\begin{lemma}
We can deform a $W_*$ in Theorem \ref{Thm class} to some $W'_*$ of type (I)' or (II)', or a residual surface.
\label{reductiontype}
\end{lemma}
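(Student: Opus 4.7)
The plan is to work through the seven configurations of $\pi(\exc(\phi))$ listed in Theorem \ref{Thm class} one at a time and to exhibit in each case a $\Q$-Gorenstein degeneration of $W_*$ to a surface of type (I)', type (II)', or a residual surface. Case (5) is already settled by Lemma \ref{caso(5)raro}. Cases (3) and (4) with $i=-d$ realize (I)' and (II)' on the nose, and when $i \in \{0,1,2\}$ I would argue that although $\Gamma \sim C_\infty + (d+i)F$ has positive $\F_d$-self-intersection, it is already a minimal section of the induced fibration $X \to \P^1$, so it becomes the negative section of a different Hirzebruch base $\F_{d'}$ after an iterated elementary transformation lifted through $X$. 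This only reinterprets the same $W_*$ on a different base, so no actual deformation is required in these cases.

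For cases (0), (1), (2), and (6) the strategy is to produce the missing ingredient in $\pi(\exc(\phi))$---a horizontal section in (0) and (1), a vertical fiber in (2), and a simplification of the two-section pair in (6)---as a sliding curve and then apply Lemma \ref{defslides}. The key input is the inequality $h^0(-K_X) \geq K_{W_*}^2+1 \geq 2$ together with the effective-divisor bookkeeping used to prove Lemma \ref{keylemma}: these force the existence of a $(-1)$-curve on $X$ meeting exactly one component of the Wahl chain transversally, oriented horizontally or vertically according to the case. Sliding along such a $(-1)$-curve yields a $\Q$-Gorenstein degeneration of $W_*$ into a new $W_*'$ in which the missing section or fiber has been absorbed into $\exc(\phi)$, depositing us in case (3) or (4).

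Case (0) reduces first to case (1): since $\exc(\phi)$ sits inside the $\pi$-exceptional tree over a single point, the proper transform of the fiber through that point must enter $\exc(\phi)$ after the slide, landing in (1). For case (6), the unique intersection point $C_\infty \cap \Gamma$ guaranteed by Lemma \ref{keylemma}(2) is the center of an elementary transformation that separates the two sections, after which one is absorbed into the fiber configuration, dropping us into case (3). The residual surface arises in the exceptional configuration where no sliding $(-1)$-curve of the required shape exists: I expect to verify that this can only happen when the Wahl chain is $[4]$ and $X$ is $\P^2$ blown up at five collinear points, with $\Gamma$ the strict transform of the line.

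The main obstacle I anticipate is producing the sliding $(-1)$-curves in cases (1) and (2) under the no-floating-curves hypothesis. Here the bound $h^0(-K_X) \geq K_{W_*}^2+1$ together with the pencil analysis from Lemma \ref{keylemma} must be exploited carefully, and it is exactly this step that forces the residual surface as the sole obstruction to the general argument.
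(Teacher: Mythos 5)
Your case division and your handling of cases (3), (4) and (5) are essentially the paper's: for (3) and (4) one simply re-chooses the ruled model so that the section $\Gamma$, which satisfies $\Gamma^2\le-2$ on $X$, becomes the negative section of some $\F_{d'}$ (no deformation needed), and (5) is Lemma \ref{caso(5)raro}. The genuine gap is in cases (0), (1) and (2), which you propose to settle by exhibiting the ``missing'' section or fiber as a sliding curve and invoking Lemma \ref{defslides}. This cannot work: a slide along a $(-1)$-curve meeting the Wahl chain produces a $\Q$-Gorenstein \emph{degeneration} of $W_*$ to a surface carrying an \emph{additional} Wahl singularity, so its output is neither a surface with a single Wahl singularity nor a family going in the direction the lemma needs ($W_*$ must be the special fibre of a family whose general fibre is of type (I)$'$ or (II)$'$). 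Nothing is ``absorbed into $\exc(\phi)$''; the exceptional locus grows by a whole new chain, and the singularity type is not preserved.

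The two ideas actually required, and absent from your proposal, are the following. First, in case (1) the only genuine deformation is of the \emph{base}: $\F_d$ deforms to $\F_0$ or $\F_1$ while the configuration $\exc(\phi)$ over the chosen fibre is carried along, giving a family $W_t\rightsquigarrow W_*$ with $W_t$ still of type (1) but over $\F_0$ or $\F_1$. Second, one then \emph{re-fibres}: if there is an infinitely near point over the fibre $F'$, the pencil of sections through it (for $\F_1$; the second ruling for $\F_0$) turns a component of $\exc(\phi)$ into a section of negative self-intersection for a new genus-$0$ fibration, landing in a case that already contains a section. The residual surface appears exactly when there is \emph{no} infinitely near point, i.e.\ the Wahl chain is $[4]$ supported on a fibre of $\F_1$ --- your identification of where it occurs (five collinear points in $\P^2$) is correct, but it is detected by this dichotomy, not by the non-existence of a sliding curve. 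Case (0) is reduced to case (1) by elementary transformations (again a change of birational model, not a slide), and case (6) is handled by passing to the intermediate blow-up where $\Gamma^2=0$, so that $\Gamma$ becomes a fibre of the other ruling and $C_\infty$ a section of it, reducing to a case with both a fibre and a section.
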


\begin{proof}
Case (0) can be reduced to case (1) through elementary transformations. Of course, the new type (1) is not assumed to have $d=d_{\text{max}}$. 

Let us consider case (1) with $d$ maybe not a $d_{\text{max}}$. First, we have that $\F_d$ is a degeneration of either $\F_1$ or $\F_0$. In either case, we have a deformation of $X_t$ over $\F_i$ with $i=1,0$ into $X$ over $\F_d$, which for each $t$ we have the same configuration $\exc(\phi)$. By blowing down in the family, we obtain a deformation of surfaces $W_t$ with one singularity and of type (1) into $W_*$. So, we are reduced to show that $W_t$ is equivalent to a $W'_*$ of type (I)' or (II)' or the residual case.

Let $F^\prime$ be the fiber over which we construct the Wahl chain. Blowing up a point on $F^\prime$ yields a $(-1)$-curve $C$, and if there is a further blow-up over $C$, we may assume that $C \subset \mathrm{Exc}(\phi)$. Indeed, otherwise, we either deform to a blow-up over $F'$ at two points, or we obtain a floating curve. If there is at least one infinitely near point over $F^\prime$. Then we can easily find a pencil given by all sections passing through that point when $i=1$. When $i=0$ we can just take one ruling. In this way $C$ (or $F'$) becomes a section with $C^2<-1$, reducing to the case that has at least one section. If there is no infinitely near point, then $F'$ is the only member in $\exc(\phi)$, and then we have $[4]$ as the Wahl chain. For $i=0$ this is the blow-up at four points of $\F_4$. For $i=1$ we get the residual case. 

The cases (2), (3) and (4) each have one section $\Gamma$ in $\exc(\phi)$, and it is the only transversal curve. If $i=-d$, the result is immediate. Otherwise, since $-e=\Gamma^2 \leq -2$ (in $X$), we successively blow-down the floating curves into $\F_e$ where $\Gamma$ is the curve of negative self-intersection. This shows that these cases are also of type (I)' or (II)'.

Case (5) follows from Lemma \ref{caso(5)raro}. In case (6), we have a $\Gamma \sim C_{\infty}+(d+1)F$, and so $\Gamma^2=d+2$. Since $\Gamma$ belongs to $\exc(\phi)$, we must have an intermediate blow-up that makes the self-intersection of $\Gamma$ equal to $0$. Then $\Gamma$ and $C_{\infty}$ can be considered in $\F_d$ as a section and fiber, thus we reduce to a case that has a fiber and a section at least.
\end{proof}

The goal now is to \say{erase} the distinction between types (I)',(II)'' and marked surfaces of type (I), (II). For this, we need to produce a marking on a Wahl chain using deformations of surfaces $W^\prime_*$ of type (I)' or (II)'. The following general lemma is key to this process.  

Let $[e_1,\ldots,e_r]$ be an HJ continued fraction with $e_i\geq 2$, and let $[b_1,\ldots,b_s]$ be its dual. We know that $[e_1,\ldots,e_r,1,b_s,\ldots,b_1]=0$. Let us reproduce it over a fiber $F$ of a $\F_d \to \P^1$ for some arbitrary $d$. Thus, we have a genus $0$ fibration $X \to \P^1$ where all fibers are $\P^1$ except for the one representing $[e_1,\ldots,e_r,1,b_s,\ldots,b_1]=0$. Let $E_i$, $B_j$ be the corresponding exceptional curves, and let $C$ be the $(-1)$-curve. There exists a deformation $X_t \rightsquigarrow  X$ that is constant for the curves $B_j$ and eliminates all the curves $E_i$. There are no local-to-global obstructions to performing this.  

\begin{lemma}
There is a disjoint collection of $(-1)$-curves $\Gamma_j$ in $X_t$, each of which intersects transversally at one point the chain $B_1,\ldots,B_s$. The $\Gamma_j$s and the $B_i$s form a fiber of a genus $0$ fibration $X_t \to \P^1$. The Iitaka-Kodaira contractions over $X_t \rightsquigarrow X$ induced by $C$ and all the $(-1)$-curves after that over the fiber $F$ have the effect of contracting the $\Gamma_j$s and the $B_i$s into a $\P^1$ fiber. 
\label{keyContraction}
\end{lemma}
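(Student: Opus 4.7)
The plan is to exploit the $\P^1$-fibration on $X$, extended to $X_t$ via the deformation, and the Iitaka--Kodaira correspondence for $(-1)$-curves in smooth families.

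First, I would observe that $\pi\colon X\to\P^1$ extends to $\pi_t\colon X_t\to\P^1$ for every $t\in\D$, since the class $[F]$ and its cohomology $h^0([F])=2$ are constant in the smooth family. The rigid curves $B_1,\ldots,B_s$ persist in $X_t$ with the same self-intersections and incidences; since any rigid curve in a $\P^1$-fibration lies in a fiber, they form a chain inside the reducible fiber $F_0^t\subset X_t$ over the distinguished point. Because $X$ and $X_t$ share the same Picard rank, $F_0^t$ has the same number of components as $F_0$, namely $r+1+s$; the remaining $r+1$ components will be the $\Gamma_j$'s.

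Second, I would apply Iitaka--Kodaira to the $(-1)$-curve $C\subset X$: in the smooth 3-fold total space $\mathcal X$, $N_{C/\mathcal X}=\O_C(-1)\oplus\O_C$, so $C$ has a unique 1-parameter deformation to a $(-1)$-curve $\Gamma\subset X_t$ inside $F_0^t$. Contracting $C$ in $X$ and $\Gamma$ in $X_t$ in parallel yields a smaller smooth family, in which the newly-created $(-1)$-curve in the special fiber again lifts via Iitaka--Kodaira. Iterating $r+s$ times reduces the special fiber in $X$ to the smooth $\P^1$ coming from $\F_d\to\P^1$, and by the parallelism contracts $r+s$ $(-1)$-curves in $X_t$, reducing $F_0^t$ to a single smooth $\P^1$. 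At each stage the contracted curve in $X_t$ is either one of the initial $\Gamma_j$'s or a $B_k$ that has just become a $(-1)$-curve after the preceding contraction, and tracking this bookkeeping shows the $r+1$ extra components of $F_0^t$ are all $(-1)$-curves initially.

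Third, I would establish the intersection pattern from the class identity $\sum_j [\Gamma_j]=[F]-\sum_j [B_j]$ in $\Pic(X_t)$: intersecting with $[B_k]$ gives $\sum_j (\Gamma_j\cdot B_k)= b_k-1$ if $k$ is an endpoint and $b_k-2$ if $k$ is in the interior, since $[F]\cdot[B_k]=0$ and the $B$-chain has the standard chain intersection pattern. Writing $d_k$ for the number of $\Gamma_j$'s meeting $B_k$, this yields $\sum_k d_k=r+1$ together with the HJ-relation $[b_1-d_1,\ldots,b_s-d_s]=[1,2,\ldots,2,1]=0$, consistent with the numerical condition $F_0^t\cdot D=0$ for every component $D$ of $F_0^t$. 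The tree structure of a reducible $\P^1$-fiber (a tree of smooth rational curves meeting transversally at single points) then forces each $\Gamma_j$ to be a leaf attached to exactly one $B_{i_j}$, and consequently the $\Gamma_j$'s are pairwise disjoint.

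Finally, the last sentence of the lemma is immediate from the construction in the second paragraph: the parallel Iitaka--Kodaira contractions contract all the $\Gamma_j$'s and $B_k$'s (in an order depending on which curve is a $(-1)$-curve at each stage), leaving one smooth $\P^1$ at the end, namely the fiber of $\F_d\to\P^1$. The main obstacle I anticipate is justifying that all $r+1$ extra components of $F_0^t$ are initially $(-1)$-curves rather than curves of more negative self-intersection; this I would resolve either by direct case analysis on the HJ continued fraction $[e_1,\ldots,e_r]$, or by invoking the universal family of divisorial contractions as described in Subsection~\ref{s3.2} (following \cite[Cor.10.1]{PPSU18}) to construct each $\Gamma_j$ explicitly as an Iitaka--Kodaira lift of a $(-1)$-curve appearing at some stage of the MMP on $X$.
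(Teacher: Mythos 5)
The paper's own proof of this lemma is a one--line citation: the statement is the special case of the geometric algorithm of \cite{PPSU18}*{Corollary 10.1} in which the M-resolution is the minimal resolution. Your proposal attempts a direct argument, but the one claim that actually carries the lemma --- that the $r+1$ components of the distinguished fiber $F_0^t\subset X_t$ other than the $B_i$ are $(-1)$-curves attached as leaves to the $B$-chain --- is precisely the point you defer, in your closing paragraph, to ``case analysis or \cite{PPSU18}''. That is not a loose end to be patched later; it is the entire content of the lemma, and your fallback for it coincides with the paper's whole proof.

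The step that genuinely fails as written is the class identity $\sum_j[\Gamma_j]=[F]-\sum_i[B_i]$ in your third paragraph: it presupposes that $F_0^t$ is reduced, and nothing you have said justifies this. The special fiber $F_0$ is very far from reduced (the middle $(-1)$-curve $C$ occurs in $F_0$ with multiplicity $\Delta$, and an interior $B_i$ typically has multiplicity greater than $1$ there --- for $[3,1,2,2]$ the multiplicities are $1,3,2,1$), and multiplicities of fiber components are not deformation invariants, so they must be recomputed on $X_t$. In fact the multiplicities in $F_0^t$ are the convergents of the zero continued fraction $[b_1-d_1,\ldots,b_s-d_s]$, and they all equal $1$ exactly when that fraction is $[1,2,\ldots,2,1]$: reducedness is equivalent to the conclusion you are extracting from it, so the computation is circular. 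Similarly, ``the tree structure forces each $\Gamma_j$ to be a leaf'' has the logic backwards: what excludes a $\Gamma_j$ hanging off another $\Gamma_{j'}$ is that two meeting $(-1)$-curves inside a fiber already sum to a divisor of square zero, hence a whole fiber --- which again uses the unproved fact that every extra component is a $(-1)$-curve. What is correct and worth keeping: the Picard-rank count of components (once you also rule out other fibers of $X_t\to\P^1$ becoming reducible, via semicontinuity of effectivity), the Kodaira-stability lifting of $C$ and the parallel contraction scheme, and the identification of the answer as $[1,2,\ldots,2,1]$, which is indeed the element of $K(\Delta/(\Delta-\Omega))$ attached to the minimal resolution. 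A non-circular way to pin that down is numerical: the component count forces $\sum_i d_i=r+1$, the identity $\sum_i b_i=r+2s-1$ for dual chains then gives $\sum_i(b_i-d_i)=2(s-1)$, and among zero continued fractions of length $s$ only $[1,2,\ldots,2,1]$ has entry sum $2(s-1)$.
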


\begin{proof}
This is essentially the way one recovers the zero continued fraction of the P-resolution whose M-resolution is the minimal resolution. It is the simpler case of the geometric algorithm shown in \cite[Corollary 10.1]{PPSU18}.
\end{proof}

\begin{example}
Start with the chain $[2,\ldots,2]$, that is, $e_i=2$ for all $i$. Its dual is $[m]$ where $m-1$ is the number of $2$s. We construct the configuration $[2,\ldots,2,1,m]$ on a fiber of $X \to \P^1$, taking $B_1$ as the proper transform of the chosen fiber $F$ in some $\F_d \to \P^1$. 
Consider a deformation $X_t \rightsquigarrow  X$ such that $B_1 \subset X_t$ for all $t$, but the $(-2)$-curves do not lift to $X_t$ for $t \neq 0$. In the general fiber $X_t$, the lifted $(-1)$-curve intersects $B_1$ transversally and can be contracted in the family (Iitaka-Kodaira blow-down). After this contraction, a new lifted $(-1)$-curve appears in the resulting general fiber $X_t^\prime$, avoiding the point of the previous contraction, since no $(-2)$-curve lifts. We repeat this process until none of the $(-2)$-curves appear in the special fiber. In this way, we obtain a deformation $X_t \rightsquigarrow  X$ where $B_1$ meets $m$ disjoint $(-1)$-curves in the general fiber. 
\label{12...2}
\end{example}

The next theorem proves Theorem \ref{mainthm1}. 

\begin{theorem}
Let $W_*$ be a surface with one Wahl singularity, $K_{W_*}^2 \geq 1$, and $-K_{W_*}$ big. Then $W_*$ is a degeneration of a $\text{Bl}_u(W_{*m})$ for some $u \geq 0$, and some $W_{*m}$ or a residual surface.
\label{theTheorem}
\end{theorem}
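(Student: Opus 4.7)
The plan is to combine Lemma \ref{reductiontype} with a read-off procedure for the marking, and then invoke the absence of local-to-global obstructions to produce the $\Q$-Gorenstein degeneration. By Lemma \ref{reductiontype}, after deforming $W_*$ we may assume that the general fiber $W^\prime_*$ is either a residual surface or of type (I)$^\prime$ or (II)$^\prime$. In the residual case, the statement of the theorem is directly satisfied, so the real work concerns (I)$^\prime$ and (II)$^\prime$.

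In these two cases the minimal resolution $\phi\colon X \to W^\prime_*$ fits in a diagram with $\pi\colon X \to \F_{d^\prime}$, and $\exc(\phi)$ consists of the Wahl chain $[e_1,\ldots,e_r]$ supported on one (respectively two) reducible fibers $F^\prime$ of the ruling $X \to \P^1$, together with the negative section $C_\infty$. Each such reducible fiber is a chain of smooth rational curves whose total HJ continued fraction is zero. Writing the configuration of $F^\prime$ as $[k_1,\ldots,k_{j-1},e_j,k_{j+1},\ldots,k_r]$, with $e_j$ the component meeting $C_\infty$, the $(-1)$-curves of $F^\prime$ outside $\exc(\phi)$ occupy precisely the positions at which $k_i<e_i$, and the two pieces $[k_1,\ldots,k_{j-1}]$ and $[k_{j+1},\ldots,k_r]$ are themselves zero continued fractions. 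Their weights $\lambda_1,\lambda_2$ furnish exactly the marking of Definition \ref{DPWahlchain}, with central mark $e_j$ (with the obvious single-sided variant in type (I)$^\prime$).

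To see that this marking is of del Pezzo type, I would combine the sum formula $\sum e_i = 3r+1$ of Proposition \ref{sumWahl}, the analogous identity $\sum k_i = 2u+3v$ from the proof of Lemma \ref{restrCentralMark}, and the blow-up count $K_X^2 = 8-N$ where $N$ is the number of blow-ups of $\pi$. Translating these through the standard expression for $K_{W^\prime_*}^2$ in terms of $K_X^2$ and the Wahl-chain discrepancies gives $\lambda_1+\lambda_2 \leq 9 - K_{W_*}^2$ in type (II)$^\prime$, and the analogous inequality in type (I)$^\prime$. Hence the marking is del Pezzo of some degree $\ell^\prime\geq K_{W_*}^2\geq 1$, and the recipe of Definition \ref{def: markedsurface} produces a marked surface $W_{*m}$ realising this Wahl chain, marking, and central mark.

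Finally, $W^\prime_*$ and $W_{*m}$ share the same minimal-resolution fiber structure and differ only in the positions of the $\lambda_i+1$ marking points on the marked components, together with $u=\ell^\prime-K_{W_*}^2$ additional floating $(-1)$-curves (off the distinguished fibers) present in $W^\prime_*$ but not in $W_{*m}$. Since both $W^\prime_*$ and $W_{*m}$ have no local-to-global obstructions to deformation (by the proposition opening Section \ref{s5}), moving the marking points and the $u$ floating blow-up points in a one-parameter family produces a $\Q$-Gorenstein degeneration $\text{Bl}_u(W_{*m})\rightsquigarrow W^\prime_*$; composing with the deformation of Lemma \ref{reductiontype} gives $\text{Bl}_u(W_{*m})\rightsquigarrow W_*$. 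The main obstacle will be the degree inequality in the third paragraph: the contributions of $C_\infty$ through its discrepancy and its intersections with the Wahl chain, together with floating curves attaching to the extreme components $E_1$ and $E_r$ (which correspond to the parameters $u_1,u_2$ of Lemma \ref{restrCentralMark}), require careful combinatorial bookkeeping to turn the numerical inequality into a valid del Pezzo marking.
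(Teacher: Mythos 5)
Your first step (reduction via Lemma \ref{reductiontype}) matches the paper, but the core of your argument has a genuine gap: you assume that the reducible fiber $F^\prime$ of $X \to \P^1$ already has the shape of the minimal resolution of a marked surface, i.e.\ that the components of $F^\prime$ outside $\exc(\phi)$ are disjoint $(-1)$-curves sitting at the positions where $k_i < e_i$, so that the marking can simply be read off from the special fiber. This is false in general. The complement $G^\prime = F^\prime \setminus (F^\prime \cap \exc(\phi))$ is a union of chains of rational curves attached to the $E_j$ (this is Claim 1 in the paper's proof), and these chains can be long and carry arbitrary negative self-intersections; $W_*$ itself is typically \emph{not} a $W_{*m}$, and the pieces $[k_1,\ldots,k_{j-1}]$, $[k_{j+1},\ldots,k_r]$ you write down are not visibly zero continued fractions on the special fiber. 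The theorem asserts that $W_*$ is a \emph{degeneration} of $\text{Bl}_u(W_{*m})$: the marked-surface structure lives on the general fiber of a deformation that still has to be constructed.

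What is missing is precisely the heart of the paper's proof: an iterative procedure (Steps 1--4 there) that smooths away the auxiliary chains in $G^\prime$ while keeping the Wahl chain constant, using Example \ref{12...2} for the chains of type $[1,2,\ldots,2]$ and Lemma \ref{keyContraction} (the MMP algorithm of \cite[Cor.\ 10.1]{PPSU18} relating M-resolutions to zero continued fractions) for the remaining chains, interleaved with Iitaka--Kodaira blow-downs. It is this process that replaces each chain of $G^\prime$ by a collection of disjoint $(-1)$-curves meeting $E_1,\ldots,E_p$ transversally in the general fiber, and it is only there that the zero continued fractions --- hence the marking and its weights --- come into existence. Your numerical bookkeeping in the third paragraph ($\sum e_i = 3r+1$, $\sum k_i = 2u+3v$, etc.) would confirm the degree of a marking once one exists, but it cannot substitute for producing one; without the deformation-theoretic construction the claimed degeneration $\text{Bl}_u(W_{*m}) \rightsquigarrow W_*$ has no source.
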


\begin{proof}
By Lemma \ref{reductiontype}, it suffices to prove the statement for a $W_*$ of type (I)' or (II)' or a residual surface. Let us assume that $W_*$ does not have floating curves. This is possible because our goal is to construct a floating blow-up. Let $\phi \colon X \to W_*$ be the minimal resolution. In addition, there exists a contraction $\pi \colon X \to \F_d$ with no blow-ups over $C_{\infty}$, such that $\exc(\phi)$ contains $C_{\infty}$ as its only transversal curve, while the remaining exceptional curves lie in one or two fibers of the corresponding fibration $X \to \P^1$.

Consider a fiber $F^\prime$ that contains curves in $\exc(\phi)$. Since $F^\prime$ is a tree with simple normal crossings, any contraction of $(-1)$-curves contained in $F^\prime$ also yields a tree with the same property. The intersection $F^\prime \cap \exc(\phi)$ is a chain $E_1,\ldots,E_p$ where we choose that $E_p \cdot C_{\infty}=1$. Let $G^\prime=F^\prime \setminus (F^\prime \cap \exc(\phi))$.  

\textbf{Claim 1:} $G'$ is a disjoint union of chains of $\P^1$s. Take one connected component of curves in $G'$. Note that the only possible $(-1)$-curve in this component must intersect one $E_j$ since $W_*$ has no floating curves. Then, we verify that this component must be a chain, otherwise we would obtain a triple point after contracting $(-1)$-curves over $F'$.

\textbf{Claim 2:} At most one of the connected components in $G'$ that intersects a fixed curve $E_j$ is a chain with self-intersections $[2,\ldots,2,1]$. If there are two such, they do not contract completely before $E_j$ does. Thus, we generate a triple point in the blow-down of $(-1)$-curves over $F'$, a contradiction. 

The goal now is to construct a smooth deformation $X_t \rightsquigarrow  X$ over $\D$ that is constant on $E_1,\ldots,E_p$, and disjoint $(-1)$-curves in $X_t$ intersecting the $E_1,\ldots,E_p$. These curves contract together with $E_1,\ldots,E_{p-1}$ after performing all possible Iitaka-Kodaira blow-downs on curves in $F'$. We follow the steps:

\textbf{Step 1.} We proceed as in Example \ref{12...2} for each chain of type $[1,2,\ldots,2]$. We obtain a smooth deformation $X_t \rightsquigarrow  X$ that preserves $E_1,\ldots,E_p$ and every other curve in $F^\prime$, except the $(-2)$-curves in the chains $[1,2,\ldots,2]$. We make all Iitaka-Kodaira blow-downs associated to these chains, and possible $(-1)$-curves coming from the $E_j$. 

\textbf{Step 2.} We obtain a new chain $E_1,\ldots,E_q$ where $E_q \cdot C_{\infty}=1$, no $E_i$ is a $(-1)$-curve, and there are no chains of the form $[1,2,\ldots,2]$ in the special fiber. Since this configuration must contract into a fiber in $\F_d \to \P^1$, $(-1)$-curves must appear in the fiber. Thus, there must be chains in the new $G'$ that start with a $(-1)$-curve. One of them must intersect the top curve $E_1$ of the exceptional chain. Otherwise, we would create a triple point in the composition of blow-downs into $\F_d$.

\textbf{Step 3.} There is a chain in $G'$ with a $(-1)$-curve that intersects $E_1$. This entire chain must contract in the sequence of blow-downs with respect to $E_1,\ldots,E_q$, terminating at some $E_j$ with $j\leq q$. Thus, we are in the situation of Lemma \ref{keyContraction}. Here $B_1=E_1, \ldots, B_{j-1}=E_{j-1}$. We then obtain a smooth deformation $X'_t \rightsquigarrow  X'$ that preserves $E_1,\ldots,E_q$ and gives the disjoint $(-1)$-curves attached to the $E_1,\ldots,E_q$ in $X'_t$.

\textbf{Step 4.} We perform the Iitaka-Kodaira blow-downs corresponding to the top chain, and the $E_1,\ldots,E_{j-1}$. This brings us to a situation as in Step 2, and we repeat until no $(-1)$-curves remain in the special fiber.

In the process above, we assume that every deformation is constant on $C_{\infty}$ and on a potential second fiber, as in the (II)' case. We repeat the procedure for the second fiber. In the end, we obtain a smooth deformation $X_t \rightsquigarrow  X$ which preserves the Wahl chain, and it produces the marking of $(-1)$-curves. We then contract the Wahl chain in every fiber of the deformation, yielding a deformation $W_t$ into $W_*$, where $W_t$ is a $W_{*m}$ for the same Wahl singularity of $W_*$ and some marking. Since the process was carried out up to floating blow-ups, we obtain the desired deformation.
\end{proof}

\begin{corollary}
A W-del Pezzo surface of degree $\ell$ with a Wahl singularity $\frac{1}{n^2}(1,na-1)$ exists if and only if the Wahl chain of $\frac{n^2}{na-1}$ is del Pezzo of degree $\leq \ell$.
\label{characterization}
\end{corollary}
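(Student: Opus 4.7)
The plan is to derive the corollary directly from Theorem~\ref{theTheorem} together with Theorem~\ref{theorem:W*m}, extracting both implications with only bookkeeping for the degree.

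For the forward implication, I would start with a $W$-del Pezzo $W_t \rightsquigarrow W_*$ of degree $\ell$ whose Wahl singularity is $\frac{1}{n^2}(1,na-1)$. Ampleness of $-K_{W_t}$ and the semicontinuity argument opening Section~\ref{s5} give that $-K_{W_*}$ is big and $K_{W_*}^2 = \ell \geq 1$, so Theorem~\ref{theTheorem} applies: $W_*$ is a $\Q$-Gorenstein degeneration of $\text{Bl}_u(W_{*m})$ for some $u \geq 0$ and some marked surface $W_{*m}$ (the residual-surface case occurs only for the Wahl chain $[4]$ and is checked separately). Since $K^2$ is a deformation invariant and each floating blow-up drops $K^2$ by one, the marked surface $W_{*m}$ has degree exactly $\ell' := \ell + u \geq \ell$, so the Wahl chain of $\frac{n^2}{na-1}$ admits a del Pezzo marking of degree $\ell' \geq \ell$.

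For the converse, suppose the Wahl chain of $\frac{n^2}{na-1}$ admits a del Pezzo marking of some degree $\ell' \geq \ell$. By Theorem~\ref{theorem:W*m}, the associated marked surface $W_{*m}$ is a del Pezzo of degree $\ell'$ with the prescribed Wahl singularity; the exceptional case of type~(I) and degree~$1$ is reduced to a different $W_{*m}$ via Lemma~\ref{wirdo}. Blowing up $u := \ell' - \ell$ general smooth points yields $\text{Bl}_u(W_{*m})$, a surface with only the given Wahl singularity, degree $\ell$, and no local-to-global obstructions to deformation. A one-parameter $\Q$-Gorenstein smoothing of its Wahl singularity then produces the required $W$-del Pezzo surface of degree $\ell$.

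The only delicate points are the two edge cases flagged above (the residual surface tied to $[4]$ and the type~(I) degree~$1$ exception via Lemma~\ref{wirdo}), both already treated in earlier material. I do not expect any substantive new obstacle: with Theorem~\ref{theTheorem} and Theorem~\ref{theorem:W*m} in hand the corollary becomes a clean translation between the geometric existence of $W$-del Pezzo surfaces and the combinatorial existence of suitable del Pezzo markings.
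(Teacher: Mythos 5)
Your proof is correct and follows essentially the same route as the paper's: both directions rest on Theorem~\ref{theTheorem} for the forward implication and Theorem~\ref{theorem:W*m} for the converse, with the residual surface (Wahl chain $[4]$) and the type (I) degree $1$ case (Lemma~\ref{wirdo}) treated as the only exceptions. Your explicit degree bookkeeping via floating blow-ups ($\ell' = \ell + u$) only makes precise what the paper leaves implicit, and correctly reads the condition as the existence of a marking of degree $\geq \ell$, consistent with Theorem~\ref{mainthm1}.
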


\begin{proof}
The Wahl chain $[4]$ exists as degeneration of $\P^2$, and by floating blowing-up it does for del Pezzos of any degree. When it is not $[4]$, we are left with the conclusion of Theorem \ref{theTheorem}, which is the content of the corollary. On the other hand, any del Pezzo Wahl chain produces a W-del Pezzo surface by Theorem \ref{theorem:W*m}, unless it is of type (I) and degree $1$. In this last case, by Lemma \ref{wirdo} we have some floating curve or we consider it as a type (II) degree $1$ del Pezzo, and so we obtain a W-del Pezzo in any case.
\end{proof}

\begin{corollary}
In degree $\ell$ greater than or equal to $5$, we have infinitely many Wahl singularities not allowed in degenerations of degree $\ell$ del Pezzo surfaces.  
\end{corollary}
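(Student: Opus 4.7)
The proof is a direct application of Proposition \ref{5isnotfree} and the classification in Theorem \ref{mainthm1} (equivalently, Corollary \ref{characterization}). Proposition \ref{5isnotfree} provides an explicit infinite family of Wahl chains
$$\Wa_{A,B}=[\,2,\ldots,2,\,A+4,\,2,\ldots,2,\,B+2\,], \qquad A\geq 2,\ \ B>A+3,$$
none of which admits a marking making it del Pezzo of degree $\geq 5$. Each $\Wa_{A,B}$ is the Hirzebruch--Jung continued fraction of a unique $\frac{n_{A,B}^2}{n_{A,B}a_{A,B}-1}$; fixing $A$ and letting $B$ vary produces chains of unbounded length, and Proposition \ref{sumWahl} then forces the indices $n_{A,B}$ to be unbounded. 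Hence the family realizes infinitely many pairwise non-isomorphic Wahl singularities.

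Now suppose, toward a contradiction, that the singularity $P=\frac{1}{n_{A,B}^2}(1,n_{A,B}a_{A,B}-1)$ arose on some W-del Pezzo surface $W$ of degree $\ell\geq 5$. Applying Theorem \ref{mainthm1} to $(W,P)$ produces a marked del Pezzo surface $W_{*m}$ of degree $\ell'\geq\ell\geq 5$ containing a singularity of the same type as $P$. In particular the chain $\Wa_{A,B}$ would be del Pezzo of degree $\ell'\geq 5$, directly contradicting Proposition \ref{5isnotfree}. Therefore none of the Wahl singularities coming from the family $\{\Wa_{A,B}\}$ is realizable in any $\Q$-Gorenstein degeneration of a del Pezzo surface of degree $\geq 5$, which is the statement of the corollary.

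The argument is short because the two cornerstones are already established: Proposition \ref{5isnotfree} performs the combinatorial elimination of all markings of degree $\geq 5$ (using Corollary \ref{delP>=5} to restrict the possible central marks), and Theorem \ref{mainthm1} supplies the bridge from the existence of a Wahl singularity on a W-del Pezzo to a del Pezzo marking on its Wahl chain. There is no real obstacle; the only point worth an explicit line in the actual write-up is that $\{\Wa_{A,B}\}$ yields infinitely many non-isomorphic Wahl singularities, which follows from the length-of-chain observation above together with the fact that a Wahl singularity is recovered from its (unique) minimal resolution, hence from its Wahl chain.
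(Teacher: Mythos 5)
Your proof is correct and follows essentially the same route as the paper: combine Proposition \ref{5isnotfree} (the explicit infinite family of Wahl chains that are never del Pezzo of degree $\geq 5$) with Theorem \ref{theTheorem}/Corollary \ref{characterization} (a Wahl singularity on a degree-$\ell$ W-del Pezzo forces its chain to be del Pezzo of degree $\geq \ell$). Your added remark that the family $\{\Wa_{A,B}\}$ yields infinitely many pairwise distinct singularities (via unbounded chain length, hence unbounded index) is a worthwhile explicit detail that the paper leaves implicit.
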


\begin{proof}
By Corollary \ref{characterization}, the corresponding Wahl chain must be del Pezzo of degree $\geq 5$. But by Proposition \ref{5isnotfree} there are infinitely many that are not del Pezzo (see also Remark \ref{muchmore}). 
\end{proof}

\subsection{How to distinguish when $K^2=8$?} \label{ss5.1} For degree $8$ we have two types of nonsingular fibers in a W-surface: Hirzebruch surfaces $\F_k$ with $k$ odd or even. We can distinguish among them using only the marking of the Wahl chain. Let $W_{*m}$ be a marked del Pezzo of degree $8$, with Wahl singularity $\frac{1}{n^2}(1,na-1)$.   

\begin{theorem}
If $n$ is even, then $\F_k \rightsquigarrow W_{*m}$ with $k$ odd.
\label{K^28F1}
\end{theorem}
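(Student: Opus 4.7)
The plan is to use $2$-divisibility of the canonical class in $\Pic$ as the invariant distinguishing $\F_0$ from $\F_1$, and to derive a parity obstruction from the local class group at the Wahl singularity when $n$ is even.

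Smooth del Pezzo surfaces of degree $8$ are $\F_0$ and $\F_1$, distinguished by the fact that $K_{\F_0}=2(-F_1-F_2)\in 2\Pic(\F_0)$ while $K_{\F_1}=-2C_\infty-3F\notin 2\Pic(\F_1)$. It therefore suffices to show that when $n$ is even, there is no Cartier divisor $D_t\in\Pic(W_t)$ with $2D_t\sim K_{W_t}$.

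Assume by contradiction such a $D_t$ exists. Since $W_{*m}$ has no local-to-global obstructions to deform, the closure of $D_t$ in the total space $\W\to\D$ yields a Weil divisor whose restriction $D_0$ to $W_{*m}$ satisfies $2D_0\sim K_{W_{*m}}$. At the Wahl singularity $\tfrac{1}{n^2}(1,na-1)$, $K_{W_{*m}}$ has local class $[na]\in\Z/n^2$, so the local class of $D_0$ must solve $2x=[na]$ in $\Z/n^2$. For $n$ even the two solutions are $[na/2]$ and $[na/2+n^2/2]$. On the other hand, the local $3$-fold singularity of $\W$ at the Wahl point is the terminal cyclic quotient $\tfrac{1}{n}(1,-1,a)$, from the standard local model of the $\Q$-Gorenstein smoothing of a Wahl singularity (the hypersurface $\{xy+z^n+t=0\}\subset\C^4/\mu_n$ with $\mu_n$ acting by weights $(1,-1,a,0)$). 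Its local class group $\Z/n$ restricts onto the subgroup $n\Z/n^2\subset\Z/n^2$. Hence, being a restriction from $\W$, the local class of $D_0$ must lie in $n\Z/n^2$.

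For $n$ even, $\gcd(a,n)=1$ forces $a$ odd; writing $a=2k+1$ gives $na/2=nk+n/2$, so $[na/2]\equiv[n/2]\pmod{n\Z/n^2}$, and similarly $[na/2+n^2/2]\equiv[n/2]\pmod{n\Z/n^2}$. Since $[n/2]\notin n\Z/n^2$ whenever $n\geq 2$, neither candidate lies in the required image, a contradiction. Hence $K_{W_t}\notin 2\Pic(W_t)$ and $W_t=\F_k$ with $k$ odd.

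The main obstacle is identifying the local restriction map $\Cl_{\mathrm{loc}}(\W)\to\Cl_{\mathrm{loc}}(W_{*m})$ as $\Z/n\hookrightarrow\Z/n^2$ with image $n\Z/n^2$, which rests on the explicit hypersurface model of the $\Q$-Gorenstein Wahl smoothing above. Once this is in place the parity check is immediate, and the same framework makes transparent why for $n$ odd the obstruction vanishes (the element $[na]\cdot 2^{-1}\in\Z/n^2$ automatically lies in $n\Z/n^2$), consistently with Theorem \ref{mainthm2}(1).
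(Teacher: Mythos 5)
Your proof is correct, and it reaches the conclusion by a route that is essentially the contrapositive of the paper's. Both arguments ultimately rest on the same structural fact: the image of $\Pic(W_t)$ in the local class group $\Z/n^2$ of the Wahl singularity is the index-$n$ subgroup $n\Z/n^2$, so that (with $a$ odd because $n$ is even) a parity obstruction appears. The paper runs this forwards: it takes a Weil divisor $A$ generating the local class group, chosen via Lemma 3.1 of [TU22] so that $A\cdot K_{W_{*m}}=-a/n$, observes that $nA$ lies in $\Pic(\F_k)\subset \Cl(W_{*m})$ with $nA\cdot K_{\F_k}=-a$ odd, and concludes $k$ is odd since on $\F_k$ with $k$ even every class has even $K$-degree. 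You run it backwards: assuming $K_{W_t}\in 2\Pic(W_t)$ (i.e.\ $k$ even), you specialize the square root to the central fiber and contradict the fact that its local class must lie in $n\Z/n^2$, which you justify directly from the hypersurface model $\{xy=z^n+t\}\subset\C^3/\tfrac1n(1,-1,a)$ of the $\Q$-Gorenstein smoothing rather than by citation. What your version buys is self-containedness at the singularity (the computation that $\Cl_{\mathrm{loc}}(\W)=\Z/n$ maps onto $n\Z/n^2$ is exactly what makes the paper's $nA$ Cartier on the general fiber); what it costs is that two standard but nontrivial facts are asserted rather than proved, namely that linear equivalence specializes along the family (so that $2D_0\sim K_{W_{*m}}$ in $\Cl(W_{*m})$, which uses $p_g=q=0$ and Hacking's exact sequence $0\to H_2(W_t)\to H_2(W_{*m})\to\Z/n\to 0$) and the identification of the local restriction map. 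Neither is a gap — the paper leans on the same machinery — but both deserve an explicit reference.
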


\begin{proof}
Since $H_1(\F_k,\Z)=0$, there is a generator for the class group of the singularity, which is image of a divisor $A$ in Cl$(W_{*m})$. By \cite[Lemma 3.1]{TU22}, we can choose $A$ so that in the minimal resolution $X \to W_{*m}$ it intersects the exceptional Wahl chain transversally at one point in an ending component, we consider the initial component of the chain. Then, we have $A \cdot K_{W_{*m}}=-\frac{a}{n}$. Therefore, $nA \in \Pic(\F_k)$ and $n A \cdot K_{\F_k}=-a$. Since $n$ is even, then $a$ must be odd and consequently $k$ must be odd. 
\end{proof}

\begin{theorem}
If $n$ is odd, then $\F_k \rightsquigarrow W_{*m}$ with $k$ even if and only if $n \Gamma \cdot K_{W_{*m}}$ is even for every marking curve $\Gamma$.
\label{K^28F0}
\end{theorem}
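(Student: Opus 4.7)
The plan is to mirror the strategy of Theorem~\ref{K^28F1}, refined to detect the $\F_0$-versus-$\F_1$ dichotomy via the full set of marking curves rather than a single class-group generator. Fix a $\Q$-Gorenstein smoothing $\W\to\D$ with $\W_0=W_{*m}$ and $\W_t=\F_k$, which exists by Theorem~\ref{theorem:W*m}. Since the Wahl singularity $\frac{1}{n^2}(1,na-1)$ has index $n$, the divisor $nK_{W_{*m}}$ is Cartier, so $n\Gamma\cdot K_{W_{*m}}$ is a well-defined integer for every Weil divisor $\Gamma$. Using $h^1(\O_{W_{*m}})=0$, specialization of Weil classes in the flat family yields, for each marking curve $\Gamma$, an integer class $[n\Gamma]_t\in\Pic(\F_k)$ with
\[
n\Gamma\cdot K_{W_{*m}} \;=\; [n\Gamma]_t\cdot K_{\F_k},
\]
by pairing against the Cartier divisor $nK_\W$.

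The ``only if'' direction is then immediate: if $k$ is even, $K_{\F_k}=-2C_\infty-(k+2)F$ is divisible by $2$ in $\Pic(\F_k)=\Z\langle C_\infty,F\rangle$, so $[n\Gamma]_t\cdot K_{\F_k}$ is automatically even for every $\Gamma$.

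For the ``if'' direction I argue the contrapositive: assuming $\F_1\rightsquigarrow W_{*m}$, I plan to produce a marking $\Gamma$ with $n\Gamma\cdot K_{W_{*m}}$ odd. Writing $[n\Gamma]_t = x_\Gamma C_\infty+y_\Gamma F$ on $\F_1$ gives $[n\Gamma]_t\cdot K_{\F_1} = -x_\Gamma-2y_\Gamma$, odd precisely when $x_\Gamma$ is odd; the goal is thus to exhibit a marking whose specialization has odd $C_\infty$-coefficient. The strategy is to combine the Weil divisor $A$ from \cite[Lemma 3.1]{TU22} used in Theorem~\ref{K^28F1} (which satisfies $[nA]_t\cdot K_{\F_1}=-a$ with $\gcd(n,a)=1$) with the Section~\ref{s4} construction of $W_{*m}$: $\Pic(X)$ of the minimal resolution $X\to W_{*m}$ is generated by $\pi^*\Pic(\F_d)$ and the $\pi$-exceptional divisors, and the Wahl-chain components together with the marking curves span this exceptional part. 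Reading parities through this generating set expresses $x_A\bmod 2$ as a $\Z/2$-linear combination of $\{x_{\Gamma_j}\bmod 2\}$, so if all $x_{\Gamma_j}$ were even then $x_A$ would be even and hence $a$ would be even. A parity constraint intrinsic to the $\F_1$-smoothing — obtainable from the deformation of the local class-group generator through the simultaneous partial resolution at the Wahl singularity — should then produce the required contradiction, exhibiting some $\Gamma_j$ with $x_{\Gamma_j}$ odd.

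The main obstacle is this last step: identifying the parity of $a$ imposed by the hypothesis $\F_1\rightsquigarrow W_{*m}$, and simultaneously expressing $x_A\bmod 2$ as an explicit $\Z/2$-linear combination of the $x_{\Gamma_j}\bmod 2$. This is a combinatorial statement depending on the central mark $e_i$, the marking weights $\lambda,\lambda_1,\lambda_2$, and the continued fraction of the Wahl chain, and should be accessible through the sliding matrix computations of Section~\ref{s3.4} together with the explicit discrepancy formulas underlying the proof of Theorem~\ref{K^28F1}.
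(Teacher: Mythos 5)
Your setup and the ``only if'' direction match the paper's: for $k$ even every class in $\Pic(\F_k)$ pairs evenly with $K_{\F_k}$, and $n\Gamma$ specializes into $\Pic(\F_k)$ by Hacking's exact sequence. The problem is the ``if'' direction, where your argument stops exactly at the step that carries all the content. You reduce to showing that an $\F_1$-smoothing with all $x_{\Gamma_j}$ even would force $a$ to be even, and then hope for ``a parity constraint intrinsic to the $\F_1$-smoothing'' on $a$ to produce a contradiction. No such constraint exists: for $n$ odd the pair $(n,a)$ does not determine whether the general fiber is $\F_0$ or $\F_1$ --- that is precisely why the theorem must be stated in terms of the marking curves rather than in terms of $(n,a)$ alone (compare Example \ref{nomarkov}, where $n=29$, $a=5$ admits two markings of degree $8$, one smoothing to $\F_0$ and the other to $\F_1$). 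So the contradiction you are aiming for cannot materialize, and the proof does not close.

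The paper's argument avoids the class $A$ entirely and is shorter. The key input you are missing is that the marking curves $\{\Gamma_j\}$ generate the \emph{whole} class group $\Cl(W_{*m})$, not merely the ``exceptional part'' relative to $\pi^*\Pic(\F_d)$. Granting this, the inclusion $\Pic(\F_k)\subseteq\Cl(W_{*m})$ means every $D\in\Pic(\F_k)$ is an integral combination $\sum_j c_j\Gamma_j$, so $nD\cdot K_{W_{*m}}=\sum_j c_j\,(n\Gamma_j\cdot K_{W_{*m}})$ is even by hypothesis; since $n$ is odd and $D\cdot K_{\F_k}\in\Z$, the integer $D\cdot K_{\F_k}$ is even for every $D\in\Pic(\F_k)$. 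Taking $D=C_\infty$, with $C_\infty\cdot K_{\F_k}=k-2$, forces $k$ even. If you prefer to keep your contrapositive framing, the correct target is not $A$ but $C_\infty$ (or any class pairing oddly with $K_{\F_k}$ when $k$ is odd): expressing it in terms of the $\Gamma_j$ immediately exhibits a marking curve with $n\Gamma_j\cdot K_{W_{*m}}$ odd. Either way, the generation statement for the $\Gamma_j$ is the lemma you need to prove or cite, and your proposal never supplies it.
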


\begin{proof}
Let $k$ be even. Since $n \Gamma \in \Pic(\F_k)$, then $n \Gamma \cdot K_{W_{*m}}$ is even for curve $\Gamma$. Conversely, let $\{\Gamma_j\}_j$ be the marking curves and assume $n \Gamma_j \cdot K_{W_{*m}}$ is even for every $j$. We note that $\{\Gamma_j\}_j$ generates the class group Cl$(W_{*m})$, and this group contains $\Pic(\F_k)$. Hence, any generator of $\Pic(\F_k)$ is expressed in terms of $\Gamma_j$. It follows that the intersection of an element in $\Pic(\F_k)$ with the canonical class must be even, since $n$ is odd and $n \Gamma \cdot K_{W_{*m}}$ is even for the marking curves $\Gamma$. Therefore, $k$ must be even.
\end{proof}

\section{Diophantine equations} \label{s7}

Let $0<a<n$ be coprime. Assume that the Wahl chain $\frac{n^2}{na-1}=[e_1,\ldots,e_r]$ is del Pezzo of type (II), degree $9-\lambda_1-\lambda_2$, and central mark $e_i=d$ (as in Definition \ref{DPWahlchain}). We have $d=8 +u_1-\lambda_1 + u_2-\lambda_2$ by Lemma \ref{restrCentralMark}. Let $$\frac{m_1}{q_1}=[e_{i-1},\ldots,e_1] \ \ \ \text{and} \ \ \ \frac{m_2}{q_2}=[e_{i+1},\ldots,e_r].$$

We recall the following well-known matrix factorization; see e.g. \cite[\S 2]{UV22}. 

\begin{proposition}
Let $0<q<m$ be coprime integers, and consider $\frac{m}{q}=[x_1,\ldots,x_s]$. Then $\left[\begin{array}{cc}
m & -q^{-1} \\
q & \frac{1-q q^{-1}}{m}
\end{array}\right] =
\left[\begin{array}{cc}
x_1 & -1 \\
1 & 0
\end{array}\right] \cdots\left[\begin{array}{cc}
x_s & -1 \\
1 & 0
\end{array}\right]$, 
 where $0<q^{-1}<m$ is the integer that satisfies $q q^{-1} \equiv 1($mod $m)$. \label{matrixhjcf}
\end{proposition}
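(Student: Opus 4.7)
The natural approach is induction on the length $s$ of the continued fraction, mimicking the recursive structure $\frac{m}{q} = x_1 - \frac{1}{[x_2,\ldots,x_s]}$ that parallels the associativity of the proposed matrix product. I will also use that each factor matrix $\left[\begin{smallmatrix} x_i & -1 \\ 1 & 0 \end{smallmatrix}\right]$ has determinant $1$, so the whole product has determinant $1$; this will pin down the bottom-right entry of the product for free.

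For the base case $s=1$, we have $\frac{m}{q} = x_1$, so $m = x_1$, $q=1$, $q^{-1}=1$, and $\frac{1-qq^{-1}}{m}=0$, which matches $\left[\begin{smallmatrix} x_1 & -1 \\ 1 & 0 \end{smallmatrix}\right]$ exactly. For the inductive step, set $\frac{m'}{q'}=[x_2,\ldots,x_s]$, so the recursion gives $m = x_1 m' - q'$ and $q=m'$ (and $\gcd(m,q)=\gcd(q',m')=1$ automatically). By induction the product $\left[\begin{smallmatrix} x_2 & -1 \\ 1 & 0 \end{smallmatrix}\right]\cdots\left[\begin{smallmatrix} x_s & -1 \\ 1 & 0 \end{smallmatrix}\right]$ equals $\left[\begin{smallmatrix} m' & -(q')^{-1} \\ q' & \frac{1-q'(q')^{-1}}{m'} \end{smallmatrix}\right]$. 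Multiplying $\left[\begin{smallmatrix} x_1 & -1 \\ 1 & 0 \end{smallmatrix}\right]$ on the left, the top-left entry becomes $x_1 m' - q' = m$ and the bottom-left becomes $m' = q$, as required.

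The remaining work is to identify the top-right entry $-q^{-1}$. A direct computation yields
$$q^{-1} \;=\; x_1(q')^{-1} + \frac{1 - q'(q')^{-1}}{m'},$$
which is an integer since $q'(q')^{-1}\equiv 1 \pmod{m'}$. To verify the congruence I multiply by $q = m'$:
$$q\cdot q^{-1} = x_1 m'(q')^{-1} + 1 - q'(q')^{-1} = 1 + (q')^{-1}(x_1 m' - q') = 1 + m\,(q')^{-1},$$
so $q q^{-1}\equiv 1 \pmod m$, as needed. The bottom-right entry is then forced by the determinant condition $m\cdot D + q\cdot q^{-1} = 1$, giving $D = \frac{1 - qq^{-1}}{m}$.

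The only nontrivial point is the range $0 < q^{-1} < m$. Since $(q')^{-1}\geq 1$ and $q'(q')^{-1}\geq 1$, the integer $\frac{1-q'(q')^{-1}}{m'}$ is $\leq 0$, so $q^{-1}\leq x_1 (q')^{-1}$; and using $x_1\geq 2$ together with $q'(q')^{-1}\leq (m')(q')^{-1} - ((m')(q')^{-1} - q'(q')^{-1})$, one bounds $q^{-1} < m = x_1 m' - q'$. Positivity $q^{-1}>0$ reduces, via the identity above, to $x_1 (q')^{-1} m' > q'(q')^{-1} - 1$, which follows from $x_1 m'\geq q'+1$ (i.e.\ $m\geq 1$). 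This positivity/range check is the only genuinely delicate part; the rest is bookkeeping with the matrix recursion. The author remarks that this is well-known, so a short citation to \cite[\S 2]{UV22} would also suffice.
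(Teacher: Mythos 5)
The paper does not prove this proposition at all: it is stated as a well-known fact with a pointer to \cite[\S 2]{UV22}, so there is no in-paper argument to compare against. Your induction on $s$ is the standard proof, and the core of it is right: the base case, the matrix recursion giving the entries $m$ and $q$, the identity $q\,q^{-1}=1+m(q')^{-1}$ establishing the congruence, and the determinant argument pinning down the bottom-right entry are all correct.

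The one place that does not hold up as written is the upper bound $q^{-1}<m$. The inequality you invoke, $q'(q')^{-1}\leq (m')(q')^{-1}-\bigl((m')(q')^{-1}-q'(q')^{-1}\bigr)$, is literally the tautology $q'(q')^{-1}\leq q'(q')^{-1}$, so it gives no information and the bound is not actually established. Since the proposition specifies the representative with $0<q^{-1}<m$, this step is needed. It is easily repaired from your own identity: from $q\,q^{-1}=1+m(q')^{-1}$ with $q=m'$ you get $q^{-1}=\frac{1+m(q')^{-1}}{q}$, and the inductive hypothesis $0<(q')^{-1}<m'=q$ gives $(q')^{-1}\leq q-1$, hence
\[
0<\frac{1+m}{q}\leq q^{-1}\leq \frac{1+m(q-1)}{q}=m-\frac{m-1}{q}<m,
\]
using $m\geq 2$. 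This also subsumes your positivity check. With that substitution the proof is complete; alternatively, as you note, citing \cite[\S 2]{UV22} matches what the paper itself does.
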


Hence we have 
\[ 
\left[\begin{array}{cc}
n^2 & -n^2+na+1 \\
na-1 & -na+a^2+1
\end{array}\right] 
=\left[\begin{array}{cc}
m_1 & -q_1 \\
q_1^{-1} & \frac{1-q_1 q_1^{-1}}{m_1}
\end{array}\right]  \left[\begin{array}{cc}
d & -1 \\
1 & 0
\end{array}\right] \left[\begin{array}{cc}
m_2 & -q_2^{-1} \\
q_2 & \frac{1-q_2 q_2^{-1}}{m_2}
\end{array}\right]. \] This gives rise to four relations for each matrix entry $\left[\begin{array}{cc}
\text{1st} & \text{2nd} \\
\text{3rd} & \text{4th}
\end{array}\right]$.\\ The 1st gives $n^2=m_1 m_2 d -q_1m_2-q_2m_1$. Consider now the $2 \times 2$ system of equations on $q_1^{-1}$ and $q_2^{-1}$ given by adding and subtracting the 2nd and the 3rd relations. We obtain the expressions $m_1+m_2=n(m_1a-nq_1^{-1})=n(m_2(n-a)-nq_2^{-1})$. The 4th does not add anything new. 

Take the surface $\widehat{W}_{*m}$ defined by $W_{*m}$. In $\widehat{W}_{*m}$ we have partial resolutions of $\frac{1}{m_1}(1,m_1-q_1)$ and $\frac{1}{m_2}(1,m_2-q_2)$ whose exceptional curves are in the two special fibers of the minimal resolution of $\widehat{W}_{*m}$. Hence we have contractions $$\widehat{W}_{*m} \to W_{*m}^T \to P(n^2,m_1,m_2),$$ where $P(n^2,m_1,m_2)$ is a toric surface with Picard number $1$ and with $3$ singularities $\frac{1}{n^2}(1,na-1)$, $\frac{1}{m_1}(1,m_1-q_1)$ and $\frac{1}{m_2}(1,m_2-q_2)$. We actually have a characterization of marked surfaces through this matrix factorization. (Of course, we can write down a similar characterization for type (I), see Remark \ref{type(I)Diophantine}.)

\begin{theorem}
Let $P(n^2,m_1,m_2)$ be a toric surface of Picard number $1$, and singularities $\frac{1}{n^2}(1,na-1)$, $\frac{1}{m_1}(1,m_1-q_1)$, $\frac{1}{m_2}(1,m_2-q_2)$ forming the chain $$\frac{1}{m_1}(1,m_1-q_1) - (1) - \Big[ {n \choose a} \Big] - (1) - \frac{1}{m_2}(1,m_2-q_2^{-1}),$$ where $0<a<n$ and $0<q_i<m_i$ are coprime. Assume that
\begin{itemize}
    \item[(1)] $q_1m_2+q_2m_1+n^2=m_1 m_2 d$, for some $d\geq 2$. 
    \item[(2)] $m_1+m_2=n(m_1a-nq_1^{-1})=n(m_2(n-a)-nq_2^{-1})$. 
    \item[(3)] $\frac{1}{m_1}(1,q_1)$ and $\frac{1}{m_2}(1,q_2)$ admit zero continued fractions of weights $\lambda_1$ and $\lambda_2$, respectively, with $9-\lambda_1-\lambda_2>0$.
\end{itemize}
Then there is a partial resolution $W \to P(n^2,m_1,m_2)$ such that $W=\widehat{W}_{*m}$ for some marked surface $W_{*m}$ with Wahl singularity $\frac{1}{n^2}(1,na-1)$ and degree $9-\lambda_1-\lambda_2>0$.
\label{thm:diophequat}
\end{theorem}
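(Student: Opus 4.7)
The plan is to reverse the matrix factorization calculation that motivates the theorem. Write $[f_1,\ldots,f_u]=\frac{m_1}{q_1}$ and $[g_1,\ldots,g_v]=\frac{m_2}{q_2}$ for the HJ continued fractions, and set
\[[e_1,\ldots,e_r]:=[f_u,\ldots,f_1,d,g_1,\ldots,g_v]\]
with central position $i=u+1$ and $e_i=d\geq 2$. By Proposition \ref{matrixhjcf}, the HJ matrix of this chain is the product of the three factors displayed just before the theorem (the first factor being the HJ matrix of $\frac{m_1}{q_1^{-1}}=[f_u,\ldots,f_1]$, the third being that of $\frac{m_2}{q_2}$). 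Hypotheses (1) and (2) are precisely the assertions that the upper-left entry of this product equals $n^2$ and the upper-right/lower-left entries match those of the HJ matrix of $\frac{n^2}{na-1}$; the fourth entry is then forced by the determinant. By uniqueness of HJ continued fractions, $[e_1,\ldots,e_r]$ is the Wahl chain of $\frac{1}{n^2}(1,na-1)$.

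Next, I would upgrade $[e_1,\ldots,e_r]$ to a del Pezzo Wahl chain of type (II) using condition (3). Admitting a zero continued fraction of a given weight is preserved under reversal of the underlying HJ, since the blow-down rule of Lemma \ref{updown} is symmetric. Hence from (3), $[e_1,\ldots,e_{i-1}]=[f_u,\ldots,f_1]$ admits a zero continued fraction of weight $\lambda_1$ (by reversing the one given for $\frac{m_1}{q_1}$), and $[e_{i+1},\ldots,e_r]=\frac{m_2}{q_2}$ admits one of weight $\lambda_2$ directly. With central mark $\underline{d}$ and $\ell=9-\lambda_1-\lambda_2>0$, Definition \ref{DPWahlchain} delivers a del Pezzo Wahl chain of type (II) and degree $\ell$.

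Definition \ref{def: markedsurface} now produces a marked surface $W_{*m}$ with Wahl singularity $\frac{1}{n^2}(1,na-1)$ and degree $\ell$. By Theorem \ref{theoremtoric}, the sliding process from Section \ref{s3.4} yields the toric surface $\widehat{W}_{*m}$ with only Wahl singularities, together with the crepant contraction $\widehat{W}_{*m}\to W_{*m}^T$. Contracting inside $\widehat{W}_{*m}$ all torus-invariant curves outside the central Wahl chain of $\frac{1}{n^2}(1,na-1)$ and its two adjacent $(-1)$-curves produces a toric surface with Picard number $1$ and three singularities in the chain configuration required by the statement. The composition $\widehat{W}_{*m}\to P(n^2,m_1,m_2)$ is the partial resolution sought by the theorem.

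The main obstacle is the numerical identification of the two outer singularities as $\frac{1}{m_1}(1,m_1-q_1)$ and $\frac{1}{m_2}(1,m_2-q_2^{-1})$ with the exact $q_i$'s prescribed by the data. This falls out of reading the HJ continued fractions of the lateral exceptional chains in the minimal resolution of $\widehat{W}_{*m}$ and invoking Proposition \ref{dual}: these lateral chains are the duals of $\frac{m_1}{q_1^{-1}}$ and $\frac{m_2}{q_2}$, namely $\frac{m_1}{m_1-q_1}$ and $\frac{m_2}{m_2-q_2^{-1}}$. The bookkeeping distinguishing $q$ from $q^{-1}$ on the two sides of the central mark is what makes the matrix factorization asymmetric in its first and third factors, and is forced by uniqueness of the HJ together with the two equalities encoded in (2).
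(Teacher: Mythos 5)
Your proposal is correct in substance but takes a genuinely different route through the middle of the argument. Both proofs begin with the same matrix computation: conditions (1) and (2) force the product of the three HJ matrices to be the HJ matrix of $\frac{n^2}{na-1}$, so the concatenation $[f_u,\ldots,f_1,d,g_1,\ldots,g_v]$ is the Wahl chain. From there the paper works \emph{downward from} $P(n^2,m_1,m_2)$: it takes the M-resolutions of the two lateral singularities furnished by condition (3) via the Christophersen--Stevens correspondence, passes to a $\Q$-Gorenstein smoothing of the minimal resolution that preserves the central Wahl chain, and runs the MMP algorithm of \cite[Cor.\ 10.1]{PPSU18} to extract the marking on the general fiber; the general fiber is then recognized as a marked surface $W_{*m}$, whose associated $\widehat{W}_{*m}$ is the claimed partial resolution. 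You instead work \emph{upward from the combinatorics}: reversal symmetry of zero continued fractions turns condition (3) directly into a type (II) marking, Definition \ref{def: markedsurface} produces $W_{*m}$, and the sliding construction of Theorem \ref{theoremtoric} produces $\widehat{W}_{*m}$, which you then contract. Your route avoids the deformation-theoretic MMP step entirely and is arguably more direct; what it costs you is that the identification of the contracted surface with $P(n^2,m_1,m_2)$ must be argued by hand rather than being built in from the start.

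Two points in that final identification deserve tightening. First, the lateral chains in $\widehat{W}_{*m}$ are not literally the dual chains of $\frac{m_1}{q_1^{-1}}$ and $\frac{m_2}{q_2}$; they are contractible chains of Wahl singularities (the M-resolutions produced by the slides) whose \emph{minimal models} are those duals. The correct argument is that each lateral configuration, together with the adjacent $(-1)$-curve and $[e_1,\ldots,e_{i-1}]$ (resp.\ $[e_{i+1},\ldots,e_r]$), forms a fiber of the genus-$0$ fibration and hence a zero continued fraction, so Proposition \ref{dual} pins down its minimal model and therefore the c.q.s.\ it contracts to, namely $\frac{1}{m_1}(1,m_1-q_1^{-1})\cong\frac{1}{m_1}(1,m_1-q_1)$ and $\frac{1}{m_2}(1,m_2-q_2^{-1})$. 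Second, your prescription to contract ``all torus-invariant curves outside the central Wahl chain and its two adjacent $(-1)$-curves'' would also contract the third boundary component $C$ (the image of the section in $|C_{\infty}+dF|$), which must survive as the third edge of the Picard-number-one surface; only the two lateral contractible chains are collapsed. Neither issue is a gap in the strategy --- both are repaired by the fiber relation above --- but as written the last paragraph conflates the dual chain with the lateral exceptional configuration.
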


\begin{proof}
We have $\left[\begin{array}{cc}
A & B \\
C & D
\end{array}\right] 
=\left[\begin{array}{cc}
m_1 & -q_1 \\
q_1^{-1} & \frac{1-q_1 q_1^{-1}}{m_1}
\end{array}\right]  \left[\begin{array}{cc}
d & -1 \\
1 & 0
\end{array}\right] \left[\begin{array}{cc}
m_2 & -q_2^{-1} \\
q_2 & \frac{1-q_2 q_2^{-1}}{m_2}
\end{array}\right]$.

By (1), we obtain $A=n^2$. Note that $m_1m_2(C+B)=(m_1 q_2^{-1}+m_2 q_1^{-1})n^2+m_1^2+m_2^2$, and $m_1m_2(C-B)=(-m_1 q_2^{-1}+m_2 q_1^{-1})n^2-m_1^2+m_2^2$. By evaluating in both equations the values of $q_1^{-1}$ and $q_2^{-1}$ from (2), we obtain $C-B=n^2-2$ and $C+B=2an-n^2$. Therefore, $C=na-1$ and $B=an+1-n^2$. Also, by evaluating $q_1^{-1}$ and $q_2^{-1}$ in $D$ we obtain $D=-na+a^2+1$.

Since we have the chain $\frac{1}{m_1}(1,m_1-q_1) - (1) - \big[ {n \choose a} \big] - (1) - \frac{1}{m_2}(1,m_2-q_2^{-1})$, the minimal resolution of $P(n^2,m_1,m_2)$ contains two genus zero fibers in the toric boundary, formed by the minimal resolutions of $\frac{1}{m_i}(1,m_i-q_i)$, its dual, and a $(-1)$-curve. We contract curves in these two fibers until we obtain the Hirzebruch surface $\F_d$. Since the $\frac{1}{m_i}(1,q_i)$ admit zero continued fractions of weights $\lambda_i$, then their duals $\frac{1}{m_i}(1,m_i-q_i)$ admit the corresponding (and unique) M-resolutions. Consider the partial resolution $W' \to P(n^2,m_1,m_2)$ defined by both of them. Let $W''$ be the minimal resolution of $W'$. There are no local-to-global obstructions to deform $W''$, and so we consider a $\Q$-Gorenstein smoothing $W_t \rightsquigarrow W''$ that keeps the Wahl chain of $n,a$. As in \cite[Corollary 10.1]{PPSU18}, we run MMP on $W_t \rightsquigarrow W''$ to define a marking over the Wahl chain in $W_t$. Then we contract the Wahl chain on the original deformation $W_t \rightsquigarrow W''$, and we obtain on the general fiber a marked surface $W_{*m}$. On the other hand, the surface $W_{*m}$ defines a $\widehat{W}_{*m}$, and this is our claimed partial resolution of $P(n^2,m_1,m_2)$. 
\end{proof}

\begin{remark}
When gcd$(m_1,m_2)=\mu>1$, the surface $P(n^2,m_1,m_2)$ is a fake weighted projective plane. Then we have a cyclic covering $\mathbb{P}(n^2/\mu,m_1/\mu,m_2/\mu) \to P(n^2,m_1,m_2)$ of degree $\mu$ (see the proof of Theorem 4.1 in \cite{HP10}). 
\label{fakeprojplanes}
\end{remark}

\begin{remark}
For type (I) we have the chain of Wahl singularities $$(0) - \Big[ {n \choose a} \Big] - (1) - \frac{1}{m}(1,m-q^{-1}),$$ where $m=na-1$, $q=d(na-1)-n^2$, and $q^{-1}=an-a^2-1$. The corresponding toric variety is $\P(1,n^2,na-1)$. Note that $d=u+l-3$ for some $u \geq 0$.
\label{type(I)Diophantine}
\end{remark}

\begin{remark}
Given a marked surface $W_{*m}$ of degree $\ell$ with associated surface $P(n^2,m_1,m_2)$, consider a general fiber $F$ of $X \to \P^1$, where $X$ is the minimal resolution of $W_{*m}$. Then, the Hodge index theorem applied to $F$ and the ample divisor $-K_{W_{*m}}$ implies that $(n^2+m_1+m_2)^2 \geq \ell n^2m_1m_2$. This is a nontrivial inequality when $\ell \geq 5$. In the case of type (I) we obtain $(n+a)^2\geq \ell (an-1)$.  
\label{hodgeIndex}
\end{remark}

\begin{example}
Let us take the Wahl chain $[3,2,8,2,2,2,4,2]$, where $n=27$, $a=11$. It has the marking $[1,1,\underline{8},2,2,1,4,1]$ of degree $6$. We have a $\widehat{W}_{*m}$ with chain of Wahl singularities $(2)-(1)-\Big[{3 \choose 1}\Big]-(1)-\Big[{27 \choose 11}\Big]-(1)-\Big[{7 \choose 3}\Big]-(1)$. We have $d=8$, $\frac{m_1}{q_1}=\frac{5}{3}=[2,3]$, $\frac{m_2}{q_2}=\frac{22}{17}=[2,2,2,4,2]$, $q_1^{-1}=2$, and $q_2^{-1}=13$. The associated $P(27^2,5,22)$ is $\P(27^2,5,22)$.    
\end{example}

\begin{remark}
The case of degree $9$ is very special. As $\lambda_1=\lambda_2=0$, we have $m_i=n_i^2$ and $q_i=n_i(n_i-a_i)+1$ for some $0<a_i<n_i$ coprime. Then (2) in Theorem \ref{thm:diophequat} gives $a_1= \frac{n_1^2na-n^2-n_1^2-n_2^2}{n^2n_1}$ and $a_2=\frac{n_2^2n(n-a)-n^2-n_1^2-n_2^2}{n^2n_2}$, and by evaluating them in (1) we obtain $$(n^2+n_1^2+n_2^2)^2-(d-1)n^2n_1^2n_2^2=0.$$ But $d=10$ in degree $9$, and so $(n^2+n_1^2+n_2^2+3n_1n_2n)(n^2+n_1^2+n_2^2-3n_1n_2n)=0.$ Therefore $(n_1,n_2,n)$ must be a Markov triple. In more generality, if the singularities $\frac{1}{m_i}(1,q_i)$ are T-singularities, then one has the equation $$(n^2+d_1 n_1^2+ d_2 n_2^2)^2-(11-d_1-d_2)d_1 d_2 n^2n_1^2n_2^2=0,$$ where $m_i=d_i n_i^2$. In this case, $d=12-d_1-d_2$, and the degree is $\ell=11-d_1-d_2$. This way, one recovers Table 1 and Table 2 in \cite{HP10} for toric surfaces with Picard number one and at least one Wahl singularity. This happens exactly when $\ell d_1 d_2$ is a square, and we obtain the Markov type equation in each case.
\label{markovequation}
\end{remark}


\begin{example}
Let us consider a $W_{*}$ of type (II) and degree $8$, with Wahl singularity $\frac{1}{n_1^2}(1,n_1 a_1-1)$. We have $9-\lambda_1-\lambda_2=8$, and so say $\lambda_1=0$ and $\lambda_2=1$. Its toric surface $\widehat{W}_{*m}$ has the chain of Wahl singularities: $$\left[{n_0 \choose a_0}\right]-(1)-\left[{n_{1} \choose a_{1}}\right]-(1)- \left[{n_{2} \choose a_{2}}\right]-(1)-\left[{n_{3} \choose a_{3}}\right],$$ where $n_0>1$. We have $m_1=n_0^2$, $m_1-q_1=n_0 a_0 -1$, $\Delta:=m_2$, $\Omega^{-1}:=m_2-q_2$. In this way, $\left[{n_{2} \choose a_{2}}\right]-(1)-\left[{n_{3} \choose a_{3}}\right]$ contracts to $\frac{1}{\Delta}(1,\Omega)$. Let $-d$ be the self-intersection of the central mark. Define $\delta=n_0 a_1 - n_1 a_0$ and $\delta'=n_2a_3 - n_3 a_2$. Then $\Delta= \delta n_0 n_1 - n_0^2 - n_1^2 = n_2^2 + n_3^2 - \delta' n_2 n_3$ and $\Omega= -\delta' n_3 a_2 +n_3 a_3 + n_2 a_2 -1=\delta n_0a_1-n_1a_1-n_0a_0-1$, and so we have the expressions $$\delta n_0 n_1 + \delta' n_2 n_3= n_0^2+n_1^2+n_2^2+n_3^2$$ and $\delta n_0 a_1 + \delta' a_2 n_3= n_0a_0+n_1a_1+n_2a_2+n_3a_3$.
\label{degree8}
\end{example}

\begin{remark}
All possible marked Wahl chains of degree $8$ are:
\begin{itemize}
\item[(I)] Wahl chains $[e_1,\ldots,e_r]$ such that $[e_2,\ldots,e_r]$ admits a zero continued fraction of weight $1$, and
\item[(II)] Wahl chains of the form $\left[\Wa^{\vee},d,\frac{\Delta}{\Omega} \right],$ where $\Wa^{\vee}$ is a dual Wahl chain, $d \geq 2$, and $\frac{\Delta}{\Delta-\Omega}$ admits an extremal P-resolution, or is a T-singularity (as in Remark \ref{markovequation}, and so in this case the indices $(n_0,n_1,n_2)$ satisfy $n_0^2+n_1^2+2n_2^2=4n_0 n_1 n_2$).
\end{itemize}
\label{beatdegree8}
\end{remark}

\section{Exceptional vector bundles on del Pezzo surfaces} \label{s8}

The purpose of this section is to provide another proof of \cite[Theorem A]{PR24} in the context of marked surfaces $W_{*m}$ and their deformations. For example, we will extend that result to a strong full exceptional collection through the canonical marking.

\begin{definition}
An exceptional vector bundle (e.v.b.) $E$ on a projective surface $Y$ is a locally free sheaf such that $\Hom(E,E)=\C$, and $\Ext^1(E,E)=\Ext^2(E,E)=0$. \label{evb}
\end{definition}

An exceptional vector bundle $E$ on $Y$ induces the semi-orthogonal decomposition (s.o.d.) $D^b(Y)=\langle E^{\perp},E \rangle$, where $E$ is the admissible triangulated subcategory generated by $E$ and $E^{\perp}=\{ t \in D^b(Y) \colon \Hom(e,t)=0, \ \forall \ e\in E \}$; see \cite[\S 1.4]{Huy06}. Hacking \cite{H13} gives a way to construct exceptional vector bundles on nonsingular fibers of W-surfaces when $p_g=q=0$.

\begin{theorem}[Theorem 1.1 \cite{H13}]
Let $(W \subset \W) \to (0 \in \D)$ be a W-surface such that 
\begin{itemize}
    \item[(1)] $W$ has only one Wahl singularity $\frac{1}{n^2}(1,na-1)$, $p_g(W)=q(W)=0$, and
    \item[(2)] the induced exact sequence $0 \to H_2(W_t) \to H_2(W) \to \Z/n \to 0 $ is exact, where $t \neq 0$. (We have $\Pic(W_t)=H_2(W_t)$ and $\Cl(W)=H_2(W)$.)
\end{itemize}

Then, possibly after a base change $\D \to \D$, there exists a reflexive sheaf $\cE$ (i.e. $\cE \to \cE^{\vee \vee}$ is an isomorphism) on $\W$ such that 
\begin{itemize}
    \item[(a)] $E:= \cE|_{W_t}$ is an e.v.b. of rank $n$, and
    \item[(b)] $E_0:= \cE|_{W}$ is a torsion-free sheaf on $W$ such that $E_0^{\vee \vee}$ (reflexive hull) is isomorphic to the direct sum of $n$ copies of a reflexive rank $1$ sheaf $A$, and the quotient $E_0^{\vee \vee}/E_0$ is a torsion sheaf supported at $\frac{1}{n^2}(1,na-1)$. 
\end{itemize}

If $\mathcal{H}$ is a line bundle on $\W$ which is ample on the fibers, then $E$ is slope stable with respect to $\mathcal{H}|_{W_t}$. Moreover we have $$c_1(E)=n c_1(A) \ \ \ \ \ \ \ \ c_2(E)=\frac{n-1}{2n}\big(c_1^2(E) +n+1 \big) \ \ \ \ \ \ \ \ c_1(E) \cdot K_{W_t} \equiv \pm a (\text{mod} \ n). $$ 
    \label{h.e.v.b.}
\end{theorem}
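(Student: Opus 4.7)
The strategy is to realize $\cE$ as the push-forward of a line bundle along the index-one cover of $\W$ at $P$, and to derive exceptionality of $E$ on the general fiber from the local structure of that push-forward on the central fiber.

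First, I would analyze the local geometry. At $P \in W$ the three-fold $\W$ has, by Koll\'ar--Shepherd-Barron, a terminal cyclic quotient singularity of type $\frac{1}{n}(1,-1,a)$. Let $\pi \colon \widetilde{\W} \to \W$ be the corresponding index-one cover: $\pi$ is finite of degree $n$, \'etale in codimension one, and $\widetilde{\W}$ is smooth. The induced cover $\widetilde{W} \to W$ of central fibers has a single $A_{n-1}$ singularity $\widetilde{P}$ over $P$, while $\widetilde{W}_t \to W_t$ for $t \neq 0$ is \'etale cyclic of degree $n$. Hypothesis (2) --- combined with $p_g=q=0$ and a possible base change $\D \to \D$ to trivialize a gerbe class --- guarantees that the class in $\Cl(\W)$ determined by a generator of $H_2(W)/H_2(W_t) \cong \Z/n$ lifts to a Cartier class on $\widetilde{\W}$, yielding a line bundle $\widetilde{\mathcal{L}}$ with a natural $\Z/n$-equivariant structure.

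Next, I would set $\cE := \pi_* \widetilde{\mathcal{L}}$. Since $\pi$ is finite of degree $n$ and $\widetilde{\mathcal{L}}$ is a line bundle, $\cE$ is a rank-$n$ reflexive coherent sheaf on $\W$. For $t \neq 0$ the cover is \'etale, so $E := \cE|_{W_t}$ is locally free of rank $n$. On the central fiber, away from $P$ the cover is a trivial $\Z/n$-torsor and $E_0$ splits into the $n$ character eigenspaces; these glue across $P$ into $n$ copies of a single rank-$1$ reflexive sheaf $A$ (obtained as a distinguished isotypic component), and a local computation at $\widetilde{P}$ using the structure of the $A_{n-1}$ singularity shows $E_0^{\vee\vee} \cong A^{\oplus n}$ with $E_0^{\vee\vee}/E_0$ a torsion sheaf supported at $P$. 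Applying Grothendieck--Riemann--Roch to $\pi|_{W_t}$ gives $c_1(E) = n\, c_1(A)$, and tracing the contribution from $\widetilde{P}$ yields the stated formulas for $c_2(E)$ and $c_1(E)\cdot K_{W_t} \bmod n$.

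For slope-stability with respect to any relatively ample $\mathcal{H}$, I would argue by specialization: any destabilizing subsheaf $F \subset E$ would limit to a subsheaf $F_0 \subset E_0$, but $E_0^{\vee\vee} = A^{\oplus n}$ is polystable of slope $\mu(E)/n$, so the only saturated subsheaves of $E_0^{\vee\vee}$ with slope $\geq \mu(E)$ are sub-direct-sums $A^{\oplus m}$; the non-trivial structure of $E_0$ at $P$ (governed by the $\Z/n$-action on the stalk of $\widetilde{\mathcal{L}}$ at $\widetilde{P}$) prevents any such sub-direct-sum from lifting to a subsheaf of $\cE$. Hence $E$ is slope-stable, and in particular simple: $\Hom(E,E) = \C$. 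Serre duality and $-K_{W_t}$ big (which gives the slope inequality $\mu(E \otimes K_{W_t}) < \mu(E)$) kill $\Ext^2(E,E) = \Hom(E, E \otimes K_{W_t})^\vee$. Finally, the Chern data above together with $\chi(\O_{W_t}) = 1$ (using $p_g=q=0$) give $\chi(E,E) = 1$ via Riemann--Roch, so $\Ext^1(E,E) = 0$.

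The main obstacle is the stability step: one must rule out destabilizing limits inside the polystable sheaf $E_0^{\vee\vee} = A^{\oplus n}$, where slope inequalities are non-strict and everything depends on tracking the precise degeneration near $P$. The $\Z/n$-equivariant structure of $\widetilde{\mathcal{L}}$ at $\widetilde{P}$ is what breaks this degeneracy and forces $\cE$ to be indecomposable in the family; carrying this out requires a careful analysis of the possible extensions of the torsion quotient $A^{\oplus n} \twoheadrightarrow E_0^{\vee\vee}/E_0$.
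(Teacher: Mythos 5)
The paper does not actually prove this statement: it is imported verbatim as Theorem 1.1 of \cite{H13}, so your proposal has to be measured against Hacking's argument there. Your construction is not a variant of it, and it contains a fatal gap at the very first step. The index-one cover of the terminal point $\frac{1}{n}(1,-1,a) \in \W$ is a purely local object, and promoting it to a global finite morphism $\pi \colon \widetilde{\W} \to \W$ of degree $n$, \'etale away from $P$, is exactly what cannot be done here. If such a cover existed (with $\widetilde{\W}$ connected and with connected fibers over $\D$), its restriction $\widetilde{W}_t \to W_t$ would be a connected \'etale cover of degree $n$ of the general fiber; but the general fibers relevant to this theorem --- and to every application in the paper --- are rational, hence simply connected, so $\widetilde{W}_t$ necessarily breaks into $n$ disjoint copies of $W_t$ and $E = \pi_*\widetilde{\mathcal{L}}|_{W_t}$ is a direct sum of $n$ line bundles. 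Such a sheaf has $\hom(E,E) \geq n$, so it is never exceptional for $n \geq 2$. Hypothesis (2) is a statement about the specialization map on $H_2$; it does not produce a $\Z/n$-quotient of $\pi_1(W_t)$, and no base change $\D \to \D$ can repair this. Everything downstream (the identification $E_0^{\vee\vee} \cong A^{\oplus n}$ with a single $A$, the Grothendieck--Riemann--Roch computation of the Chern classes, and the stability argument, which is in any case only sketched) rests on this nonexistent cover.

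Hacking's actual proof goes in a different direction: after the base change $t = s^n$ he normalizes and performs an explicit weighted blow-up of the total space at $P$, so that the new special fiber becomes a normal crossing surface $X_0' \cup_Z \mathbb{P}$, where $\mathbb{P}$ is a specific weighted projective plane meeting the other component along a smooth rational curve $Z$. He constructs by hand an exceptional bundle $G$ of rank $n$ on $\mathbb{P}$ whose restriction to $Z$ is a twist of $\O_Z^{\oplus n}$, glues it to $A^{\oplus n}$ on $X_0'$ (this is where hypothesis (2) enters, guaranteeing the existence of the rank-one class $A$ generating $H_2(W)/H_2(W_t) \cong \Z/n$), and then deforms the glued bundle sideways to the nearby fibers; indecomposability and exceptionality of $E$ come from the exceptional component $G$ and cohomology vanishing on the reducible fiber, not from a covering trick, and stability is proved by a separate degeneration argument. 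The identity $c_2(E) = \frac{n-1}{2n}\big(c_1^2(E)+n+1\big)$ is then just the Riemann--Roch reformulation of $\chi(E,E)=1$. To salvage your write-up you would have to replace the covering construction with a degeneration of this kind.
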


A \textit{Hacking vector bundle} on a nonsingular projective surface $Y$ is an e.v.b. isomorphic to some e.v.b. $E$ as in Theorem \ref{h.e.v.b.}. Assumption (2) in Theorem \ref{h.e.v.b.} is satisfied, for example, when $H_1(W_t)=0$.

\begin{definition}
An \textit{exceptional collection of vector bundles} (e.c.) on a projective surface $Y$ is a collection of e.v.b. $E_r,E_{r-1},\ldots, E_0$ such that $\Ext^k(E_i,E_j)=0$ for all $i<j$ and all $k\geq 0$. Its \textit{length} is $r+1$. It is said to be \textit{strong} if moreover $\Ext^k(E_i,E_j)=0$ for all $i>j$, and all $k>0$. 
    \label{e.c.}
\end{definition}

As explained in \cite[\S 1.4]{Huy06}, an exceptional collection $E_r,E_{r-1},\ldots, E_0$ on $Y$ defines a s.o.d. $D^b(Y)=\langle \cA^{\perp}, E_r,E_{r-1},\ldots, E_0, \rangle$ where $\cA$ is generated by $E_r,E_{r-1}$, $\ldots, E_0$. The exceptional collection is \textit{full} if $\cA^{\perp}=\emptyset$. Any exceptional collection on a del Pezzo surface can be completed as a full exceptional collection by \cite{KO94}. Hence, on a del Pezzo surface of degree $\ell$, an exceptional collection of maximal length is full. This length must be equal to $12-\ell$.

In fact, given a full e.c. $F_r,\ldots,F_0$ on a del Pezzo surface, there is a toric variety $W$ with only Wahl singularities of indices $n_i=\rk(F_i)$ such that all $F_i$ are Hacking vector bundles for a $\Q$-Gorenstein smoothing of $W$. See \cite[Section 2.5.1]{H12}. This exceptional collection can be explicitly constructed on many W-surfaces with a chain of Wahl singularities. A general recipe is the following. 

\begin{theorem}[Theorems 5.5 and 5.8 \cite{TU22}] Let $W_t \rightsquigarrow W$ be a W-surface such that 

\begin{itemize}
    \item[(1)] $p_g(W)=q(W)=0$,
    \item[(2)] the surface $W$ has exactly the Wahl singularities $P_0,\ldots,P_r$ (we also allow $P_i$ to be smooth points), and a chain of nonsingular rational curves $\Gamma_1,\ldots,\Gamma_r$ that are toric boundary divisors $\Gamma_i$, $\Gamma_{i+1}$ at $P_{i+1}$ (a chain of Wahl singularities Definition \ref{def:chainWahlsing}), and
    \item[(3)] there exists a Weil divisor $A \subset W$, which is Cartier outside of $P_0$ and generates the local class group $\Cl(P_0 \in W)$. 
\end{itemize}
Then, after possibly shrinking $\D$, there exists an e.c. $E_r,\ldots,E_0$ of Hacking vector bundles on $W_t$, $t \neq 0$, with
$$\rk(E_i)=n_i,\quad
c_1(E_i)=-n_i(A+\Gamma_1+\ldots+\Gamma_i) \in H^2(W_t,\Z),$$ where $P_i=\frac{1}{n_i^2}(1,n_i a_i-1)$. (For $P_i$ smooth we take $n_i=a_i=1$.) By Riemann--Roch, we also have
$c_2(E_i)=\frac{n_i-1}{2n_i}(c_1(E_i)^2 + n_i +1)$. We have that $c_1(E_i) \cdot K_{W_t} \equiv a_i (\text{mod} \ n_i)$.
\label{H.e.c.theorem}    
\end{theorem}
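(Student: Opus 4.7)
The approach is to construct each $E_i$ individually via Hacking's theorem (Theorem~\ref{h.e.v.b.}) applied to a one-parameter W-subfamily smoothing only $P_i$, and then to verify the exceptional-collection vanishings by a combination of slope stability and a Riemann--Roch Euler-characteristic computation.

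The first step is to identify the correct divisors $A_i := A + \Gamma_1 + \cdots + \Gamma_i$ and show that $A_i$ is Cartier away from $P_i$ and generates $\Cl(P_i \in W) \simeq \Z/n_i$. This is proved by induction on $i$, using that $\Gamma_i$ is a toric boundary curve at exactly the two adjacent singularities $P_{i-1}$, $P_i$; a short local toric computation shows that adjoining $\Gamma_i$ to $A_{i-1}$ trivializes the class at $P_{i-1}$ while producing a generator at $P_i$. Next, for each $i$ I would use the absence of local-to-global obstructions for $\Q$-Gorenstein deformations of $W$ (the argument in \cite[Proposition~3.1]{HP10}, valid since $p_g = q = 0$) to extract from $\W \to \D$ a one-parameter W-subfamily $\W_i \to \D$ smoothing only $P_i$ and leaving the other Wahl singularities rigid. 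Applying Theorem~\ref{h.e.v.b.} to $\W_i$ with divisor $A_i$ produces a reflexive sheaf $\cE_i$ whose restriction to a smooth fiber is a Hacking exceptional bundle of rank $n_i$ with $c_1 = -n_i A_i$; a standard unobstructed-deformation argument for simple sheaves (the obstruction space $\Ext^2(E,E)$ vanishes for exceptional $E$) spreads this bundle to $E_i$ on the general fiber $W_t$ of the full family $\W$. The formula for $c_2$ and the congruence $c_1(E_i) \cdot K_{W_t} \equiv a_i \pmod{n_i}$ are inherited from Theorem~\ref{h.e.v.b.}.

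To show $\Ext^k(E_i, E_j) = 0$ for $i < j$, recall that the $E_m$ are slope stable with respect to $-K_{W_t}$ by Theorem~\ref{h.e.v.b.}, and that the slope difference $\mu(E_i) - \mu(E_j) = (\Gamma_{i+1} + \cdots + \Gamma_j) \cdot K_{W_t}$ can be shown to be strictly positive from the chain data (after possibly perturbing the polarization inside the ample cone). Stability then forces $\Hom(E_i, E_j) = 0$; Serre duality together with an analogous slope comparison yields $\Ext^2(E_i, E_j) \cong \Hom(E_j, E_i \otimes K_{W_t})^{\vee} = 0$. The remaining vanishing $\Ext^1(E_i, E_j) = 0$ follows from $\chi(E_i, E_j) = 0$, which I would verify by Riemann--Roch on $W_t$ using the explicit $c_1, c_2$ formulae together with the intersection numbers $\Gamma_k^2$, $\Gamma_k \cdot K_W$, $\Gamma_k \cdot \Gamma_l$ prescribed by the chain-of-Wahl-singularities data.

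The main obstacle is precisely the Euler-characteristic identity $\chi(E_i, E_j) = 0$. Because the $A_i$ are only $\Q$-Cartier, the intersection $A_i \cdot A_j$ decomposes into an integral part computed on $W_t$ and fractional local contributions at each $P_k$ whose exact form depends on the integers $(n_k, a_k)$ and on the $c_k$; the required cancellation is a combinatorial identity matching these fractional contributions against the Hirzebruch--Jung data of the chain, and carrying this out cleanly is the technical heart of the proof. A secondary point is to ensure that a single polarization (a small perturbation of $-K_{W_t}$) makes all $E_i$ simultaneously slope stable, so that the slope comparisons above hold in a common chamber of the ample cone.
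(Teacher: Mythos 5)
This statement is imported: the paper cites it as Theorems 5.5 and 5.8 of \cite{TU22} and gives no proof of its own, so your proposal can only be measured against the actual argument in that reference, which constructs all the sheaves $\cE_i$ on one three-fold total space and proves the $\Ext$-vanishings by analyzing the degenerate torsion-free sheaves $E_{i,0}$ on the central fiber, not by stability on the general fiber. Your first step (Hacking bundles attached to the divisors $A_i=A+\Gamma_1+\cdots+\Gamma_i$) is the right starting point, but your verification of the exceptional-collection condition has genuine gaps. The central one is the slope comparison: you claim $\mu(E_i)-\mu(E_j)=-(\Gamma_{i+1}+\cdots+\Gamma_j)\cdot K_{W_t}>0$ for $i<j$ "from the chain data", but the theorem makes no assumption on the signs of $\Gamma_k\cdot K_W$ (the $\Gamma_k$ may be $K$-positive curves, e.g.\ extremal P-resolution curves, and $W$ need not be del Pezzo at all since only $p_g=q=0$ is assumed). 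The inequality $\mu(E_j)\leq\mu(E_i)$ is exactly what the paper's Corollary \ref{HecWeakdelPezzo} extracts under the \emph{additional} hypothesis that $W$ is a weak del Pezzo toric surface; in the generality of Theorem \ref{H.e.c.theorem} it simply fails, so stability gives you neither $\Hom(E_i,E_j)=0$ nor, via Serre duality, $\Ext^2(E_i,E_j)=0$ (the latter needs the further inequality $\mu(E_i)-\mu(E_j)<-K\cdot H$ for the chosen polarization $H$, which you never establish). With $\Hom$ and $\Ext^2$ unproven, the reduction of $\Ext^1$-vanishing to $\chi(E_i,E_j)=0$ collapses, and that Euler-characteristic identity is itself only asserted, not proved --- it is essentially equivalent to the statement being established.

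Two further points would need repair even in the del Pezzo case. First, your local class-group lemma is false as stated: at an intermediate singularity $P_{k}$ the divisor $A_i$ is locally the full toric boundary $\Gamma_{k}+\Gamma_{k+1}$, whose class in $\Cl(P_k)\simeq\Z/n_k^2$ is $n_k a_k\neq 0$; so $A_i$ is \emph{not} Cartier away from $P_i$. What is true (and what the construction needs) is that $n_iA_i$ lies in the subgroup of $\Cl(W)$ lifting to $H_2(W_t)$ and that $A_i$ generates the correct $\Z/n_i$ at $P_i$; this distinction is exactly the bookkeeping your induction skips. Second, Theorem \ref{h.e.v.b.} as quoted applies to W-surfaces, whose fibers over $t\neq 0$ are nonsingular; a subfamily smoothing only $P_i$ has singular general fibers, so the theorem does not literally apply to your $\W_i$, and transporting the resulting bundle from the partial smoothing to the common nonsingular fiber $W_t$ requires a multi-parameter deformation argument you have not supplied.
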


\begin{definition}
A \textit{Hacking exceptional collection} (H.e.c.) on a nonsingular projective surface $Y$ is the existence of a W-surface $W_t \rightsquigarrow W$ where $Y=W_t$ for some $t \neq 0$, and an e.c. as in Theorem \ref{H.e.c.theorem} on $Y$.
\label{H.e.c.}
\end{definition}

In Theorem \ref{H.e.c.}, the hypothesis (3) is satisfied automatically when $W_t$ is rational since $H^1(W_t,\Z)=0$. If $n_0=1$ ($P_0$ is a nonsingular point), then we can tensor the H.e.c. by $A$ being linearly equivalent to $\O_{W_t}$. 

\begin{definition}
Let $Y$ be a nonsingular del Pezzo surface. We define the slope of a vector bundle $E$ as $\mu(E):=\frac{\deg(E)}{\rk(E)}$ where $\deg(E):= - c_1(E) \cdot K_Y$.
\label{slope}
\end{definition}

\begin{theorem}[Corollary 2.5 \cite{Goro89}]
An exceptional vector bundle $E$ on a del Pezzo surface $Y$ is uniquely determined up to isomorphism by its slope $\frac{c_1(E)}{\rk(E)} \in H^2(Y,\Q)$. 
\label{Goro}
\end{theorem}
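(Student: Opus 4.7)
The plan is to reduce the statement to the fact, also due to Gorodentsev, that every exceptional vector bundle on a del Pezzo surface is $\mu_{-K_Y}$-stable, and then exploit the usual rigidity of stable sheaves of the same slope.

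First, let $E$ and $E'$ be exceptional vector bundles on $Y$ with ranks $r,r'$ and first Chern classes $D,D'$ satisfying $D/r=D'/r'=\mu\in H^2(Y,\Q)$. The exceptional condition $\chi(E,E)=1$, combined with the Hirzebruch--Riemann--Roch formula and $\chi(\O_Y)=1$, forces
\[
c_2(E)=\frac{(r-1)(D^2+r+1)}{2r},
\qquad
c_2(E')=\frac{(r'-1)(D'^2+r'+1)}{2r'},
\]
exactly the identity recorded in Theorem~\ref{h.e.v.b.}. A direct RR computation of $\chi(E,E')=\int\ch(E^{\vee}\otimes E')\,\td(Y)$, using $D=r\mu$ and $D'=r'\mu$ so that the degree-one term $rD'-r'D$ vanishes and all $\mu^{2}$ contributions cancel, will yield
\[
\chi(E,E')=\frac{r^{2}+r'^{2}}{2rr'}.
\]
Since $\chi(E,E')\in\Z$, writing $r=d\alpha$, $r'=d\beta$ with $\gcd(\alpha,\beta)=1$ reduces the integrality condition to $\alpha\beta\mid\alpha^{2}+\beta^{2}$, forcing $\alpha=\beta=1$, i.e.\ $r=r'$. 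Consequently $D=D'$, so $\ch(E)=\ch(E')$ and $\chi(E,E')=1$.

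Next I would invoke Gorodentsev's theorem that any exceptional bundle on a del Pezzo surface is $\mu_{-K_Y}$-stable. Since $E$ and $E'$ have the same slope, the usual stability argument reduces everything to producing a nonzero morphism $E\to E'$. Serre duality gives
\[
\Ext^{2}(E,E')\cong\Hom(E',E\otimes K_Y)^{\vee},
\]
and $\mu_{-K_Y}(E\otimes K_Y)=\mu_{-K_Y}(E)-K_Y^{2}<\mu_{-K_Y}(E')$ because $-K_Y$ is ample, so stability forces $\Ext^{2}(E,E')=0$. Combined with $\chi(E,E')=1$ this gives $\hom(E,E')\geq 1$.

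Finally, any nonzero $\phi\colon E\to E'$ between $\mu$-stable bundles of equal slope and equal determinant must be an isomorphism: if the image had rank strictly less than $r$, it would destabilize $E'$, so $\phi$ is generically of full rank; then $\det\phi$ is a section of the trivial line bundle $\det(E)^{\vee}\otimes\det(E')\cong\O_Y$ (here we use that $\Pic(Y)$ is torsion-free on a del Pezzo, so $c_1(E)=c_1(E')$ implies $\det E\cong\det E'$), hence a nonzero constant, so $\phi$ is an isomorphism. The main obstacle is the RR bookkeeping to obtain the clean formula $\chi(E,E')=(r^{2}+r'^{2})/(2rr')$; everything after that is formal once Gorodentsev's stability result is in hand.
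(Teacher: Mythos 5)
The paper does not prove this statement at all: it is imported verbatim as Corollary 2.5 of Gorodentsev's work, so there is no in-paper argument to measure you against. Judged on its own terms, your reconstruction is correct. The Riemann--Roch bookkeeping checks out: with $\mathrm{ch}_2(E)=\frac{D^2-r^2+1}{2r}$ forced by $\chi(E,E)=1$, the degree-one term $rD'-r'D$ and all $\mu^2$ contributions do cancel, leaving $\chi(E,E')=\frac{r^2+r'^2}{2rr'}$, and the coprimality argument correctly pins down $r=r'$, hence $D=D'$ and $\chi(E,E')=1$. The second half is also sound: stability of $E'$ and of $E\otimes K_Y$ kills $\Ext^2(E,E')$ via Serre duality, $\chi=1$ then produces a nonzero morphism, and the standard slope/determinant argument (using that $\Pic(Y)$ is torsion-free on a rational surface) upgrades it to an isomorphism. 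The one thing to be candid about is where the weight sits: the statement that every exceptional bundle on a del Pezzo surface is $\mu_{-K_Y}$-stable is itself a substantial theorem (for general del Pezzo surfaces the standard reference is Kuleshov--Orlov rather than Gorodentsev's $\P^2$/moving-anticanonical setting), and it is comparable in depth to the uniqueness statement you are deriving from it; in Gorodentsev's development the two come packaged together. So your proof is a legitimate and complete deduction, but it is a reduction to a peer theorem rather than an independent argument -- which is perfectly acceptable here, since the paper itself treats the whole statement as a black box.
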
 

\begin{theorem} [Theorem 2.5.3 and Corollary 2.5.4 \cite{H12}] Let $Y$ be a del nonsingular del Pezzo surface and $F_r,\ldots,F_0$ be a full e.c. on $Y$ ($r=11-K_Y^2$). Then, there is a toric variety $W$ with only Wahl singularities, and a W-surface $W_t \rightsquigarrow W$ such that $W_t$ is deformation equivalent to $Y$, and 
a H.e.c. $E_r,\ldots,E_0$ on $W_t$ such that $\frac{c_1(F_i)}{\rk(F_i)}=\frac{c_1(E_i)}{\rk(E_i)}$ in $H^2(Y,\Q)$ for all $i$. If we order the chain of Wahl singularities as $$\left[{n_0 \choose a_0}\right]-(c_1)-\left[{n_{1} \choose a_{1}}\right]-(c_2)- \ldots-(c_r)-\left[{n_{r} \choose a_{r}}\right]$$ using the toric boundary $\Gamma_1,\ldots,\Gamma_r$, then $\rk(F_i)=\rk(E_i)=n_i$ and $\deg(F_i)\equiv \deg(E_i) \equiv -a_i (\text{mod} \ n_i)$ for all $i$.  
\label{HecfordelPezzo}
\end{theorem}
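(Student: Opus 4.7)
\medskip

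\noindent\textbf{Proof proposal.} The plan is to reverse-engineer the W-surface $W_t \rightsquigarrow W$ directly from the numerical data of the exceptional collection $F_r,\ldots,F_0$, and then appeal to Theorem~\ref{H.e.c.theorem} and the uniqueness statement of Theorem~\ref{Goro} to identify the $F_i$ with the output $E_i$ of Hacking's construction. First, on any smooth del Pezzo $Y$, each $F_i$ is an e.v.b.\ of some rank $n_i$ with $\gcd(n_i,\deg(F_i))$-type congruence fixing a unique residue $a_i$ modulo $n_i$ with $0<a_i<n_i$ and $\gcd(n_i,a_i)=1$: this comes from the classification of e.v.b.\ on del Pezzos, where the slope $c_1(F_i)/\rk(F_i)$ lies in a specific affine lattice dictated by Riemann--Roch applied to the exceptionality condition $\chi(F_i,F_i)=1$. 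So each $F_i$ naturally labels a Wahl singularity $\frac{1}{n_i^2}(1,n_ia_i-1)$.

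\smallskip

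Second, I would build the toric surface $W$ combinatorially. Using the fact that $F_r,\ldots,F_0$ is full, the Gram matrix $(\chi(F_i,F_j))$ is upper triangular with $1$s on the diagonal, and its entries together with the ranks $n_i$ encode a collection of primitive lattice vectors $v_0,\ldots,v_r$ in $N=\Z^2$ that are to be the rays of a fan. Specifically, after choosing $v_0$ arising from the local class group generator $A$ of $P_0$, each subsequent ray $v_{i+1}$ is determined from $v_i$ and $v_{i-1}$ by the requirement that consecutive cones give Wahl singularities of the prescribed type $\frac{1}{n_i^2}(1,n_ia_i-1)$; the discrete data $c_i=-\Gamma_i^2$ is then forced by $\det(v_{i-1},v_{i+1})$ formulas. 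The fullness hypothesis $\sum r_i = 12-K_Y^2$ combined with the Euler pairing normalisation ensures that after $r+1$ steps the rays close up into the fan of a compact toric surface $W$, and that the resulting chain of Wahl singularities realises the claimed ordering with toric boundary $\Gamma_1,\ldots,\Gamma_r$.

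\smallskip

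Third, I would check that $W$ admits a Q-Gorenstein smoothing to a surface deformation-equivalent to $Y$. Since $W$ is a rational toric surface with only Wahl singularities, there are no local-to-global obstructions (cf.\ the proof of Theorem~\ref{theorem:W*m}), so one-parameter $\Q$-Gorenstein smoothings exist. The invariants $K^2$, $\chi_{\text{top}}$, $p_g$, $q$ are preserved, and since there is only one deformation type of smooth del Pezzo of each degree except at $K^2=8$, one arranges the numerics so that $W_t$ is deformation-equivalent to $Y$. Now Theorem~\ref{H.e.c.theorem} applies on $W_t \rightsquigarrow W$ and produces an H.e.c.\ $E_r,\ldots,E_0$ with $\rk(E_i)=n_i$ and $c_1(E_i)=-n_i(A+\Gamma_1+\cdots+\Gamma_i)$, giving in particular $\deg(E_i) \equiv -a_i \pmod{n_i}$. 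Finally, by Theorem~\ref{Goro}, each $E_i$ is determined up to isomorphism by its slope; matching the rays of the fan with $c_1(F_i)/\rk(F_i)$ at construction time ensures $c_1(F_i)/\rk(F_i)=c_1(E_i)/\rk(E_i)$ in $H^2(Y,\Q)$, completing the identification.

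\smallskip

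\noindent\textbf{Main obstacle.} The genuinely delicate step is the combinatorial construction in the second paragraph: showing that the rays $v_0,\ldots,v_r$ determined iteratively by the ranks and slopes of $F_0,\ldots,F_r$ close up to a complete fan, and that consecutive $2$-dimensional cones produce exactly Wahl (not merely T- or worse cyclic) singularities. This closure is the shadow of the fullness and exceptionality of the collection, and requires a careful bookkeeping of mutation relations for exceptional bundles together with a Diophantine argument analogous to those in Section~\ref{s7}, controlling when $\det(v_{i-1},v_{i+1})=n_i^2$ with the correct $a_i$.
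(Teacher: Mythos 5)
Your overall architecture coincides with the paper's: build a toric surface $W$ from the numerical data of the collection, take a $\Q$-Gorenstein smoothing (no local-to-global obstructions), apply Theorem \ref{H.e.c.theorem} to get a H.e.c.\ $E_r,\ldots,E_0$, and identify $F_i$ with $E_i$ through the slope-uniqueness of Theorem \ref{Goro}. The last two steps of your proposal are essentially what the paper does.

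However, the middle step --- the construction of the fan of $W$ --- contains a genuine gap, and you have correctly located it yourself in your ``main obstacle'' paragraph without resolving it. Your iterative recipe determines $v_{i+1}$ from $v_i,v_{i-1}$ ``by the requirement that consecutive cones give Wahl singularities of the prescribed type'': this is circular, since the existence of such a ray and the Wahl-ness of the resulting cone is precisely what has to be proved, not something that can be imposed as a definition. Likewise, the assertion that the rays close up into a complete fan ``after $r+1$ steps'' is stated as a consequence of fullness but no argument is given; the proposed ``Diophantine argument analogous to Section \ref{s7}'' is a plan, not a proof. The paper sidesteps the iteration entirely by using the Hille--Perling/Hacking construction: one forms the toric system $u_i=\frac{c_1(F_i)}{\rk(F_i)}-\frac{c_1(F_{i-1})}{\rk(F_{i-1})}$ together with $u_0=-K_Y-\sum_{j\geq 1}u_j$, takes the kernel $M$ of $\Z^{r+1}\to H^2(Y,\Q)$, $e_i\mapsto u_i$, and dualizes to obtain a rank-$2$ lattice $L$ with primitive vectors $v_i$ that \emph{automatically} form a complete fan in cyclic order. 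The statement that the resulting two-dimensional cones are exactly the Wahl singularities $\frac{1}{n_i^2}(1,n_ia_i-1)$ with $n_i=\rk(F_i)$ is then the content of the cited results (Perling, Theorem 11.3; Hacking, Theorem 2.5.3), which is why the theorem is attributed to them. To make your proposal complete you would either have to reprove that input or replace your ray-by-ray construction with the kernel-dual one and cite it.
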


\begin{proof}
For the construction of $W$ see \cite[Theorem 11.3]{Per18} and \cite[Theorem 2.5.3]{H12}. From this last reference, one forms the vectors $u_i:=\frac{c_1(F_i)}{\rk(F_i)} - \frac{c_1(F_{i-1})}{\rk(F_{i-1})} \in H^2(Y,\Q)$ for $i=1,\ldots,r$, together with $u_0:=-K_Y-\sum_{j=1}^r u_j$. Let $M$ be the kernel of the homomorphism $\Z^{r+1} \to H^2(Y,\Q)$ sending $e_i$ to $u_i$. Let $\Z^{r+1} \to L$ defined by the dual of $M \subset \Z^{r+1}$, sending $e_i$ to $v_i$. Then $L$ is a free abelian group of rank $2$, and the vectors $v_i \in L$ are primitive and generate the rays of a complete fan in $L \otimes \R$ in cyclic order. This is the fan of $W$. We have $K_W^2=K_Y^2$. It is proved (\cite[Theorem 11.3]{Per18}) that the singularities of $W$ correspond to Wahl singularities $\frac{1}{n_i^2}(1,n_ia_i-1)$ where $n_i=\rk(F_i)$ (including the case of nonsingular points). Let $\Gamma_0,\ldots, \Gamma_r$ be the curves in the toric boundary of $W$. We consider the chain of Wahl singularities $$\left[{n_0 \choose a_0}\right]-(c_1)-\left[{n_{1} \choose a_{1}}\right]-(c_2)- \ldots-(c_r)-\left[{n_{r} \choose a_{r}}\right].$$
Since $-K_W$ is big, by \cite[Proposition 3.1]{HP10} there are no local-to-global obstructions to deform $W$. We may therefore consider a W-surface $W_t \rightsquigarrow W$. It satisfies the hypothesis of Theorem \ref{H.e.c.theorem} because $H^1(W_t,\Z)=0$. Hence, $W_t$ admits a H.e.c. $E_r,\ldots E_0$ and all the assertions follow from Theorem \ref{H.e.c.theorem} and Theorem \ref{Goro}.
\end{proof}

Since any e.c. on a del Pezzo surface can be completed to a full e.c. \cite{KO94}, then every e.v.b. on a del Pezzo surface arises as a Hacking vector bundle. We note that by \cite[2.2.3 Corollary]{Goro89} $c_1^2(E)$ and $\rk(E)$ of Theorem \ref{Goro} are coprime. Theorem \ref{HecfordelPezzo} also shows that $\deg(E)$ and $\rk(E)$ are coprime.  

\begin{corollary}
Let $W$ be a weak del Pezzo toric surface with only Wahl singularities, and let $W_t \rightsquigarrow W$ be a W-surface such that $W_t$ is a del Pezzo surface. Then $W_t$ admits a full and strong H.e.c. $E_r,\ldots,E_0$ such that for all $j>i$, we have $\hom(E_j,E_i)=-\rk(E_j) \rk(E_i) (\Gamma_{i+1}+\ldots+\Gamma_j)\cdot K_W.$
\label{HecWeakdelPezzo}
\end{corollary}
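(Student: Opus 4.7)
The plan is to invoke Theorem~\ref{H.e.c.theorem} to obtain the exceptional collection on $W_t$, then establish fullness by a length count, and finally prove strongness together with the Hom formula using Hirzebruch--Riemann--Roch, a toric adjunction identity, and Serre duality combined with slope stability. Since $W$ is a toric weak del Pezzo with only Wahl singularities, its toric boundary is a cycle of $N$ rational curves meeting at $N$ torus-fixed points, each smooth or of Wahl type. Breaking the cycle by selecting one curve $A$ through a fixed point $P_0$ produces the chain of Wahl singularities $P_0,\dots,P_r$ connected by $\Gamma_1,\dots,\Gamma_r$ (Definition~\ref{def:chainWahlsing}), where $r=N-1$. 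A Weil divisor $A$ generating $\Cl(P_0\in W)$ exists because $H^1(W_t,\Z)=0$ (as $W_t$ is rational), so Theorem~\ref{H.e.c.theorem} furnishes the H.e.c.\ $E_r,\dots,E_0$ on $W_t$ with $\rk(E_i)=n_i$ and $c_1(E_i)=-n_i(A+\Gamma_1+\dots+\Gamma_i)$. Fullness then follows from a length count: $\chi_{\mathrm{top}}$ is preserved by $\Q$-Gorenstein smoothings, and toric computation gives $\chi_{\mathrm{top}}(W)=N$, so $r+1=N=\chi_{\mathrm{top}}(W_t)=12-K_{W_t}^2$, which is the length of any full exceptional collection on a del Pezzo of degree $K_{W_t}^2$; Kuleshov--Orlov \cite{KO94} then gives fullness.

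Next, I fix $j>i$ and set $D=\Gamma_{i+1}+\dots+\Gamma_j$. Hirzebruch--Riemann--Roch on $W_t$, combined with $c_2(E_k)=\tfrac{n_k-1}{2n_k}(c_1(E_k)^2+n_k+1)$ and the identity $n_jc_1(E_i)-n_ic_1(E_j)=n_in_jD$, simplifies to
\[
\chi(E_j,E_i)=-\tfrac{n_in_j}{2}\,D\cdot K_W+\tfrac{n_in_j}{2}D^2+\tfrac{n_i^2+n_j^2}{2n_in_j}.
\]
The key toric adjunction identity $D^2+D\cdot K_W=-n_i^{-2}-n_j^{-2}$ is proved by Weil adjunction on each $\Gamma_k$, which yields $\Gamma_k^2+\Gamma_k\cdot K_W=-n_{k-1}^{-2}-n_k^{-2}$ (using that the different at a Wahl point $\frac{1}{n^2}(1,na-1)$ contributes $1-n^{-2}$), together with $\Gamma_k\cdot\Gamma_{k+1}=n_k^{-2}$ (toric intersection through $P_k$) and $\Gamma_k\cdot\Gamma_l=0$ for $|k-l|\geq 2$; summing these contributions telescopes to $-n_i^{-2}-n_j^{-2}$. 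Substituting gives $\chi(E_j,E_i)=-n_in_j\,D\cdot K_W$, the target value.

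Finally, for strongness, Serre duality gives $\Ext^2(E_j,E_i)=\Hom(E_i,E_j\otimes K_{W_t})^\vee$. Slope stability of Hacking bundles (Theorem~\ref{h.e.v.b.}) together with $\mu(E_i)\geq\mu(E_j)$ (since $\Gamma_k\cdot K_W\leq 0$) and $\mu(E_j\otimes K_{W_t})=\mu(E_j)-K_{W_t}^2<\mu(E_j)$ (since $K_{W_t}^2>0$) forces $\Hom(E_i,E_j\otimes K_{W_t})=0$, so $\Ext^2(E_j,E_i)=0$. The main obstacle is the vanishing $\Ext^1(E_j,E_i)=0$; once established, it combines with $\Ext^2=0$ and the Euler characteristic computation to yield $\hom(E_j,E_i)=\chi(E_j,E_i)=-n_in_j\,D\cdot K_W$, proving both the Hom formula and strongness simultaneously. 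To prove $\Ext^1=0$ I plan to apply upper semicontinuity of $\ext^1$ in the flat family of reflexive sheaves $\mathcal{E}_j,\mathcal{E}_i$ over $\D$ from Theorem~\ref{h.e.v.b.}, reducing the question to a computation on the toric special fiber $W$, where each $\mathcal{E}_i|_W$ is torsion-free with reflexive hull $\O_W(-A-\Gamma_1-\dots-\Gamma_i)^{\oplus n_i}$; the required $\Ext^1_W$ can then be controlled by a torus-equivariant computation that exploits the chain structure of the boundary and the nef-ness of $-K_W$. This toric Ext-vanishing is the technical heart of the argument.
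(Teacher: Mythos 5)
Your architecture (Theorem \ref{H.e.c.theorem} for existence, length count for fullness, Riemann--Roch for the numerical value) matches the paper's, and your Euler characteristic computation is correct: the telescoping identity $D^2+D\cdot K_W=-n_i^{-2}-n_j^{-2}$ does hold and does yield $\chi(E_j,E_i)=-n_in_j\,D\cdot K_W$. But the decisive step --- the vanishing of $\Ext^1(E_j,E_i)$ --- is exactly the one you leave unproved. Your plan (semicontinuity in the family plus a torus-equivariant $\Ext^1$ computation on the singular special fiber $W$) is not carried out, and it would be genuinely hard to carry out: on $W$ the restrictions $\mathcal{E}_k|_W$ are torsion-free but not locally free at the Wahl points, so $\Ext^1_W$ picks up local contributions at the singularities, and semicontinuity only bounds $\ext^1$ on the general fiber from above by a quantity over which you have no a priori control. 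The paper closes this gap with a different ingredient: Kuleshov--Orlov's result (\cite[Corollary 2.11]{KO94}) that for an exceptional pair on a del Pezzo surface the ``reverse'' $\Ext^2$ vanishes and at most one of the reverse $\Hom$, $\Ext^1$ is nonzero. Granting that dichotomy, the inequality $\chi(E_j,E_i)=n_in_j(\mu(E_i)-\mu(E_j))\ge 0$ (which you correctly establish, since $-K_W$ is nef) immediately forces $\ext^1(E_j,E_i)=0$ and $\hom(E_j,E_i)=\chi(E_j,E_i)$. Without this, or an equivalent input, your argument does not close.

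Two smaller points. First, your Hirzebruch--Riemann--Roch computation, though correct, is longer than necessary: since $E_i,E_j$ already form an exceptional pair in the other direction, $\chi(E_i,E_j)=0$, and the antisymmetric part of Riemann--Roch gives $\chi(E_j,E_i)=\rk(E_i)\,c_1(E_j)\cdot K-\rk(E_j)\,c_1(E_i)\cdot K=-n_in_j\,D\cdot K_W$ directly, with no need for the $c_2$ formula or the adjunction identity. Second, your Serre-duality-plus-stability argument for $\Ext^2(E_j,E_i)=0$ implicitly uses slope stability with respect to $-K_{W_t}$, whereas Theorem \ref{h.e.v.b.} only provides stability for a polarization induced by a line bundle ample on the fibers; since $-K_W$ is merely nef on the weak del Pezzo special fiber, this needs a small perturbation argument --- or, again, is subsumed by the citation of \cite{KO94}.
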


\begin{proof}
Let $\Gamma_0,\ldots,\Gamma_r$ be the toric boundary with $r=11-K_W^2$. The hypotheses of Theorem \ref{H.e.c.theorem} hold, and so we obtain the corresponding H.e.c. $E_r,\ldots,E_0$. Since this e.c is of maximum length, by \cite{KO94} it must be full. Following \cite[Corollary 2.11]{KO94}, if $E,F$ form an e.c. on a del Pezzo surface, then $\Ext^2(E,F)=0$, and $\Ext^i(E,F) \neq 0$ for at most one $i=0,1$. By the Riemann-Roch theorem, we have $$\sum_{i=0}^2 (-1)^i \ext^i(E,F)=\chi(E,F)=\chi(F,E)+ \rk(F) c_1(E) \cdot K_Y - \rk(E) c_1(F) \cdot K_Y,$$ thus, $\hom(E,F)-\ext^1(E,F)=\rk(F) \rk(E) (\mu(F)-\mu(E))$. It follows that $\mu(E) \leq \mu(F)$ is equivalent to $\hom(E,F)=\rk(F) \rk(E) (\mu(F)-\mu(E))$ and $\Ext^1(E,F)=0$. By construction, for $i<j$ we have $$ \frac{c_1(E_i)}{n_i}-\frac{c_1(E_j)}{n_j}=\Gamma_{i+1}+\ldots+\Gamma_j,$$ so $\mu(E_j) \leq \mu(E_i)$. This implies that the e.c. $E_r,\ldots,E_0$ is also strong.
\end{proof}

\begin{corollary}
Let $W_{*m}$ be a marked surface that is not of type (I) and degree $1$ (see Lemma \ref{wirdo}). Consider a general W-surface $W_t \rightsquigarrow W_{*m}$ so that $W_t$ is a del Pezzo surface. Then $W_t$ admits a H.e.c. that is full and strong.   
\label{HecW*m}
\end{corollary}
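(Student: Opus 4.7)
The plan is to produce a W-surface $\widetilde{W}_t \rightsquigarrow \widehat{W}_{*m}$ whose general fiber is deformation-equivalent to $W_t$, and then apply Corollary \ref{HecWeakdelPezzo} directly. This detour through $\widehat{W}_{*m}$ is forced upon us: the marked surface $W_{*m}$ carries only one Wahl singularity and therefore no chain of Wahl singularities long enough to feed into Theorem \ref{H.e.c.theorem}, whereas $\widehat{W}_{*m}$ is by construction a weak del Pezzo toric surface whose toric boundary realizes precisely such a chain.

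First, I would invoke the proof of Theorem \ref{theoremtoric}: via the iterated slide construction and Lemma \ref{defslides}, it supplies a one-parameter $\Q$-Gorenstein deformation with special fiber $\widehat{W}_{*m}$ and general fiber $W_{*m}$, smoothing precisely the auxiliary Wahl singularities created by the slidings while keeping the distinguished Wahl singularity of $W_{*m}$ intact. Next, by Theorem \ref{theorem:W*m}, the assumption that $W_{*m}$ is not of type (I) and degree $1$ ensures that $W_{*m}$ is del Pezzo with no local-to-global deformation obstructions, so the given W-surface $W_t \rightsquigarrow W_{*m}$ realizes a further $\Q$-Gorenstein smoothing of the remaining Wahl singularity. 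Since $\widehat{W}_{*m}$ is rational with $-K$ big and likewise has no obstructions (cf. \cite[Proposition 3.1]{HP10}), its $\Q$-Gorenstein deformation functor is smooth, and these two partial smoothings can be composed into a single one-parameter W-surface $\widetilde{W}_t \rightsquigarrow \widehat{W}_{*m}$. Choosing the smoothing generically guarantees that the general fiber $\widetilde{W}_t$ is actually del Pezzo (ampleness of $-K$ is an open condition on the base) and that it lies in the same irreducible component of smoothings as $W_t$, hence is deformation-equivalent to it.

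Finally, $\widetilde{W}_t \rightsquigarrow \widehat{W}_{*m}$ meets all hypotheses of Corollary \ref{HecWeakdelPezzo}, which produces a full and strong Hacking exceptional collection on $\widetilde{W}_t$. Because exceptional vector bundles are rigid, this collection transports along the flat family connecting $\widetilde{W}_t$ to $W_t$, giving the desired full strong H.e.c. on $W_t$. The hard part will be the composition of deformations: it relies on the unobstructedness of $\widehat{W}_{*m}$ and on a careful choice of $\Q$-Gorenstein smoothing direction so that $\widetilde{W}_t$ lands in the same connected component of del Pezzo moduli as the given $W_t$ (in particular respecting the $\F_0$ vs $\F_1$ dichotomy in degree $8$, which is governed by the marking via Theorem \ref{mainthm2} and is therefore preserved along our composition through $\widehat{W}_{*m}$). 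Once that composition is in place, everything reduces to the already-established Corollary \ref{HecWeakdelPezzo}.
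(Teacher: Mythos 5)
Your proposal is correct and follows essentially the same route as the paper: the published proof simply cites Theorem \ref{theorem:W*m} (so $W_{*m}$ is del Pezzo), Theorem \ref{theoremtoric} (so $\widehat{W}_{*m}$ is a weak del Pezzo toric surface), and then applies Corollary \ref{HecWeakdelPezzo} to the induced W-surface $W_t \rightsquigarrow \widehat{W}_{*m}$. You have merely made explicit the composition of the two $\Q$-Gorenstein degenerations and the deformation-equivalence of general fibers, which the paper leaves implicit.
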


\begin{proof}
By Theorem \ref{theorem:W*m} the surface $W_{*m}$ is del Pezzo, and by Theorem \ref{theoremtoric} the toric surface $\widehat{W}_{*m}$ is weak del Pezzo. Then, it follows from Corollary \ref{HecWeakdelPezzo}.
\end{proof}

\begin{theorem}
Let $0<a<n$ be coprime integers. Then there is a full and strong H.e.c. $E_7,\ldots,E_1,\O_Y$ on del Pezzo surfaces $Y$ of degree $4$, where $\rk(E_1)=n$ and $c_1(E_1) \cdot -K_Y \equiv \pm a (\text{mod} \ n)$. 
\end{theorem}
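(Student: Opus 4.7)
Given coprime integers $0<a<n$, write $\frac{n^2}{na-1}=[e_1,\ldots,e_r]$ and call this Wahl chain $\Wa$. The strategy is to produce a weak del Pezzo toric surface $\widehat W$ of degree $4$ carrying a length-$8$ chain of Wahl singularities $P_0-(c_1)-P_1-\cdots-(c_7)-P_7$ in which $P_0$ is smooth and $P_1=\frac{1}{n^2}(1,na-1)$. Once this is done, Corollary \ref{HecWeakdelPezzo} applied to any general W-surface $W_t \rightsquigarrow \widehat W$ (which exists because there are no local-to-global obstructions, cf.\ Theorem \ref{theorem:W*m}) immediately produces a full and strong H.e.c.\ $E_7,\ldots,E_0$ on the del Pezzo $Y=W_t$. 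Since $P_0$ is smooth and its local class group is trivial, the Weil divisor $A$ of Theorem \ref{H.e.c.theorem} can be chosen with $A\sim 0$, giving $E_0=\O_Y$; and $\rk(E_1)=n_1=n$ with $c_1(E_1)\cdot K_Y\equiv \pm a \pmod n$, i.e.\ $c_1(E_1)\cdot(-K_Y)\equiv \pm a \pmod n$, which is the desired conclusion.

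For the generic case $\Wa \notin \{[4],[2,5]\}$, the construction of $\widehat W$ is automatic: apply Lemma \ref{canmarking} to obtain the canonical marking of $\Wa$ with central mark $e_r$ (the \emph{right} canonical marking), giving a del Pezzo Wahl chain of type (I) and degree $4$. Its marked surface $W_{*m}$ is del Pezzo by Theorem \ref{theorem:W*m}, and Example \ref{ex:canmarking} writes down the chain of Wahl singularities on the associated toric M-resolution $\widehat W_{*m}$ explicitly; reading the chain from the correct end places a smooth point as $P_0$ and $\bigl[{n \choose a}\bigr]$ as $P_1$, in a chain of total length $12-4=8$. Take $\widehat W:=\widehat W_{*m}$ and invoke Corollary \ref{HecWeakdelPezzo}.

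For the two exceptional Wahl chains $[4]$ and $[2,5]$ (corresponding respectively to $\frac{1}{4}(1,1)$ and $\frac{1}{9}(1,5)$), the canonical marking of Lemma \ref{canmarking} is unavailable and we build $\widehat W$ directly. For $[4]$, start from the Markov degeneration $\P(1,1,4)$ of $\P^2$, whose chain of Wahl singularities is $(0)-\bigl[{2 \choose 1}\bigr]-(0)$ by Example \ref{ex:Markov}, and perform five successive toric blow-ups at torus-fixed points along the two $0$-curves, arranged so that the result is a weak del Pezzo of degree $4$ whose toric boundary, after cutting at a smooth vertex, is a length-$8$ chain with $\frac{1}{4}(1,1)$ in the second slot. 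For $[2,5]$, begin with the degree-$8$ marked surface on $\F_5$ used in the proof that every Wahl singularity is realizable in degree $4$, and perform four additional toric blow-ups along the smooth side of the boundary to extend the chain to length $8$, again placing the singularity in the second slot.

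The main obstacle is precisely this last combinatorial check in the exceptional cases: one must verify that the toric blow-ups can be chosen so that the floating $(-1)$-curves join the existing chain of Wahl singularities (rather than sitting off to one side), and that $-K$ remains nef throughout. Once this is confirmed, everything else is direct: fullness is automatic, since any exceptional collection of length $12-K_Y^2=8$ on a degree-$4$ del Pezzo is full by \cite{KO94}, and strongness is built into Corollary \ref{HecWeakdelPezzo} via the slope ordering $\mu(E_j)\leq\mu(E_i)$ for $i<j$ implied by $\tfrac{c_1(E_i)}{n_i}-\tfrac{c_1(E_j)}{n_j}=\Gamma_{i+1}+\cdots+\Gamma_j$ against $-K_Y$.
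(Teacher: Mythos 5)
Your proposal is correct and follows essentially the same route as the paper: the generic case via the canonical marking of Lemma \ref{canmarking}, the associated toric model $\widehat{W}_{*m}$ (Example \ref{ex:canmarking}), and Corollary \ref{HecWeakdelPezzo}, with the two exceptional chains $[4]$ and $[2,5]$ handled by an ad hoc toric construction that the paper treats at the same level of detail (it blows up $\F_4$ and $\F_5$ directly and calls it "easy"). The only substantive difference is that the paper invokes \emph{both} canonical markings so as to realize both residues $+a$ and $-a$, whereas you use only one, which still suffices for the statement as written.
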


\begin{proof}
If $n=2$, then $a=1$, and we can easily construct through a suitable blow up of $\F_4$ and Theorem \ref{H.e.c.theorem}. The same for $n=3$ and $a=1$ starting with $\F_5$. For any other $0<a<n$, we have two canonical markings by Lemma \ref{canmarking}. Consider the surfaces $\widehat{W}_{*m}$ associated to each marking, and construct the corresponding two H.e.c. $E_7,\ldots,E_1,\O_Y$ on del Pezzo surfaces $Y$ of degree $4$. Since both are of type (I), we start the chain of Wahl singularities with a nonsingular point, thus the collection begins with $\O_Y$. We have $\rk(E_1)=n$ and $c_1(E_1) \cdot -K_Y \equiv \pm a (\text{mod} \ n)$, since both canonical markings read the Wahl chain in its two distinct directions. These collections are full and strong by Corollary \ref{HecW*m}.  
\end{proof}

Next, we present two geometric proofs of \cite[Theorem A]{PR24}. The first of them proceeds by constructing a new family of surfaces derived from the canonical markings of Lemma~\ref{canmarking}.

\begin{definition}
A Wahl chain is of \textit{fiber type} if it admits a zero continued fraction of weight $\lambda$ with $\lambda \leq 8$. Its degree is $\ell=8-\lambda$. 
\label{def:fibertype} 
\end{definition}

For any $\F_d \to \P^1$, we choose a fiber and construct the chain of curves a given Wahl chain of fiber type. Its contraction is a surface $W_{*mf}$ with the corresponding Wahl singularity. This surface has $K_{W_{*mf}}^2=8-\lambda$ and $-K_{W_{*mf}}$ big. For each $d$, we can consider W-surfaces $W_t \rightsquigarrow W_{*mf}$ where $W_t$ is a del Pezzo surface. 

\begin{lemma}
A Wahl chain of fiber type is of degree at most $5$, with $[4]$ being the only chain of degree $5$. Any other Wahl chain is of fiber type of degree $4$.
\label{lem:fibertype}
\end{lemma}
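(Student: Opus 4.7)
\medskip

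\noindent\emph{Plan.} The statement breaks into three claims: (i) any Wahl chain of fiber type has degree at most $5$; (ii) only $[4]$ attains degree $5$; and (iii) every other Wahl chain is of fiber-type degree $4$. The plan is to prove (iii) by an inductive construction along the Wahl algorithm, and then to deduce (i) and (ii) from a combinatorial sum bound.

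\smallskip

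\noindent\emph{Inductive construction for (iii).} The base cases are the length-$2$ Wahl chains $[5,2]$ and $[2,5]$: the subtractions $(4,1)$ and $(1,4)$ each produce $[1,1]$, which has value $0$ and total $\lambda+1=5$, so $\lambda=4$ and the degree is $4$. The product matrix of $[1,1]$ is $\mat{1}\mat{1}=\bigl(\begin{smallmatrix}0&-1\\ 1&-1\end{smallmatrix}\bigr)$. Using the identity $\mat{e+1}=\mat{e}+E_{11}$, a direct calculation shows that if $[f_1,\ldots,f_r]$ has product matrix $\bigl(\begin{smallmatrix}0&-1\\ 1&-1\end{smallmatrix}\bigr)$, then so do $[f_1+1,f_2,\ldots,f_r,2]$ and $[2,f_1,\ldots,f_{r-1},f_r+1]$. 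Consequently the marking $(d_1,\ldots,d_r)$ lifts to $(d_1,\ldots,d_r,0)$ on $\mathcal{W}^+=[e_1+1,e_2,\ldots,e_r,2]$ and to $(0,d_1,\ldots,d_r)$ on $\mathcal{W}^-=[2,e_1,\ldots,e_{r-1},e_r+1]$, each still of total $5$. Since every Wahl chain other than $[4]$ arises from $[5,2]$ or $[2,5]$ by iterating these two Wahl moves, (iii) follows.

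\smallskip

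\noindent\emph{Sum bound for (i) and (ii).} Let $(d_1,\ldots,d_r)$ be any subtraction of total $s=\lambda+1$ yielding value $0$, and write $f_i=e_i-d_i$, so that $\sum f_i=3r+1-s$ by Proposition~\ref{sumWahl}. If $f_i\geq 1$ for every $i$, an induction on $r$ using the interior blow-downs of Lemma~\ref{updown} shows that any length-$r$ value-zero HJ chain with entries $\geq 1$ satisfies $\sum f_i\leq 3r-4$, with equality only for classical zero continued fractions (minimal model $[1,1]$). This forces $s\geq 5$, hence $\lambda\geq 4$ and degree $\leq 4$. Otherwise some $f_i\leq 0$, which requires $d_i\geq e_i\geq 2$ and so uses at least two units of the budget at position $i$. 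When $f_i=0$, the break identity $[\ldots,f_{i-1},0,f_{i+1},\ldots]=[f_1,\ldots,f_{i-1}]+[f_{i+1},\ldots,f_r]$, combined with the strict positivity of values of pure Wahl sub-chains (entries all $\geq 2$), excludes $s\leq 3$ whenever $r\geq 2$; the case $f_i<0$ is ruled out by an analogous matrix-product calculation. The only remaining possibility is $r=1$, which forces $\mathcal{W}=[4]$, and here the unique subtraction giving value $0$ is $d_1=4$, producing $s=4$, $\lambda=3$, and degree $5$.

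\smallskip

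\noindent\emph{Main obstacle.} The delicate part is ruling out low-budget subtractions when some $f_i\leq 0$: the classical sum bound no longer applies, and one must combine the break identity with positivity of Wahl sub-chain values (and a matrix argument for $f_i<0$) to exclude every $s\leq 3$ when the Wahl chain has length $\geq 2$. The clean regime $f_i\geq 1$ is handled uniformly by the sum argument, and the matrix-based induction gives the upper bound $\lambda\leq 4$ together with the explicit fiber-type degree-$4$ marking on every Wahl chain except $[4]$.
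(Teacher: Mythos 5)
Your argument reaches the right conclusions and, for the degree bound, is essentially the paper's own computation in different clothing: the paper compares $\sum e_i=3r+1$ (Proposition \ref{sumWahl}) with the entry-sum $2u+3t$ of a length-$r$ zero continued fraction to get $\lambda=u+3\geq 4$ once $u\geq 1$, which is exactly your inequality $\sum f_i\leq 3r-4$. Where you genuinely diverge is part (iii): the paper simply takes the canonical marking of Lemma \ref{canmarking} and drops the ending mark, whereas you rebuild that same marking (weight $4$ on the centre, $1$ on the first-move neighbour) by induction along the Wahl algorithm with the matrix invariant $\bigl(\begin{smallmatrix}0&-1\\ 1&-1\end{smallmatrix}\bigr)$. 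That is a legitimate alternative, but it buys nothing the canonical marking does not already give, and it introduces the one real gap in your writeup: having product matrix $\bigl(\begin{smallmatrix}0&-1\\ 1&-1\end{smallmatrix}\bigr)$ only controls the \emph{formal} value of $[f_1+1,f_2,\ldots,f_r,2]$; to conclude it is a zero continued fraction you must also check that it is a valid HJ continued fraction in the sense of Definition \ref{hjcf}, i.e.\ that every tail $[f_j,\ldots,f_r,2]$ is positive. The paper's route avoids this because the marked configuration is realized geometrically by blow-ups over a fiber, so validity (minimal model $[1,1]$) is automatic; your matrix identity alone does not supply it, and you should either verify tail-positivity inductively or fall back on the blow-up description.

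The second issue is the case analysis when some $f_i\leq 0$. The ``break identity'' $[\ldots,f_{i-1},0,f_{i+1},\ldots]=[f_1,\ldots,f_{i-1}]+[f_{i+1},\ldots,f_r]$ is false: the correct statement is $[\ldots,a,0,b,\ldots]=[\ldots,a+b,\ldots]$, and for instance $[2,2,0,2]=[2,4]=7/4$ while $[2,2]+[2]=7/2$. Fortunately the whole case is vacuous, so the error does not sink the proof: by Definition \ref{hjcf} the subtracted string must satisfy $[f_i,\ldots,f_r]>0$ for all $i\geq 2$ and $[f_1,\ldots,f_r]\geq 0$, which forces $f_i\geq 1$ for every $i\geq 2$ and also $f_1\geq 1$ whenever $r\geq 2$ (since $[0,f_2,\ldots,f_r]<0$). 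Hence the only admissible string with an entry $\leq 0$ is $[0]$ with $r=1$, i.e.\ the Wahl chain $[4]$ with $d_1=4$, giving $\lambda=3$ and degree $5$. Replace the break-identity discussion and the ``analogous matrix-product calculation'' for $f_i<0$ by this one-line observation and your case analysis becomes both correct and shorter.
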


\begin{proof}
Over the fiber, we first blow-up maximally to obtain $[1,2,\ldots,2,1]$ of length $u+1$. We then blow-up at $t$ nodes of the subsequent configurations, followed by $\lambda+1$ additional blow-ups on the interior of the marked components. If $[e_1,\ldots,e_r]$ is the Wahl chain, then $\sum_{i=1}^r e_i=2u+3t+\lambda+1$. Since $\sum_{i=1}^r e_i=3r+1$ and $r=u+1+t$, it follows that $\lambda=u+3$ and $\ell=5-u$.
If $\ell=5$, then $u=0$ and we have the Wahl chain $[4]$ only.
For $\ell=4$ use the canonical marking without the ending mark.  
\end{proof}

\begin{theorem}
Let $\partial, n$ be coprime integers with $n>0$. Then there is an e.c. $E,\O_Y$ on del Pezzo surfaces $Y$ of degree $4$ with $\rk(E)=n$ and $\deg(E)=\partial$.  
\label{Polishchuk}
\end{theorem}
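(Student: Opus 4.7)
The plan is to use the fiber-type construction of Definition~\ref{def:fibertype}, with the base Hirzebruch surface $\F_d$ serving as a continuous tuning parameter: the underlying Wahl chain and its canonical marking fix $\rk(E)=n$ and pin $\deg(E)$ modulo $n$, while varying $d$ shifts $\deg(E)$ by multiples of $n$ within that residue class, eventually hitting the prescribed value $\partial$.

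The case $n=1$ is immediate, since $-{}\cdot K_Y\colon\Pic(Y)\to\Z$ is surjective on any degree-$4$ del Pezzo $Y$, so there is some $\O_Y(D)$ with $-D\cdot K_Y=\partial$ and the required cohomology vanishing. For $n\geq 2$ I would choose an integer $0<a<n$ coprime to $n$ with $a\equiv\pm\partial\pmod n$; the sign ambiguity is harmless because dualizing an e.v.b.\ preserves exceptionality and flips the sign of $\deg$ modulo $n$. Unless $(n,a)=(2,1)$, where the Wahl chain $[4]$ has to be handled separately by a small ad hoc blow-up of $\F_d$, Lemma~\ref{lem:fibertype} guarantees that the Wahl chain of $\frac{n^2}{na-1}$ is fiber type of degree $4$.

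For each admissible $d$, build $W_{*mf}^{(d)}$ over $\F_d$: it carries the single Wahl singularity $P_1=\frac{1}{n^2}(1,na-1)$, has $K^2=4$ and $-K$ big, and since $p_g=q=0$ there are no local-to-global obstructions, so there is a W-del-Pezzo $W_t\rightsquigarrow W_{*mf}^{(d)}$ with smooth general fiber $Y$ of degree $4$. The next step is to exhibit in $W_{*mf}^{(d)}$ a chain of Wahl singularities $P_0-\Gamma_1-P_1$ in the sense of Definition~\ref{def:chainWahlsing}, by taking $\Gamma_1$ to be the image under the Wahl-chain contraction of a natural section of the induced fibration $X\to\F_d\to\P^1$ (e.g.\ the proper transform of $C_\infty$, or of a member of $|C_\infty+dF|$ that meets the appropriate endpoint of the Wahl chain), and $P_0$ any smooth point on $\Gamma_1$. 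Theorem~\ref{H.e.c.theorem}, applied with $A=\O_W$ since $P_0$ is smooth, then produces an exceptional collection $E^{(d)},\O_Y$ on $Y$ with $\rk E^{(d)}=n$ and $\deg E^{(d)}=n\,\Gamma_1\cdot(-K_Y)$.

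The main obstacle is the intersection-theoretic bookkeeping showing that as $d$ ranges over admissible integers the quantity $n\,\Gamma_1\cdot(-K_Y)$ traverses every integer $\equiv a\pmod n$, so that every $\partial\equiv a\pmod n$ is realized; the second canonical marking of Lemma~\ref{canmarking}, or equivalently dualizing $E^{(d)}$, then covers the residue class $-a\pmod n$. I would carry out this computation by passing to the toric model built from slides as in Section~\ref{s4} (a straightforward analogue of $\widehat W_{*m}$): on the toric side $\Gamma_1$ becomes torus-invariant, and the numbers $\Gamma_1\cdot K$ and $\Gamma_1^2$ can be read directly from the HJ-continued-fraction matrix factorization of Proposition~\ref{matrixhjcf}, which makes both the linear dependence on $d$ and the residue $a\pmod n$ transparent.
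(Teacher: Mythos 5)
Your plan follows the same route as the paper's first proof: build $W_{*mf}$ over $\F_d$ from the fiber-type (canonical) marking, take $\Gamma_1$ to be the image of a section, apply Theorem \ref{H.e.c.theorem} with $P_0$ a smooth point, and use dualization to deal with the sign of the residue class. The setup is fine, and Lemma \ref{lem:fibertype} does give you the degree-$4$ fiber-type marking for every chain except $[4]$.

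The genuine gap is in the step you yourself flag as the ``main obstacle'': the claim that letting $d$ range over admissible integers makes $n\,\Gamma_1\cdot(-K_Y)$ traverse the \emph{entire} residue class $a \pmod n$ is false. Since $d\geq 0$, the computation (which the paper carries out) gives $\deg(E)=a+n(2-d)$, so this family only produces degrees $\leq a+2n$; it is bounded above. Switching to the positive section in $|C_\infty+dF|$ does not rescue this: there $\Gamma_1\cdot K$ is roughly $-d-2$, so $\deg(E_1)=n(A+\Gamma_1)\cdot K_Y$ again tends to $-\infty$. In fact no choice of section or of $d$ in Theorem \ref{H.e.c.theorem} can reach arbitrarily large positive degrees directly. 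The missing ingredient is the rotation $(E,\O_Y)\mapsto(E^{\vee}\otimes\O_Y(K_Y),\O_Y)$, which sends degree $\partial$ to $-\partial-4n$ and hence converts a family bounded above in the class $a\pmod n$ into one bounded below in the class $-a\pmod n$ (and vice versa). You invoke dualization only to flip the residue class $a\leftrightarrow -a$; it is also needed, together with the two canonical markings (i.e.\ both readings $a$ and $n-a$ of the Wahl chain), to cover the unbounded-above half of each class. With that correction — four families, two bounded above and two bounded below, two in each residue class — the argument closes exactly as in the paper.
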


\begin{proof}[Proof 1]
Consider $W_{*mf}$ for a given Wahl chain via the canonical fiber type and assume that we started with $\F_d$, $d\geq 0$. Let $\Gamma_1 \subset W_{*mf}$ denote the image of the section $C_\infty$. We apply Theorem \ref{H.e.c.theorem} to obtain an e.c. $E,\O_Y$ where $Y=W_t$ is a del Pezzo surface from a W-surface $W_t \rightsquigarrow W_{*mf}$. Therefore, $c_1(E) \cdot (-K_Y)= a+n(2-d)$ or $n-a+n(2-d)$ (using the flipped Wahl chain). Applying the operations $(E,\O_Y) \mapsto (\O_Y,E^{\vee}) \mapsto (E^{\vee}, \O_Y(-K_Y)) \mapsto (E^{\vee} \otimes \O_Y(K_Y), \O_Y)$ we obtain the e.v.b. $E^{\vee} \otimes \O_Y(K_Y)$ of rank $n$ and degree $-a-n(2-d)-4n$ or $-(n-a)-n(2-d)-4n$. This way, we exhaust all possible degrees.    
\end{proof}

\begin{proof}[Proof 2]
Let $n\geq 4$ (use previous proof for $n=2,3$). We consider the canonical markings from Lemma \ref{canmarking}, and the corresponding surfaces $W_{*m}^1$ and $W_{*m}^2$. For each $W_{*m}^i$, we identify particular $\P^1$s to apply Theorem \ref{H.e.c.theorem}. Let us consider $u$ as in Lemma \ref{restrCentralMark}. Since our marking is of type (I) and degree $\ell=4$, we note that $u+1=d$, where $-d$ is the self-intersection of the central mark. Recall that $u+1$ corresponds to the number of curves in the first $u$ consecutive blow-ups that will belong to the Wahl chain. Due to the marking, we perform an extra blow-up at the ending curve not intersecting $C_{\infty}$. We have the chain $[1,2,\ldots,2,1,d]$ where the number of $2$s is $d$. It can be numerically contracted to $[1,2,1,1]$, which in turn contracts to $[0,0]$, corresponding geometrically to $\P^1 \times \P^1$.
Then, for any $t\geq 0$, we can consider a smooth rational curve $\Gamma_1$ of bidegree $(t,1)$ not passing through the intersection point of the fiber and the section (two rulings). Additionally, let  $\Gamma'_1$ be another curve of bidegree $(t,1)$ passing through that intersection point. We have that $K_{\P^1 \times \P^1} \cdot (t,1)=-2t-2$.

Let $X_i \to W_{*m}^i$ be the minimal resolutions. Then we have $\Gamma_1 \cdot K_{X_i}=-2t-2$ and $\Gamma'_1 \cdot K_{X_i}=-2t-1$. We now apply Theorem \ref{H.e.c.theorem} for the chain of Wahl singularities given by $\Gamma_1 \subset W_{*m}^1$ and by $\Gamma'_1 \subset W_{*m}^1$, and analogously for $W_{*m}^2$. Combining these four cases, we construct e.c.s $E,\O_Y$ on a del Pezzo surface $Y$ of degree $4$ with $\deg(E)=n-a+ns$ and $a+ns$ for all $s\leq -1$. Again, applying the operations $(E,\O_Y) \mapsto (E^{\vee} \otimes \O_Y(K_Y), \O_Y)$ we obtain e.v.b.s $E^{\vee} \otimes \O_Y(K_Y)$ of rank $n$ and degrees $n-a+n(s-5)$ and $a+n(s-5)$ for all $s\geq 1$. Therefore, we obtain any $\partial$ coprime to $n$ in this way.  
\end{proof}

Following the results in the previous sections, we now prove that there are strong restrictions on the slope of an exceptional vector bundle on a del Pezzo surface of degree $\ell \geq 5$. This is stronger than \cite{PR24} as we do not impose the e.v.b. $E$ to be part of an e.c. $E,\O_Y$.

\begin{theorem}
There exist infinitely many pairs $\partial,n$ of coprime integers with $0<\partial<n$ such that an e.v.b. $E$ with $\rk(E)=n$ and $\deg(E)\equiv \pm\partial \pmod{n}$ is not realizable on any del Pezzo of degree $\geq 5$.
\label{strongerPolishcuk}
\end{theorem}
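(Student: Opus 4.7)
The plan is to translate the realizability of an exceptional vector bundle into a statement about its associated Wahl chain via Hacking's construction, and then invoke Proposition \ref{5isnotfree} to exhibit infinitely many Wahl chains that are forbidden in degree $\geq 5$.

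Suppose $E$ is an e.v.b. on a del Pezzo surface $Y$ with $K_Y^2=\ell\geq 5$, $\rk(E)=n$, and $\deg(E)\equiv\pm\partial\pmod{n}$. By \cite{KO94}, $E$ extends to a full exceptional collection on $Y$. Applying Theorem \ref{HecfordelPezzo} to this collection yields a toric surface $W$ with only Wahl singularities together with a W-surface $W_t\rightsquigarrow W$ such that $W_t$ is deformation equivalent to $Y$. In particular, $W$ is a W-del Pezzo of degree $\ell\geq 5$, and one of its Wahl singularities is $P=\frac{1}{n^2}(1,na-1)$ with $a\equiv\mp\partial\pmod{n}$ (so $a\in\{\partial,n-\partial\}$ when normalized to $0<a<n$).

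Applying Theorem \ref{mainthm1} (equivalently, Corollary \ref{characterization}) to $P\in W$, the Wahl chain $\frac{n^2}{na-1}$ must be a del Pezzo Wahl chain of some degree $\ell'\geq\ell\geq 5$ (possibly after peeling off floating curves). Reversing a Wahl chain preserves the del Pezzo property, so this obstruction depends only on the class $\pm\partial\pmod{n}$. Hence, whenever the Wahl chain $\frac{n^2}{na-1}$ fails to be del Pezzo in every degree $\geq 5$, no such $E$ can be realized on any del Pezzo of degree $\geq 5$.

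To finish, Proposition \ref{5isnotfree} supplies an explicit infinite family of obstructions: the Wahl chains $[2,\ldots,2,A+4,2,\ldots,2,B+2]$ with $A\geq 2$ and $B>A+3$ are never del Pezzo of degree $\geq 5$. By the bijection between Wahl chains and coprime pairs $0<a<n$ (Definition \ref{wahl}), these chains correspond to infinitely many distinct pairs $(a,n)$, with $n$ unbounded as the chain length grows. Setting $\partial=a$ (normalized to $0<\partial<n$) produces infinitely many forbidden pairs $(\partial,n)$, which is the desired conclusion. The only delicate step is the reduction itself: one must use Theorem \ref{HecfordelPezzo} to pin down, from the congruence $\deg(E)\equiv -a\pmod{n}$, a specific Wahl singularity of the toric surface $W$ to which the obstruction can be attached. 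Once this is in place, the result is a direct pullback of Proposition \ref{5isnotfree} through the Hacking correspondence.
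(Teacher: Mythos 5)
Your proposal is correct and follows essentially the same route as the paper: pass from the exceptional bundle to a Hacking exceptional collection via Theorem \ref{HecfordelPezzo}, extract the Wahl singularity $\frac{1}{n^2}(1,na-1)$ with $a\equiv\mp\partial\pmod n$, apply the classification (Theorem \ref{theTheorem}/Corollary \ref{characterization}) to force the Wahl chain to be del Pezzo of degree $\geq 5$, and conclude with the infinite family of forbidden chains from Proposition \ref{5isnotfree}. The only cosmetic differences are that you make the reduction to a full exceptional collection via \cite{KO94} explicit and skip the intermediate surface $W_*$ with a single singularity, both of which the paper handles implicitly or in one line.
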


\begin{proof}
Assume that $E$ is an e.v.b. on a del Pezzo surface of degree $\ell \geq 5$ with $\rk(E)=n$ and $\deg(E)\equiv\pm\partial \pmod{n}$. By Theorem \ref{HecfordelPezzo} there is a toric surface $W$ that contains the corresponding Wahl singularity $\frac{1}{n^2}(1,n\partial-1)$. Thus, there exists a surface $W_*$ containing that Wahl singularity only, $K_{W_*}^2=K_W^2 \geq 5$ and $-K_{W_*}$ big. Then, $W_*$ is a degeneration of a $\text{Bl}_u(W_{*m})$ for some $u\geq 0$ and some $W_{*m}$ by Theorem \ref{theTheorem} (the residual surface is ignored at this point as the Wahl chain is $[4]$). By assumption, there is a marking on the Wahl chain that makes it a del Pezzo of degree $\geq 5$. However, by Proposition \ref{5isnotfree} (and Remark \ref{muchmore}) there are infinitely many Wahl chains that are not del Pezzo of degree greater than or equal to $5$.
\end{proof}

\begin{remark}
From a marked surface $W_{*m}$ with Wahl singularity $\frac{1}{n^2}(na-1)$ of type (I) and degree $\ell$, one can always construct $E,\O_Y$ e.c. with rank$(E)=n$ and $\deg(E)$ equal to $a+n(1-\ell)$ and $-a-n$ using a section disjoint to $C_{\infty}$ or a fiber. By Remark \ref{hodgeIndex}, we must have $(a+n)^2 \geq \ell(na-1)$. (In \cite[(5.1)]{PR24} a similar inequality is deduced using Riemann-Roch and Hodge index theorems.) To illustrate this, take $t\geq 1$ and the Wahl chain $[t+2,5,2,\ldots,2]$ with $n=2t+3$ and $a=2$. It has a type (I) marking of degree $5$: $[\underline{t+2},1,2,\ldots,2,1]$. Then the degrees $a+n(1-\ell)=-10-8t$ and $-a-n=-5-2t$ do not satisfy the second inequality \cite[(5.3)]{PR24}, since $$ \deg(E)^2+5 n \deg(E)+5n^2+1=4(t-1)(t+1) \geq 0.$$ 

In degree $\ell=5$, we also have that the following type (I) infinite families do not satisfy the second inequality \cite[(5.3)]{PR24}:
\begin{itemize}
    \item $[\underbrace{t+2,2,\ldots,2}_{t},t+4,\underbrace{2,\ldots,2}_{t}]$ with marking $[\underline{t+2},1,2,\ldots,2,t+1,1,2,\ldots,2]$. In this case $n=(t+1)^2+1$, $a=t+1$, and degrees are $-4t^2-7t-7$ and $-t^2-3t-3$.
    \item $[\underbrace{t+2,2,\ldots,2}_{t-1},t+2,\underbrace{2,\ldots,2,t+1}_{t-2},\underbrace{2,\ldots,2}_{t}]$ with marking 
    $[\underline{t+2},2,\ldots,2,1,t-1,2,\ldots,2,t+1,1,2,\ldots,2]$. We have $n=t^3-t^2+t+1$ and $a=t^2-2t+2$, and degrees $-4t^3+5t^2-6t-2$ and $-t^3+t-3$.
\end{itemize}

There are other ways to obtain $E,\O_Y$. For the family above $[t+2,5,2,\ldots,2]$, in $\F_{t+1}$ we can consider a section $\Gamma$ with $\Gamma \cdot C_{\infty}=1$, and blow up over $\Gamma \cap F$ for some fiber $F$. Since we have a marked $(-1)$-curve in the last $2$, we obtain a $\Gamma$ in $X$ with $\Gamma^2=3$. Then the construction of $E,\O_Y$ from that $\Gamma$ gives $deg(E)=-8t-14$ and $\rk(E)=2t+3$. The second inequality in \cite[(5.3)]{PR24} does not hold. 
\label{examplesPolishchuk}
\end{remark}

\begin{remark}
In \cite[Proposition 5.2]{PR24}, Polishchuk and Rains prove that for any $\ell \geq 5$ and any pair $(\partial, n)$ of relatively prime integers with $n> 0$ satisfying $$-\ell \leq {\partial}^2 + \ell n \partial + \ell n^2 \leq -1,$$ there exists an e.c. $E,\O_Y$ on a del Pezzo surface $Y$ of degree $\ell$ with $\partial=\deg(E)$ and rank$(E)=n$. The first inequality is necessary for this e.c. to exist and comes from the Riemann-Roch and Hodge index theorems. The second is arbitrary. To handle these types of integers, they consider integers in $\Q(\sqrt{\ell(\ell-4)})$ and their norms, which are precisely ${\partial}^2 + \ell n \partial + \ell n^2$. We now show how to obtain the corresponding del Pezzo Wahl chains for all of them. They will be type (I).

As in \cite[Proposition 5.2]{PR24}, the idea is to solve for $(\partial,n)$ the generalized Pell equation ${\partial}^2 + \ell n \partial + \ell n^2=e$ for a fixed $\ell \geq 5$ and $e\in \{-\ell,\ldots,-1\}$. We obtain recursive relations that generate all solutions $(n,\partial)$ such that $n>1$ and $\partial=(-\ell n+\sqrt{\ell^2n^2-4n^2\ell+4e})/2$. We denote them by $\left(n_{\ell, e}^{(j)}(k), \partial_{\ell, e}^{(j)}(k)\right)$, for $k \geq 0$, where the superscript $j$ distinguishes different families of solutions associated with the same pair $(\ell,e)$. We list the initial data for all the recursions
$$n_{\ell,e}^{(j)}(k)= (\ell -2) n_{\ell,e}^{(j)}(k-1)-n_{\ell,e}^{(j)}(k-2), \ \partial_{\ell,e}^{(j)}(k)=(\ell-2) \partial_{\ell,e}^{(j)}(k-1)- \partial_{\ell,e}^{(j)}(k-2)$$ where $k\geq 2$.

{\small 
\vspace{0.3cm}
\noindent
$(\ell=8)$: 

$(n_{8,-8}(0),n_{8,-7}^{(0)}(0),n_{8,-7}^{(1)}(0),n_{8,-4}(0))=(3,2,4,5)$ 

$(n_{8,-8}(1),n_{8,-7}^{(0)}(1),n_{8,-7}^{(1)}(1),n_{8,-4}(1))=(17,11,23,29)$

$(\partial_{8,-8}(0),\partial_{8,-7}^{(0)}(0),\partial_{8,-7}^{(1)}(0),\partial_{8,-4}(0))= (-4,-5,-3,-6)$

$(\partial_{8,-8}(1),\partial_{8,-7}^{(0)}(1),\partial_{8,-7}^{(1)}(1),\partial_{8,-4}(1)) = (-20,-13,-27,-34)$

\vspace{0.3cm}
\noindent
$(\ell=7)$:

$(n_{7,-5}^{(0)}(0),n_{7,-5}^{(1)}(0),n_{7,-3}(0))=(2,3,4)$ 

$(n_{7,-5}^{(0)}(1),n_{7,-5}^{(1)}(1),n_{7,-3}(1))=(9,14,19)$

$(\partial^{(0)}_{7,-5}(0),\partial^{(1)}_{7,-5}(0),\partial_{7,-3}(0))=(-3,-4,-5)$ 

$(\partial^{(0)}_{7,-5}(1),\partial^{(1)}_{7,-5}(1),\partial_{7,-3}(1))=(-11,-17,-23)$

\vspace{0.3cm}
\noindent
$(\ell=6)$: 

$(n_{6,-3}(0),n_{6,-2}(0))=(2,3)$ \ \ \ \ \ \ \ \  \ $(n_{6,-3}(1),n_{6,-2}(1))=(7,11)$

$(\partial_{6,-3}(0),\partial_{6,-2}(0)) =(-4,-4)$ \ \ \ $(\partial_{6,-3}(1),\partial_{6,-2}(1))=(-9,-14)$

\vspace{0.3cm}
\noindent
$(\ell=5)$: 

$(n_{5,-5}(0),n_{5,-1}(0))=(2,2)$ \ \ \ \ \ \ \ \ \ $(n_{5,-5}(1),n_{5,-1}(1))=(3,5)$

$(\partial_{5,-5}(0),\partial_{5,-1}(0)) =(-5,-3)$ \ \ \ $(\partial_{5,-5}(1),\partial_{5,-1}(1))=(-5,-7)$}

\vspace{0.3cm}

It follows that $\partial_{l, e}(k)=-n_{l, e}(k)-n_{l, e}(k-1)$ for $k \geq 1$. Consequently, the vector bundle $E$ is a Hacking v.b. corresponding to $\frac{1}{n_{l, e}(k)^2}\left(1, n_{l, e}(k)n_{l, e}(k-1)-1\right)$. In what follows, we describe the corresponding Wahl chains and markings of type (I) (see Remark \ref{Pellrecursion}). We recover $E$ by considering a fiber $\Gamma$ in $X$, the chain of Wahl singularities is $(0)-\left[{n_{l, e}(k) \choose n_{l, e}(k-1) } \right]$ for every $k \geq 1$. We have $\deg(E)=-n_{l, e}(k)-n_{l, e}(k-1)$. (We also have $\left[{n_{l, e}(k) \choose n_{l, e}(k-1) } \right]-(l-3)$, and for the corresponding e.c. $E',\O_Y$ we have $\deg(E')=n_{l, e}(k-1)+(1-\ell)n_{l, e}(k)$.) 

{\small 
\vspace{0.3cm}
\noindent
$(\ell=8, e=-8)$: For $k\geq 1$, 

\noindent
$[\underbrace{6,\dots,6}_k, 5, 3, \underbrace{2,2,2,3,\dots,2,2,2,3}_{(k-1) \ \text{blocks} \  2,2,2,3}, 2,2,2,2]$; $[\underline{6},\dots,6, 5, 1, 2,2,2,3,\dots,2,2,2,3, 2,2,2,2]$}

{\small
\vspace{0.3cm}
\noindent
$(\ell=8, e=-7, j=0)$: For $k\geq 2$,  

\noindent
$[\underbrace{6,\dots,6}_k, 5, \underbrace{2,2,2,3,\dots,2,2,2,3}_{(k-1) \ \text{blocks} \  2,2,2,3}, 2,2,2,2]$; $[\underline{6},\dots,6, 4, 1,2,2,3,\dots,2,2,2,3, 2,2,2,2]$}

{\small
\vspace{0.3cm}
\noindent
$(\ell=8, e=-7, j=1)$: For $k\geq 1$, 

\noindent
$[\underbrace{6,\dots,6}_k,6, 2, 3, \underbrace{2,2,2,3,\dots,2,2,2,3}_{(k-1) \ \text{blocks} \  2,2,2,3}, 2,2,2,2]$; $[\underline{6},\dots,6, 1, 2, 2,2,2,3,\dots,2,2,2,3, 2,2,2,2]$}

{\small
\vspace{0.3cm}
\noindent
$(\ell=8, e=-4)$: (Example \ref{nomarkov}) For $k\geq 1$, 

\noindent
$[\underbrace{6,\dots,6}_k, 7, 2, 2, 3, \underbrace{2,2,2,3,\dots,2,2,2,3}_{(k-1) \ \text{blocks} \  2,2,2,3}, 2,2,2,2]$;

\noindent
$[\underline{6},\dots,6,7,1,2,2,2,2,2,3,\dots,2, 2,2,3,2,2,2,2]$,  $[\underline{6},\dots,6,6,2,1,3,2,2,2,3,\dots,2, 2,2,3,2,2,2,2]$}

\noindent
Each marking above produces different general fibers $\F_0$ and $\F_1$ respectively.

{\small
\vspace{0.4cm}
\noindent
$(\ell=7, e=-5, j=0)$: For $k\geq 2$,  

\noindent
$[\underbrace{5, \dots, 5}_{k},5,\underbrace{2,2,3,\dots,2,2,3}_{(k-1) \ \text{blocks} \  2,2,3},2,2,2] $; $[\underline{5}, \dots, 5,3,1,2,3,\dots,2,2,3,2,2,2] $

\vspace{0.3cm}
\noindent
$(\ell=7, e=-5, j=1)$: For $k\geq 1$, 

\noindent
$[\underbrace{5, \dots, 5}_{k},5, 3, \underbrace{2,2,3,\dots,2,2,3}_{(k-1) \ \text{blocks} \  2,2,3}, 2,2,2]$; $[\underline{5},\dots,5, 4, 1, 2,2,3,\dots,2,2,3, 2,2,2]$

\vspace{0.3cm}
\noindent
$(\ell=7, e=-3)$: For $k\geq 1$, 

\noindent
$[\underbrace{5, \dots, 5}_k, 6, 2, 3, \underbrace{2,2,3,\dots,2,2,3}_{(k-1) \ \text{blocks} \  2,2,3}, 2,2,2] $; $[\underline{5}, \dots, 5, 5, 1, 2, 2,2,3,\dots,2,2,3, 2,2,2]$

\vspace{0.3cm}
\noindent
$(\ell=6, e=-3)$: For $k\geq 2$, 

\noindent
$[\underbrace{4, \dots, 4}_k, 5, \underbrace{2,3,\dots,2,3}_{(k-1) \ \text{blocks} \  2,3}, 2,2] $; $[\underline{4}, \dots, 4, 2, 1,3,\dots,2,3, 2,2]$

\vspace{0.3cm}
\noindent
$(\ell=6, e=-2)$: For $k\geq 1$, 

\noindent
$[\underbrace{4, \dots, 4}_k, 5, 3, \underbrace{2,3,\dots,2,3}_{(k-1) \ \text{blocks} \  2,3}, 2,2]$; $[\underline{4}, \dots, 4, 3, 1, 2,3,\dots,2,3, 2,2]$

\vspace{0.3cm}
\noindent
$(\ell=5, e=-5)$: For $k\geq 2$,  

\noindent
$[\underbrace{3, \dots, 3}_{k-1}, 2, 6, \underbrace{3,\dots,3}_{k-2}, 2] $; $[\underline{3}, \dots, 3, 2, 1, 3,\dots,3, 2] $}

\vspace{0.3cm}
\noindent
$(\ell=5, e=-1)$: For $k\geq 2$, 

\noindent
$[\underbrace{3, \dots, 3}_k, 5, \underbrace{3,\dots,3}_{k-1}, 2] $; $[\underline{3}, \dots, 3, 1, 2, 3,\dots,3, 2] $

\label{AllPolishchuk}
\end{remark}












\bibliographystyle{plain}
\bibliography{math}

\end{document}